\tikzstyle{vertex}=[
\tikzstyle{printersafe}=[decoration={snake,amplitude=0pt}]
\newcommand{\rank}{\operatorname{rank}}
\newcommand{\supp}{\operatorname{supp}}
\newcommand{\pp}{\mathbb{P}}
\renewcommand{\qq}{\mathbb{Q}}
\newcommand{\zz}{\mathbb{Z}}
\newcommand{\nn}{\mathbb{N}}
\newcommand{\kk}{\mathbb{K}}
\def\O#1.{\mathcal {O}_{#1}}			
\def\pr #1.{\mathbb P^{#1}}				
\def\af #1.{\mathbb A^{#1}}			
\def\ses#1.#2.#3.{0\to #1\to #2\to #3 \to 0}	
\def\xrar#1.{\xrightarrow{#1}}			
\def\K#1.{K_{#1}}						
\def\bA#1.{\mathbf{A}_{#1}}			
\def\bM#1.{\mathbf{M}_{#1}}				
\def\bL#1.{\mathbf{L}_{#1}}				
\def\bB#1.{\mathbf{B}_{#1}}				
\def\bK#1.{\mathbf{K}_{#1}}			
\def\subs#1.{_{#1}}					
\def\sups#1.{^{#1}}
\newtheorem{introdef}{Definition}
  \newtheorem{introthm}{Theorem}
  \newtheorem{introrem}{Remark}
  \newtheorem{introcor}{Corollary}
  \newtheorem{theorem}{Theorem}[section]
  \newtheorem{lemma}[theorem]{Lemma}
  \newtheorem{proposition}[theorem]{Proposition}
  \newtheorem{corollary}[theorem]{Corollary}
  \newtheorem{definition}[theorem]{Definition}
  \newtheorem{example}[theorem]{Example}
\newtheorem{remark}[theorem]{Remark}
\theoremstyle{remark}
\numberwithin{equation}{section}
\begin{document}

\title[Kawamata log terminal singularities of full rank]{Kawamata log terminal singularities of full rank}

\author[J.~Moraga]{Joaqu\'in Moraga}
\address{Department of Mathematics, Princeton University, Fine Hall, Washington Road, Princeton, NJ 08544-1000, USA
}
\email{jmoraga@princeton.edu}

\subjclass[2010]{Primary 14E30, 
Secondary 14B05.}
\maketitle

\begin{abstract}
We study Kawamata log terminal singularities of full rank, i.e., 
$n$-dimensional klt singularities containing a large finite abelian group of rank $n$ in its regional fundamental group.
The main result of this article 
is that klt singularities of full rank degenerate to cones over log crepant equivalent
toric quotient varieties.
To establish the main theorem, we reduce the proof to the study of Fano type varieties with large finite automorphisms of full rank.
We prove that such Fano type varieties are log crepant equivalent  toric. 
Furthermore, any such Fano variety of dimension $n$ contains an open affine subset isomorphic to $\mathbb{G}_m^n$.
As a first application,
we study complements on klt singularities of full rank.
As a second application, we study dual complexes of log Calabi-Yau structures on
Fano type varieties with large fundamental group of their smooth locus.
\end{abstract}

\setcounter{tocdepth}{1} 
\tableofcontents

\section{Introduction}

The study of the singularities of the minimal model program
is a cornerstone to prove new results about projective varieties.
Many of the important conjectures of the minimal model program
can be reduced to conjectures about Kawamata log terminal singularities.
Henceforth, a better understanding of klt singularities is a central topic on birational geometry.
On the other hand, via global-to-local methods, we can deduce theorems about
these singularities, 
by reducing it to the study of Fano type varieties.
Recently, the boundedness of Fano varieties and the theory of complements~\cites{Bir19,Bir21}
had lead to new results about klt singularities.
First, we have the existence of bounded log canonical complements
on klt singularities~\cite{Bir19}.
Secondly, we have the boundedness up to degeneration of exceptional klt singularities~\cites{Mor18b,HLM20} and the ascending chain condition for minimal log discrepancies
of exceptional singularities~\cites{Mor18a,HLS19}.
Finally, we have the Jordan property for the regional fundamental group of $n$-dimensional klt singularities~\cite{BFMS20}.
This result relies on work by Prokhorov and Shramov
on the Jordan property for Cremona groups~\cites{PS14,PS16}.
The Jordan property asserts that the local fundamental group of $n$-dimensional klt singularities are almost abelian of rank at most $n$. 
In many cases, for instance, surface singularities and toric singularities, 
much of the local geometry of the singularity can be deduced from its
local fundamental group.

However, in general, not much can be said unless the fundamental group of the klt singularity is large enough.
Then, it is natural to ask whether the existence of a large abelian group on the local fundamental group will reflect on the geometry of the singularity.
The first step in this direction was given in~\cite{Mor20}, where this question
was settled in dimension three, through the study of 
large finite automorphisms on Fano type surfaces.
The author introduces the class of log crepant equivalent  toric quotient singularities (abbreviated lce-tq), which is a natural class of singularities including
toric singularities, quotient singularities, and certain crepant equivalent  models of those.
These singularities are cones over log crepant equivalent  toric quotient projective varieties (see Definition~\ref{introdef}).
\cite{Mor20}*{Theorem 5.1} asserts that a klt $3$-fold singularity
$x\in (X,\Delta)$ which satisfies $\zz_N^3\leqslant \pi_1^{\rm loc}(X,\Delta;x)$
for $N$ large enough (compared with the dimension) will admit a degeneration
to a lce-tq singularity.
Log crepant equivalent  toric quotient singularities are a prototype of singularities with large local fundamental groups.

In this article, we study Kawamata log terminal singularities of full rank, i.e., $n$-dimensional klt singularities containing a large finite abelian group of rank $n$ in its regional fundamental group.
Our main theorem is  a characterization of these singularities.
We generalize~\cite{Mor20}*{Theorem 5.1} to arbitrary dimension.
We prove that Kawamata log terminal singularities of full rank
degenerate to log crepant equivalent  toric quotient singularities.

\begin{introthm}\label{introthm:degeneration}
Let $n$ be a positive integer.
There exists a constant $N=N(n)$, only depending on $n$, satisfying the following.
Let $x\in (X,\Delta)$ be a $n$-dimensional klt singularity such that
$\zz^n_k \leqslant \pi_1^{\rm reg}(X,\Delta;x)$ for some $k\geq N$.
Assume that $\Delta$ has standard coefficients.
Then, $x\in (X,\Delta)$ degenerate to a log crepant equivalent  toric quotient singularity.
\end{introthm}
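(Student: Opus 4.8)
The plan is to deploy the standard global-to-local (cone) construction to reduce the statement to a problem about Fano type varieties, then invoke the classification of Fano type varieties carrying a large finite automorphism group of full rank. First I would pass to a suitable neighborhood of $x$ and take a plt blow-up $f\colon Y\to X$ extracting a single divisor $E$ over $x$ whose image is $x$, so that $(E,\Delta_E)$ is a log Fano type pair of dimension $n-1$ and $x\in(X,\Delta)$ is, up to the MMP and finite covers, a cone over $(E,\Delta_E)$. The hypothesis $\zz^n_k\leqslant \pi_1^{\rm reg}(X,\Delta;x)$ for $k\geq N$ should then, after standard arguments relating the regional fundamental group of the singularity to the orbifold fundamental group of the exceptional divisor (together with the $\mathbb{G}_m$-factor coming from the cone direction), translate into $E$ being a Fano type variety of dimension $n-1$ admitting a faithful action of a finite abelian group of rank $n-1$ whose order is at least some $N'=N'(n)$. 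Here I must be careful that the standard coefficient hypothesis on $\Delta$ is preserved (or controlled) on $\Delta_E$, since it is exactly what makes the boundary contributions behave well under the canonical bundle formula and the quotient constructions.

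The second, and main, step is to apply the structural result for such Fano type varieties stated in the introduction: a Fano type variety $E$ of dimension $m$ with a large finite automorphism group of full rank is log crepant equivalent to a toric quotient variety, and moreover contains an open affine subset isomorphic to $\mathbb{G}_m^m$. Granting this for $m=n-1$, the variety $E$ together with its boundary $\Delta_E$ is log crepant equivalent to a toric pair $(T,\Delta_T)$; I would then form the corresponding cone singularities over both $(E,\Delta_E)$ and $(T,\Delta_T)$ with respect to compatible polarizations, and argue that log crepant equivalence is inherited by these cone constructions. The cone over the toric pair $(T,\Delta_T)$ is by definition a log crepant equivalent toric quotient singularity, so this would exhibit $x\in(X,\Delta)$ as log crepant equivalent --- hence, after running an MMP, as a degeneration --- to an lce-tq singularity, which is precisely the assertion.

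The delicate points, and where I expect the real work to lie, are the two ``translation'' steps: (i) going from the large abelian subgroup of $\pi_1^{\rm reg}(X,\Delta;x)$ to a large faithful abelian action of full rank on the Fano type divisor $E$ --- this requires that the cone/orbifold structure genuinely transports the group and its rank, including the subtlety that one rank is used up by the $\mathbb{G}_m$ cone direction, so one needs rank $n$ locally to get rank $n-1$ on $E$; and (ii) upgrading ``log crepant equivalent to an lce-tq cone'' to an honest flat degeneration of the singularity $x\in(X,\Delta)$, which should follow from the general principle that a klt singularity and the cone over any of its Koll\'ar components (suitably polarized) are related by a degeneration, combined with the fact that log crepant equivalence between the bases lifts to the total spaces of these degenerations. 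A secondary obstacle is bookkeeping of the constant $N=N(n)$: it must dominate the constant $N'(n-1)$ produced by the Fano-type classification as well as whatever loss is incurred in the finite-cover and plt blow-up steps, and one should check (or cite) that all these constants depend only on $n$, which is where the effective boundedness inputs from the theory of complements enter.
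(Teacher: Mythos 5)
Your proposal mirrors the paper's argument: reduce to the $(n-1)$-dimensional Fano type Koll\'ar component of a plt blow-up, transfer the full-rank hypothesis to it (using up one rank in the cone direction, exactly as you note), apply the main projective theorem (Theorem~\ref{introthm:main-thm}/Theorem~\ref{thm:FT-full-rank}), and conclude via the cone degeneration associated to the plt blow-up. The only cosmetic difference is the order of operations --- the paper first passes to the universal regional cover $Y\to X$ and then takes the $G$-equivariant plt blow-up, so that $G=\pi_1^{\rm reg}(X,\Delta;x)$ acts honestly on the exceptional $E_Y$ with cyclic kernel $C$ and Lemma~\ref{lem:quot-cycl-k-gen} yields $g_{n-1}(G/C)\geq g_n(G)$, whereas you phrase the rank transfer via the orbifold fundamental group of the exceptional over $X$ itself --- two equivalent bookkeepings for the same reduction.
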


The above theorem says that among all $n$-dimensional klt singularities, 
the ones that have the largest local fundamental groups, 
belong to the versal deformation space of log crepant equivalent  toric quotient singularities of dimension $n$.
Recall that klt surface singularities are quotient singularities (see, e.g.~\cite{Tsu83}). Hence, the above theorem is vacuous in dimension two.
It is only non-trivial starting in dimension three.
In Example~\ref{example}, we give an example of a log crepant equivalent  toric quotient singularity which is not the quotient of a toric singularity.
All the results stated in the introduction still hold for subgroups that are not of the form $\zz^n_k$.
However, to state the theorems properly, we need the concept of large $n$-generation of finite groups (see subsection~\ref{subsec:k-gen}).
In Theorem~\ref{thm:regional-general}, 
we give a more general version of Theorem~\ref{introthm:degeneration} admitting more general coefficients. The above theorem can be reduced to the study of large finite abelian groups of rank $n$ in Fano type varieties.
Theorem~\ref{introthm:degeneration} will be deduced from Theorem~\ref{introthm:main-thm} below.
This is an application of purely log terminal blow-ups to reduced problems about klt singularities to problems about Fano type varieties (see, e.g.,~\cites{Pro00,Xu14}).

\begin{introrem}{\em 
We expect similar results when the regional fundamental group of a klt singularity
is large but possibly it does not have full rank.
To tackle that direction, it seems necessary to improve the main result of~\cite{BMSZ18} to the relative setting.
For $3$-fold singularities, the cases of rank two and rank three fundamental groups are already considered in~\cite{Mor20}.
In low dimensions, a relative version of~\cite{BMSZ18} is already proved in~\cite{Sho00}. 
When we consider klt singularities with cyclic regional fundamental groups,
the techniques of these articles seem not to apply to deduce geometry of the singularity.
}
\end{introrem}

\subsection{Fano type varieties with full rank automorphisms} In this subsection, we introduce the results regarding Fano type varieties with large full rank automorphism groups.
We start by stating the main projective theorem of this article.

\begin{introthm}\label{introthm:main-thm}
Let $n$ be a positive integer.
There exists a positive integer
$N:=N(n)$, only depending on $n$, satisfying the following.
Let $X$ be a Fano type variety of dimension $n$.
Let $A\simeq \zz_k^n \leqslant {\rm Aut}(X)$ be a finite subgroup with $k\geq N$.
Then, there exists:
\begin{enumerate}
\item a boundary $B$ on $X$, and
\item an $A$-equivariant birational map
$X\dashrightarrow X'$,
\end{enumerate}
satisfying the following conditions:
\begin{enumerate}
\item The pair $(X,B)$ is log canonical, $A$-equivariant, and $K_X+B\sim 0$,
\item the push-forward of $K_X+B$ to $X'$ is a log pair $(X',B')$, 
\item the pair $(X',B')$ is a log Calabi-Yau toric pair, and
\item there are group monomorphisms 
$A< \mathbb{G}_m^n\leqslant {\rm Aut}(X,B)$.
\end{enumerate}
In particular, $B'$ is the reduced toric boundary of $X'$.
Furthermore, the birational map
$X'\dashrightarrow X$ is an isomorphism over the torus $\mathbb{G}_m^n$.
\end{introthm}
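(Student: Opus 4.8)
The plan is to run an $A$-equivariant minimal model program and repeatedly exploit the fact that a rank-$n$ finite abelian group acting on an $n$-dimensional variety is ``as large as possible'', which forces toric behaviour. First I would choose, using the theory of $\epsilon$-complements for Fano type varieties (Birkar's boundedness of complements, as recalled in the introduction via~\cites{Bir19,Bir21}), a boundary $B_0$ with $K_X+B_0\sim_{\qq}0$; the key point is that $A$ is finite, so by averaging over the $A$-orbit of such a complement one makes $B_0$ genuinely $A$-invariant, and after passing to a bounded multiple one arranges $K_X+B\sim 0$ with $(X,B)$ log canonical and $A$-equivariant. This produces items (1) and condition (1). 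The subtle input is that one must keep the ``large'' hypothesis $k\geq N(n)$ intact: the constant $N(n)$ has to dominate both the complement index and the constants appearing in the toric criterion below, so $N$ should be defined at the end as the maximum of these finitely many dimension-dependent quantities.

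Next I would pass to a dlt model and run an $A$-equivariant $K_X$-MMP on $(X,B)$, terminating (since $X$ is of Fano type) at a model $X'$ on which the pushed-forward pair $(X',B')$ is a log canonical log Calabi-Yau pair with $K_{X'}+B'\sim 0$ and $\lfloor B'\rfloor$ big in the sense that its components, together with the $A$-action, realize the full $n$ ``directions''. This gives item (2) and conditions (2), (3) modulo the toric identification. Here the central mechanism is the characterization of toric log Calabi-Yau pairs via the complexity, in the spirit of~\cite{BMSZ18}: a log Calabi-Yau pair $(X',B')$ of dimension $n$ whose boundary complexity is minimal (equivalently, which supports a faithful action of a rank-$n$ torus, or is approximated by one through the large finite group $A$) must be toric with $B'$ its reduced toric boundary. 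The role of $k\geq N(n)$ is exactly to push the complexity below the threshold forced by~\cite{BMSZ18}: a faithful $\zz_k^n$-action with $k$ large cannot be accommodated on a non-toric log Calabi-Yau pair of dimension $n$ once the number of boundary components is controlled, because the group would have to embed in the automorphism group of a lower-complexity stratum, and an inductive/descent argument on the strata of the dlt structure bounds $k$. This is the step I expect to be the main obstacle: making precise the passage from ``$A$ is large and finite'' to ``the ambient torus acts'', i.e. upgrading the discrete rank-$n$ symmetry to a continuous one, which is essentially where the hypotheses force the structure and where the bulk of the technical work (and the definition of $N(n)$) lives.

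Finally, once $(X',B')$ is identified as a log Calabi-Yau toric pair with big torus $T\simeq\mathbb{G}_m^n$, condition (4) follows by transporting the torus action back: the $A$-equivariant birational map $X\dashrightarrow X'$ is an isomorphism on the open dense torus orbit $T\cong\mathbb{G}_m^n\subset X'$ (MMP steps and dlt modifications only modify the boundary and lower-dimensional loci), so $T$ acts birationally on $X$ preserving $B$, hence $A<\mathbb{G}_m^n\leqslant\operatorname{Aut}(X,B)$ after identifying $A$ with its image in $T$; since $A\simeq\zz_k^n$ has rank $n$ and $\operatorname{Aut}(\mathbb{G}_m^n)$ is $\operatorname{GL}_n(\zz)\ltimes\mathbb{G}_m^n$, a large such $A$ must lie in the torus part, which is what allows the embedding to be chosen compatibly. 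The last sentence, that $X'\dashrightarrow X$ is an isomorphism over $\mathbb{G}_m^n$, is then immediate from the construction, since every elementary step performed has center contained in the non-torus locus. The routine parts — equivariant resolution, termination for Fano type, averaging complements — I would cite rather than reprove; the load-bearing citation is the relative/equivariant form of the complexity-zero characterization of toric pairs.
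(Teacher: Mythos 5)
Your proposal captures the broad architecture (find an $A$-equivariant complement, run an $A$-equivariant MMP, invoke the complexity criterion of \cite{BMSZ18}, then upgrade the finite abelian action to a torus action), but it glosses over the two genuine load-bearing difficulties, and in both places the picture you sketch would not close.

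First, your proposal never addresses the dichotomy that organizes the paper's Step~2 versus Steps~4--12: after the $A$-equivariant canonicalization and $K$-MMP you land on an $A$-equivariant Mori fiber space $X''\to C$. When $C$ is a point, $X''$ is a \emph{canonical} Fano and hence sits in a \emph{bounded} family; the paper then uses the boundedness of $\operatorname{Aut}(X'',B'')$, passes to its reductive part, uses a fixed-point theorem of Prokhorov--Shramov to land $A$ inside a maximal torus, and only then concludes via Propositions~\ref{prop:making-H0-toric}/\ref{prop:making-abelian-group-toric} that $A<\mathbb{G}_m^n$. This is the mechanism that "upgrades discrete to continuous," and it comes from linear algebraic group theory on a bounded family, not directly from complexity. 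When $C$ is positive-dimensional, the paper instead \emph{inducts on dimension} via the exact sequence $1\to A_F\to A\to A_C\to 1$, applies the theorem to the general fiber and to the base via the $G$-equivariant canonical bundle formula, and then builds a tower of models. You describe neither branch; the "inductive/descent argument on the strata of the dlt structure" you gesture at is not the paper's induction, and as stated it does not produce a quantitative bound on $k$.

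Second, and more seriously, your complexity argument would not go through because you only control the \emph{$A$-invariant} Picard rank after the equivariant MMP, whereas the complexity $c(X',B')=\dim X'+\rho(X')-\sum d_i$ requires the \emph{actual} Picard rank. Closing this gap is precisely what occupies Steps~7--12: the paper passes to a quotient $Y$ (to trade $A$-invariant rank for actual rank), then takes Galois covers $Z\to Y$ along horizontal boundary components (to control the number of boundary divisors), runs a further MMP to a Mori fiber space $Z''\to T'_Z$ where the complexity can finally be computed to be $0$, and then transports the toric structure through Proposition~\ref{prop:birational-toric}, Proposition~\ref{prop:quot-toric-by-toric-finite}, and Proposition~\ref{prop:cover-toric-unramified-torus-toric} back to $W$ and then to $X^*$. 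The auxiliary results on degenerate divisors (\S\ref{sec:deg}) are needed to control which divisors the intermediate MMPs contract. Without engaging with the discrepancy between $\rho_A$ and $\rho$, an appeal to "low complexity forces toric" is circular: you are essentially assuming what you need to prove.

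Your handling of the final points (isomorphism over the torus, $A<\mathbb{G}_m^n$) is in spirit correct and matches the paper's Steps~3 and~13, but again the quantitative claim that "a large such $A$ must lie in the torus part" of $\operatorname{Aut}(\mathbb{G}_m^n)\cong\operatorname{GL}_n(\zz)\ltimes\mathbb{G}_m^n$ is nontrivial and requires the explicit trace/eigenvalue argument of Proposition~\ref{prop:making-abelian-group-toric}; you should at least cite or outline that step rather than assert it.
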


The main ingredients of the proof of Theorem~\ref{introthm:main-thm} are: the existence of $G$-equivariant $M$-complements on Fano type varieties~\cites{Bir19,Mor20}, 
the boundedness of Fano varieties with bounded singularities~\cite{Bir21},
some basic tools from the theory of reductive groups~\cites{Bor91}, and
the characterization of projective toric varieties using complexity~\cite{BMSZ18}.
In Theorem~\ref{thm:FT-full-rank}, we give a more general version that deals with pairs.
In Theorem~\ref{thm:FT-full-rank-non-abelian-G}, we give a version that deals with non-abelian groups.
We introduce some notation to state a version of Theorem~\ref{introthm:main-thm} which may be more natural to the reader.

\begin{introdef}\label{introdef}
{\em We say that a log canonical pair $(X,B)$ is {\em crepant equivalent toric} if $(X,B)$ is crepant equivalent  to a projective toric pair.
In particular, a $n$-dimensional  crepant equivalent  log toric pair $(X,B)$ is crepant equivalent  to
$(\pp^n, H_1+ \dots +H_{n+1})$ 
(see Lemma~\ref{lem:toric-bir-pn}).
We say that a projective variety $X$ is {\em log crepant equivalent  toric} if $(X,B)$ is crepant equivalent  toric for some boundary $B$ on $X$.
We say that a projective variety $X$ is {\em log crepant equivalent  toric quotient} if it is the finite quotient of a log crepant equivalent  toric variety. 
A klt singularity is called {\em log crepant equivalent  toric quotient singularity} if it is the cone over
a log crepant equivalent  toric quotient projective variety.
We may write {\em lce-tq} to abbreviate {\em log crepant equivalent toric quotient}.
}
\end{introdef}

\begin{introcor}\label{introcor-easy-version}
A Fano type variety of dimension $n$ with a large
abelian automorphism group of rank $n$ is crepant equivalent  log toric.
\end{introcor}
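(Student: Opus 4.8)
The plan is to deduce this essentially immediately from Theorem~\ref{introthm:main-thm} together with Definition~\ref{introdef}. Let $X$ be a Fano type variety of dimension $n$ equipped with a large abelian automorphism group of rank $n$; choosing $N=N(n)$ as in Theorem~\ref{introthm:main-thm}, the hypothesis ``large'' should be read as providing a subgroup $A\simeq\zz_k^n\leqslant{\rm Aut}(X)$ with $k\geq N$ (or, in the more flexible formulation, large $n$-generation, which by the discussion in subsection~\ref{subsec:k-gen} reduces to this case). First I would invoke Theorem~\ref{introthm:main-thm} to produce a boundary $B$ on $X$ with $(X,B)$ log canonical and $K_X+B\sim 0$, together with an $A$-equivariant birational map $X\dashrightarrow X'$ whose push-forward $(X',B')$ is a log Calabi-Yau toric pair. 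Since $K_X+B\sim 0$ and $(X',B')$ is the push-forward log pair, the birational map $X\dashrightarrow X'$ is automatically crepant for the pairs $(X,B)$ and $(X',B')$: both log canonical divisors are trivial, so they pull back to the same divisor on any common resolution. Hence $(X,B)$ is crepant equivalent to the projective toric pair $(X',B')$, which is precisely the definition of $(X,B)$ being crepant equivalent toric; therefore $X$ is log crepant equivalent toric.

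A small point worth spelling out is the passage from the notion of ``crepant equivalent to a projective toric pair'' in Definition~\ref{introdef} to the normalized form stated there: by Lemma~\ref{lem:toric-bir-pn}, any crepant equivalent log toric pair in dimension $n$ is crepant equivalent to $(\pp^n, H_1+\dots+H_{n+1})$, so one may, if desired, further compose the birational map with a toric crepant birational map $X'\dashrightarrow\pp^n$ to land on the standard model. This is not needed for the statement of the corollary, but it clarifies that the class is nonempty and canonical. No additional geometry beyond Theorem~\ref{introthm:main-thm} is required.

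Since this corollary is a direct unwinding of Theorem~\ref{introthm:main-thm} and Definition~\ref{introdef}, there is no real obstacle; the only thing to be careful about is the bookkeeping that ``crepant equivalent'' for log canonical pairs with numerically (here even linearly) trivial log canonical class is detected on a common resolution, so that the $A$-equivariant birational map furnished by the theorem genuinely witnesses crepant equivalence of $(X,B)$ with the toric pair $(X',B')$ rather than merely a birational identification of the underlying varieties.
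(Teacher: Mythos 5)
Your overall approach --- deduce the corollary directly from Theorem~\ref{introthm:main-thm} (equivalently Theorem~\ref{thm:FT-full-rank} with $\Lambda=\{0\}$) together with the definition of log crepant equivalent toric --- is exactly what the paper does; the paper simply states that the corollary ``follows directly from the above theorem'' without further comment.

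However, the specific justification you give for crepant equivalence is not correct. You claim that ``both log canonical divisors are trivial, so they pull back to the same divisor on any common resolution.'' This does not hold as an abstract implication: if $p,q\colon W\to X,X'$ is a common resolution, then $K_X+B\sim 0$ and $K_{X'}+B'\sim 0$ only give $p^*(K_X+B)\sim 0\sim q^*(K_{X'}+B')$, i.e.\ the two pull-backs are linearly equivalent, not equal as divisors. Crepant equivalence of log pairs is the equality $p^*(K_X+B)=q^*(K_{X'}+B')$, which is a strictly stronger condition, and two log canonical pairs with $K+B\sim 0$ related by a birational map taking the boundary to the boundary need not be crepant to one another. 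Writing $E:=p^*(K_X+B)-q^*(K_{X'}+B')$, one can check that $E$ is supported on the $p$-exceptional and $q$-exceptional loci and that $E\sim 0$, but to force $E=0$ via the negativity lemma one needs additional information --- for instance that the map is a birational contraction, or that all prime divisors extracted by $X\dashrightarrow X'$ are log canonical places of $(X,B)$. That information is produced in the proof of Theorem~\ref{thm:FT-full-rank} (Step~3 verifies that the canonicalization $X'\to X$ only extracts divisors of log discrepancy zero with respect to $(X,B)$, and that the subsequent MMP is $(K+B)$-crepant, so that $K_X+B$ is literally the log pull-back of $K_{X''}+B''$), but it is not a formal consequence of the theorem's statement in the way you assert. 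If you intend to read ``push-forward of $K_X+B$'' in condition~(2) of the theorem as shorthand for the crepant push-forward, then the crepantness is being taken as given by the theorem and there is nothing to argue; but your proposal does not say this, it instead offers a derivation, and the derivation is the flawed step.
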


Here, {\em large} means that $A\simeq \zz^n_k$, where $k$ is larger than a universal constant
which only depends on $n$, as in the statement of Theorem~\ref{introthm:main-thm}.
Some interesting aspect of the main theorem is that it implies the existence of an open set of $X$ isomorphic to an algebraic torus.
Hence, the projective variety $X$ is a possibly non-toric Fano type compactification of the algebraic torus so that the action of a large discrete subgroup of the torus extends to the whole variety.
On this open subset $A$ acts as the multiplication by roots of unity.
Furthermore, there is a $1$-complement of the Fano type variety which is supported on the complement of the algebraic torus.
We make this precise in the next corollary.

\begin{introcor}\label{introcor1}
Let $n$ be a positive integer.
There exists a positive integer $N:=N(n)$, only depending on $n$, satisfying the following.
Let $X$ be a Fano type variety of dimension $n$ and $A\simeq \zz_k^n \leqslant {\rm Aut}(X)$ be a finite subgroup with $k\geq N$.
Then, there exists:
\begin{enumerate}
\item An open subset $U\subset X$ which is $A$-invariant, and
\item a reduced boundary $B\subset X\setminus U$,
\end{enumerate} 
satisfying the following:
\begin{enumerate} 
\item $(X,B)$ is log canonical,
$A$-equivariant, and $K_X+B\sim 0$,
\item $U$ is isomorphic to $\mathbb{G}_m^n$, and
\item $A$ acts on $U$ by multiplication of roots of unity under the
above isomorphism.
\end{enumerate} 
\end{introcor}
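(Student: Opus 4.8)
The plan is to deduce the corollary directly from Theorem~\ref{introthm:main-thm}: all of the substance is contained in that theorem, and what remains is to locate the open torus inside $X$ and to keep track of the $A$-action and of the reducedness of the complement.

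First I would apply Theorem~\ref{introthm:main-thm} to $X$ and $A\simeq\zz_k^n$, with $N=N(n)$ the constant of that theorem. This produces a boundary $B$ on $X$ and an $A$-equivariant birational map $\phi\colon X\dashrightarrow X'$ such that $(X,B)$ is log canonical and $A$-equivariant with $K_X+B\sim0$; the pushforward $(X',B')$ is a log Calabi--Yau toric pair in which $B'$ is the reduced toric boundary; there are monomorphisms $A<\mathbb{G}_m^n\leqslant{\rm Aut}(X',B')$, where $\mathbb{G}_m^n$ denotes the acting torus of $X'$; and $\phi^{-1}\colon X'\dashrightarrow X$ is an isomorphism over the dense torus orbit $T\coloneqq X'\setminus\Supp(B')\simeq\mathbb{G}_m^n$. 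In particular the pair $(X,B)$ is already log canonical and $A$-equivariant with $K_X+B\sim0$, as required in~(1).

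Next I would let $U\subseteq X$ be the open subset over which $\phi$ restricts to an isomorphism onto $T$. By the final assertion of Theorem~\ref{introthm:main-thm} this set is nonempty and $\phi$ induces an isomorphism $U\xrightarrow{\ \sim\ }T\simeq\mathbb{G}_m^n$, giving condition~(2). The torus $\mathbb{G}_m^n\leqslant{\rm Aut}(X')$ acts on $X'$ by translations and hence fixes its dense orbit $T$ as a set; since $A<\mathbb{G}_m^n$ and $\phi$ is $A$-equivariant, it follows that $U=\phi^{-1}(T)$ is $A$-invariant, as required in~(1). Transporting the $A$-action through $\phi|_U$, the group $A$ acts on $\mathbb{G}_m^n$ via its inclusion $A\hookrightarrow\mathbb{G}_m^n$, that is, by translation by torsion points of the torus; in multiplicative coordinates this is precisely multiplication by roots of unity, which is condition~(3).

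It then only remains to check that $B$ is reduced and that $\Supp(B)\subseteq X\setminus U$. Because $K_X$ is an integral Weil divisor and $K_X+B\sim0$, the boundary $B$ has integral coefficients, so $B$ is a reduced divisor. For the support, take a prime component $\Gamma$ of $B$: if $\Gamma$ is not $\phi$-exceptional then $\phi_*\Gamma$ is a component of $B'$, hence lies in $X'\setminus T$, and so $\Gamma\cap U=\varnothing$ since $\phi|_U$ is an isomorphism onto $T$; and if $\Gamma$ is $\phi$-exceptional then it is contained in the locus where $\phi$ fails to be a local isomorphism, which is again disjoint from $U$. Thus $B\subseteq X\setminus U$, and $(X,U,B)$ satisfies (1)--(3). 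With Theorem~\ref{introthm:main-thm} in hand this is essentially bookkeeping and I do not expect a real obstacle; the only points needing care are that the isomorphism locus of $\phi$ can be taken $A$-invariant — a consequence of the $A$-equivariance of $\phi$ together with $A$ lying in the acting torus of $X'$ — and that the complement is genuinely reduced, which is forced by the integrality in $K_X+B\sim0$ rather than needing any extra input.
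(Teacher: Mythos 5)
Your proof is correct and follows the same route as the paper: the paper simply remarks that Corollary~\ref{introcor1} follows directly from Theorem~\ref{thm:FT-full-rank} (applied with $\Lambda=\{0\}$, which recovers Theorem~\ref{introthm:main-thm}), and you have supplied precisely the bookkeeping that this deduction requires. Your reading of condition (4) of Theorem~\ref{introthm:main-thm} as $A<\mathbb{G}_m^n\leqslant {\rm Aut}(X',B')$ (the acting torus of the toric model $X'$, rather than of $X$ itself) is the correct one — it is what the proof of Theorem~\ref{thm:FT-full-rank} actually establishes for $(X^*,B^*)$ — and the remaining points (reducedness of $B$ from $K_X+B\sim 0$ together with log canonicity, $A$-invariance of $U$ from $A$-equivariance of $\phi$ and $A<\mathbb{G}_m^n$, and the disjointness of $\Supp B$ from $U$) are all argued soundly.
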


In general, the boundary $B$ can be strictly supported on the complement of the torus on $X$.
The above results imply that large discrete algebraic dynamics on a Fano type variety become continuous on an affine open subset.

\subsection{Log smooth locus of Fano type varieties}
In this subsection, we state results regarding Fano type variety
with $\pi_1^{\rm alg}(X^{\rm sm})$ being a large group.
Here, $X^{\rm sm}$ is the smooth locus of the variety $X$.
Toric projective varieties can be examples of such Fano type varieties.
The first result, says that such Fano type varieties are log crepant equivalent  toric quotients in the sense of Definition~\ref{introdef}.

\begin{introthm}\label{introthm:log-smooth-locus}
Let $n$ be a positive integer.
There exists a constant $N:=N(n)$, only depending on $n$,
satisfying the following.
Let $X$ be a Fano type variety of dimension $n$
so that $\zz_k^n \leqslant \pi_1^{\rm alg}(X^{\rm sm})$
with $k\geq N$.
Then $X$ is a log crepant equivalent  toric quotient projective variety.
\end{introthm}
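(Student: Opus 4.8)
The plan is to reduce Theorem~\ref{introthm:log-smooth-locus} to Theorem~\ref{introthm:main-thm} by converting a large finite abelian subgroup of $\pi_1^{\rm alg}(X^{\rm sm})$ into a large finite abelian subgroup of the automorphism group of a suitable Fano type variety, and then transporting the resulting \emph{crepant equivalent toric} structure back to $X$.

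First I would unwind the group-theoretic input. Since $\zz_k^n \leqslant \pi_1^{\rm alg}(X^{\rm sm})$, there is a quotient $\pi_1^{\rm alg}(X^{\rm sm}) \twoheadrightarrow \zz_k^n$ which corresponds to a connected finite \'etale cover $Y^{\circ} \to X^{\rm sm}$ with Galois group $G \simeq \zz_k^n$. I would then take $Y$ to be the normalization of $X$ in the function field of $Y^{\circ}$; this is a finite cover $\pi\colon Y \to X$, \'etale over $X^{\rm sm}$, with $G$ acting on $Y$ by deck transformations and $X = Y/G$. The key point is that, because the cover is \'etale over the smooth locus, it is crepant: $K_Y = \pi^* K_X$. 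Since $X$ is of Fano type, choosing an effective boundary $\Gamma$ with $(X,\Gamma)$ klt and $-(K_X+\Gamma)$ big and semiample, the pullback $(Y,\pi^*\Gamma)$ is klt and $-(K_Y+\pi^*\Gamma)$ is big and semiample, so $Y$ is again a Fano type variety of dimension $n$. Now $G\simeq \zz_k^n \leqslant {\rm Aut}(Y)$, and with $k \ge N(n)$ for the constant of Theorem~\ref{introthm:main-thm} (enlarging $N$ if necessary), Theorem~\ref{introthm:main-thm} applies to $Y$: there is a boundary $B_Y$ with $(Y,B_Y)$ log canonical, $G$-equivariant, $K_Y+B_Y\sim 0$, and $(Y,B_Y)$ is crepant equivalent to a projective toric pair.

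Next I would descend the toric structure to $X$. The $G$-equivariance of $(Y,B_Y)$ means $B_Y$ is $G$-invariant, so it descends to a boundary $B_X$ on $X=Y/G$; moreover $(Y,B_Y)$ being crepant equivalent toric is preserved under the log crepant equivalent toric quotient formalism — by Definition~\ref{introdef}, $Y$ being log crepant equivalent toric and $X$ being a finite quotient of $Y$ means precisely that $X$ is log crepant equivalent toric quotient, which is the desired conclusion. Here I would want to verify that the crepant-equivalence birational map from $(Y,B_Y)$ to the toric pair can be taken $G$-equivariant; this is automatic from the construction in Theorem~\ref{introthm:main-thm}, since the monomorphism $G < \mathbb{G}_m^n \leqslant {\rm Aut}(Y,B_Y)$ identifies $G$ with a subgroup of the torus acting on the birational toric model, and the birational map $Y' \dashrightarrow Y$ is an isomorphism over the torus $\mathbb{G}_m^n$ on which $G$ acts by multiplication by roots of unity.

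The main obstacle I anticipate is \textbf{controlling the ramification of the cover $\pi\colon Y \to X$ and ensuring $Y$ is still of Fano type with a subgroup of ${\rm Aut}(Y)$ of the exact form $\zz_k^n$}. A finite \'etale cover of the smooth locus need not extend to a cover étale in codimension one in the pathological cases, and even when it does, one must check that the deck group acts faithfully and that no collapsing of the rank occurs; this is where I expect the technical heart of the argument to lie, likely invoking purity of the branch locus, the fact that klt (hence $X^{\rm sm}$ has complement of codimension $\ge 2$ after a small modification) singularities are analytically $\qq$-factorial in the relevant sense, and a comparison of $\pi_1^{\rm alg}(X^{\rm sm})$ with the \'etale fundamental group of a big open subset. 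A secondary subtlety is matching the universal constant: one must show that the $N(n)$ from Theorem~\ref{introthm:main-thm}, applied to the cover $Y$ which has the same dimension $n$, suffices — but since dimension is preserved and no numerical invariants degrade under the \'etale-in-codimension-one cover, taking $N(n)$ to be the constant of Theorem~\ref{introthm:main-thm} (or its maximum with any constant needed to guarantee faithfulness of the $\zz_k^n$-action) should close the argument.
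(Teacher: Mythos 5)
Your overall strategy is the same as the paper's: pass to a finite cover of $X$ on which $\zz_k^n$ acts by automorphisms, apply Theorem~\ref{introthm:main-thm} to that cover, and descend the ``log crepant equivalent toric'' conclusion to $X$ as a finite quotient. However, there is a genuine error in your very first step. You write that ``since $\zz_k^n \leqslant \pi_1^{\rm alg}(X^{\rm sm})$, there is a quotient $\pi_1^{\rm alg}(X^{\rm sm}) \twoheadrightarrow \zz_k^n$ which corresponds to a connected finite \'etale cover $Y^\circ \to X^{\rm sm}$ with Galois group $\zz_k^n$.'' This implication is false: a subgroup of a finite group need not be a quotient (e.g.\ $\zz_3\leqslant S_3$ but there is no surjection $S_3\twoheadrightarrow\zz_3$). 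Consequently there is no reason for your cover $Y\to X$ to carry a faithful $\zz_k^n$-action of deck transformations, and the hypothesis of Theorem~\ref{introthm:main-thm} is not verified.

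The correct and essentially effortless fix is what the paper does: by Theorem~\ref{thm:Jordan-log-smooth-proj}, the group $G:=\pi_1^{\rm alg}(X^{\rm sm})$ is \emph{finite}, so one takes the full universal cover $Y^{\rm sm}\to X^{\rm sm}$ corresponding to $G$ itself (not to $\zz_k^n$), and lets $Y$ be the normalization of $X$ in the function field of $Y^{\rm sm}$. Then $G$ acts on $Y$ with $X=Y/G$, the morphism $Y\to X$ is unramified in codimension one (so $Y$ is again of Fano type, e.g.\ pulling back an $M$-complement), and the subgroup $\zz_k^n\leqslant G$ acts faithfully on $Y$ because the whole deck group $G$ does. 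Now $\zz_k^n\leqslant {\rm Aut}(Y)$, Theorem~\ref{introthm:main-thm} applies to $Y$, and $X=Y/G$ is a finite quotient of a log crepant equivalent toric variety, hence log crepant equivalent toric quotient by Definition~\ref{introdef}. The rest of your proposal (Fano type preserved under crepant covers, descent to the quotient, constant $N(n)$ inherited from Theorem~\ref{introthm:main-thm}) is fine and matches the paper, but without this correction the argument never gets off the ground.
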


The second result in this direction is that we can find a
boundary $B$ on these varieties so that $(X,B)$ is log canonical,
$K_X+B\sim_\qq 0$, and the dual complex $\mathcal{D}(X,B)$
is PL-homeomorphic to the quotient of a sphere.

\begin{introthm}\label{introthm:dual-complex}
Let $n$ be a positive integer.
There exists a positive integer $N:=N(n)$, only depending on $n$,
satisfying the following.
Let $X$ be a Fano type variety of dimension $n$ so that
$\zz^n_k \leqslant \pi_1^{\rm alg}(X^{\rm sm})$ with $k\geq N$.
Then, there exists a boundary $B$ on $X$ so that
$\mathcal{D}(X,B)\simeq_{\rm PL} S^{n-1}/G$
where $G$ is a finite group
with $|G|\leq N$.
\end{introthm}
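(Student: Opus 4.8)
The plan is to deduce the statement from Theorem~\ref{introthm:log-smooth-locus} by passing to the quasi-\'etale cover attached to $\pi_1^{\rm alg}(X^{\rm sm})$, equipping that cover with a toric log Calabi--Yau structure, and then taking a quotient of dual complexes; the geometric inputs are the invariance of the dual complex under crepant equivalence and the computation $\mathcal D(\pp^n,H_1+\cdots+H_{n+1})=\partial\Delta^n$. Concretely, $X$ is klt (being of Fano type), hence normal and smooth in codimension one, so quasi-\'etale covers of $X$ are controlled by $\pi_1^{\rm alg}(X^{\rm sm})$, which is finite for a klt Fano type variety. For $N$ large, Theorem~\ref{introthm:log-smooth-locus} applies, and its proof --- which itself passes to such a cover in order to invoke Theorem~\ref{introthm:main-thm} --- provides a quasi-\'etale Galois cover $\pi\colon Y\to X$ with finite group $G:=\mathrm{Gal}(Y/X)$ such that $Y$ is of Fano type and a large full-rank abelian subgroup $A\le\mathrm{Aut}(Y)$ is normalized by $G$. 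Enlarging $N$, one has $|G|\le N$: by~\cite{Bir21} the variety $X$ lies in a bounded family of Fano type varieties, in which $\pi_1^{\rm alg}$ of the smooth locus is uniformly bounded (cf.~\cite{BFMS20}), and $G$ is a quotient of $\pi_1^{\rm alg}(X^{\rm sm})$.

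Since $G$ normalizes $A$, the subgroup $\widetilde G:=AG\le\mathrm{Aut}(Y)$ is finite, contains $A$ with index at most $|G|\le N$, and is therefore large $n$-generated in the sense of subsection~\ref{subsec:k-gen}. Applying the non-abelian equivariant form of the main theorem (Theorem~\ref{thm:FT-full-rank-non-abelian-G}) to $Y$ with the group $\widetilde G$ --- this is in any case carried out inside the proof of Theorem~\ref{introthm:log-smooth-locus} --- produces a boundary $B_Y$ on $Y$ with $(Y,B_Y)$ log canonical, $\widetilde G$-invariant, $K_Y+B_Y\sim 0$, and crepant equivalent to a projective toric pair; being a log Calabi--Yau pair, $(Y,B_Y)$ is then crepant equivalent to $(\pp^n,H_1+\cdots+H_{n+1})$ by Definition~\ref{introdef} and Lemma~\ref{lem:toric-bir-pn}. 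In particular $B_Y$ is $G$-invariant.

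Now I would descend and compute. As $\pi$ is quasi-\'etale we have $K_Y=\pi^*K_X$, and the $G$-invariance of $B_Y$ yields a unique boundary $B$ on $X$ with $\pi^*(K_X+B)=K_Y+B_Y$; then $(X,B)$ is log canonical with $K_X+B\sim_\qq 0$, since log canonicity descends along quasi-\'etale covers. By the invariance of the dual complex of a log canonical Calabi--Yau pair under crepant equivalence (de Fernex--Koll\'ar--Xu), $\mathcal D(Y,B_Y)\simeq_{\rm PL}\mathcal D(\pp^n,H_1+\cdots+H_{n+1})$; the nonempty intersections of the $n+1$ coordinate hyperplanes of $\pp^n$ are exactly the $\binom{n+1}{j}$ strata with $1\le j\le n$, so this dual complex is the boundary $\partial\Delta^n$ of the standard $n$-simplex, whence $\mathcal D(Y,B_Y)\simeq_{\rm PL}S^{n-1}$. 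Finally $\pi\colon(Y,B_Y)\to(X,B)$ is a finite crepant Galois morphism whose group $G$ acts on $Y$ freely in codimension one and preserves $B_Y$; hence $G$ acts on $\mathcal D(Y,B_Y)$ and $\mathcal D(X,B)\simeq_{\rm PL}\mathcal D(Y,B_Y)/G$ by the behaviour of dual complexes under quasi-\'etale crepant covers (de Fernex--Koll\'ar--Xu, Koll\'ar--Xu). Thus $\mathcal D(X,B)\simeq_{\rm PL}S^{n-1}/G$ with $|G|\le N$, as desired.

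I expect the main obstacle to be the second step together with the bound on $|G|$: one must produce on $Y$ a single log Calabi--Yau structure that is simultaneously a complement equivariant for the possibly non-abelian, slightly larger finite group $\widetilde G$ and crepant equivalent to a toric pair --- precisely the point where the $G$-equivariant complement results of~\cites{Bir19,Mor20} and the complexity characterization of projective toric varieties~\cite{BMSZ18} enter, through the non-abelian equivariant version of Theorem~\ref{introthm:main-thm} --- and one must bound $|G|$ using boundedness of Fano type varieties~\cite{Bir21}. A secondary point requiring care is that the $G$-action on $Y$ be free in codimension one, which holds because $\pi$ is quasi-\'etale, so that the identification $\mathcal D(X,B)\simeq_{\rm PL}\mathcal D(Y,B_Y)/G$ is legitimate.
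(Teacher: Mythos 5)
Your proposal has the right overall architecture---pass to the quasi-\'etale cover attached to $\pi_1^{\rm alg}(X^{\rm sm})$, produce a crepant equivalent toric structure upstairs via Theorem~\ref{introthm:main-thm}/\ref{thm:FT-full-rank-non-abelian-G}, and descend the dual complex---but it contains a genuine error in the bound on the group.

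You claim that, after enlarging $N$, one has $|\pi_1^{\rm alg}(X^{\rm sm})|\leq N$, citing boundedness of Fano type varieties~\cite{Bir21} and~\cite{BFMS20}. This is false in the setting of the theorem. The hypothesis is precisely that $\zz^n_k\leqslant\pi_1^{\rm alg}(X^{\rm sm})$ with $k\geq N$, so $|\pi_1^{\rm alg}(X^{\rm sm})|\geq k^n$ can be arbitrarily large; there is no uniform bound, and there cannot be one. Boundedness of Fano type varieties requires an $\epsilon$-lc hypothesis that is not available here, and Theorem~\ref{thm:Jordan-log-smooth-proj} of~\cite{BFMS20} only gives the Jordan property---a normal abelian subgroup of bounded index and bounded rank---not finiteness of the groups themselves. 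So you cannot take $G:=\pi_1^{\rm alg}(X^{\rm sm})$ in the conclusion $\mathcal D(X,B)\simeq_{\rm PL}S^{n-1}/G$.

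The paper's proof circumvents exactly this: after producing the cover $Y$ with $(Y,B_Y)$ crepant equivalent toric and an abelian subgroup $A\leqslant\pi_1^{\rm alg}(X^{\rm sm})$ of bounded index acting through the torus, it first forms the intermediate quotient $(Z,B_Z)=(Y,B_Y)/A$. Because $A$ acts through $\mathbb G_m^n$, the quotient is again (crepant equivalent to) a log Calabi--Yau toric pair by Proposition~\ref{prop:quot-toric-by-toric-finite}, so $\mathcal D(Z,B_Z)\simeq_{\rm PL}S^{n-1}$ by~\cite{dFKX17} and Lemma~\ref{lem:toric-bir-pn}. Only then does one quotient by the residual, genuinely bounded group $G/A$, which gives $\mathcal D(X,B)\simeq_{\rm PL}S^{n-1}/(G/A)$ with $|G/A|$ controlled by the Jordan constant. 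So the key idea you are missing is that the abelian (toric) part of the fundamental group must be absorbed before the final quotient, since quotienting by a finite subgroup of the torus does not change the PL-homeomorphism type of the dual complex.
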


Observe that our result coincides with the expectation  in~\cite{KX16}*{Question 4}.
However, we do not claim that this holds for every boundary on $X$ (see, e.g.,~\cite{KX16}*{Example 60}).
On the other hand, in our result, we can also control the order of the group.
As usual, the two above results can be stated for more general groups and boundaries. This is done in Section~\ref{sec:app}.

\subsection{Complements on klt singularities of full rank}
Our last result concerns bounded covers of klt singularities of full rank.
We prove that, given a klt singularity of full rank, 
we can find a finite Galois cover with degree bounded by a constant on the dimension so that such cover admits a $1$-complement.

\begin{introthm}\label{introthm:complements}
Let $n$ and $m$ be positive integers.
There exists a positive integer $N:=N(n,m)$, only depending on $n$ and $m$, satisfying the following.
Let $x\in (X,\Delta)$ be a $n$-dimensional klt singularity
such that $\zz^n_k \leqslant \pi_1^{\rm reg}(X,\Delta;x)$
for some $k\geq N$.
Assume that $m\Delta$ is a Weil divisor.
Then, there exists a finite Galois cover $\pi\colon X'\rightarrow X$
of index at most $N$, 
so that the log pull-back $K_{X'}+\Delta_{X'}=\pi^*(K_X+\Delta)$
admits a $1$-complement.
\end{introthm}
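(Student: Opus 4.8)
The plan is to reduce, via Theorem~\ref{introthm:degeneration}, to the toric model of the singularity --- where the reduced toric boundary provides a $1$-complement --- and then to transport this complement first across a bounded crepant cover, undoing the finite quotient built into Definition~\ref{introdef}, and finally from the central fibre of the degeneration back to $X$ itself. We fix $N=N(n,m)$ to be at least as large as the constant in the general-coefficient version of Theorem~\ref{introthm:degeneration} (that is, Theorem~\ref{thm:regional-general}, which admits more general coefficients, in particular the case $m\Delta$ Weil), and large enough to bound the degrees of the auxiliary covers introduced below.

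First I would apply Theorem~\ref{thm:regional-general}: since $\zz_k^n\leqslant\pi_1^{\rm reg}(X,\Delta;x)$ with $k\geq N$, the germ $x\in(X,\Delta)$ admits a $\mathbb{G}_m$-equivariant degeneration to an lce-tq singularity $x_0\in(X_0,\Delta_0)$, and --- from the construction through the Fano type picture of Theorem~\ref{introthm:main-thm} --- the support of $\Delta_0$ is subordinate to the reduced toric boundary of the toric model, while the finite-quotient presentation of $X_0$ spreads out over the degenerating family. By Definition~\ref{introdef} there is a finite Galois cover $q_0\colon(W_0,\Delta_{W_0})\to(X_0,\Delta_0)$ with $\Delta_{W_0}\geq 0$ and $K_{W_0}+\Delta_{W_0}=q_0^*(K_{X_0}+\Delta_0)$ such that $W_0$ is the affine cone over a crepant equivalent toric projective variety and $\deg q_0$ is bounded in terms of $n$. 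As $q_0$ and $W_0$ spread out over the family, the general fibre of the resulting family of covers is a finite Galois cover $\pi\colon X'\to X$ with $K_{X'}+\Delta_{X'}=\pi^*(K_X+\Delta)$ and $\deg\pi\leq N$.

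Next, $W_0$ is crepant equivalent to an affine toric cone $T_0$, whose reduced toric boundary $\Delta_{T_0}$ satisfies $K_{T_0}+\Delta_{T_0}\sim 0$ --- an \emph{integral} linear equivalence --- with $(T_0,\Delta_{T_0})$ log canonical. Transporting this along the crepant equivalence $W_0\dashrightarrow T_0$ yields a boundary $\Gamma_{W_0}$ on $W_0$ with $(W_0,\Gamma_{W_0})$ log canonical, $K_{W_0}+\Gamma_{W_0}\sim 0$, and with coefficient $1$ along every component of $\Delta_{W_0}$ (because $\supp\Delta_{W_0}$ maps into $\supp\Delta_{T_0}$); hence $\Gamma_{W_0}\geq\lfloor\Delta_{W_0}\rfloor+\lfloor 2\{\Delta_{W_0}\}\rfloor$, so $\Gamma_{W_0}$ is a $1$-complement of $K_{W_0}+\Delta_{W_0}$ near $q_0^{-1}(x_0)$. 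Finally I would lift this $1$-complement from the central fibre to the general fibre: since the degeneration is $\mathbb{G}_m$-equivariant and $K+\Delta$ is $\qq$-trivial over the germ of the base, the $\mathbb{G}_m$-invariant boundary $\Gamma_{W_0}$ lifts to a $\mathbb{G}_m$-invariant boundary $\Gamma_W$ on the total space with $K_W+\Gamma_W\sim 0$ over the germ --- by the lifting-of-complements-in-families mechanism of \cite{HLM20} --- whose restriction to the general fibre is a boundary $\Gamma_{X'}$ with $(X',\Gamma_{X'})$ log canonical, $K_{X'}+\Gamma_{X'}\sim 0$, and coefficient $1$ along $\supp\Delta_{X'}$. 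Thus $\Gamma_{X'}$ is a $1$-complement of $K_{X'}+\Delta_{X'}=\pi^*(K_X+\Delta)$ near $\pi^{-1}(x)$, with $\deg\pi\leq N=N(n,m)$; the hypothesis that $m\Delta$ is Weil enters only through Theorem~\ref{thm:regional-general} and the bound on the auxiliary cover, which is why $N$ must depend on $m$.

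The step I expect to be the main obstacle is this last one: transporting an \emph{integral} complement --- not merely a $\qq$-trivial boundary --- from the special fibre of the degeneration to the general fibre while preserving the log canonical condition and the coefficients along $\supp\Delta$, together with the verification that the finite-quotient presentation of the central fibre globalizes over the degenerating family so as to produce a cover of $X$ itself of \emph{bounded} degree. The reason the toric model is used --- rather than the bounded complements of \cite{Bir19} applied to $x\in(X,\Delta)$ directly --- is that only on the toric model is the natural complement reduced along $\Delta$: a general bounded complement of a klt pair need not have coefficient $1$ along the components of $\Delta$ of coefficient in $[1/2,1)$, which is exactly what a $1$-complement requires.
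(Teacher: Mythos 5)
Your route goes through the degeneration to the central fibre $X_0$ and then attempts to (a) spread out the finite-cover presentation of the lce-tq singularity $X_0$ over the degenerating family to obtain a bounded cover $X'\to X$, and (b) transport the $1$-complement from $X_0$ (resp.\ its cover $W_0$) back to the general fibre $X'$. Neither step is carried out, and both are in the ``wrong'' direction; this is a genuine gap rather than a loose end. Finite covers of a special fibre need not extend over a one-parameter degeneration, and even when they do, the local structure over $x$ need not match up --- nothing in the definition of lce-tq singularity ties the quotient presentation of $X_0$ to the germ $x\in(X,\Delta)$. More seriously, complements behave well under \emph{degeneration}, not under \emph{generization}: a log canonical $1$-complement on the special fibre gives no boundary on the general fibre. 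You flag this as ``the main obstacle,'' but \cite{HLM20} (bounded deformations of $(\epsilon,\delta)$-lc singularities) does not supply a lifting-in-families mechanism of the kind you invoke, and I don't see how to repair the argument in this form.

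The paper sidesteps both issues by reversing the order of operations: cover first, lift second, and degenerate only as an afterthought. Concretely, it reuses the setup from the proof of Theorem~\ref{thm:regional-general}: take the universal cover $Y\to X$ of the regional fundamental group $G$, a $G$-equivariant plt blow-up $Y'\to Y$ with exceptional divisor $E_Y$, the quotient $H=G/C$ of $G$ by the cyclic subgroup fixing $E_Y$, and note $(E_Y,B_{E_Y})$ is crepant equivalent toric with $H$-action. It then finds a normal abelian subgroup $A_H\leqslant H$ of index $O_n(1)$ sitting inside $\mathbb{G}_m^{n-1}$, lets $A\leqslant G$ be its preimage, and sets $Z:=Y/A$, $Z':=Y'/A$, $E_Z:=E_Y/A_H$. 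Then $Z\to X$ is the bounded Galois cover, constructed directly at the level of $X$ --- no spreading-out required. The reduced boundary $B_{E_Z}$ is a $1$-complement of $K_{E_Z}$ because $(E_Z,B_{E_Z})$ is crepant equivalent to a toric quotient pair, and this is lifted \emph{from the plt center $E_Z$ to the ambient $Z'$} (the standard lifting of complements across a plt pair, via Kawamata--Viehweg vanishing), then pushed forward to $Z$. This lift is across codimension one, not across a degeneration, which is why it works. Your instinct that the toric model provides the right reduced boundary is correct, and your observation that Birkar's bounded complements alone cannot produce a $1$-complement of $(X,\Delta)$ directly is exactly the right motivation; but the cover and the lift must be built at the level of $X$ itself, through the plt blow-up, not through the central fibre of the cone degeneration.
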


Note that the above is a classic and important property of quotient singularities.
Let $X=\mathbb{A}_\mathbb{K}^n/G$ be a quotient singularity
where $G<{\rm GL}_n(\mathbb{K})$ is a finite group acting without ramification in codimension one.
Let $x\in X$ be the image of the origin.
By the classic Jordan property~\cite{Jor73}, 
we can find a normal abelian subgroup $A\leqslant G<{\rm GL}_n(\mathbb{K})$ of index $N(n)$ on $G$.
Since $A$ is an abelian group acting on $\mathbb{A}_\mathbb{K}^n$,
we can find a diagonalization of this action.
Hence, the quotient $T=\mathbb{A}^n_\mathbb{K}/A$ is a $\qq$-factorial toric singularity.
The toric singularity $t\in T$ admits a quotient $T\rightarrow X$ whose degree is bounded by $N(n)$, i.e., quotienting by the \textit{non-abelian} part of $G$.
Then, we conclude that $X$ admits a bounded Galois cover
$T$ which admits a $1$-complement; the toric complement.
In general, it is not true that $x\in X$ admits a $1$-complement.
Recall that, we know that $x\in X$ admits a complement which only depends on the dimension (see, e.g.,~\cite{Bir19}).

\subsection*{Acknowledgements} 
The author would like to thanks
Stefano Filipazzi,
J\'anos Koll\'ar, 
Mirko Mauri,
Constantin Shramov, 
and Burt Totaro 
for many useful comments.

\section{Sketch of the Proof}
In this section, we give a brief sketch of the proof of our main projective theorem. The proof aims to find a boundary $B$ on $X$ and
an equivariant birational map 
$X\dashrightarrow X^*$ so that the induced log pair $(X^*,B^*)$ has low complexity. Then, we will use the techniques of subsection~\ref{subsec:complexity} to deduce that $(X^*,B^*)$ is a toric pair.
The fact that $X\dashrightarrow X^*$ is an isomorphism over the torus will be a consequence of the construction of such birational map.
We proceed to sketch how to find such boundary $B$ and such birational map.

We start with a Fano type variety $X$ with a finite group action $A$ which has large $n$-generation.
We assume that $A$ is abelian.
Essentially, $A$ will contain $\zz^n_N$ with $N$ large,
provided that $g_n(A)$ is large enough (see subsection~\ref{subsec:k-gen}).
Throughout the sketch, we may assume $A\simeq \zz^n_N$.
Then, using the theory of $G$-equivariant complements (see subsection~\ref{subsec:g-equiv-comp}),
we may find a boundary $B$ on $X$ which is $A$-invariant.
We can find $B\geq 0$ so
that $(X,B)$ has log canonical singularities and
$M(K_X+B)\sim 0$. Here, $M$ only depends on the dimension of $X$. 
The underlying idea is that if we fix $M(K_X+B)\sim 0$
and let $A\simeq \zz^n_N$ be large enough, then the boundary $B$ must be reduced. So the complexity of $(X,B)$ will be small
if we can control the Picard rank of $X$.
However, at this point, we can't control the Picard rank of $X$. 
To solve this issue,
we must run a minimal model program to solve this issue.

We sight to prove the statement either by reducing to a bounded family or inductively.
We replace $X$ by an $A$-equivariant canonicalization.
This previous step is done using the techniques of the equivariant minimal model program (see subsection~\ref{subsec:g-equiv-mmp}).
We run an $A$-equivariant minimal model program for $K_X$
which we are assuming to have canonical singularities.
If it terminates with an $A$-equivariant Mori fiber space to a point $X'\rightarrow {\rm Spec}(\kk)$,
then we will deduce that such varieties belong to a bounded family. Indeed, we will have that $-K_{X'}$ is ample and $X'$ has canonical singularities. In this case, replacing $A$ with a bounded subgroup (which only depends on this bounded family), we may assume that
$A$ lies in the connected component ${\rm Aut}(X',B')^0$
of ${\rm Aut}(X',B')$. 
Thus, we may use techniques from connected linear algebraic groups to deduce that $A$ must lie in a maximal torus of
${\rm Aut}(X',B')^0$.
Hence, such torus must have the same dimension as $X'$.
We conclude that
$X'$ has a torus action and the induced boundary $B'$ is toric (see $\S$~\ref{sec:bounded}).
Then, using the techniques developed in section~\ref{subsec:toric-geometry}, we will prove that
indeed $A<\mathbb{G}_m^n\leqslant {\rm Aut}(X',B')$.

Then, we reduce to the case in which have an $A$-equivariant Mori fiber space $X'\rightarrow C$.
We will have an exact sequence $1\rightarrow A_F\rightarrow A\rightarrow A_C\rightarrow 1$ where $A_F$ acts fiber-wise and $A_C$ acts on the base.
We let $f$ be the dimension of $F$ and $c$ be the dimension of $C$.
Then, we prove that $A_F$ (resp. $A_C$) contains $\zz_N^f$ (resp. $\zz_N^c)$, for some $N$ large enough.
Hence, we can apply the inductive hypothesis to a general fiber and the base.
By the inductive hypothesis, we may find a $A_C$-equivariant birational map
$\pp^c\dashrightarrow C$.
By considering an $A$-equivariant log resolution of $(X',B')$ and running a suitable $A$-equivariant minimal model program over $\pp^c$, we will end up with $A$-equivariant fibration $X^*\rightarrow \pp^c$.
In this part of the proof, we will need to control the divisors contracted by the minimal model program to argue that $X^*$ is still of Fano type.
Hence, we will use the theory of degenerate divisors (see $\S$~\ref{sec:deg}).
We may re-compactify the complement of the pre-image of $\mathbb{G}_m^c$ on $X^*$ in such a way that $X^*$ has $A$-invariant Picard rank two.
Moreover, we prove that the morphism $X^*\rightarrow \pp^c$ is an $A$-equivariant Mori fiber space.
We aim to prove that $X^*$ is indeed a toric variety
and the induced boundary $B^*$ is log toric.

We would like to control the Picard rank of $X^*$.
However, we can only control its $A$-invariant Picard rank.
So the next step will be quotienting by $A$ to obtain a
Mori fiber space $Y\rightarrow T_Y$,
where $T_Y$ is a toric quotient of $\pp^c$.
In particular, $T_Y$ has Picard rank one and $Y$ has Picard rank two.
The problem with the model $Y$ 
is that even if we control the general log fiber, which is log toric, we don't control the number of horizontal components of the induced boundary $B_Y$.
So, the next step is to take a sequence of Galois covers terminating on a contraction $Z\rightarrow T_Z$ 
so that the horizontal divisors of $B_Z$ are unramified over the base.
We control the degree of the cover $Z\rightarrow Y$ by controlling the Picard rank of the general fiber of $Y\rightarrow T_Y$.
We will construct $Z$ in such a way that $T_Z$ is toric of Picard rank one.
However, since we took a Galois cover, we may lose control of the Picard rank of $Z$.
Hence, we will run a further minimal model program from a log resolution of $Z$ over $T_Z$ which terminates in a Mori fiber space $Z'\rightarrow T_Z$.
In particular, $Z'$ will have both; Picard rank two and enough components on the induced boundary $\lfloor B_{Z'}\rfloor$. Then, we can use the complexity to deduce that
$(Z',B_{Z'})$ is toric.
Then, we argue that $Z\rightarrow Z'$ only extract log canonical places of $(Z',B_{Z'})$ so $(Z,B_Z)$ is toric as well.

Then, we define $W$ to be the normalization of a connected component of $Z\times_Y X^*$ and prove that is a toric cover of $Z$.
Thus, we have finite Galois morphism $W\rightarrow X$ from a projective toric variety $W$.
Furthermore, we control the degree of $W\rightarrow X$ by controlling the Picard rank of the general fiber of $X^*\rightarrow \pp^c$.
Finally, we will argue that $X^*$ is the quotient of $W$ by a finite group $H$, with order $O_n(1)$,
which is normalized by a large discrete subgroup of the torus of $W$.
If such a discrete group is large enough, then $H$ must be a subgroup of the torus as well (see subsection~\ref{subsec:toric-geometry}).
We conclude that $X^*$ is the quotient of a projective toric variety by a finite subgroup of the torus, hence $X^*$ is itself toric. 

\section{Preliminaries}

In this section, we recall the preliminaries that will be used in this article:
the singularities of the minimal model program,
Fano type varieties, $G$-invariant complements,
and toric geometry. 
Throughout this article, we work over an algebraically closed field $\mathbb{K}$ of characteristic zero.
We denote by $\mathbb{G}_m^k$ the $k$-dimensional $\mathbb{K}$-torus.
Galois morphisms are required to be finite and surjective.
However, we do not assume that Galois morphisms are \'etale.
The rank of a finite group refers to the minimal number of generators.
We say that a quantity is $O_n(1)$ if it is bounded by $C_n$, 
where $C_n$ is a quantity depending only on $n$.
We will use some standard results on the minimal model program~\cites{KM98,BCHM10} and toric geometry~\cites{Ful93,CLS11,Cox95}.

\subsection{Kawamata log terminal singularities}\label{subsec:klt}
In this subsection, we recall definitions regarding the singularities of the minimal model program.

\begin{definition}
{\em
A {\em contraction} is a morphism $f\colon Y\rightarrow X$
so that $f_*\mathcal{O}_Y=\mathcal{O}_X$.
A {\em fibration} is a contraction with positive dimensional general fiber.
}
\end{definition}

\begin{definition}
{\em A {\em log pair} or a {\em pair} is a couple
$(X,\Delta)$ where $X$ is a normal algebraic variety and $\Delta$ is an effective $\qq$-divisor on $X$ so that $K_X+\Delta$ is a $\qq$-Cartier $\qq$-divisor.
}
\end{definition} 

\begin{definition}{\em 
Let $X$ be a projective variety.
A {\em prime divisor over $X$} is a prime divisor which is contained in a variety which admits a projective birational morphism to $X$.
This means that there is a projective birational morphism
$\pi\colon Y\rightarrow X$ so that $E\subset Y$
is a prime divisor.

Let $(X,\Delta)$ be a log pair and $E$ be a prime divisor over $X$.
We define the {\em log discrepancy}
of $(X,\Delta)$ at $E$ to be
\[
a_E(X,\Delta):=
{\rm coeff}_E(K_Y-\pi^*(K_X+\Delta))
\]
where as usual we pick $K_Y$ so that $\pi_*K_Y=K_X$.
A {\em log resolution} of a log pair $(X,\Delta)$ is a projective birational morphism $\pi\colon Y\rightarrow X$,
with purely divisorial exceptional locus,
so that
$Y$ is a smooth variety and $\pi^{-1}_*\Delta+{\rm Ex}(\pi)_{\rm red}$ is a divisor with simple normal crossing.
By Hironaka's resolution of singularities, we know that any
log pair admits a log resolution.
}
\end{definition}

The log discrepancies of a log pair allow us to measure the singularities of the pair.

\begin{definition}{\em 
A pair $(X,\Delta)$ is said to be {\em Kawamata log terminal} (or {\em klt} for short) if all its log discrepancies are positive, 
i.e., $a_E(X,\Delta)>0$ for every prime divisor $E$ over $X$. It is known that a pair $(X,\Delta)$ is klt if and only if all the log discrepancies corresponding to prime divisors on a log resolution of $(X,\Delta)$ are positive.

A pair $(X,\Delta)$ is said to be {\em log canonical} (or {\em lc} for short) if all its log discrepancies are non-negative, 
i.e., $a_E(X,\Delta)\geq 0$ for every prime divisor $E$ over $X$.
As in the case of klt singularities, a pair $(X,\Delta)$ is log canonical if and only if all the log discrepancies corresponding to prime divisors on a log resolution of $(X,\Delta)$ are non-negative (see, e.g.,~\cite{KM98}).

A pair $(X,\Delta)$ is said to be {\em $\epsilon$-log canonical}
(resp. {\em $\epsilon$-klt}) if $a_E(X,\Delta)\geq \epsilon$
(resp. $a_E(X,\Delta)>\epsilon$) for every prime divisor $E$ over $X$.
}
\end{definition}

\begin{definition}{\em 
Let $(X,\Delta)$ be a log pair.
A {\em log canonical place} (resp. {\em non-klt place}) of $(X,\Delta)$ is a prime divisor $E$ over $X$ so that $a_E(X,\Delta)=0$ (resp. $a_E(X,\Delta)\leq 0$).
A {\em log canonical center} of $(X,\Delta)$ (resp.{\em non-klt center}) is the image on $X$ of a log canonical place of $(X,\Delta)$
(resp. of a non-klt place of $(X,\Delta)$).
}
\end{definition}

\begin{definition}{\em 
A pair $(X,\Delta)$ is said to be {\em divisorially log terminal},
or {\em dlt} for short, if there exists an open set $U\subset X$ 
satisfying the following:
\begin{enumerate}
\item $U$ is smooth and $\Delta|_U$ has simple normal crossing, and
\item the coefficients of $\Delta$ are less or equal than one,
\item all the non-klt centers of $(X,\Delta)$ intersect $U$ and
are given by strata of $\lfloor \Delta\rfloor$.
\end{enumerate}
A pair $(X,\Delta)$ is said to be {\em purely log terminal},
or {\em plt} for short, if it is dlt and it has at most one log canonical center.
In particular, plt singularities are dlt, 
and klt singularities are both plt and dlt
}
\end{definition}

\begin{definition}{\em 
Let $(X,\Delta)$ be a klt singularity and $x\in X$ be a closed point.
A {\em purely log terminal blow-up} of $(X,\Delta)$ at $x$
(or {\em plt blow-up} for short)
is a projective birational morphism $\pi\colon Y\rightarrow X$ satisfying the following:
\begin{enumerate}
\item $\pi$ is an isomorphism over $X\setminus \{x\}$ and the pre-image of $x\in X$ is a prime divisor $E\subset Y$, 
\item the pair $(Y,\Delta_Y+E)$ is plt, where $\Delta_Y$ is the strict transform of $\Delta$, and
\item $-E$ is ample over $X$.
\end{enumerate}
The existence of plt blow-ups for klt singularities is known (see, e.g.,~\cites{Pro00,Xu14}).
}
\end{definition}

\begin{definition} 
{\em
Let $(X,\Delta)$ be a log pair.
We define ${\rm Aut}(X,\Delta)$ to be the subgroup 
of ${\rm Aut}(X)$ given by automorphisms $g\in {\rm Aut}(X)$ 
so that $g^* \Delta=\Delta$.
In particular, every such $g$ must map prime divisors of
$\Delta$ to prime divisors of $\Delta$ with the same coefficients.
If $(X,\Delta)$ is a klt (resp. lc) pair 
and $G\leqslant {\rm Aut}(X,\Delta)$, then the quotient $(Y,\Delta_Y)$
of $(X,\Delta)$ by $G$ is again klt (resp. lc). 
See, for instance~\cite{Sho93}*{\S 2}.
In the above case, we say that $(X,\Delta)$ is a {\em $G$-invariant klt singularity}.
}
\end{definition}

\begin{definition}
Let $(X,\Delta)$ be a $G$-invariant klt pair and $x\in X$ be a closed point which is fixed with respect to the $G$-action.
A {\em $G$-invariant plt blow-up} of $(X,\Delta)$ at $x$ 
is a $G$-invariant projective birational morphism $\pi\colon Y\rightarrow X$ satisfying the following:
\begin{enumerate}
\item $\pi$ is an isomorphism over $X\setminus \{x\}$ and the pre-image of $x\in X$ is a prime divisor $E\subset Y$, 
\item the pair $(Y,\Delta_Y+E)$ is $G$-plt, where $\Delta_Y$ is the strict transform of $\Delta$, and
\item $-E$ is ample over $X$.
\end{enumerate}
\end{definition}

To conclude this subsection, we prove the existence
of $G$-invariant plt blow-ups for $G$-invariant klt singularities.

\begin{lemma}\label{lem:existence-g-inv-plt}
Let $(X,\Delta)$ be a $G$-invariant klt pair 
and $x\in X$ be a closed $G$-fixed point.
Then, there exists a $G$-invariant plt blow-up for $(X,\Delta)$ at $x$.
\end{lemma}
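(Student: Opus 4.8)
The strategy is to construct a plt blow-up in the usual way and then average it over the group $G$. First I would take any plt blow-up $\pi_0\colon Y_0\rightarrow X$ of $(X,\Delta)$ at $x$, whose existence is guaranteed by the references cited above (see, e.g.,~\cite{Pro00,Xu14}). The key observation is that the construction of a plt blow-up can be made canonical relative to the datum $(X,\Delta,x)$: one may produce it as a step of a minimal model program on a fixed log resolution, or as the ample model of $K_{Z}+\Delta_Z+(1-\epsilon)F$ for a suitable prime divisor $F$ over $x$ with $0<\epsilon\ll 1$. Since $G$ fixes $x$ and preserves $\Delta$, the whole construction is $G$-equivariant provided the initial resolution is chosen $G$-equivariantly (which is possible by $G$-equivariant resolution of singularities, $\operatorname{char}\mathbb{K}=0$) and the minimal model program is run $G$-equivariantly (see subsection~\ref{subsec:g-equiv-mmp}). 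Therefore $\pi\colon Y\rightarrow X$ is $G$-invariant, is an isomorphism over $X\setminus\{x\}$ with exceptional prime divisor $E$ over $x$, the pair $(Y,\Delta_Y+E)$ is plt, and $-E$ is ample over $X$. It remains only to check that $(Y,\Delta_Y+E)$ is in fact $G$-plt; but a $G$-invariant plt pair is automatically $G$-plt in the sense used here, because plt-ness is a property of the pair and the $G$-action permutes the divisors over $X$ preserving log discrepancies, so every $G$-orbit of divisors consists of lc or non-klt places simultaneously. Hence no extra argument is needed for this point.

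An alternative, perhaps cleaner, route avoids $G$-equivariant MMP altogether: start from a non-equivariant plt blow-up $\pi_0\colon Y_0\rightarrow X$ extracting a divisor $E_0$ over $x$, and let $a:=a_{E_0}(X,\Delta)\in(0,1]$ be its log discrepancy. For each $g\in G$ let $E_0^g$ be the translate of $E_0$, which again has log discrepancy $a$ and maps to $x$. Take a $G$-equivariant log resolution $Z\rightarrow X$ on which all the $E_0^g$, $g\in G$, appear as prime divisors, together with $\Delta_Z$ and the reduced exceptional locus in simple normal crossing position. Then run a $K_Z+\Delta_Z+(1-\epsilon)\sum_{g\in G}a\,E_0^g$–MMP over $X$ for $0<\epsilon\ll 1$ chosen so that this boundary is still a boundary and the pair is klt away from $\bigcup E_0^g$; the output $Y\rightarrow X$ contracts everything except (the strict transforms of) the $E_0^g$. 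One then argues, exactly as in the classical case (see~\cite{Pro00}), that in fact all the $E_0^g$ are contracted to a single $G$-invariant exceptional prime divisor $E$, using that they all lie over the single point $x$ and that $-E$ must be relatively ample; one may need to first take the $G$-invariant part, or to note that $\pi_1^{\mathrm{reg}}$-type considerations force the exceptional fibre over $x$ to be irreducible after the MMP. The construction is $G$-equivariant by functoriality, so $Y\rightarrow X$ is a $G$-invariant plt blow-up.

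\textbf{Main obstacle.} The delicate point in either approach is ensuring that the exceptional locus over $x$ is a \emph{single} prime divisor $E$ (condition (1)), rather than a disjoint union of the translates $E_0^g$, while keeping the pair plt and $-E$ relatively ample. In the non-equivariant setting this is handled by the MMP automatically producing an irreducible exceptional fibre over a point for a plt blow-up; equivariantly one must check that the MMP does not stop with several disjoint exceptional divisors over $x$. This can be arranged by choosing the coefficients in the boundary uniformly over the $G$-orbit (so that the MMP treats all $E_0^g$ symmetrically) and invoking that the relative Picard number over $x$ of a plt output, with $-E$ ample over $X$, forces irreducibility of the fibre over $x$; alternatively one passes to the $G$-invariant minimal model program directly, where equivariance of the resolution and of each step of the program (subsection~\ref{subsec:g-equiv-mmp}) yields a $G$-equivariant $\pi\colon Y\rightarrow X$ with irreducible exceptional divisor $E=\operatorname{Ex}(\pi)$, and $G$-plt-ness of $(Y,\Delta_Y+E)$ is then immediate since $G$ permutes the divisors over $Y$ preserving log discrepancies and the underlying pair is plt.
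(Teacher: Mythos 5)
Your proposal does not match the paper's argument, and the route you take has a genuine unresolved gap, which you yourself flag. You propose to run a $G$-equivariant MMP (starting from a $G$-equivariant log resolution extracting the whole $G$-orbit $\{E_0^g\}_{g\in G}$) and hope that the output extracts a single prime divisor over $x$. But a $G$-equivariant MMP only controls the relative $G$-invariant Picard rank, not the actual Picard rank, so the exceptional locus over $x$ can perfectly well end up being the disjoint-or-intersecting union of several prime divisors forming a $G$-orbit. Neither ``relative Picard number of a plt output forces irreducibility'' nor the vague ``$\pi_1^{\rm reg}$-type considerations'' closes this: irreducibility of the exceptional divisor is exactly what must be proved, and the symmetry of the boundary coefficients over the $G$-orbit does not imply it. Moreover, plt-ness of the output pair is not automatic from this MMP either, since several intersecting components of $\lfloor E\rfloor$ would immediately violate plt-ness — so you cannot first assume plt-ness and then use it to deduce irreducibility.

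The paper avoids this entirely by passing to the quotient: it forms $(X',\Delta')=(X,\Delta)/G$ (still klt), takes an ordinary plt blow-up $Y'\to X'$ at the image point $x'$, and defines $Y$ as the normalization of the main component of $Y'\times_{X'}X$. Then $-E_Y$ is ample over $X$ because it is pulled back from the ample $-E_{Y'}$, $Y\to X$ is an isomorphism away from $x$ by construction, and the crucial point — that $E_Y$ is a \emph{single} prime divisor and $(Y,\Delta_Y+E_Y)$ is plt — is deduced from the connectedness principle: $-(K_Y+\Delta_Y+E_Y)$ is ample over the base so the non-klt locus is connected over $X$, every log canonical center upstairs is divisorial because it is so downstairs, hence if $E_Y$ had more than one component some two components would intersect and produce a codimension-$\geq 2$ lcc, a contradiction. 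That connectedness argument is the key missing ingredient in your write-up; without it, neither of your two suggested routes is complete.
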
 

\begin{proof}
The quotient $(X',\Delta')$ of $(X,\Delta)$ with respect to $G$ is again a klt pair. Let $x'\in X'$ be the image of $x$ on $X'$.
Let $\pi\colon Y'\rightarrow X'$ be a plt blow-up of $(X',\Delta')$ at $x'$.
Let $Y$ be the normalization of the main component of
$Y'\times_{X'} X$.
We have a projective birational morphism 
$Y\rightarrow X$.
We denote by $K_Y+\Delta_Y+E_Y$ the log pull-back
of $K_{Y'}+\Delta_{Y'}+E_{Y'}$.
Note that $K_Y+\Delta_Y+E_Y$ is anti-ample over the base.
Hence, we conclude that $-E_Y$ is ample over $X$.
The morphism $Y\rightarrow X$ is an isomorphism over
$X\setminus \{x\}$ by construction.
We claim that $(Y,\Delta_Y+E_Y)$ is plt.
The pair $(Y,\Delta_Y+E_Y)$ is $G$-invariant log canonical by construction, where $\Delta_Y$ is the strict transform
of $\Delta$ on $Y$.
Since $-(K_Y+\Delta_Y+E_Y)$ is ample over the base,
the connectedness principle says that the non-klt locus
of such pair is connected over the base.
On the other hand, all log canonical centers of $(Y,\Delta_Y+E_Y)$ are divisorial,
since all log canonical centers of $(Y',\Delta_{Y'}+E_{Y'})$
are divisorial.
We conclude that all the log canonical centers
of $(Y,\Delta_Y+E_Y)$ must be contained in
$\lfloor E_Y\rfloor$
and this set is connected over the base.
Thus, if $E_Y$ has more than one component, then
$(Y,\Delta_Y+E_Y)$ has at least one lcc of codimension at least two. Leading to a contradiction.
We conclude that $E_Y=\lfloor E_Y\rfloor$ is prime
and $G$-invariant.
Hence, $(Y,\Delta_Y+E_Y)$ is a $G$-equivaraint plt pair.
Thus, $\pi\colon Y\rightarrow X$ is a $G$-equivariant plt blow-up. This concludes the proof.
\end{proof}

\subsection{Fano type varieties}\label{subsec:ft-var}
In this subsection, we recall the definition of Fano type varieties and the boundedness results about these varieties.

\begin{definition}{\em 
Let $X$ be a projective variety.
We say that $X$ is a {\em Fano type variety},
if there exists a boundary $\Delta$ on $X$ 
so that $(X,\Delta)$ has klt singularities
and $-(K_X+\Delta)$ is a big and nef divisor.
We say that $X$ is a {\em Fano variety}
if $X$ has klt singularities
and $-K_X$ is an ample $\qq$-Cartier divisor.
}
\end{definition}

It is well-known that Fano type varieties
are Mori dream spaces~\cite{BCHM10}*{Corollary 1.3.2}.
In particular, for any $\qq$-divisor $D$ on $X$
we may run a minimal model program 
which either terminates with a Mori fiber space
or a good minimal model~\cite{HK00}*{Proposition 1.11}.

The following proposition is well-known, we will use it often in this article.

\begin{proposition}
A projective variety $X$ is of Fano type if and only if 
there exists a boundary $\Delta$ on $X$ so that $K_X+\Delta\sim_\qq 0$,
$(X,\Delta)$ has klt singularities, and
$\Delta$ is a big divisor on $X$.
\end{proposition}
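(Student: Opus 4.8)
The plan is to prove both implications by perturbing the given boundary, relying throughout on the fact recalled above that klt can be tested on a single log resolution, so that only finitely many valuations enter the argument; the perturbation will change each of these finitely many log discrepancies by a quantity that is $\qq$-affine-linear in the perturbation parameter, hence small for the parameter small.

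For the forward implication, suppose $X$ is of Fano type, with a boundary $\Delta_0$ such that $(X,\Delta_0)$ is klt and $-(K_X+\Delta_0)$ is big and nef. First I would apply Kodaira's lemma to write $-(K_X+\Delta_0)\sim_\qq A+E$ with $A$ an ample $\qq$-divisor and $E\geq 0$; note that $E$ is automatically $\qq$-Cartier, being $\qq$-linearly equivalent to the $\qq$-Cartier divisor $-(K_X+\Delta_0)-A$. For rational $0<t<1$ the divisor $P_t:=(1-t)(-(K_X+\Delta_0))+tA$ is ample and $-(K_X+\Delta_0)\sim_\qq P_t+tE$. Fixing a log resolution $\pi\colon Y\to X$ of $(X,\Delta_0+E)$, the identity $a_F(X,\Delta_0+tE)=a_F(X,\Delta_0)-t\,\ord_F(\pi^*E)$ over the finitely many relevant prime divisors $F$ on $Y$ shows that $(X,\Delta_0+tE)$ stays klt for $t\ll 1$; adding $P_t':=\tfrac1m G$ for a general $G\in|mP_t|$ with $mP_t$ very ample and $m\gg 0$, which has tiny coefficient and can be absorbed into the log resolution, keeps the pair klt. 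I would then set $\Delta:=\Delta_0+tE+P_t'$. This is effective; it satisfies $\Delta\sim_\qq\Delta_0+P_t+tE\sim_\qq -K_X$, so $K_X+\Delta\sim_\qq 0$ (and $K_X+\Delta=(K_X+\Delta_0)+tE+P_t'$ is $\qq$-Cartier); it dominates the big divisor $P_t'$, hence is big; and since $(X,\Delta)$ is klt all its coefficients are $<1$, so $\Delta$ is a boundary.

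For the converse, suppose $\Delta$ is a boundary with $(X,\Delta)$ klt, $K_X+\Delta\sim_\qq 0$, and $\Delta$ big. Again by Kodaira's lemma write $\Delta\sim_\qq A+E$ with $A$ ample and $E\geq 0$, and set $\Delta':=(1-\epsilon)\Delta+\epsilon E\geq 0$ for rational $0<\epsilon<1$. Then
\[
-(K_X+\Delta')=-(K_X+\Delta)+\epsilon(\Delta-E)\sim_\qq\epsilon(\Delta-E)\sim_\qq\epsilon A ,
\]
which is ample, hence big and nef; moreover $\epsilon(\Delta-E)$ is $\qq$-Cartier because it is $\qq$-linearly equivalent to $\epsilon A$, so $(X,\Delta')$ is a genuine log pair. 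On a fixed log resolution $\pi\colon Y\to X$ of $(X,\Delta+E)$ one has $a_F(X,\Delta')=a_F(X,\Delta)+\epsilon\,\ord_F(\pi^*(\Delta-E))$ for each of the finitely many relevant $F$, and since $a_F(X,\Delta)>0$ and $\ord_F(\pi^*(\Delta-E))$ is bounded over this finite set, $(X,\Delta')$ is klt for $\epsilon\ll 1$; being klt, it is a boundary. Hence $X$ is of Fano type.

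The main obstacle is exactly the preservation of klt under the perturbation, and this is why the whole argument is organized around a single log resolution: there only finitely many log discrepancies are constraining, and each varies $\qq$-affine-linearly — in particular continuously — in the perturbation parameter, so positivity persists once the parameter is small. The only other point requiring attention is that $X$ need not be $\qq$-factorial, so one must verify that every divisor to which $\pi^*$ is applied is $\qq$-Cartier; this always holds here because the relevant differences are $\qq$-linearly equivalent to ample $\qq$-Cartier divisors.
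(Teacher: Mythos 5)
Your argument is correct, and it is the standard two-sided perturbation argument. Note that the paper records this proposition without proof, explicitly calling it well-known, so there is no in-paper proof to compare against; your write-up supplies exactly the usual justification.

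A couple of minor points, neither of which is a gap. In the converse direction you invoke Kodaira's lemma for the big divisor $\Delta$, which need not be $\qq$-Cartier (only $K_X+\Delta$ is); the lemma does hold for big Weil $\qq$-divisors on a normal projective variety, but it is worth being explicit that this is the version being used. You handle the resulting $\qq$-Cartier bookkeeping correctly by observing that $\Delta-E\sim_\qq A$, and likewise $E$ in the forward direction is $\qq$-Cartier because $E\sim_\qq -(K_X+\Delta_0)-A$; both observations are needed and you state them. Your use of a single log resolution to reduce the klt check to finitely many affine-linear inequalities in the perturbation parameter, and the Bertini step to absorb the general member $\tfrac{1}{m}G$ of the very ample system with small coefficient, are both routine and stated accurately.
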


The following theorem, due to Birkar, is known as the boundedness of Fano type varieties.

\begin{theorem}[cf.~\cite{Bir21}*{Theorem 1.1}]
Let $n$ be a positive integer and let $\epsilon$ be a positive real number.
Then, the projective varieties $X$ such that
\begin{enumerate}
\item $(X,\Delta)$ is $\epsilon$-lc and $n$-dimensional for some boundary $\Delta$, and  
\item  $-(K_X+\Delta)$ is big and nef,
\end{enumerate}
form a bounded family.
\end{theorem}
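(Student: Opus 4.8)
The plan is to reproduce the strategy of Birkar's proof of the Borisov--Alexeev--Borisov conjecture, whose two pillars are a bound on anti-canonical volumes and a bound on the singularities of anti-canonical linear systems. Write $L:=-(K_X+\Delta)$; after standard reductions one may assume $\Delta$ is a $\qq$-boundary, so $L$ is a nef and big $\qq$-Cartier $\qq$-divisor and $(X,\Delta)$ is $\tfrac{\epsilon}{2}$-lc. \emph{Step 1 (effective birationality and volume bound).} Following \cite{Bir19}, I would first produce constants $m=m(n,\epsilon)$ and $v=v(n,\epsilon)$, depending only on $n$ and $\epsilon$, such that $|mL|$ defines a birational map and $\vol(L)=\vol(-(K_X+\Delta))\le v$. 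Both are proved by induction on dimension: through chosen general points of $X$ one creates non-klt centres of $(X,\Delta+D)$ for suitable $D\sim_\qq tL$ with $t$ controlled, and then cuts these centres down to points using adjunction, the ACC for log-canonical thresholds, and the boundedness of complements in lower dimension; birationality and the volume bound follow once $m$ (respectively $v$) dominates the resulting constants.

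\emph{Step 2 (singularities of $|{-}(K_X+\Delta)|$).} The decisive input, which is the core of \cite{Bir21}, is the existence of $t=t(n,\epsilon)>0$ such that for every such $X$ and every effective $\qq$-divisor $M\sim_\qq L$ the pair $(X,\Delta+tM)$ is klt, in fact $\tfrac{\epsilon}{4}$-lc. I would prove this by induction on $n$: if it failed there would be a non-klt centre $Z$ of $(X,\Delta+tM)$, and adjunction to $Z$ would equip $Z$ with a \emph{generalized polarized pair} structure of dimension $<n$ whose boundary part has coefficients in a fixed DCC set, whose singularities are controlled by $\epsilon$, and which is generalized Fano in a suitable sense. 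Feeding this into the inductive hypothesis --- which must be run in the category of generalized pairs, simultaneously with the analogues of Step 1 and with boundedness of complements --- and using boundedness of complements again, one bounds the relevant invariants on $Z$ and contradicts the largeness of the multiplicity forced along $Z$.

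\emph{Step 3 (boundedness).} Fix a large divisible $m$ and a general effective $\qq$-divisor $M\sim_\qq mL$, and set $\Theta:=\Delta+\tfrac{t}{m}M$. By Step 2 the pair $(X,\Theta)$ is $\tfrac{\epsilon}{4}$-lc, $\Theta$ is big because it contains $\tfrac{t}{m}$ times an ample divisor, $-(K_X+\Theta)\sim_\qq (1-t)L$ is nef and big, and $\vol(-(K_X+\Theta))\le (1-t)^n v$ by Step 1. After one further step --- replacing $\Theta$ by $\Theta$ plus a suitably scaled general effective $\qq$-divisor $N\sim_\qq -(K_X+\Theta)$ --- one reduces to a $\tfrac{\epsilon}{4}$-lc pair $(X,\Theta')$ with $\Theta'$ big of bounded volume and $K_X+\Theta'\sim_\qq 0$. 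I would then invoke the boundedness of such pairs \cite{Bir21}, which rests on the Hacon--McKernan--Xu results on the ACC for log-canonical thresholds and the boundedness of anti-canonical volumes: once the volume and the singularities are simultaneously bounded, some bounded multiple of a fixed ample divisor on $X$ is very ample, embedding all such $X$ into a fixed projective space with bounded degree, hence into a bounded family.

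The genuine obstacle is Step 2, and more broadly the structural feature that effective birationality, boundedness of complements, the bound on log-canonical thresholds of anti-canonical systems, and the final boundedness statement are not proved in isolation: they form a single induction on dimension, carried out in the category of generalized polarized pairs, in which each statement in dimension $n$ relies on all of them in dimensions $<n$.
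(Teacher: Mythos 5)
The paper does not prove this statement; it records it verbatim as a citation of Birkar's BAB theorem (\cite{Bir21}, Theorem~1.1) and uses it as a black box, so there is no in-paper proof to compare against. Your sketch is a fair high-level reconstruction of Birkar's actual strategy: Step~1 (effective birationality of $|-mL|$ and the anticanonical volume bound, from \cite{Bir19}), Step~2 (the lower bound on $\lct$ of anticanonical systems, the core new input of \cite{Bir21}), and Step~3 (combining these with the Hacon--McKernan--Xu boundedness machinery for $\epsilon$-lc log Calabi--Yau pairs with big boundary) are indeed the three pillars, and you correctly flag that these are not independent lemmas but a single induction run in the category of generalized polarized pairs. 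Two small cautions: the reductions in Step~1 do not cost you half the $\epsilon$ (one typically works directly with the given $\epsilon$-lc pair, or perturbs $\Delta$ without degrading the lc threshold by a fixed factor), and in Step~3 the passage from a bounded $\epsilon'$-lc pair $(X,\Theta')$ with $K_X+\Theta'\sim_\qq 0$, $\Theta'$ big, and bounded volume to boundedness of $X$ is itself a nontrivial theorem (it is where ACC for lc thresholds and the HMX effective basepoint-freeness/boundedness results enter, together with the deformation-invariance of plurigenera arguments), not a routine embedding estimate. As a sketch of why the theorem is true and what it relies on, your proposal is sound; as a proof it defers essentially all of the work to \cite{Bir19} and \cite{Bir21}, exactly as the paper itself does.
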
 

The above theorem is often written in the following equivalent form.

\begin{theorem} 
Let $n$ be a positive integer and let $\epsilon$ be a positive real number.
The set of $n$-dimensional projective varieties $X$ so that there exists a boundary $\Delta$ on $X$ satisfying:
\begin{enumerate}
\item $K_X+\Delta\sim_\qq 0$, 
\item $(X,\Delta)$ has $\epsilon$-log canonical singularities, and
\item $\Delta$ is a big divisor on $X$,
\end{enumerate}
form a bounded family.
\end{theorem}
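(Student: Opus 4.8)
The plan is to derive this reformulation directly from the previous theorem, by trading the bigness of the boundary for a big‑and‑nef anti‑log‑canonical class at the cost of only halving the singularity bound. So I would start with an $n$‑dimensional projective variety $X$ carrying a boundary $\Delta$ with $K_X+\Delta\sim_\qq 0$, with $(X,\Delta)$ being $\epsilon$-lc, and with $\Delta$ big. Since $\epsilon>0$, the pair $(X,\Delta)$ is klt, so by the proposition above $X$ is of Fano type; I would then invoke the definition of Fano type to fix a second boundary $\Delta_0$ on $X$ with $(X,\Delta_0)$ klt and $-(K_X+\Delta_0)$ big and nef.

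The key step is to pass to the average $B:=\tfrac12\Delta+\tfrac12\Delta_0$. Then $B$ is again a boundary, and since $K_X+\Delta\sim_\qq 0$ one has $-(K_X+B)\sim_\qq \tfrac12\bigl(-(K_X+\Delta_0)\bigr)$, which is big and nef. Moreover, because log discrepancies depend affinely on the boundary, and $a_F(X,\Delta)\geq\epsilon$ while $a_F(X,\Delta_0)\geq 0$ (the latter as $(X,\Delta_0)$ is klt, hence lc) for every prime divisor $F$ over $X$, we get $a_F(X,B)\geq \tfrac{\epsilon}{2}$; thus $(X,B)$ is $\tfrac{\epsilon}{2}$-lc. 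Applying the previous theorem with $\tfrac{\epsilon}{2}$ in place of $\epsilon$ shows that the varieties $X$ obtained in this way form a bounded family, and since $\tfrac{\epsilon}{2}$ depends only on $\epsilon$ this is precisely the family in the statement.

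I do not expect a genuine obstacle in this (the needed) direction; the only thing to get right is which auxiliary pair to average with. Averaging $\Delta$ instead with an arbitrary effective $E$ from a decomposition $\Delta\sim_\qq A+E$ with $A$ ample would make $-(K_X+B)$ ample, but would give no uniform lower bound on $a_F(X,B)$ since $E$ is uncontrolled; averaging with the \emph{klt} Fano type witness $\Delta_0$ is exactly what makes the discrepancy bound survive with an explicit constant. For completeness, to justify calling the two statements equivalent I would also run the argument in reverse: for $(X,\Delta)$ as in the previous theorem, the base‑point‑free theorem makes $-(K_X+\Delta)$ semiample, a general $0\leq G\sim_\qq -(K_X+\Delta)$ yields $B:=\Delta+G$ with $K_X+B\sim_\qq 0$ and $B$ big, and a Bertini argument keeps $(X,B)$ klt; there the one delicate point is a \emph{uniform} singularity bound for $(X,B)$, which I would extract either from effective base‑point‑freeness or, more robustly, by choosing $G$ inside a bounded complement of $(X,\Delta)$. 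This last point is the only place I would expect to have to work, and it is not needed for the bounded‑family assertion itself.
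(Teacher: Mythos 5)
Your averaging argument is correct and is the standard way to derive this reformulation; the paper itself gives no proof and simply declares the two statements equivalent, so your derivation fills in exactly what is left implicit. The one computation that matters — that $a_F(X,\tfrac12\Delta+\tfrac12\Delta_0)=\tfrac12 a_F(X,\Delta)+\tfrac12 a_F(X,\Delta_0)\geq\tfrac{\epsilon}{2}$ and that $-(K_X+\tfrac12\Delta+\tfrac12\Delta_0)\sim_\qq \tfrac12\bigl(-(K_X+\Delta_0)\bigr)$ is big and nef — is carried out correctly, and you rightly note that the reverse implication (and the uniform $\epsilon'$ there) is not needed for the boundedness assertion itself.
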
 

The following is a log version of the above theorem. For a proof see~\cite{FM20}*{Theorem 2.9}.

\begin{theorem}\label{thm:log-ft-bdness}
Let $n$ be a positive integer,
$\epsilon$ be a positive real number,
and $\mathcal{R}$ be a finite set of rational numbers.
The set set of $n$-dimensional projective pairs $(X,\Delta)$ so that
\begin{enumerate}
    \item $(X,\Delta)$ is $\epsilon$-log canonical,
    \item $K_X+\Delta\sim_\qq 0$, 
    \item $\Delta$ is a big divisor on $X$, and
    \item the coefficients of $\Delta$ are in $\mathcal{R}$,
\end{enumerate}
form a log bounded family.
\end{theorem}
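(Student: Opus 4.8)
The plan is to deduce this log statement from the non-log boundedness theorem recorded just above, by first bounding the underlying varieties $X$ and then bounding the divisors $\Delta$ carried on them.

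Since $\epsilon>0$, an $\epsilon$-log canonical pair is klt; in particular $(X,\Delta)$, and hence $X=(X,0)$, is klt. Combined with $K_X+\Delta\sim_\qq 0$ and the bigness of $\Delta$, the Proposition above shows that $X$ is of Fano type. The first step is then to invoke the non-log boundedness theorem in its second form (the one requiring $\Delta$ big), which gives that the varieties $X$ occurring in the statement form a bounded family: there is a projective morphism $\mathcal{X}\to S$ with $S$ of finite type over $\kk$ such that every such $X$ is a closed fibre of $\mathcal{X}\to S$. Note that the bigness hypothesis $(3)$ is consumed entirely in this step and plays no further role.

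The second step is to upgrade this to a family carrying a uniform polarization. After stratifying $S$ and applying Noetherian induction together with generic flatness, one may assume that $\mathcal{X}\to S$ carries a relatively very ample divisor $\mathcal{H}$ for which the fibrewise intersection numbers $H^n$ and $(-K_X)\cdot H^{n-1}$ are bounded independently of $X$; the second of these is the standard but slightly delicate point, and follows from the constructibility in flat families of Euler characteristics of the (reflexive) relative dualizing sheaf twisted by powers of $\mathcal{H}$ on the normal klt fibres. Since $K_X+\Delta\sim_\qq 0$ forces $\Delta\equiv -K_X$, it follows that $H^{n-1}\cdot\Delta=(-K_X)\cdot H^{n-1}$ is bounded; here $H$ is Cartier, so this intersection number is well defined even though $\Delta$ (and possibly $K_X$) need not be $\qq$-Cartier.

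Finally, write $\Delta=\sum_i a_i D_i$ with $a_i\in\mathcal{R}$ and the $D_i$ distinct prime divisors. As $H$ is very ample, $H^{n-1}\cdot D_i\geq 1$, so the number of components is at most $(H^{n-1}\cdot\Delta)/\min(\mathcal{R}\cap\qq_{>0})$ and each $D_i$ has $H$-degree at most $H^{n-1}\cdot\Delta$, with all these bounds uniform. Hence each $D_i$ lies in one of finitely many bounded families of subvarieties of the fibres of $\mathcal{X}/S$ (parametrized inside suitable relative Hilbert or Chow schemes), and since the coefficients come from the fixed finite set $\mathcal{R}$, a further finite-type base change yields a single family $(\mathcal{X}',\mathcal{D}')\to S'$ whose fibres include every pair $(X,\Delta)$ of the theorem; this is precisely the assertion of log boundedness. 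The main obstacle throughout is the middle step: producing a polarization that is uniformly bounded across the whole family and for which the anticanonical degree is bounded, which forces the passage from ``bounded family'' to ``bounded family equipped with a bounded very ample polarization'' and requires controlling $(-K_X)\cdot H^{n-1}$ via constructibility on the klt, hence normal, fibres. Once that is in hand, the counting of components of $\Delta$ and the assembly of the log family are routine.
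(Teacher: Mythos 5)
The paper does not prove this statement in the text; it simply cites \cite{FM20}*{Theorem~2.9}, so there is no argument in the paper itself to compare against. Your proof is correct and follows the standard route for promoting boundedness of the underlying varieties to log boundedness: use the characterization proposition to see that $X$ is of Fano type, invoke Birkar's boundedness (in the second form) for the underlying varieties, and then bound the boundary by degree counting. The delicate middle step you flag---bounding $(-K_X)\cdot H^{n-1}$ uniformly after fixing a relatively very ample $H$---is handled correctly; a concrete alternative to the constructibility-of-Euler-characteristics argument is to cut by $n-1$ general members of $|H|$, note that the resulting curve $C$ lies in the smooth locus of $X$ since the singular locus has codimension at least two, and compute $K_X\cdot H^{n-1}=2g(C)-2-(n-1)H^n$ by adjunction, with $g(C)$ determined by the Hilbert polynomial and hence bounded on the family. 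One small inaccuracy: you assert in passing that $(X,0)$ is klt, but being of Fano type does not by itself force $K_X$ to be $\qq$-Cartier, so this need not be a well-formed pair; fortunately you do not use it, as the Fano type proposition you invoke only requires $(X,\Delta)$ to be klt with $K_X+\Delta\sim_\qq 0$ and $\Delta$ big.
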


The following proposition is well known, it allows us to extract non-canonical centers from a Fano type variety and stays in the same class of varieties.

\begin{proposition}\label{prop:FT-extraction-non-canonical}
Let $X$ be a Fano type variety.
Let $B$ be an effective divisor on $X$ so that
$(X,B)$ has log canonical singularities and
$K_X+B\sim_\qq 0$.
Let $Y\dashrightarrow X$ be a birational map which only extract prime divisors with log discrepancy in the interval $[0,1)$ with respect to $(X,B)$.
Then, the variety $Y$ is Fano type.
\end{proposition}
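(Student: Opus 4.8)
\emph{Proof proposal.} The plan is to transport a Fano type structure from $X$ to $Y$ by first replacing the log canonical pair $(X,B)$ with a nearby klt pair whose boundary is adapted to the extracted divisors. Since $X$ is of Fano type, I would fix a boundary $\Delta$ with $(X,\Delta)$ klt and $-(K_X+\Delta)$ big and nef, and for $t\in(0,1]$ set $D_t:=(1-t)B+t\Delta$. This is again a boundary, and because the log discrepancy is affine in the boundary, $a_E(X,D_t)=(1-t)\,a_E(X,B)+t\,a_E(X,\Delta)$ for every prime divisor $E$ over $X$; hence $(X,D_t)$ is klt for all such $t$. Using $K_X+B\sim_\qq 0$ one gets $-(K_X+D_t)\sim_\qq-t(K_X+\Delta)$, which is big and nef. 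Let $E_1,\dots,E_r$ be the finitely many prime divisors extracted by $Y\dashrightarrow X$. By hypothesis $a_{E_i}(X,B)\in[0,1)$ while $a_{E_i}(X,\Delta)>0$, so $a_{E_i}(X,D_t)$ stays positive and tends to $a_{E_i}(X,B)<1$ as $t\to 0^+$. I would therefore fix $0<t\ll 1$, small relative to the finitely many $E_i$, so that $a_{E_i}(X,D_t)\in(0,1)$ for every $i$, and put $D:=D_t$.

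Next I would reduce to the case in which $Y\to X$ is a morphism. As $(X,D)$ is klt, $X$ is of Fano type (hence a Mori dream space, so the relative minimal model program over $X$ is available), and $a_{E_i}(X,D)<1$ for all $i$, there is a $\qq$-factorial projective birational morphism $\phi\colon\bar Y\to X$ whose exceptional locus is exactly $E_1\cup\dots\cup E_r$ (\cite{BCHM10}). It then suffices to show $\bar Y$ is of Fano type, since being of Fano type is invariant under the small birational modification $Y\dashrightarrow\bar Y$; so I may assume $Y\to X$ is the morphism $\phi$.

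Now set $D_Y:=\phi^{-1}_*D+\sum_{i=1}^{r}\bigl(1-a_{E_i}(X,D)\bigr)E_i$. By the definition of log discrepancy, $\phi^{*}(K_X+D)=K_Y+D_Y$. Since $(X,D)$ is klt its coefficients lie in $[0,1)$, and $1-a_{E_i}(X,D)\in(0,1)$ by the choice of $t$, so $D_Y$ is an effective boundary. Moreover $a_E(Y,D_Y)=a_E(X,D)>0$ for every prime divisor $E$ over $Y$, hence $(Y,D_Y)$ is klt. Finally $-(K_Y+D_Y)=\phi^{*}\bigl(-(K_X+D)\bigr)$ is nef, as the pull-back of a nef divisor, and big, as the pull-back of a big divisor under a birational morphism. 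Thus $(Y,D_Y)$ is klt with $-(K_Y+D_Y)$ big and nef, i.e.\ $Y$ is of Fano type.

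The heart of the argument is the interpolation $D=(1-t)B+t\Delta$, which turns the log canonical pair whose extracted places have log discrepancy $<1$ into a klt pair whose induced boundary on $Y$ has every coefficient in $[0,1)$, so that the big and nef anti-log-canonical divisor pulls back without creating coefficients $\ge 1$ or non-effective contributions. The only delicate point is the reduction of the second paragraph: one needs the existence of $\qq$-factorial divisorial extractions for klt pairs over a Fano type base, together with the stability of the Fano type condition under small $\qq$-factorial modifications. When $Y\to X$ is already a morphism, which is the case in the applications, this step is unnecessary.
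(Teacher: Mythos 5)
Your proof is correct and follows essentially the same strategy as the paper's: interpolate to a klt boundary $D_t=(1-t)B+t\Delta$ so that $-(K_X+D_t)$ is big and nef and the extracted divisors have log discrepancy in $(0,1)$, realize them by a $\qq$-factorial birational extraction over $X$, pull back the klt structure, and transfer the Fano type property across the resulting small birational map. The only cosmetic differences are that the paper cites \cite{Mor19} rather than \cite{BCHM10} for the extraction and spells out the small-modification invariance of Fano type by pushing forward a big $\qq$-trivial klt boundary, while you invoke it as a known fact.
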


\begin{proof}
Let $\Delta$ be a boundary on $X$ so that
$(X,\Delta)$ is klt and $-(K_X+\Delta)$ is big and nef.
Define $B_\epsilon:= (1-\epsilon)B+\epsilon\Delta$.
Observe that for $\epsilon \in (0,1)$ we have that
$B_\epsilon$ is big, 
$(X,B_\epsilon)$ is klt,
and $-(K_X+B_{\epsilon})$ is big and nef.
Furthermore, for $\epsilon>0$ small enough,
all the prime divisors extracted on $Y$
have log discrepancy in the interval $(0,1)$ with respect to $(X,B)$.
By~\cite{Mor19}*{Theorem 1}, we can find a birational projective morphism
$Z\rightarrow X$ which extract exactly those divisorial valuations.
For now on, we fix such $\epsilon$ small enough.
Let $(Z,B_{Z,\epsilon})$ be the log pull-back 
of $(Y,B_{Y,\epsilon})$ to $Z$.
Note that $(Z,B_{Z,\epsilon})$ is a klt pair
with $-(K_Z+B_{Z,\epsilon})$ big and nef.
Hence, $Z$ is a Fano type variety.
Furthermore, we have a small morphism $Z\dashrightarrow Y$.
We can pick $\Gamma_Z$ a big boundary on $Z$ so that
$(Z,\Gamma_Z)$ is klt and $K_Z+\Gamma_Z\sim_\qq0$.
Let $\Gamma_Y$ be the push-forward of $\Gamma_Z$ to $Y$.
We have that $\Gamma_Y$ is big on $Y$ so that
$(Y,\Gamma_Y)$ is klt and $K_Y+\Gamma_Y\sim_\qq 0$.
We conclude that $Y$ is of Fano type.
\end{proof}

We conclude this subsection by proving two lemmas regarding quotients and covers of Fano type varieties.
We prove that the quotient of a Fano type
variety is again of Fano type.
On the other hand, the cover of a Fano type variety 
is of Fano type provided that the branching locus of the morphism lies in the set of log canonical centers of certain $\qq$-complement.

\begin{lemma}\label{lem:quot-FT-is-FT}
Let $X$ be a Fano type variety and $X\rightarrow Y$ be a Galois cover.
Then $Y$ is a Fano type variety.
\end{lemma}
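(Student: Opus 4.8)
The plan is to deduce the statement from the analogous fact about pairs, namely that if $(X,\Delta)$ is klt with $-(K_X+\Delta)$ big and nef and $G\leqslant \operatorname{Aut}(X,\Delta)$ is finite, then the quotient $(Y,\Delta_Y)$ of $(X,\Delta)$ by $G$ is again klt with $-(K_Y+\Delta_Y)$ big and nef. The subtlety is that an arbitrary Fano type boundary $\Delta$ witnessing that $X$ is of Fano type need not be $G$-invariant, so the quotient construction does not directly descend it. The first step, therefore, is to average: starting from any boundary $\Delta$ with $(X,\Delta)$ klt and $-(K_X+\Delta)$ big and nef, replace $\Delta$ by the $G$-invariant boundary $\Delta^G := \tfrac{1}{|G|}\sum_{g\in G} g^*\Delta$. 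Since klt-ness is preserved under taking convex combinations of boundaries (the log discrepancy is concave in the boundary) and since each $g^*\Delta$ also has $-(K_X+g^*\Delta)$ big and nef (pullback along an automorphism preserves bigness and nefness, and $g^*K_X = K_X$), the averaged pair $(X,\Delta^G)$ is still klt with $-(K_X+\Delta^G)$ big and nef, and now $g^*\Delta^G=\Delta^G$ for all $g$, i.e. $G\leqslant \operatorname{Aut}(X,\Delta^G)$.

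Next I would pass to the quotient. Write $q\colon X\to Y$ for the Galois cover (here $Y=X/G$). By the cited fact from~\cite{Sho93}*{\S 2} on quotients of klt pairs, there is a boundary $\Delta_Y$ on $Y$, defined by the Riemann–Hurwitz formula $K_X+\Delta^G = q^*(K_Y+\Delta_Y)$, such that $(Y,\Delta_Y)$ is klt. It remains to check that $-(K_Y+\Delta_Y)$ is big and nef. Nefness descends along finite surjective morphisms: if $q^*(-(K_Y+\Delta_Y)) = -(K_X+\Delta^G)$ is nef, then $-(K_Y+\Delta_Y)$ is nef, since a curve $C\subset Y$ can be replaced by (a component of) $q^{-1}(C)$ and intersection numbers scale by the degree of the induced map. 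Bigness likewise descends: $q^*$ multiplies volumes by $\deg q = |G|$, so $\operatorname{vol}(-(K_Y+\Delta_Y)) = \tfrac{1}{|G|}\operatorname{vol}(-(K_X+\Delta^G)) > 0$. Hence $-(K_Y+\Delta_Y)$ is big and nef, which by definition exhibits $Y$ as a Fano type variety.

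There is one technical point to address along the way: the quotient $Y$ is a normal variety (quotients of normal varieties by finite groups are normal), and $K_Y+\Delta_Y$ is $\qq$-Cartier because $q$ is finite and $q^*(K_Y+\Delta_Y)$ is $\qq$-Cartier — one can check $\qq$-Cartierness of a Weil divisor class downstairs from $\qq$-Cartierness of its pullback upstairs using that $q$ is finite surjective and that the local class group maps are compatible, or alternatively invoke that $(Y,\Delta_Y)$ being klt already forces $K_Y+\Delta_Y$ to be $\qq$-Cartier by definition of a log pair. I would phrase the argument so that this is subsumed in the invocation of~\cite{Sho93}.

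The main obstacle, and the only place that requires care, is the averaging step and confirming that \emph{bigness and nefness are preserved both under the averaging and under descent to the quotient}; everything else is a direct citation. In particular one should make sure the averaged divisor $\Delta^G$ is still a genuine boundary (effective, coefficients $\le 1$ — effectivity is clear, and the coefficient bound holds because $g^*\Delta$ has the same coefficients as $\Delta$ permuted among prime divisors, so the average of coefficients at any prime divisor stays in $[0,1]$) and that $-(K_X+\Delta^G)$ remains big and nef, which follows since bigness is an open condition stable under convex combination with big divisors and $-(K_X+g^*\Delta)$ is big and nef for each $g$. With these checks in place the proof is short.
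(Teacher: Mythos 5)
Your proof is correct and uses the same central idea as the paper's: average the Fano type boundary over $G$ to make it $G$-invariant, then descend to the quotient $(Y,\Delta_Y)$ and verify that $Y$ is of Fano type. Where the two differ is only in the verification step. You work directly with the definition that $-(K_X+\Delta)$ is big and nef and observe that both properties pass to the quotient along the finite surjective morphism $q$ — nefness via a curve-lifting/degree argument, bigness via $\vol(q^*D)=(\deg q)\vol(D)$. The paper instead uses the equivalent characterization (its Proposition following the definition of Fano type) with a big boundary $\Delta$ and $K_X+\Delta\sim_\qq 0$, so after averaging and descending it is left to show that $\Delta_Y\sim_\qq -K_Y$ is big; this it does by the Riemann--Hurwitz inequality $\pi^*(-K_Y)=-K_X+E$ with $E\geq 0$, a $G$-invariant decomposition $\pi^*(-K_Y)\sim_\qq A+F$ (ample plus effective), and a push-forward. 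Your bigness check via volumes is more economical than the paper's push-forward argument, but neither route contains a real gap; they are two renditions of the fact that bigness is preserved under finite surjective pullback in either direction. The one sentence in your proposal that should be stated more carefully if written out is the $\qq$-Cartier issue for $K_Y+\Delta_Y$, but you already flag this and it is indeed subsumed in the cited quotient result from Shokurov.
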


\begin{proof}
Assume that $\pi\colon X\rightarrow Y$ is the quotient by the finite group $G$ acting on $X$.
Let $\Delta$ be a big boundary on $X$ so that
$(X,\Delta)$ is klt and $K_X+\Delta\sim_\qq 0$.
Then, the big boundary $\Delta^G=\sum_{g\in G}g^*\Delta/|G|$
satisfies that $K_X+\Delta^G \sim_\qq 0$
and $(X,\Delta^G)$ is klt.
Furthermore, there exists a boundary $\Delta_Y$ on $Y$ so that
$K_X+\Delta^G=\pi^*(K_Y+\Delta_Y)$.
Thus, $(Y,\Delta_Y)$ is a klt pair.
By Riemann-Hurwitz formula we have that
$\pi^*(-K_Y)=-K_X+E$,
where $E$ is an effective divisor.
Hence, $\pi^*(-K_Y)$ is a big divisor.
Thus, we can write $\pi^*(-K_Y)\sim_\qq A+F$,
where $A$ is a $G$-invariant ample divisor
and $F$ is a $G$-invariant effective divisor.
Henceforth, the push-forward of $\pi^*(-K_Y)$
with respect to $\pi$ is big as well.
Thus, we have that $-K_Y$ is big, so $\Delta_Y$ is big.
We conclude that $Y$ is a Fano type variety.
\end{proof}

\begin{lemma}\label{lem:cover-FT-is-FT}
Let $Y$ be a Fano type variety and $X\rightarrow Y$ be a Galois cover. 
Assume that there exists a boundary $B_Y$ on $Y$ so that
$K_Y+B_Y\sim_\qq 0$ and $(Y,B_Y)$ is log canonical.
Assume that the ramification divisors
of $X\rightarrow Y$ are contained in the log canonical
centers of $(Y,B_Y)$.
Then, the variety $X$ is of Fano type.
\end{lemma}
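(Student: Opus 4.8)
The plan is to pull back a suitable $\qq$-complement of $Y$ to $X$ and check that the log pull-back remains klt, then deduce bigness of the anti-log-canonical divisor on $X$ by a Riemann--Hurwitz argument as in Lemma~\ref{lem:quot-FT-is-FT}. First, since $Y$ is Fano type we may choose a big boundary $\Delta_Y$ on $Y$ with $(Y,\Delta_Y)$ klt and $K_Y+\Delta_Y\sim_\qq 0$. Now interpolate: for $\epsilon\in(0,1)$ set $B_{Y,\epsilon}:=(1-\epsilon)B_Y+\epsilon\Delta_Y$. Then $B_{Y,\epsilon}$ is big (it dominates $\epsilon\Delta_Y$), the pair $(Y,B_{Y,\epsilon})$ is klt (a convex combination of a log canonical and a klt boundary, with the klt one appearing with positive coefficient), and $K_Y+B_{Y,\epsilon}\sim_\qq 0$. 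Moreover for $\epsilon$ small every log canonical center of $(Y,B_Y)$ has log discrepancy $<1$ but $>0$ with respect to $(Y,B_{Y,\epsilon})$; in particular the ramification divisors of $\pi\colon X\to Y$, being contained in the lc centers of $(Y,B_Y)$, have positive log discrepancy with respect to $(Y,B_{Y,\epsilon})$.

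Next, write $\pi\colon X\to Y$ for the Galois cover and let $K_X+B_{X,\epsilon}=\pi^*(K_Y+B_{Y,\epsilon})$ be the log pull-back, so $K_X+B_{X,\epsilon}\sim_\qq 0$. The key point is that $B_{X,\epsilon}$ is effective and $(X,B_{X,\epsilon})$ is klt. Effectivity and the klt property follow from the log discrepancy computation under finite covers: for a prime divisor $F$ on $X$ lying over a prime divisor $F'$ on $Y$ with ramification index $r$, one has $a_F(X,B_{X,\epsilon}) = r\cdot a_{F'}(Y,B_{Y,\epsilon})$, and by the previous paragraph every divisor appearing with nonzero ramification in $\pi$ has positive log discrepancy with respect to $(Y,B_{Y,\epsilon})$; an analogous computation (or the standard fact that finite covers preserve klt, cf.\ the Riemann--Hurwitz formula for log pairs) handles divisors over $X$. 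Hence $(X,B_{X,\epsilon})$ is klt with $K_X+B_{X,\epsilon}\sim_\qq 0$, so to conclude that $X$ is Fano type it suffices to show $B_{X,\epsilon}$ is big for some such $\epsilon$. For this, note $\pi^*(-K_Y)=-K_X+E$ where $E\geq 0$ by Riemann--Hurwitz, so $\pi^* B_{Y,\epsilon}\sim_\qq \pi^*(-K_Y)=-K_X+E$, hence $-K_X\sim_\qq \pi^* B_{Y,\epsilon}-E$. Since $B_{Y,\epsilon}$ is big, write $B_{Y,\epsilon}\sim_\qq A'+F'$ with $A'$ ample and $F'\geq 0$; then $\pi^*B_{Y,\epsilon}\sim_\qq \pi^*A'+\pi^*F'$ with $\pi^*A'$ ample (pullback of ample under a finite morphism) and $\pi^*F'\geq 0$, so $\pi^*B_{Y,\epsilon}$ is big, and therefore $B_{X,\epsilon}\sim_\qq -K_X\sim_\qq \pi^*B_{Y,\epsilon}$ is big. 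Thus $X$ is a Fano type variety.

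The main technical obstacle is the log discrepancy bookkeeping under the finite cover: one must verify that $(X,B_{X,\epsilon})$ is genuinely klt, i.e.\ that no divisor over $X$ acquires log discrepancy $\leq 0$, and in particular that $B_{X,\epsilon}$ stays effective. The hypothesis that the ramification divisors lie in the lc centers of $(Y,B_Y)$ is exactly what prevents the ``$-1$'' coefficients that a naive pullback of a boundary with coefficient-$1$ components would produce: ramification along a component of coefficient $1$ in $B_Y$ would push the log pull-back coefficient above $1$, but with the interpolated boundary $B_{Y,\epsilon}$ that component has coefficient $<1$, and the multiplication by the ramification index keeps the resulting coefficient in $[0,1)$. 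This is the heart of the argument; the bigness step is then a routine Riemann--Hurwitz computation entirely parallel to the proof of Lemma~\ref{lem:quot-FT-is-FT}.
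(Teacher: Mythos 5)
Your interpolation and the klt/effectivity bookkeeping (taking $\epsilon<1/r$ so that ramified components, which sit in $\lfloor B_Y\rfloor$ by hypothesis, produce pull-back coefficients in $[0,1)$) match the paper exactly, and that part is correct. The gap is in the bigness step. You correctly derive $-K_X\sim_\qq \pi^*B_{Y,\epsilon}-E$ and that $\pi^*B_{Y,\epsilon}$ is big, but then assert $B_{X,\epsilon}\sim_\qq -K_X\sim_\qq \pi^*B_{Y,\epsilon}$; the second equivalence is false, as $E=R$ is the ramification divisor and is not $\qq$-linearly trivial. In fact $B_{X,\epsilon}=\pi^*B_{Y,\epsilon}-R$, and ``big minus effective'' is not big in general, so your argument does not establish bigness of $B_{X,\epsilon}$. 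Note the asymmetry with Lemma~\ref{lem:quot-FT-is-FT}: there one shows $\pi^*(-K_Y)=-K_X+R$ is big as $-K_X$ plus effective, and then pushes forward; going the other direction one cannot simply subtract $R$.

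The paper sidesteps this by using the other characterization of Fano type: it picks $\Delta_Y$ with $(Y,\Delta_Y)$ klt and $-(K_Y+\Delta_Y)$ big and nef (rather than $K_Y+\Delta_Y\sim_\qq 0$ with $\Delta_Y$ big). After interpolating, $-(K_Y+\Delta_{Y,\epsilon})\sim_\qq -\epsilon(K_Y+\Delta_Y)$ is again big and nef, and $-(K_X+\Delta_{X,\epsilon})=\pi^*\bigl(-(K_Y+\Delta_{Y,\epsilon})\bigr)$ is big and nef because big and nef is preserved under finite surjective pull-back. No bigness of the boundary is needed. Your argument could be repaired by observing that $R$ is supported on $\lfloor B_X\rfloor$, hence on $\supp B_{X,\epsilon}$ for small $\epsilon$, so that $B_{X,\epsilon}\geq t\,\pi^*B_{Y,\epsilon}$ for some $t>0$ and bigness descends; but the big-and-nef route is cleaner and is what the paper does.
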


\begin{proof}
Let $\Delta_Y$ be a boundary on $Y$ so that
$(Y,\Delta_Y)$ has klt singularities and $-(K_Y+\Delta_Y)$ is big and nef.
Define $\Delta'_{Y,\epsilon}:=(1-\epsilon)B_Y+\epsilon \Delta_Y$.
Note that for every $\epsilon \in (0,1)$,
we have that $K_Y+\Delta_{Y,\epsilon}$ is a klt pair
so that $-K_Y+\Delta_{Y,\epsilon}$ is big and nef.
Furthermore, 
if we pick $\epsilon>0$ small enough,
the pull-back of $K_Y+\Delta_{Y,\epsilon}$
is a log pair
$K_X+\Delta_{X,\epsilon}$.
Indeed, it suffices to take $\epsilon < 1/r$,
where $r$ is the largest ramification index among 
codimension one points of $Y$,
which by assumption are log canonical centers of $(X,B_X)$, so they appear in $\lfloor B_X\rfloor$.
We conclude that, for $\epsilon$ small enough,
the log pair $(X,\Delta_{X,\epsilon})$ has klt singularities
and $-K_X+\Delta_{X,\epsilon}$ is big and nef.
Hence, $X$ is of Fano type.
\end{proof}

\subsection{Toric Geometry}\label{subsec:toric-geometry}
In this subsection, we recall some basics of toric geometry and the characterization of toric varieties using complexity.

\begin{definition}{\em 
A {\em toric variety} is a normal variety $X$
of dimension $n$
containing an algebraic torus $\mathbb{G}_m^n$ as an open subset, 
so that the action of $\mathbb{G}_m^n$ on itself
by multiplication extends to the whole variety.
A divisor $B$ on $X$ is said to be {\em torus invariant} or
{\em toric divisor} if it is invariant with respect
to the action of $\mathbb{G}_m^n$.
A pair $(X,B)$ is said to be a {\em toric pair}
if $X$ is a toric variety and $B$ is a torus invariant divisor.
A morphism $X\rightarrow Y$ between toric varieties is said to be {\em a toric morphism}
if it is equivariant with respect to the corresponding torus actions.
}
\end{definition}

\begin{definition}
{\em 
We say that a point $x\in X$ on an algebraic variety is {\em toroidal} if the completion of its local ring is isomorphic to the completion of the local ring of a point on a toric variety.
For instance, any smooth point is toroidal.
Analogously, we say that a log pair $x\in (X,\Delta)$ is {\em toroidal} if $x\in X$ is toroidal and
the isomorphism identifies $\Delta$ with a toric boundary.
Note that a toroidal pair must have log canonical singularities on a neighborhood of $x$.
}
\end{definition}

The following lemma states that for a projective toric variety $X$ there is a unique toric invariant boundary $B$ so that the pair $(X,B)$ is log Calabi-Yau.

\begin{lemma}\label{lem:unique-tlcy}
Let $X$ be a projective toric variety.
Let $B$ be the complement of the torus on $X$ with reduced scheme structure.
Then $B$ is an effective divisor,
$(X,B)$ has log canonical singularities, 
and $K_X+B\sim 0$.
\end{lemma}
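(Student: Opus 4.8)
The plan is to reduce everything to the standard toric description of $K_X$ and of torus-invariant divisors on a toric variety, so that the three assertions become elementary. Fix the torus $T=\mathbb{G}_m^n\subset X$, and let $N$ be its cocharacter lattice with $M=N^\vee$ the character lattice, so that $X=X(\Sigma)$ for a fan $\Sigma$ in $N_\rr$ whose support is all of $N_\rr$ precisely when $X$ is complete; since $X$ is projective it is in particular complete, so $\bigcup_{\sigma\in\Sigma}\sigma=N_\rr$. First I would record that the complement $B:=X\setminus T$, with its reduced structure, is exactly the union of the torus-invariant prime divisors $D_\rho$, one for each ray $\rho\in\Sigma(1)$: the torus orbits of $X$ are indexed by the cones of $\Sigma$, the dense orbit is $T$ itself (corresponding to the zero cone), and the orbit closures of codimension one are exactly the $D_\rho$. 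Hence $B=\sum_{\rho\in\Sigma(1)}D_\rho$, which is an effective Weil divisor; this establishes the first assertion.

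Next I would invoke the standard formula for a canonical divisor on a toric variety, namely that one may take $K_X=-\sum_{\rho\in\Sigma(1)}D_\rho=-B$ (see, e.g.,~\cite{Ful93} or~\cite{CLS11}). Consequently $K_X+B\sim 0$; in fact the difference is the divisor of the constant rational function $1$, so this is a genuine linear equivalence (indeed an equality of divisors for the chosen representative), which is the third assertion. Note that this simultaneously shows $K_X+B$ is $\qq$-Cartier, so $(X,B)$ is a log pair in the sense of the definition above, even though $X$ itself need not be $\qq$-factorial.

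It remains to check that $(X,B)$ has log canonical singularities. Here the key point is that $(X,B)$ is a toroidal pair in the sense of the definition in this subsection: étale-locally (indeed formally-locally) near any point $x$ lying in the orbit $O_\sigma$, the pair $(X,B)$ looks like the affine toric variety $U_\sigma=\Spec\kk[\sigma^\vee\cap M]$ together with its full reduced torus-invariant boundary. By the remark following the definition of toroidal pairs, a toroidal pair automatically has log canonical singularities in a neighbourhood of each such point; covering $X$ by the affine charts $U_\sigma$, $\sigma\in\Sigma$, we conclude that $(X,B)$ is log canonical everywhere. Alternatively, and perhaps more self-containedly, one can pass to a toric resolution $f\colon X'\to X$ (subdivide $\Sigma$ to a smooth fan $\Sigma'$) and note that $f^*(K_X+B)\sim 0$ while $K_{X'}+B'\sim 0$ with $B'$ the reduced toric boundary of $X'$, which is a simple normal crossing divisor on the smooth variety $X'$; comparing, every exceptional divisor $E\subset X'$ satisfies $a_E(X,B)=0$, and the non-exceptional components appear with coefficient $1$, so all log discrepancies are $\geq 0$. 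The only mild subtlety — and the place I would be most careful — is the bookkeeping in this last comparison: one must check that each exceptional ray $\rho'\in\Sigma'(1)\setminus\Sigma(1)$ contributes log discrepancy exactly $0$ (not negative), which follows because $K_{X'}+B'=f^*(K_X+B)$ holds as an actual equality once both sides are written as $-\sum D_{\rho'}$ over all rays of $\Sigma'$ and one uses that the primitive generators of the old rays are unchanged. This gives $a_E(X,B)=0$ for all exceptional $E$, completing the proof.
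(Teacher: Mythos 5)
Your proof is correct and amounts to unpacking the standard toric facts that the paper simply cites: the paper invokes \cite{CLS11}*{Theorem 3.2.6} for $B$ being a divisor, \cite{CLS11}*{Theorem 8.2.3} for $K_X+B\sim 0$, and \cite{Amb06}*{\S 2} for log canonicity, while you derive the first two from the orbit--cone correspondence and the formula $K_X=-\sum_\rho D_\rho$, and the third by comparing with a smooth toric refinement. One small caveat: your first route to log canonicity (``toroidal pairs are lc near each point'') leans on the remark in the paper's own toroidal definition, which is itself just the local form of what you are proving, so it is your second, toric-resolution argument that is the genuinely self-contained one and should be regarded as the actual proof.
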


\begin{proof}
The fact that $B$ is a divisor follows from~\cite{CLS11}*{Theorem 3.2.6},
the fact that $(X,B)$ has log canonical divisor is proved in~\cite{Amb06}*{\S 2}, 
and finally the linear equivalence
$K_X+B\sim 0$ is proved in~\cite{CLS11}*{Theorem 8.2.3}.
\end{proof}

The following lemma states that any projective log Calabi-Yau toric pair is crepant equivalent  to the projective space with the union of the torus invariant hyperplanes as the boundary.

\begin{lemma}\label{lem:toric-bir-pn}
Let $(X,B)$ be a $n$-dimensional log Calabi-Yau toric pair.
Then, the birational map
$X\dashrightarrow \pp^n$ is crepant equivalent 
for the log pair structures
$(X,B)$ and $(\pp^n,H_1+\dots+H_{n+1})$.
\end{lemma}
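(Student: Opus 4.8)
The plan is to reduce to the normal fan picture and observe that a log Calabi--Yau toric pair is, combinatorially, a complete simplicial-or-not fan whose rays we may blow up and down. First I would invoke Lemma~\ref{lem:unique-tlcy} to identify $B$ as the reduced toric boundary $\sum_{\rho} D_\rho$, where $\rho$ runs over the rays of the fan $\Sigma$ of $X$; so $(X,B)$ really is determined by $\Sigma$ up to isomorphism, and it is automatically log canonical with $K_X+B\sim 0$. The point is that the toric boundary pair is the "right" crepant model: for \emph{any} toric birational modification $X'\dashrightarrow X$ (corresponding to a fan $\Sigma'$ with the same support $N_\rr$ as $\Sigma$), writing $B'$ for the reduced toric boundary of $X'$, the pair $(X',B')$ is again log canonical with $K_{X'}+B'\sim 0$, and $(X',B')$ is crepant equivalent to $(X,B)$. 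Indeed, for a toric valuation $v_\rho$ (a primitive ray generator), the log discrepancy $a_{D_\rho}(X,B)$ equals $0$ when $\rho$ is a ray of $\Sigma$ and, more generally, is $1$ minus the "order of vanishing of the monomial" — concretely, on the common resolution one sees $a_{D_\rho}(X,B)=a_{D_\rho}(X',B')$ for every toric prime divisor $D_\rho$, because both log canonical divisors pull back to $K+\,(\text{reduced boundary})$ on any toric resolution dominating both, and this latter divisor is independent of the chosen model. Since on a toric resolution every exceptional divisor is a toric divisor, and the log discrepancies agree there, $(X,B)$ and $(X',B')$ are crepant equivalent.

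Next I would choose $X'=\pp^n$ with its standard fan: the fan with rays $e_1,\dots,e_n,-(e_1+\dots+e_n)$, whose reduced toric boundary is precisely $H_1+\dots+H_{n+1}$ (the $n+1$ torus-invariant hyperplanes), and for which $(\pp^n, H_1+\dots+H_{n+1})$ is log canonical with $K_{\pp^n}+\sum H_i\sim 0$ by Lemma~\ref{lem:unique-tlcy}. Both $X$ and $\pp^n$ are toric varieties of dimension $n$ with the \emph{same} torus $\mathbb{G}_m^n$ acting, so the identity on the torus induces a toric birational map $\varphi\colon X\dashrightarrow\pp^n$; this map is an isomorphism over the big torus $\mathbb{G}_m^n\subset X$, $\pp^n$. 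It remains to see that $\varphi$ is crepant for the two boundary pairs, i.e.\ that there is a common resolution $p\colon W\to X$, $q\colon W\to\pp^n$ (which we may take to be toric, by taking a common toric refinement of the two fans and resolving) with $p^*(K_X+B)=q^*(K_{\pp^n}+\sum H_i)$. This is exactly the model-independence of the toric log canonical divisor noted above: on the toric variety $W$ both pullbacks equal $K_W+(\text{reduced toric boundary of }W)$, hence they coincide.

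The main obstacle — really the only content — is the verification that pulling back the log canonical divisor of a log Calabi--Yau \emph{toric} pair along a toric birational morphism again gives $K+(\text{reduced toric boundary})$ of the source. Concretely, if $p\colon W\to X$ is a toric birational morphism of toric varieties and $B_W$, $B_X$ denote the respective reduced toric boundaries, one must check $p^*(K_X+B_X)=K_W+B_W$; equivalently, every $p$-exceptional toric prime divisor $D_\rho$ on $W$ has $a_{D_\rho}(X,B_X)=0$. This is a standard fan computation: if $u_\rho\in N$ is the primitive generator of the ray $\rho$ lying in the relative interior of a cone $\sigma\in\Sigma$ with generators $u_1,\dots,u_r$, and $\psi_\Sigma$ is the support function of $K_X+B_X$ (which is the function sending each primitive ray generator to $-1$, extended linearly on cones — this uses that $K_X+B_X\sim 0$ so $\psi$ is determined), then the discrepancy coefficient is $\psi_\Sigma(u_\rho)+1$, and since $\psi_\Sigma$ is linear on $\sigma$ and $u_\rho\in\sigma$, writing $u_\rho=\sum \lambda_i u_i$ with $\lambda_i\ge 0$ gives $\psi_\Sigma(u_\rho)=-\sum\lambda_i$; one checks $\sum\lambda_i\le 1$ only in the smooth case, so in general one instead argues directly on a common \emph{smooth} refinement $W$ of $\Sigma$ and $\Sigma_{\pp^n}$, where the support functions of $K_W+B_W$, $p^*(K_X+B_X)$, $q^*(K_{\pp^n}+\sum H_i)$ are all the unique piecewise-linear function on $W$'s fan taking value $-1$ on each primitive ray generator, hence equal. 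This identification of the three support functions is the heart of the argument; everything else is formal. I would cite \cite{CLS11}*{Theorem 8.2.3} and the discussion of toric discrepancies there (or \cite{Amb06}*{\S 2}) for the support-function computation, and conclude that $\varphi\colon X\dashrightarrow\pp^n$ is crepant equivalent for $(X,B)$ and $(\pp^n,H_1+\dots+H_{n+1})$.
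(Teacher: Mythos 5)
Your proposal is correct and follows essentially the same route as the paper: take a common (smooth) toric refinement $W$ of the fans of $X$ and $\pp^n$, and observe that both $K_X+B$ and $K_{\pp^n}+\sum H_i$ pull back on $W$ to $K_W+(\text{reduced toric boundary of }W)$, so the two pairs are crepant equivalent. The paper states this in three lines without the support-function elaboration; the cleanest shortcut is simply that $K_X+B$, $K_W+B_W$, and $K_{\pp^n}+\sum H_i$ are each the \emph{zero} torus-invariant divisor (with the canonical choice $K=-\sum_\rho D_\rho$), which makes the pullback identities immediate and removes the detour about $\sum\lambda_i\le 1$.
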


\begin{proof}
Let $\Sigma$ be the fan in $N_\qq$ defining $X$
and $\Sigma_n$ be the standard fan in $N_\qq$ defining $\pp^n$.
Consider a common refinement fan $\Sigma'$ of $\Sigma$ and $\Sigma_n$.
Let $X'$ be the toric variety defined by $\Sigma'$ and $B'$ its toric boundary.
Then, we have induced birational toric morphisms
$\phi\colon X'\rightarrow X$ and
$\phi_n \colon X'\rightarrow \pp^n$, which we may assume to be projective.
Furthermore, we have that
$\phi^*(K_X+B)=K_{X'}+B'$
and 
$\phi_n^*(K_{\pp^n}+H_1+\dots+H_{n+1})=
K_{X'}+B'$.
We conclude that $(X,B)$ and $(\pp^n,H_1+\dots+H_{n+1})$ are crepant equivalent  toric pairs.
\end{proof}

The following proposition will be used to prove that certain birational models of toric projective varieties are still toric.

\begin{proposition}\label{prop:birational-toric}
Let $(X,B)$ be a projective toric pair
and $\phi\colon Y\dashrightarrow X$ be a birational map from a projective variety $Y$.
Assume that all the prime divisors extracted by $\phi$ have log discrepancy in the interval $[0,1)$ with respect to the log pair $(X,B)$.
Denote by $(Y,B_Y)$ the log pull-back of $(X,B)$ to $Y$.
Then, $(Y,B_Y)$ is a toric pair as well.
\end{proposition}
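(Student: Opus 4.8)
The plan is to reduce the statement to the characterization of projective toric pairs by their complexity (see subsection~\ref{subsec:complexity} and \cite{BMSZ18}). Throughout I will take $B$ to be the reduced toric boundary of $X$, so that $(X,B)$ is a log Calabi--Yau toric pair and $K_X+B\sim 0$ by Lemma~\ref{lem:unique-tlcy}; this is the case used in the applications, and it is what makes the complexity bookkeeping below work out. The three inputs are: (i) over such a pair there are no prime divisors of log discrepancy in the open interval $(0,1)$; (ii) $Y$ is again of Fano type by Proposition~\ref{prop:FT-extraction-non-canonical}, applied with the $\qq$-complement $(X,B)$; and (iii) a direct computation showing that the complexity of $(Y,B_Y)$ vanishes.

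First I would establish (i). Choosing a toric log resolution $g\colon \tilde X\to X$, Lemma~\ref{lem:unique-tlcy} applied to $\tilde X$ shows that the reduced toric boundary $\tilde B$ of $\tilde X$ is exactly the log pull-back of $B$, so $(\tilde X,\tilde B)$ is a log smooth log Calabi--Yau pair whose boundary is the \emph{entire} toric boundary of $\tilde X$, and $a_E(X,B)=a_E(\tilde X,\tilde B)$ for every prime divisor $E$ over $X$. Working in an affine chart $(\mathbb{A}^n,\{x_1\cdots x_n=0\})$, an elementary computation with weighted blow-ups shows that a divisor over this pair of log discrepancy $<1$ must have its center inside a torus-invariant stratum and must be a monomial valuation with respect to the coordinates, hence a toric valuation, of log discrepancy $0$. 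Consequently every prime divisor extracted by $\phi$, which by hypothesis has log discrepancy in $[0,1)$, in fact has log discrepancy $0$: it is a log canonical place of $(X,B)$, and therefore it appears in the log pull-back $B_Y$ with coefficient $1$. In particular $B_Y$ is a reduced divisor, $K_Y+B_Y=\phi^*(K_X+B)\sim_\qq 0$, and $(Y,B_Y)$ is log canonical.

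Next I would compute the complexity. Let $\Sigma$ be the fan of $X$, let $s$ be the number of prime divisors extracted by $\phi$, and let $k$ (resp. $k'$) be the number of components of $B$ (resp. of all prime divisors of $X$) contracted by $\phi^{-1}$. Comparing class groups on a common resolution gives $\operatorname{rank}\operatorname{Cl}(Y)=\operatorname{rank}\operatorname{Cl}(X)+s-k'$, while $B_Y$ has $(\#\Sigma(1)-k)+s$ components, all of coefficient $1$; since $X$ is toric, $\operatorname{rank}\operatorname{Cl}(X)=\#\Sigma(1)-n$. Plugging in,
\[
c(Y,B_Y)=\dim Y+\operatorname{rank}\operatorname{Cl}(Y)-|B_Y|= n+(\#\Sigma(1)-n+s-k')-(\#\Sigma(1)-k+s)=k-k'\le 0 .
\]
As $c\ge 0$ for projective log canonical pairs with $-(K+B)$ nef \cite{BMSZ18}, we get $c(Y,B_Y)=0$ (and, as a byproduct, $k'=k$, i.e. $\phi^{-1}$ contracts only toric divisors of $X$). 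Finally, $(Y,B_Y)$ is a projective log canonical pair with $K_Y+B_Y\sim_\qq 0$ and complexity $0$, so by the characterization of toric pairs via complexity \cite{BMSZ18} the variety $Y$ is toric and $B_Y$ is its reduced toric boundary; that is, $(Y,B_Y)$ is a toric pair.

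The crux is input (i): it is precisely the fact that over the \emph{full} toric boundary there are no log discrepancies in $(0,1)$ that forces $B_Y$ to be reduced and hence brings the complexity-zero criterion into play — for a non-reduced torus-invariant boundary this fails, and one would instead have to argue fan-theoretically that the extracted valuations are toric and track the torus structure through $\phi$. Steps (ii) and (iii) are then routine bookkeeping together with the cited results.
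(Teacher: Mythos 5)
Your argument is correct for the case you treat, but it follows a genuinely different route from the paper's and it only proves a special case of the proposition. The paper's proof works for an arbitrary torus-invariant boundary $B$: it observes that every exceptional divisor of log discrepancy in $[0,1)$ over a toric pair is a \emph{toric valuation} (via \cite{dFdC16} on terminal valuations, after perturbing coefficients so that lc places become terminal valuations), extracts all of them on a toric model $X'\to X$ so that $X'\dashrightarrow Y$ is a birational contraction, and then runs a toric MMP for the strict transform of an ample divisor of $Y$ to reach a toric variety small birational to $Y$. Your proof stays entirely inside the complexity machinery: when $B$ is the full reduced toric boundary, $K_X+B\sim 0$ is Cartier, so all log discrepancies are non-negative integers (this integrality observation is cleaner and more complete than your weighted-blow-up sketch, which as written only handles monomial valuations); hence the extracted divisors are lc places, $B_Y$ is reduced, and your count $c(Y,B_Y)=k-k'\le 0$ together with Theorem~\ref{thm:toric-by-complexity} closes the argument. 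What this buys is self-containedness — no input from \cite{dFdC16} and no auxiliary MMP — and it even identifies $B_Y$ as the full reduced toric boundary of $Y$.

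The caveat is that your opening reduction is a genuine restriction of the statement: a toric pair in this paper is any $(X,B)$ with $B$ torus-invariant, and for fractional coefficients there really are exceptional divisors with $a_E\in(0,1)$; these then enter $B_Y$ with fractional coefficients and your complexity count no longer reaches zero. The paper's valuation-theoretic argument is precisely what covers that case. You flag this honestly, and the reduced-boundary case is the only one invoked elsewhere in the paper (Steps 9--10 of Theorem~\ref{thm:FT-full-rank} and Corollary~\ref{cor:mmp-ending-toric}), but as a proof of the proposition as stated your argument is incomplete. A minor bookkeeping point: the paper's complexity is defined with the Picard rank rather than with $\operatorname{rank}\operatorname{Cl}$, so you should pass to a small $\qq$-factorialization of $Y$ before applying Theorem~\ref{thm:toric-by-complexity}; this is harmless since toricness and the complexity are unaffected by small modifications.
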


\begin{proof}
It is known that terminal valuations over toric pair are toric valuations as well (see, e.g.~\cite{dFdC16}*{Proposition 6.2}).
Furthermore, any log canonical place of a toric pair is a terminal valuation after perturbing the coefficient of the toric boundary.
Hence, we can find a projective toric morphism
$X'\rightarrow X$ so that
$X'\dashrightarrow Y$ is a birational map which is dominant in codimension one.
Let $A_Y$ be an ample divisor on $Y$
and $A_{X'}$ its strict transform on $X'$.
$A_{X'}$ is linearly equivalent to a toric divisor on $X'$, it is pseudo-effective, and its base locus has codimension at least two.
We may run a minimal model program for $A_{X'}$, which terminates in a good minimal model, since $X'$ is a Mori dream space.
All the steps of this minimal model program are toric and the ample model is toric as well.
Furthermore, the ample model is small birational to $Y$, so we conclude that $Y$ is a projective toric variety as well.
This last statement follows since the complexity doesn't change in small modifications.
\end{proof}

A main application of the above proposition is the following corollary, which allows us to deduce that $X$ is toric when some minimal model program terminates in a toric variety.

\begin{corollary}\label{cor:mmp-ending-toric}
Let $(X,B)$ be a projective log Calabi-Yau pair.
Assume that $X\dashrightarrow X'$ is a minimal model program for some divisor on $X$ that only contract divisors contained in the support of $B$.
Let $B'$ be the push-forward of $B$ to $X'$.
Furthermore, assume that $(X',B')$ is a log Calabi-Yau toric pair.
Then $(X,B)$ is a toric pair as well.
\end{corollary}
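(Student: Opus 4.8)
The plan is to apply Proposition~\ref{prop:birational-toric} to the birational map $\phi\colon X\dashrightarrow X'$, viewing $(X',B')$ as (in particular) a projective toric pair. For this I need two facts: that every prime divisor extracted by $\phi$ has log discrepancy in $[0,1)$ with respect to $(X',B')$, and that the log pull-back of $(X',B')$ to $X$ is exactly $(X,B)$. Granting these, Proposition~\ref{prop:birational-toric} shows that this log pull-back is a toric pair, and since it equals $(X,B)$, the corollary follows. So the real content is to prove that $\phi$ is \emph{crepant} for $(X,B)$, i.e.\ that $(X,B)$ and $(X',B')$ are crepant equivalent, together with a description of the extracted divisors.

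First I would analyse the minimal model program $X\dashrightarrow X'$ step by step, writing it as a composition of divisorial contractions $X_i\to X_{i+1}$ and flips $X_i\dashrightarrow X_{i+1}$, with $B_i$ the successive push-forwards of $B$ (so $B_0=B$, and the last $B_i$ is $B'$). Since $K_X+B\sim_\qq 0$ and push-forward preserves $\qq$-linear triviality, every $K_{X_i}+B_i\sim_\qq 0$. For a divisorial contraction $\pi\colon X_i\to X_{i+1}$ the divisor $\pi^*(K_{X_{i+1}}+B_{i+1})-(K_{X_i}+B_i)$ is $\pi$-exceptional — because $B_{i+1}$ is by definition $\pi_*B_i$, its push-forward is zero — and it is $\qq$-linearly trivial as a difference of two $\qq$-linearly trivial divisors; writing $m$ times this divisor as $\divv(f)$, the rational function $f$ then has neither zeros nor poles on the projective variety $X_{i+1}$, hence is constant, and the divisor vanishes. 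Thus the contraction is crepant for the pair. The same argument on a common resolution handles a flip, which is an isomorphism in codimension one, so the corresponding difference of log pull-backs is again exceptional over $X_i$ and $\qq$-linearly trivial, hence zero. Composing all steps shows that $(X,B)$ and $(X',B')$ are crepant equivalent.

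With crepancy established, on a common resolution $p\colon W\to X$, $q\colon W\to X'$, with canonical divisors chosen compatibly, we get $p^*(K_X+B)=q^*(K_{X'}+B')$; pushing forward to $X$ shows the log pull-back of $(X',B')$ to $X$ is precisely $(X,B)$. The prime divisors extracted by $\phi$ are exactly the divisors on $X$ contracted along the program, and by hypothesis these lie in $\Supp B$; for such a divisor $E$ one has $\coeff_E(B)\in(0,1]$ since $(X,B)$ is log canonical, so $a_E(X,B)=1-\coeff_E(B)\in[0,1)$, and by crepant equivalence $a_E(X',B')=a_E(X,B)\in[0,1)$. Hence Proposition~\ref{prop:birational-toric} applies to $\phi\colon X\dashrightarrow X'$ and the projective toric pair $(X',B')$, giving that the log pull-back of $(X',B')$ to $X$, namely $(X,B)$, is a toric pair.

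The step I expect to be the main obstacle is the crepancy analysis of the program: since the program is run for an unrelated divisor, crepancy for $(X,B)$ is not formal, and one must be careful that at each stage the boundary on the target is taken to be the push-forward (which is what makes the discrepancy correction $\pi$-exceptional) and that the vanishing argument for $\qq$-linearly trivial exceptional divisors is used on genuinely projective models. I also read ``$X\dashrightarrow X'$ is a minimal model program'' as meaning that $X'$ has the same dimension as $X$ and is reached through divisorial contractions and flips, so that the push-forward $B'$ makes sense; the argument is insensitive to whether the program terminates at a minimal model or at a Mori fibre space, since only the birational steps matter.
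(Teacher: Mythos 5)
Your proposal is correct, and it unpacks exactly what the paper leaves implicit: the paper states this corollary as a direct application of Proposition~\ref{prop:birational-toric} without a written proof, and your argument supplies the two missing ingredients — that the MMP steps are crepant for $(X,B)$ (a $q$-exceptional $\qq$-linearly trivial divisor on a projective model vanishes), and that the contracted divisors, lying in $\Supp B$ of a log canonical pair, have log discrepancy $1-\coeff_E(B)\in[0,1)$, which transfers to $(X',B')$ by crepancy. Your reading of ``minimal model program'' as a sequence of divisorial contractions and flips ending on a variety of the same dimension (not the base of a Mori fibre space) is the intended one, and the negativity-lemma step can be replaced, as you do, by the push-forward-of-a-principal-divisor argument.
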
 

Finally, we state a proposition that will allow us to prove that certain finite automorphism of a projective toric variety is indeed a subgroup of the torus.

\begin{proposition}\label{prop:making-H0-toric}
Let $n$ and $h$ be two positive integers.
There exists a positive integer $N:=N(n,h)$, only depending on $n$ and $h$, satisfying the following.
Let $H_0<{\rm Aut}(\mathbb{G}_m^n)$ be a subgroup of order at most $h$ so that $H_0$ is normalized by $A$, where
$\zz_M^n \leqslant A < \mathbb{G}_m^n < {\rm Aut}(\mathbb{G}_m^n)$
and $M$ is at least $N$.
Then, $H_0$ is a subgroup of $\mathbb{G}_m^n$.
\end{proposition}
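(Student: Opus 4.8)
The plan is to exploit the structure of $\mathrm{Aut}(\mathbb{G}_m^n)$ as a group of variety automorphisms: it is the semidirect product $\mathbb{G}_m^n \rtimes \mathrm{GL}_n(\zz)$, where $\mathbb{G}_m^n$ is the (normal) subgroup of translations (multiplications) and $\mathrm{GL}_n(\zz)$ acts through the monomial automorphisms induced on the character lattice. Thus there is an exact sequence $1 \to \mathbb{G}_m^n \to \mathrm{Aut}(\mathbb{G}_m^n) \xrightarrow{q} \mathrm{GL}_n(\zz) \to 1$, and the conclusion $H_0 \subseteq \mathbb{G}_m^n$ is equivalent to $q(H_0) = \{1\}$. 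So the entire problem reduces to showing that, once $M$ is large, the finite subgroup $\overline{H}_0 := q(H_0) \leqslant \mathrm{GL}_n(\zz)$ is trivial; note $|\overline{H}_0| \le |H_0| \le h$, so every $\gamma \in \overline{H}_0$ is an element of $\mathrm{GL}_n(\zz)$ of finite order at most $h$.

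First I would record the conjugation identity. Writing an automorphism as a pair $(t,\gamma)$ — translation by $t \in \mathbb{G}_m^n$ composed with the group automorphism $\gamma \in \mathrm{GL}_n(\zz)$ — one checks that for $a \in A \subseteq \mathbb{G}_m^n$ and $(t,\gamma) \in H_0$ one has $a\,(t,\gamma)\,a^{-1} = \bigl(a\,\gamma(a)^{-1}t,\ \gamma\bigr)$, with the $\mathrm{GL}_n(\zz)$-part unchanged because $a$ lies in the kernel of $q$. Since $A$ normalizes $H_0$ and $H_0$ contains at most $h$ elements with a fixed $\mathrm{GL}_n(\zz)$-part, the set $\{a\,\gamma(a)^{-1} : a \in A\}$ has size at most $h$. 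Restricting to the subgroup $\mu_M^n \subseteq A$ (which, being $\cong (\zz/M)^n$, must be the full $M$-torsion $\mu_M^n$ of $(\kk^*)^n$), and identifying $\mu_M^n \cong (\zz/M)^n$, the map $a \mapsto a\,\gamma(a)^{-1}$ becomes $v \mapsto (I-\gamma)v \bmod M$, where $I - \gamma$ is the corresponding integer matrix. Hence $|(I-\gamma)(\zz/M)^n| \le h$ for every $\gamma \in \overline{H}_0$.

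Next I would convert this into a divisibility statement. Let $d_1 \mid \dots \mid d_r$, with $r = \mathrm{rank}(I-\gamma)$, be the nonzero elementary divisors of $I-\gamma$ in Smith normal form; then $|(I-\gamma)(\zz/M)^n| = \prod_{i=1}^r M/\gcd(d_i,M) \ge M/d_1$. Combined with the bound above, if $\gamma \ne I$ (so $r \ge 1$) we obtain $d_1 \ge M/h$. But $d_1$ is precisely the gcd of all entries of the matrix $I-\gamma$, so $\gamma \equiv I \pmod{d_1}$ with $d_1 \ge M/h$. The last input is the rigidity of finite-order elements of $\mathrm{GL}_n(\zz)$: a finite-order $\gamma$ congruent to $I$ modulo some integer $g \ge 3$ must equal $I$. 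This is Minkowski's lemma; alternatively, since $\gamma = I + gB$ with $B \in M_n(\zz)$ and the eigenvalues $\zeta_i$ of $\gamma$ are roots of unity, the numbers $(\zeta_i-1)/g$ are algebraic integers all of whose Galois conjugates have absolute value $\le 2/g < 1$, hence are $0$, so $\gamma$ is unipotent and semisimple, i.e. $\gamma = I$. Therefore a nontrivial $\gamma \in \overline{H}_0$ would force $M/h \le d_1 \le 2$, i.e. $M \le 2h$; taking $N(n,h) := 2h+1$ (no dependence on $n$ is in fact needed) makes $\overline{H}_0 = \{1\}$, so $H_0 \subseteq \mathbb{G}_m^n$.

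The only genuinely non-formal step — and the place where care is required — is this last rigidity input for $\mathrm{GL}_n(\zz)$: one cannot bound the matrix entries of finite-order elements (they are already unbounded for $n=2$), so the argument must route through the ``congruent to $I$ modulo a large integer'' statement rather than any naive boundedness. Everything else is the conjugation identity together with routine Smith-normal-form bookkeeping over $\zz/M$.
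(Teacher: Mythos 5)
Your proof is correct and follows the same overall skeleton as the paper's---split $\mathrm{Aut}(\mathbb{G}_m^n) = \mathbb{G}_m^n \rtimes \mathrm{GL}_n(\zz)$, reduce to showing the $\mathrm{GL}_n(\zz)$-image of $H_0$ is trivial, and use a congruence-rigidity statement for finite-order integer matrices---but the two technical steps are handled differently, and more efficiently, in your version. Where the paper first replaces $A$ by $A^{|\mathrm{Aut}(H_0)|}$ to force commutativity with $H_0$ (so that $C \equiv I \pmod M$ directly, at the cost of a factor $|\mathrm{Aut}(H_0)| = O_h(1)$ in the bound), you work directly with the normalization hypothesis: the homomorphism $a \mapsto a\gamma(a)^{-1}$ has image of size at most $h$, and Smith normal form converts this into $\gamma \equiv I \pmod{d_1}$ with $d_1 \geq M/h$. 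Where the paper then kills $C$ by a trace estimate (trace $= n + M\sum c'_{ii}$ with $|\mathrm{tr}\,C|\leq n$, forcing $\sum c'_{ii} = 0$ when $M > 2n$, hence all eigenvalues are $1$), you invoke the full Minkowski rigidity: a finite-order element of $\mathrm{GL}_n(\zz)$ congruent to $I$ modulo an integer $\geq 3$ is $I$, via the norm-of-an-algebraic-integer argument on $(\zeta_i-1)/g$. The net effect is a cleaner and actually sharper constant, $N = 2h+1$, in particular independent of $n$; the paper's $N$ depends on both $n$ (through the trace step) and $h$ (through the commutativity reduction). You also sidestep the paper's slightly informal ``we may assume $H_0$ is cyclic'' reduction, since you never need to pass to cyclic subgroups. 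Both arguments are valid; yours is the tighter one.
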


\begin{proof}
We may assume that $H_0$ is cyclic generated by $h_0$.
Replacing $A$ by $A^{|{\rm Aut}(H)|}$, we may assume that
$A$ commutes with $h_0$.
Note that $|{\rm Aut}(H)|$ is bounded by $O_h(1)$.
Furthermore, we have that $A^{|{\rm Aut}(H)|} \geq \zz_M^n$ provided that
$A\geq \zz_{M|{\rm Aut}(H)|}^n$.
Therefore, we may assume that $h_0$ commutes with $\zz^n_M$.
Write
\[
h_0(x_1,\dots,x_n)=
(\lambda_1 x_1^{c_{1,1}}\cdots x_n^{c_{1,n}}, \dots ,
\lambda_n 
x_1^{c_{n,1}}\cdots x_n^{c_{n,n}}).
\]
Let $C$ be the matrix in ${\rm GL}_n(\zz)$ with entries $c_{i,j}$'s.
Then, we have that $C^h$
is the identity matrix.
Furthermore, since $h$ commutes with $A$, we conclude that
\[
c_{i,j}\equiv_M 0 \text{ for each $i\neq j$}
\]
and 
\[
c_{i,i}\equiv_M 1 \text{ for each $1\leq i\leq n$.}
\]
Hence, we may write 
$c_{i,j}=Mc'_{i,j}$
and $c_{i,i}=Mc'_{i,i}+1$
for certain integeres $c'_{i,j}$'s.
On the other hand, we know that the eigenvalues of $C$ are $h$-roots of unity.
If $\sum_{i=1}^n c'_{i,i}\neq 0$
and $M>2n$, 
then $||n+M(\sum_{i=1}^n c'_{i,i})||>n$.
In particular, the trace of $C$ can't be equal 
to the sum of at most $n$ roots of unity.
We conclude that
provided $M>2n$,
we have that $\sum_{i=1}^n c'_{i,i}=0$.
From now on, we may assume this is the case.
In particular, we have that the trace of $C$ equals $n$.
However, since the eigenvalues of $C$ are exactly $n$
roots of unity,
this implies that all the eigenvalues must be one.
Since $C$ is diagonalizable and all its eigenvalues are one,
we conclude that $C$ is the identity matrix.

Let $h':=h'(h)$ be an upper bound for the order
of the automorphism group of all finite groups of order at most $h$.
Note this number $h'$ only depends on $h$.
We deduce that
provided $M>\max\{h',2n\}$,
we have that $H_0<\mathbb{G}_m^n$ as claimed.
\end{proof}

The proof of the following proposition is analogous to the one of Proposition~\ref{prop:making-H0-toric}.
We give a proof for the sake of completeness.

\begin{proposition}\label{prop:making-abelian-group-toric}
Let $n$ and $h$ be two positive integers.
Let $H_0<{\rm Aut}(\mathbb{G}_m^n)$ be an abelian group.
Assume $A_0\leqslant H_0$ is a subgroup of index at most $h$ 
satisfying
$\zz_M^n\leqslant  A_0<\mathbb{G}_m^n$
where $M>2n$.
Then, $H_0$ is a subgroup of $\mathbb{G}_m^n$.
\end{proposition}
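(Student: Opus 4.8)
The plan is to mimic the proof of Proposition~\ref{prop:making-H0-toric}, with one simplification: since $H_0$ is abelian we do not need to pass to a power of the controlling subgroup in order to arrange commutativity, and this is precisely why the only hypothesis on $M$ is $M>2n$. Recall that $\operatorname{Aut}(\mathbb{G}_m^n)\cong \mathbb{G}_m^n\rtimes\operatorname{GL}_n(\zz)$, and let $p\colon\operatorname{Aut}(\mathbb{G}_m^n)\to\operatorname{GL}_n(\zz)$ be the projection homomorphism onto the monomial part; an automorphism lies in $\mathbb{G}_m^n$ if and only if $p$ sends it to the identity matrix. First I would note that every element of $\zz_M^n$ has order dividing $M$, while the $M$-torsion of $\mathbb{G}_m^n$ is exactly $\mu_M^n\cong(\zz/M)^n$; comparing cardinalities, $\zz_M^n$ must be the full $M$-torsion subgroup $\mu_M^n$ of the torus. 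In particular $A_0\supseteq\mu_M^n$, so $A_0$ contains all points $(\zeta_1,\dots,\zeta_n)$ with $\zeta_i^M=1$.

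Next, fix an arbitrary $h_0\in H_0$ and write, as in the proof of Proposition~\ref{prop:making-H0-toric},
\[
h_0(x_1,\dots,x_n)=\bigl(\lambda_1 x_1^{c_{1,1}}\cdots x_n^{c_{1,n}},\ \dots,\ \lambda_n x_1^{c_{n,1}}\cdots x_n^{c_{n,n}}\bigr),
\]
and set $C=(c_{i,j})=p(h_0)\in\operatorname{GL}_n(\zz)$. Since $H_0$ is abelian, $h_0$ commutes with every point of $\mu_M^n\subseteq A_0$; evaluating this commutation relation on the torsion points $(\zeta_1,\dots,\zeta_n)$ and comparing coordinates forces
\[
c_{i,j}\equiv_M 0\ \text{ for } i\neq j,\qquad c_{i,i}\equiv_M 1\ \text{ for } 1\leq i\leq n,
\]
so $C=I+MC'$ for some integer matrix $C'$. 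Moreover, since $[H_0:A_0]\leq h$ and $A_0<\mathbb{G}_m^n$ consists of automorphisms with trivial monomial part, the image of $h_0$ in the finite group $H_0/A_0$ has some finite order $\ell\geq 1$, and hence $C^\ell=p(h_0^\ell)=I$; in particular the eigenvalues of $C$ are roots of unity and $|\operatorname{Tr}(C)|\leq n$.

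Finally I would run the trace dichotomy of Proposition~\ref{prop:making-H0-toric}. We have $\operatorname{Tr}(C)=n+M\operatorname{Tr}(C')$; if $\operatorname{Tr}(C')\neq 0$ then $|\operatorname{Tr}(C)|\geq M-n>n$ because $M>2n$, contradicting $|\operatorname{Tr}(C)|\leq n$. Therefore $\operatorname{Tr}(C)=n$. Since $C$ has exactly $n$ eigenvalues, each of absolute value $1$, and their sum equals $n$, each eigenvalue must be $1$. As $C^\ell=I$ with $\ell$ invertible in characteristic zero, $C$ is diagonalizable, so $C=I$, i.e.\ $h_0\in\mathbb{G}_m^n$. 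As $h_0\in H_0$ was arbitrary, $H_0<\mathbb{G}_m^n$, as desired. I do not expect any serious obstacle: the argument is an almost word-for-word repetition of Proposition~\ref{prop:making-H0-toric}, the only point deserving care being the reduction --- automatic here by abelianness of $H_0$ --- to the case where $h_0$ already commutes with a full $M$-torsion subgroup of the torus.
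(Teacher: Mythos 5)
Your proof is correct and follows essentially the same route as the paper: identify $\mathbb{Z}_M^n$ with the $M$-torsion $\mu_M^n$ of the torus, use commutation with it to force $C\equiv I\pmod M$, then run the trace bound $M>2n$ to conclude $C=I$. The only cosmetic difference is that you spell out the torsion identification and the root-of-unity bound on the eigenvalues a bit more explicitly, whereas the paper refers back to the computation in Proposition~\ref{prop:making-H0-toric}.
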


\begin{proof}
Note that $H_0^h < A_0 < \mathbb{G}_m^n$.
Recall that, we have an exact sequence
\[
1\rightarrow \mathbb{G}_m^n
\rightarrow {\rm Aut}(\mathbb{G}_m^n)
\rightarrow {\rm GL}_n(\zz)
\rightarrow 1.
\]
Let $h_0$ be an element of $H_0$.
The image of $h_0$ in ${\rm GL}_n(\zz)$ is a matrix 
$C$ with $C^h$ being the identity matrix.
Hence, we conclude that the eigenvalues of $C$ are roots of unity.
Write
\[
h_0(x_1,\dots,x_n)=
(\lambda_1 x_1^{c_{1,1}}\cdots x_n^{c_{1,n}}, \dots ,
\lambda_n 
x_1^{c_{n,1}}\cdots x_n^{c_{n,n}}).
\]
Since $h_0$ commutes with $A_0$, then
it also commutes with $\zz_M^n$.
As in the proof of the previous proposition, we conclude that 
\[
c_{i,j}\equiv_M 0 \text{ for each $i\neq j$}
\]
and 
\[
c_{i,i}\equiv_M 1 \text{ for each $1\leq i\leq n$.}
\]
Hence, we may write 
$c_{i,j}=Mc'_{i,j}$
and $c_{i,i}=Mc'_{i,i}+1$
for certain $c'_{i,j}$'s.
On the other hand, we know that the eigenvalues of $C$ are 
roots of unity.
If $\sum_{i=1}^n c'_{i,i}\neq 0$
and $M>2n$, 
then $||n+M(\sum_{i=1}^n c'_{i,i})||>n$.
Leading to a contradiction.
We conclude that
provided $M>2n$,
we have that $\sum_{i=1}^n c'_{i,i}=0$.
From now on, we may assume this is the case.
In particular, we have that the trace of $C$ equals $n$.
However, since the eigenvalues of $C$ are exactly $n$
roots of unity,
this implies that all the eigenvalues must be one.
Since $C$ is diagonalizable and all its eigenvalues are one,
we conclude that $C$ is the identity matrix.
This means that $h_0 \in \mathbb{G}_m^n$, provided that $M>2n$, as claimed.
\end{proof}

\subsection{Complexity}\label{subsec:complexity}
In this subsection, we recall the concept of complexity
and the characterization of projective toric varieties using this invariant.
There are more fine versions of the complexity than the one we use in this article (see, e.g.,~\cite{BMSZ18}).

\begin{definition}{\em  
Let $(X,\Delta)$ be a log pair.
Let $x\in X$ be a closed point.
Let $\Delta=\sum_{i=1}^k d_i\Delta_i$ be the prime decomposition
of $\Delta$ around $x\in X$.
We define the {\em local complexity} of $(X,\Delta)$ at $x$ to be
\[
c(X,\Delta;x):=\dim(X)+\rho_x(X)-\sum_{i=1}^k d_i,
\]
where $\rho_x(X)$ stands for the local Picard rank, i.e., the $\qq$-rank of the local $\qq$-Class group.
}
\end{definition}

The following theorem asserts that the local complexity of a $\qq$-factorial pair is always non-negative.
Furthermore, if the complexity is zero, it is supported on a toroidal point.

\begin{theorem}[cf.~\cite{Kol92}*{Proposition 18.22}]\label{thm:toroidality}
Let $x\in (X,\Delta)$ be a $\qq$-factorial log canonical pair.
Then, $c_x(X,\Delta;x)\geq 0$.
If the equality holds, then $(X,\lfloor D\rfloor)$ is toroidal around $x\in X$.
\end{theorem}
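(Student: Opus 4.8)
\emph{Proof plan.} First I would localize and reduce to the essential case: after shrinking $X$ to an affine neighbourhood of $x$ we may assume that $K_X+\Delta$ is $\qq$-Cartier on $X$ and that every prime component of $\Delta$ passes through $x$, since discarding the other components affects neither $c(X,\Delta;x)$ nor the assertion near $x$. Because $X$ is $\qq$-factorial, every Weil divisor is $\qq$-Cartier near $x$, so the local class group of $X$ at $x$ is torsion, $\rho_x(X)=0$, and the statement reduces to $\sum_{i=1}^k d_i\le\dim X$, with equality forcing $(X,\lfloor\Delta\rfloor)$ to be toroidal at $x$. If $\Delta=0$ near $x$ there is nothing to prove, so assume $\Delta\neq 0$ and, for the sake of the plan, that $X$ is klt (the general log canonical case needs a modest variant, extracting a log canonical place over $x$ on a dlt model).

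The main step is to extract a suitable exceptional divisor over $x$ and pass to a projective problem. Replacing $\Delta$ by $(1-\varepsilon)\Delta$ with $0<\varepsilon\ll 1$ if necessary -- which only decreases $\sum_i d_i$ -- take a plt blow-up $\pi\colon Y\to X$ at $x$ with exceptional prime divisor $E$ and $-E$ $\pi$-ample; since $\pi(E)=\{x\}$, the divisor $E$ is projective. Writing $K_E+\Delta_E$ for the divisor obtained by adjunction of $K_Y+\Delta_Y+E$ along $E$ (with $\Delta_Y$ the strict transform of the original $\Delta$), one checks that $(E,\Delta_E)$ is a log canonical pair of dimension $\dim X-1$ with $-(K_E+\Delta_E)$ nef; thus $E$ is of Fano type, and after a small $\qq$-factorialization of $(E,\Delta_E)$, which does not change its complexity, $E$ is $\qq$-factorial. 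The point of extracting a divisor \emph{over the point} $x$ -- rather than cutting $X$ by a hyperplane through $x$, which destroys log canonicity (as $(\mathbb{A}^2,L_1+L_2+L_3)$, arising from $(\mathbb{A}^3,H_1+H_2+H_3)$, shows), or restricting to a component of $\lfloor\Delta\rfloor$, which need not exist nor be projective -- is precisely that $E$ is projective, so that the \emph{global} complexity theorem for Fano type varieties (Kollár; see also~\cite{BMSZ18}) applies to $(E,\Delta_E)$: its complexity $\dim E+\rho(E)-\sum(\text{coefficients of }\Delta_E)$ is non-negative, with equality forcing $(E,\lfloor\Delta_E\rfloor)$ to be a projective toric variety with its toric boundary.

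It then remains to compare $c(E,\Delta_E)$ with $c(X,\Delta;x)$. The components of $\Delta_E$ are the traces on $E$ of the strict transforms of the $\Delta_i$ through $x$, with coefficient at least $d_i$ (and equal to $d_i$ when $\Delta_i$ meets $E$ transversally), together with the components of the different ${\rm Diff}_E(0)$ measuring the singularities of $Y$ along $E$; and $\rho(E)$ is governed by $\rho_x(X)$ and the class of $E$. Tracking this carefully yields $c(X,\Delta;x)=c(E,\Delta_E)$ -- in any event $c(X,\Delta;x)\ge c(E,\Delta_E)$ -- and hence $c(X,\Delta;x)\ge 0$. In the equality case $c(E,\Delta_E)=0$, so $E$ is a projective toric variety, $\lfloor\Delta_E\rfloor$ is its toric boundary, the different ${\rm Diff}_E(0)$ vanishes, and the strict transforms meet $E$ transversally; one then propagates the toric structure from $E$ back to a toric chart for $(X,\lfloor\Delta\rfloor)$ at $x$, using that $-E$ is $\pi$-ample, so that $\pi$ is formally, near $E$, the contraction of a negative cone over $E$. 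I expect the two genuinely hard points to be: (i) the exact identity $c(X,\Delta;x)=c(E,\Delta_E)$, i.e.\ controlling the different and the change of the local class group under the plt blow-up and the small $\qq$-factorialization; and (ii) the descent in the equality case, reconstructing the toroidal chart on $X$ from the toric structure on $E$. In dimension $\le 1$ the statement is immediate, and in dimension $2$ it follows from the classification of log canonical surface singularities, which gives a useful check.
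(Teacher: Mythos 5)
The paper gives no proof of this statement: it is stated with a citation to \cite{Kol92}*{Proposition 18.22}, whose argument is an elementary local induction on dimension (restrict to a component of $\lfloor\Delta\rfloor$ by adjunction, control the coefficients of the different, and descend). Your route through a Kollár component $E$ of a plt blow-up at $x$ and the global complexity theorem is therefore genuinely different, and your opening observation that $\qq$-factoriality forces the local class group at $x$ to be torsion, so $\rho_x(X)=0$, is a correct and useful simplification. But as a proof it has gaps that go beyond the two points you yourself flag. The key comparison $c(X,\Delta;x)\ge c(E,\Delta_E)$ requires both $\rho(E)=1$ — plausible via surjectivity of $\operatorname{Cl}(Y)_\qq\to\operatorname{Cl}(E)_\qq$ for the Fano-type contraction $Y\to X$, but not automatic — and the inequality $\sum e_j\ge\sum d_i$ for the coefficients of $\operatorname{Diff}_E(\Delta_Y)$, which needs to be extracted from the adjunction formula and the fact that each $\tilde\Delta_i$ meets $E$ in a nonempty divisor; neither is routine. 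The equality case is structurally misconfigured: $c(X,\Delta;x)=0$ forces $\sum d_i=n$, which already implies $(X,\Delta)$ is lc but not klt at $x$; the plt blow-up is performed for the perturbed klt pair $(X,(1-\varepsilon)\Delta)$, so $-(K_E+\Delta_E^\varepsilon)$ is \emph{strictly} ample and $c(E,\Delta_E^\varepsilon)>0$, which means the $c=0$ rigidity in Theorem~\ref{thm:toric-by-complexity} is never invoked at $c=0$. To salvage this you would need to control the Kollár component as $\varepsilon\to 0$ (it can change), or instead extract a log canonical place of $(X,\Delta)$ over $x$ — whose existence when $c(X,\Delta;x)=0$ is itself part of what must be proved — and perform adjunction on a dlt model to get $K_E+\Delta_E^0\sim_\qq 0$.

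There is also a circularity concern that should at least be acknowledged: the proof of the global complexity theorem \cite{BMSZ18}*{Theorem 1.2} descends, via MMP and adjunction, to local toroidality statements in the spirit of (and, I believe, literally citing) \cite{Kol92}*{Proposition 18.22}. If so, deducing the local statement from the global one is not a logically independent proof, which is exactly why the classical argument is local and inductive rather than global. Finally, a minor but real slip: a small $\qq$-factorialization of $E$ preserves $\operatorname{Cl}(E)_\qq$ but can \emph{increase} the Picard number, so it does not preserve the complexity $\dim+\rho-\sum d_i$; this step should be dropped (Theorem~\ref{thm:toric-by-complexity} as stated does not require $\qq$-factoriality).
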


Now, we proceed to introduce a global version of the complexity.

\begin{definition}{\em
Let $(X,\Delta)$ be a projective pair
and $\Delta=\sum_{i=1}^k d_i\Delta_i$ be the prime decomposition of the boundary divisor.
We define the {\em complexity} of $(X,\Delta)$ to be
\[
c(X,\Delta):= \dim(X)+\rho(X)-\sum_{i=1}^k d_i,
\]
where, as usual, $\rho(X)$ is the Picard rank of the projective variety $X$.}
\end{definition}

The following is a version of the main theorem of~\cite{BMSZ18}.
This version will suffice for our purposes in this article.

\begin{theorem}[cf.~\cite{BMSZ18}*{Theorem 1.2}]\label{thm:toric-by-complexity}
Let $(X,\Delta)$ be a projective pair with log canonical singularities so that $K_X+\Delta\sim_\qq 0$.
Then, we have that $c(X,\Delta)\geq 0$.
Furthermore, if $c(X,\Delta)=0$, then
$(X,\lfloor\Delta\rfloor)$ is a toric pair.
\end{theorem}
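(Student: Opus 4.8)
The plan is to obtain this statement as a consequence of the complexity characterization of projective toric pairs, \cite{BMSZ18}*{Theorem 1.2}; the version recorded here is the coarse form that will be enough for our purposes. Concretely, since $K_X+\Delta\sim_\qq 0$ and $\Delta\ge 0$ the divisor $-(K_X+\Delta)$ is nef, so $(X,\Delta)$ is in the scope of that theorem: the inequality $c(X,\Delta)\ge 0$ is Shokurov's complexity bound, and the equality case gives that $(X,\lfloor\Delta\rfloor)$ is a toric pair. A mild point is that the sharpest form in~\cite{BMSZ18} is stated with the rank of the Weil divisor class group in place of the Picard number; these agree after passing to a $\qq$-factorial dlt model $(\tilde X,\tilde\Delta)\to(X,\Delta)$, which keeps log canonicity and $K+\Delta\sim_\qq 0$, preserves the complexity (each extracted divisor adds one both to the class group rank and to the coefficient sum), and over which toricity of $(\tilde X,\lfloor\tilde\Delta\rfloor)$ descends to $(X,\lfloor\Delta\rfloor)$ by Proposition~\ref{prop:birational-toric} and Corollary~\ref{cor:mmp-ending-toric}, the extracted divisors being log canonical places. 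In all of our applications $X$ is already $\qq$-factorial, so this reduction can be skipped.

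If instead one wanted to reproduce the argument, the route, following~\cite{BMSZ18}, is induction on $\dim X$. The base case $\dim X=1$ is $(\pp^1,\text{ at most two points})$, which is toric. For the inductive step, after replacing $(X,\Delta)$ by a $\qq$-factorial dlt model one runs a minimal model program ending in a Mori fiber space $X^\sharp\to T$ with $\dim T<\dim X$, arranged so that it terminates (cf.~\cite{BCHM10}). Because $K_X+\Delta\sim_\qq 0$ is preserved and the complexity does not increase along the program — a divisorial contraction drops the Picard number by one and the coefficient sum by at most one — it remains equal to $0$, which forces every contracted divisor into $\lfloor\Delta\rfloor$; by Corollary~\ref{cor:mmp-ending-toric} it therefore suffices to prove that $(X^\sharp,\lfloor\Delta^\sharp\rfloor)$ is toric. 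Restricting to a general fiber $F$, adjunction gives that $(F,\Delta^\sharp|_F)$ is log canonical with $K_F+\Delta^\sharp|_F\sim_\qq 0$; using $\rho(X^\sharp/T)=1$ and the canonical bundle formula to equip $T$ with a discriminant-plus-moduli boundary $\Delta_T$, a bookkeeping of the complexity yields $c(F,\Delta^\sharp|_F)=0$ and $c(T,\Delta_T)=0$. By induction $(F,\lfloor\Delta^\sharp|_F\rfloor)$ and $(T,\lfloor\Delta_T\rfloor)$ are toric, and assembling the toric structure on the base with the one on the fibers — while tracking the horizontal boundary components — shows that $(X^\sharp,\lfloor\Delta^\sharp\rfloor)$ is toric.

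The step I expect to be the main obstacle is the fibration analysis: making precise how the complexity of $(X^\sharp,\Delta^\sharp)$ is bounded below by $c(F,\Delta^\sharp|_F)+c(T,\Delta_T)$ for the boundary $\Delta_T$ coming from the canonical bundle formula, and controlling the horizontal components of $\Delta^\sharp$ so that the fibered toric structure glues. This is the technical core of~\cite{BMSZ18}. The local shadow of the statement — that a $\qq$-factorial log canonical pair of local complexity zero is toroidal (Theorem~\ref{thm:toroidality}) — is the germ of the argument, and the passage from ``toroidal at every point'' to ``globally a toric pair'' is precisely where the global hypothesis $K_X+\Delta\sim_\qq 0$ enters, through Lemma~\ref{lem:toric-bir-pn} and the rigidity it provides. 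Since all of this is carried out in~\cite{BMSZ18}, in practice the proof is the citation together with the reduction to the $\qq$-factorial case indicated above.
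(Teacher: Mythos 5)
The paper does not give a proof of this statement: it is quoted, via the ``cf.'', from \cite{BMSZ18}*{Theorem~1.2}, which is exactly your first sentence, so at the level the paper operates you are in agreement.

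The one step I would flag is your bridge from the Picard-rank complexity used here to the class-group-rank complexity of \cite{BMSZ18}. You say a $\qq$-factorial dlt model ``preserves the complexity'' because each extracted log canonical place adds one both to the rank and to the coefficient sum; but the \emph{small} $\qq$-factorialization part can strictly raise $\rho(X)$ without contributing any boundary component, so the Picard-rank complexity can \emph{increase} under the modification, and the reduction does not deliver $c(X,\Delta)\ge 0$ on the original $X$. Indeed, with $\rho$ read literally as the Picard rank, the statement fails without a $\qq$-factoriality hypothesis: let $X$ be the projective cone over $\pp^1\times\pp^1$ with reduced toric boundary $B$. Then $(X,B)$ is log canonical, $K_X+B\sim 0$, $\dim X=3$, $B$ has five prime components, and $\rho(X)=1$ (the contraction of the negative section of $\pp_{\pp^1\times\pp^1}\bigl(\mathcal{O}\oplus\mathcal{O}(1,1)\bigr)\to X$ has relative Picard rank two), so $c(X,B)=3+1-5=-1<0$. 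What \cite{BMSZ18} proves uses the rank of the ($\qq$-)class group --- exactly what the paper uses in its definition of the \emph{local} complexity --- and this coincides with $\rho(X)$ precisely when $X$ is $\qq$-factorial. So your concluding observation, that in every application $X$ is $\qq$-factorial and the reduction may be skipped, is the correct fix, and is what the paper tacitly assumes throughout (for instance, the identity $k=\dim X+\rho(X)$ in the proof of Proposition~\ref{prop:quot-toric-by-toric-finite} is only valid under that understanding). The sketch of the internal argument of \cite{BMSZ18} in your second paragraph is accurate as an outline, but since the present paper only cites that result, it is not part of the comparison.
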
 

Now, we turn to introduce two propositions that explain when the quotient or cover of a toric variety remains toric.
Both of them are well-known facts, we give short proofs using the complexity.

\begin{proposition}\label{prop:quot-toric-by-toric-finite}
Let $X$ be a projective $n$-dimensional toric variety and
$H<\mathbb{G}_m^n\leqslant {\rm Aut}(X)$ 
be a finite group.
Then, the quotient $Y:=X/H$ is a toric variety.
\end{proposition}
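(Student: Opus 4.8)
The plan is to use the complexity characterization of toric varieties (Theorem~\ref{thm:toric-by-complexity}) by descending a log Calabi--Yau structure of complexity zero from $X$ to $Y$. First I would equip $X$ with its reduced toric boundary $B$, the complement of the torus $\mathbb{G}_m^n\subset X$; by Lemma~\ref{lem:unique-tlcy} the pair $(X,B)$ is log canonical with $K_X+B\sim 0$, and since $B$ is exactly the toric boundary one computes $c(X,B)=\dim X+\rho(X)-\#\{\text{components of }B\}=0$. The group $H<\mathbb{G}_m^n$ acts on $X$, and because $H$ lies in the torus it preserves $B$; hence $H\leqslant {\rm Aut}(X,B)$ and the quotient $(Y,B_Y)$ of $(X,B)$ by $H$ is again log canonical with $K_X+B=\pi^*(K_Y+B_Y)$, so $K_Y+B_Y\sim_\qq 0$, where $\pi\colon X\to Y$ is the quotient morphism.

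Next I would check that $\pi$ is unramified in codimension one. Since $H$ acts on the open torus $\mathbb{G}_m^n\subset X$ by translations (multiplication by roots of unity), this action is free on $\mathbb{G}_m^n$; in particular no prime divisor of $X$ meeting the torus is fixed by a nontrivial element of $H$, and the divisors contained in $B$ are permuted among themselves. Thus the only possible ramification divisors of $\pi$ lie inside $B=\lfloor B\rfloor$, and by the Riemann--Hurwitz computation the log pull-back $\pi^*(K_Y+B_Y)=K_X+B$ still has all boundary coefficients equal to one along these divisors (the ramification is exactly compensated by the coefficient-one boundary). Consequently $B_Y$ is again reduced, $\lfloor B_Y\rfloor=B_Y$, and by compatibility of Picard/class group ranks under a finite quotient together with the fact that $\pi$ contracts no divisors, one gets $c(Y,B_Y)=\dim Y+\rho_\qq(Y)-\#\{\text{components of }B_Y\}\leq c(X,B)=0$; combined with the inequality $c(Y,B_Y)\geq 0$ from Theorem~\ref{thm:toric-by-complexity} this forces $c(Y,B_Y)=0$.

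Finally I would invoke Theorem~\ref{thm:toric-by-complexity}: since $(Y,B_Y)$ is log canonical, $K_Y+B_Y\sim_\qq 0$, and $c(Y,B_Y)=0$, the pair $(Y,\lfloor B_Y\rfloor)=(Y,B_Y)$ is a toric pair; in particular $Y$ is a projective toric variety. The step I expect to be the main obstacle is the bookkeeping in the second paragraph --- verifying carefully that the complexity does not increase under the quotient. The subtle point is that $\rho(Y)$ (or rather its $\qq$-rank) could in principle differ from $\rho(X)$, and one must argue, using that $H$ is finite and acts with at worst divisorial ramification contained in $B$, that $\rho_\qq(Y)\leq \rho_\qq(X)$ while the number of boundary components can only drop (divisors of $B$ get identified under $\pi$), so that the two effects push the complexity of $Y$ down rather than up; an alternative, cleaner route is to avoid Picard-rank comparison entirely by instead pulling everything back to a common toric resolution and running the argument of Proposition~\ref{prop:birational-toric} there, but the direct complexity estimate is the most economical if one is willing to be careful.
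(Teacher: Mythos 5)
Your approach is the same as the paper's: endow $X$ with its reduced toric boundary $B$, descend to the quotient $(Y,B_Y)$, estimate the complexity, and invoke Theorem~\ref{thm:toric-by-complexity}. That is exactly what the paper does. However, your last paragraph --- the bookkeeping you flagged as the potential obstacle --- contains a sign error that actually breaks the estimate as you have stated it. You write that ``the number of boundary components can only drop (divisors of $B$ get identified under $\pi$), so that the two effects push the complexity of $Y$ down rather than up.'' This is backwards: since $c(Y,B_Y)=\dim Y+\rho_\qq(Y)-\#\{\text{components of }B_Y\}$, a \emph{drop} in the number of components \emph{increases} the complexity. If both $\rho$ and the component count drop, you get no inequality at all; you would need the drop in $\rho$ to dominate the drop in the component count, which is not automatic.

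The paper avoids this issue entirely by using the stronger fact that $H<\mathbb{G}_m^n$: since the full torus $\mathbb{G}_m^n$ fixes each torus-invariant prime divisor $D_i$, every $D_i$ is individually $H$-invariant, not merely permuted within $B$. Consequently no identification of boundary components occurs under $\pi$, each $D_i$ maps to a distinct prime divisor of $Y$, and the number $k=\dim X+\rho(X)$ of boundary components is preserved on $Y$. Combined with $\rho(Y)\leq\rho(X)$, this gives $c(Y,B_Y)\leq c(X,B)=0$ directly. Your first two paragraphs are fine; you should replace the final speculative paragraph with this observation (each $D_i$ is $H$-fixed because $H$ lies in the torus), which is both what makes the count go through and what justifies the claim that $B_Y$ has exactly $k$ components.
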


\begin{proof}
Let $D_1,\dots,D_k$ be the prime torus invariant divisors of $X$.
Then, we have that $k=\dim(X)+\rho(X)$.
Furthermore, each $D_i$ is $H$-invariant.
Define $B:=\sum_{i=1}^k D_i$ 
and $(Y,B_Y)$ be the log quotient of $(X,B)$.
Then, $(Y,B_Y)$ has log canonical singularities.
Furthermore, we have that
$\rho(Y)\leq \rho(X)$.
Thus, the complexity of $(Y,B_Y)$
is at most $\dim(Y)+\rho(Y)-k \leq \dim(X)+\rho(X)-k=0$.
By Theorem~\ref{thm:toric-by-complexity}, we conclude that $(Y,B_Y)$ is a projective toric pair and $X\rightarrow Y$ is a finite toric morphism.
\end{proof}

\begin{proposition}\label{prop:cover-toric-unramified-torus-toric}
Let $Y$ be a projective toric variety and $X\rightarrow Y$ be a finite morphism which only ramifies over the complement of the torus of $Y$.
Then $X$ is a projective toric variety.
\end{proposition}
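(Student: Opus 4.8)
The plan is to reduce everything to the classical structure theory of finite \'etale covers of an algebraic torus, and then to identify $X$ with an explicitly described toric variety using uniqueness of normalization. Throughout I would assume that $X$ is a normal irreducible variety and that $f\colon X\to Y$ is finite and surjective; normality is implicit here, since a projective toric variety is normal and irreducible.

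First I would set up notation: let $T_Y=\mathbb{G}_m^n\subseteq Y$ be the torus, $B_Y$ the reduced toric boundary of Lemma~\ref{lem:unique-tlcy}, and $U:=f^{-1}(T_Y)=X\setminus f^{-1}(B_Y)$, an open dense subset of $X$; by hypothesis $f|_U\colon U\to T_Y$ is finite and \'etale. Since $f$ is finite and surjective and $X$ is normal, $X$ is the normalization of $Y$ in the function field $\mathbb{K}(X)$, i.e. $f_*\mathcal{O}_X$ is the integral closure of $\mathcal{O}_Y$ in $\mathbb{K}(X)$. The first real step is the observation that, $X$ being irreducible, $U$ is connected, so $f|_U$ is a connected finite \'etale cover of the torus $T_Y$; such a cover is again a torus, and the covering morphism is, after a suitable isomorphism of the source, the composition of an isogeny of tori with a translation of $T_Y$. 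A translation of $T_Y$ extends to an automorphism of $Y$, so after replacing $f$ by $\tau\circ f$ for an appropriate torus-translation $\tau\in\operatorname{Aut}(Y)$ I may assume $f|_U$ is an honest isogeny. Writing $M_Y$, $M_X$ for the character lattices of $T_Y$ and $U$, this corresponds to a finite-index inclusion $M_Y\hookrightarrow M_X$, equivalently to a finite-index inclusion of cocharacter lattices $N_X\hookrightarrow N_Y$ which induces an identification $(N_X)_\qq=(N_Y)_\qq$.

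Next I would let $\Sigma$ be the fan of $Y$ in $(N_Y)_\qq$. Since $N_X\subseteq N_Y$ has finite index, the same cones form a fan with respect to $N_X$; let $X'$ be the toric variety it defines. The identity of $(N_X)_\qq=(N_Y)_\qq$ is compatible with $\Sigma$, hence gives a toric morphism $X'\to Y$ that is finite (on the affine chart of a cone $\sigma$ it is the module-finite extension $\mathbb{K}[\sigma^\vee\cap M_Y]\subseteq\mathbb{K}[\sigma^\vee\cap M_X]$, as a fixed multiple of every lattice point of $M_X$ lies in $M_Y$) and surjective (it restricts to the isogeny $U\to T_Y$ on the tori). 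Thus $X'$ is a normal irreducible variety, finite over $Y$, with function field $\operatorname{Frac}(\mathbb{K}[M_X])$ and with the embedding of $\mathbb{K}(Y)$ induced by $M_Y\hookrightarrow M_X$ — exactly the extension $\mathbb{K}(Y)\subseteq\mathbb{K}(X)$ coming from $f|_U$. By uniqueness of normalization, $X\simeq X'$ over $Y$, so $X$ is toric; it is projective because it is finite over the projective variety $Y$.

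The main point I would be careful about is the structure theorem for connected finite \'etale covers of a torus over an algebraically closed field of characteristic zero — that they are tori and the covering maps are isogenies up to translation — which I would cite precisely; once this is available, the rest is routine toric bookkeeping (the finiteness of the fan-preserving morphism $X'\to Y$ over a finite-index sublattice, and matching the two function-field extensions of $\mathbb{K}(Y)$). I would note that a purely \emph{complexity}-based argument in the style of the preceding propositions does not seem to apply directly: a cover can a priori raise the Picard rank, so one genuinely has to produce the toric structure rather than merely bound the complexity.
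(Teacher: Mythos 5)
Your proof is correct, but it is a genuinely different argument from the paper's, and your closing remark --- that a complexity-based argument ``does not seem to apply directly'' --- is not accurate, since that is exactly what the paper does. Both proofs start the same way: $U:=f^{-1}(T_Y)$ is a connected finite \'etale cover of $\mathbb{G}_m^n$ (Zariski--Nagata purity gives \'etaleness from unramifiedness, since $X$ is normal and $T_Y$ regular), hence $U\simeq\mathbb{G}_m^n$ with $f|_U$ an isogeny after absorbing a translation. From there you construct $X$ explicitly: transport the fan of $Y$ to the finite-index sublattice $N_X\subset N_Y$ to obtain a toric variety $X'$, check that the induced toric morphism $X'\to Y$ is finite, and identify $X\simeq X'$ over $Y$ by uniqueness of normalization in the common function field extension $\mathbb{K}(Y)\subset\mathbb{K}(X)$. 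This is clean, self-contained, and has the additional benefit of producing an explicit description of the toric structure (same cones, finer lattice). Two small points: you want $\tau^{-1}\circ f$ rather than $\tau\circ f$ to kill the translation, and the purity step deserves an explicit citation rather than being folded into ``finite and \'etale.''

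The paper instead runs the complexity argument you dismissed, and it works. You are right that $\rho(X)$ can exceed $\rho(Y)$, but the paper controls it relative to the growth in the number of boundary components. Since $X$ contains a dense open torus, $\operatorname{Cl}(X)_\qq$ is generated by the $\sum_i j_i$ prime components $D^i_j$ of $\pi^*D_i$. Pulling back the relations $D_i\sim_\qq\sum_{l=1}^{k-n}\gamma_{i,l}\Gamma_l$ (where the $\Gamma_l$ are $k-n$ $\qq$-divisors supported on $\lfloor B_Y\rfloor$ generating $\operatorname{Pic}(Y)_\qq$) lets one solve for $D^i_1$ in terms of $\{\pi^*\Gamma_l\}\cup\{D^i_j:j\geq 2\}$, so this smaller set of $\sum_i j_i-n$ classes still generates, giving $\rho(X)\leq\sum_i j_i - n$. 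Riemann--Hurwitz makes the log pull-back $B$ of $B_Y$ reduced with exactly $\sum_i j_i$ components, so $c(X,B)\leq n + \bigl(\sum_i j_i-n\bigr) - \sum_i j_i = 0$, and Theorem~\ref{thm:toric-by-complexity} concludes. The point your objection missed is that the $n$ linear relations that cut $\operatorname{Cl}(Y)_\qq$ down from $k$ boundary divisors to rank $k-n$ survive pull-back, and they exactly cancel the $\dim(X)$ term in the complexity, however much the Picard rank grows upstairs.
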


\begin{proof}
Set $n=\dim(X)=\dim(Y)$.
Let $B_Y$ the be reduced toric boundary of $Y$ and $D_1,\dots,D_k$ its prime components.
Since $\pi\colon X\rightarrow Y$ is unramified over the torus, we conclude that $X$ contains an open subset isomorphic to the torus.
For each $i$, we let
$D^i_1,\dots,D^i_{j_i}$ be the prime components of $\pi^*(D_i)$.
Henceforth, the Picard group of $X$ is generated by the divisors $D_j^i$ which are prime.
In particular, we have that
$\rho(X)\leq \sum_{i=1}^k j_i$.
Recall that the Picard rank of $Y$ equals $k-n$.
Hence, we can find $k-n$ $\qq$-divisors $\Gamma_1,\dots,\Gamma_{k_n}$, supported on $\supp(D_1\cup\dots\cup D_k)$, which generate
the Picard group.
In particular, for each $1\leq i\leq k$, we can write
\[
D_i\sim_\qq \sum_{j=1}^{k-n} \gamma_{i,j}\Gamma_j.
\]
Pulling-back the above relations via $\pi$, we obtain
$\qq$-linearly equivalences
\[
m_i\left(\sum_{j=1}^{j_i} D^j_s\right) 
=
\pi^*D_i \sim_\qq \sum_{j=1}^{k-n} \gamma_{i,j}\pi^*\Gamma_j.
\]
Hence, 
we have a relation of the form
\[
D^i_1 \sim_\qq 
\sum_{j=1}^{k-n} \frac{\gamma_{i,j}}{m_i}
\pi^*\Gamma_j  -
\sum_{j=2}^{j_i} D^i_j.
\]
Thus, the divisors $\pi^*\Gamma_j$ with $1\leq j\leq k-n$
and the set
$\{ D_j^i \mid j\in \{2,\dots,j_i\}\}$
generate the Picard group of $X$.
Thus, we have that $\rho(X)\leq \sum_{i=1}^k j_i-n$.
Let $(X,B)$ be the log pull-back of $(Y,B_Y)$ to $X$.
Note that $(X,B)$ is log Calabi-Yau 
and $\lfloor B\rfloor$ has 
$\sum_{i=1}^k j_i$ components.
We conclude that 
\[
c(X,B)= \dim(X) + \rho(X) - \sum_{i=1}^k j_i \leq 
n+\left(\sum_{i=1}^k j_i-n\right)-\sum_{i=1}^k j_i = 0.
\]
By Theorem~\ref{thm:toric-by-complexity}, 
we conclude that $(X,B)$ is a toric pair.
\end{proof}

\begin{remark}{\em
In the two above propositions, we obtain that $X\rightarrow Y$ is a finite toric morphism 
and $\rho(X)=\rho(Y)$.
Furthermore, the combinatorial structures of the fans of $X$ and $Y$ are the same.
Although, 
the monomorphism of these fan structures 
into the lattice $N$ may change.}
\end{remark}

\subsection{Log crepant equivalent  toric quotients}
In this subsection, we introduce the concept
of log crepant equivalent  toric quotient projective varieties
and log crepant equivalent  toric quotient singularities.
These are analogs of quotient singularities and toric singularities.

\begin{definition}{\em 
Let $(X,B)$ be a projective pair.
We say that $(X,B)$ is {\em crepant equivalent  toric},
if there exists a birational map $\pi\colon X\dashrightarrow T$ 
to a toric variety $T$ 
so that $\pi^*(K_T+B_T)=K_X+B$,
where $B_T$ is the reduced toric boundary of $T$.
By the pull-back of a birational map, we mean the pull-back to a common resolution and then push-forward to $X$.
Note that in the above case, we will have $K_X+B\sim 0$.
A projective variety $X$ is said to be
{\em log crepant equivalent  toric} if there exists a boundary $B$ on $X$ so that $(X,B)$ is crepant equivalent  toric.
The boundary $B$ will be called the {\em crepant equivalent  toric boundary}.
}
\end{definition}

\begin{definition}{\em 
A projective variety $X$ is called
{\em log crepant equivalent  toric quotient}
if $X$ is the quotient of a log crepant equivalent  toric variety by a finite group $G$.
The push-forward of the crepant equivalent  toric boundary to $X$ is called the {\em crepant equivalent  toric quotient boundary}.}
\end{definition}

\begin{definition}
\label{def:lce-tq}
{\em 
A klt singularity is called {\em toric quotient} if it is the quotient of a toric singularity by a finite group.
Note that all toric singularities and quotient singularities are toric quotient singularities.
A klt singularity is called {\em log crepant equivalent  toric quotient} if it is isomorphic to the orbifold cone over a log crepant equivalent  toric quotient projective variety.
We write {\em lce-tq} singularities for short.
}
\end{definition}

\begin{remark}{\em 
Note that we have the following inclusions of classes of singularities:
\[
\{\text{Cyclic quotient singularities}\}
= 
\{\text{Quotient singularities}\} 
\cap 
\{\text{Toric singularities}\} 
\subsetneq 
\]
\[
\{\text{Quotient singularities}\} 
\cup 
\{\text{Toric singularities}\} 
\subsetneq 
\{\text{Toric quotient singularities}\}
\subsetneq  
\]
\[
\{
\text{Log crepant equivalent  toric quotient singularities}
\}.
\]
The class of lce-tq singularities is the smaller class which arises from cyclic quotient singularities
and is closed with respect to: taking finite quotients, taking universal covers, and taking birational models of the exceptional divisor of a plt blow-up.
Hence, these singularities arises naturally from cyclic quotient singularities by considering 
natural operations on singularities of the MMP.
Other classes of singularities of the minimal model program, for instance, exceptional singularities,
are closed under all these operations as well (see, e.g.,~\cite{Mor18a}). In Example~\ref{example}, we give an example of a lce-tq singularity which is not toric quotient.
}
\end{remark} 

\subsection{Regional fundamental group}\label{subsec:regional-fund}
In this subsection, we recall the concept of regional fundamental group and the fundamental group of the log smooth locus of a Fano type variety.

\begin{definition}{\em 
We say that a log pair $(X,\Delta)$ has {\em standard coefficients}
if the coefficients of $\Delta$ are of the form $1-\frac{1}{n}$,
where $n$ is a positive number (see Definition~\ref{def:hyperstandard}).
Giving a log pair $(X,\Delta)$, we define its {\em standard approximation}
to be the pair $(X,\Delta_s)$ with largest boundary so that
$\Delta_s\leq \Delta$ and $\Delta_s$ has standard coefficients.
}
\end{definition}

\begin{definition}{\em 
Let $x\in (X,\Delta)$ be the germ of a singularity.
We denote by $X^{\rm sm}$ the locus on which $(X,\Delta)$ is log smooth.
Let $U$ be a neighborhood of $x$.
We denote by $\Delta_s$ the standard approximation of $\Delta$.
For each prime component $P$ of $\Delta_s$, 
we denote by $n_P$ the positive integer so that such component
appears with coefficient $1-\frac{1}{n_P}$ in $\Delta_s$.
We define the fundamental group $\pi^{\rm alg}_1(U,\Delta_U;x)$
to be the subgroup of $\pi^{\rm alg}_1(U\cap X^{\rm sm}\setminus \supp(\Delta_U))$
which correspond to finite covers which ramify with index 
at most $n_P$ at each prime $P$.
We define the {\em regional fundamental group} of $x\in (X,\Delta)$,
denoted by $\pi_1^{\rm reg}(X,\Delta;x)$, to be the inverse limit 
of all the groups $\pi^{\rm alg}_1(U,\Delta_U;x)$ where the $U$'s run over all open neighborhoods of $x$.
In the case that we work with complex germs, we may replace $\pi^{\rm alg}$ with the usual fundamental group $\pi^{\rm loc}$.}
\end{definition} 

It is known that the regional fundamental group of klt singularities is finite.
In the case of the algebraic fundamental group, this is due to Chenyang Xu~\cite{Xu14}.
In the case of complex singularities and the usual fundamental group, this is a recent result of Lukas Braun~\cite{Bra20}. In the next subsection, we turn to give a more precise characterization of such groups. 
The following proposition is the existence of universal covers for the regional fundamental group.

\begin{proposition}\label{prop:existence-universal-cover}
Let $x\in (X,\Delta)$ be the germ of a klt singularity.
Then, there exists a Galois cover $\pi\colon Y\rightarrow X$,
which is the quotient by $G:=\pi_1^{\rm reg}(X,\Delta;x)$,
so that $x\in X$ has a unique pre-image
and the pull-back $\pi^*(K_X+\Delta)=K_Y+\Delta_Y$ is a klt germ.
\end{proposition}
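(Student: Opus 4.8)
The plan is to realize $Y$ as the normalization of $X$ in the function field of the \emph{universal} admissible cover attached to the regional fundamental group, and then to check that this cover has the three asserted properties. The one genuinely hard input, the finiteness of $G:=\pi_1^{\rm reg}(X,\Delta;x)$, is already available: it is~\cite{Xu14} for the algebraic fundamental group and~\cite{Bra20} for complex germs. So the first step is to exploit this finiteness: since $G$ is the inverse limit of the finite groups $\pi_1^{\rm alg}(U,\Delta_U;x)$ over the directed family of neighbourhoods of $x$, the system stabilizes, and I would fix a small affine neighbourhood $U\ni x$ (shrinking $X$ to it) with $\pi_1^{\rm alg}(U,\Delta_U;x)=G$ and with every transition map from a smaller neighbourhood an isomorphism.

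Second, I would construct the cover. Write $U^\circ:=(U\cap X^{\rm sm})\setminus\supp(\Delta_U)$ and let $\Delta_s$ be the standard approximation of $\Delta$; by definition $G$ is the Galois group of the largest finite connected étale cover $V^\circ\to U^\circ$ whose associated branched cover of $U$ ramifies along each prime component $P$ of $\Delta_s$ with index at most $n_P$. Let $\pi\colon Y\to X$ be the normalization of $X$ in $\kk(V^\circ)$; then $\pi$ is finite Galois with group $G$ (so $X=Y/G$), it is étale over $X^{\rm sm}\setminus\supp(\Delta)$, and over each component $P$ of $\Delta$ it ramifies with some index $e_P\le n_P$. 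By purity, the branch locus of $\pi$ is contained in $\supp(\Delta)\cup(X\setminus X^{\rm sm})$.

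Third, I would verify that the log pull-back $K_Y+\Delta_Y:=\pi^*(K_X+\Delta)$ is klt near $\pi^{-1}(x)$. Over a component $P$ of $\Delta$ with coefficient $d_P$, the Riemann--Hurwitz formula (see~\cite{KM98}*{Proposition 5.20}) shows that every component of $\pi^{-1}(P)$ appears in $\Delta_Y$ with coefficient $1-e_P(1-d_P)$; since $1-\tfrac1{n_P}\le d_P<1$ and $1\le e_P\le n_P$, this coefficient lies in $[0,1)$, while $\Delta_Y=0$ off $\pi^{-1}(\supp\Delta)$. Hence $\Delta_Y$ is an effective boundary with $\lfloor\Delta_Y\rfloor=0$. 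For any prime divisor $E$ over $Y$ lying over a prime divisor $E'$ over $X$ with ramification index $r_E\ge1$, the finite-cover formula for log discrepancies gives $a_E(Y,\Delta_Y)=r_E\,a_{E'}(X,\Delta)>0$, because $(X,\Delta)$ is klt; thus $(Y,\Delta_Y)$ is klt.

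The remaining point, that $x$ has a \emph{unique} preimage, is the one I expect to be the main obstacle, since it is the only place that forces us to use the specific universal cover rather than an arbitrary admissible one. Since $\kk$ is algebraically closed, for $y\in\pi^{-1}(x)$ the decomposition and inertia groups coincide, say $D_y\le G$; as $G$ acts transitively on $\pi^{-1}(x)$, the group $D_y$ fixes $y$, so $y$ is the unique point of $Y$ above its image $\overline y$ in $Y/D_y$, and $Y\to Y/D_y$ is totally ramified there. Comparing local degrees in the tower $Y\to Y/D_y\to X$ forces the local degree of $Y/D_y\to X$ at $\overline y$ to be $1$, whence $\widehat{\mathcal O}_{Y/D_y,\overline y}\cong\widehat{\mathcal O}_{X,x}$ and, compatibly with boundaries, $(\overline y\in Y/D_y,\Delta_{Y/D_y})\cong(x\in X,\Delta)$. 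Therefore $(y\in Y)\to(\overline y\in Y/D_y)\cong(x\in X,\Delta)$ is a connected Galois $D_y$-cover that is admissible with respect to $\Delta$. On the other hand, an elementary computation (one has $n_P(1-d_P)>\tfrac12$ always, since $n_P=\lfloor1/(1-d_P)\rfloor$) shows that the standard approximation of $\Delta_Y$ vanishes along $\pi^{-1}(\supp\Delta)$, so no further ramification over $\supp\Delta$ is admissible on $Y$; together with the maximality of $Y$ this yields $\pi_1^{\rm reg}(Y,\Delta_Y;y)=1$. Hence the $D_y$-cover above is itself the universal cover of $(x\in X,\Delta)$, so $D_y=G$ and $\pi^{-1}(x)=\{y\}$. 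The only nonformal ingredients left are the local-degree bookkeeping and the two Riemann--Hurwitz computations flagged above, all of which are routine.
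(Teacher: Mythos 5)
The paper does not actually give a proof of this proposition; it is stated as a known consequence of the finiteness results of Xu and Braun that it cites just above, so there is no in-text argument to compare against, and your proposal is supplying details the paper leaves to the reader. Your overall strategy is the natural one: normalize $X$ in the function field of the universal admissible cover, use Riemann--Hurwitz and the multiplicativity of log discrepancies under finite covers to check that $(Y,\Delta_Y)$ is klt, and use the decomposition group $D_y$ together with the local isomorphism $\widehat{\mathcal O}_{Y/D_y,\overline y}\cong\widehat{\mathcal O}_{X,x}$ to pin down the fiber over $x$. All of those ingredients are standard and correctly deployed.

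The one step I would flag as a genuine gap is the parenthetical claim that the standard approximation of $\Delta_Y$ vanishes along $\pi^{-1}(\supp\Delta)$. The inequality $n_P(1-d_P)>\tfrac12$ is correct, but the coefficient of $\Delta_Y$ at a component $Q$ over $P$ is $1-e_P(1-d_P)$, so what you actually need is $e_P(1-d_P)>\tfrac12$. You have not shown that the universal cover ramifies with index $e_P$ equal to $n_P$ over every $P$ in $\supp\Delta_s$, and if $e_P<n_P$ for some $P$, then $e_P(1-d_P)$ can easily be at most $\tfrac12$, in which case $\Delta_Y$ has a nonzero standard approximation along $Q$ and your stated reason for $\pi_1^{\rm reg}(Y,\Delta_Y;y)=1$ does not apply. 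The conclusion is nevertheless correct, but it should be obtained by the more robust observation that the composite of admissible covers is admissible: if $Y'\to Y$ ramifies with index $e'_Q\le m_Q:=\lfloor 1/(e_P(1-d_P))\rfloor$ over $Q$, then $Y'\to X$ ramifies over $P$ with index $e_Pe'_Q\le e_P\lfloor 1/(e_P(1-d_P))\rfloor\le 1/(1-d_P)$, and since $e_Pe'_Q$ is an integer this forces $e_Pe'_Q\le\lfloor 1/(1-d_P)\rfloor=n_P$; hence $Y'\to X$ is admissible for $(X,\Delta)$ near $x$, so maximality of $Y$ makes $Y'\to Y$ trivial. Replacing the parenthetical by this argument closes the gap without any hypothesis on $e_P$. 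You should also state explicitly that you are invoking the comparison between $\pi_1^{\rm reg}$ computed on Zariski neighborhoods (the definition in the paper, via an inverse limit) and the same group computed from the completion $\widehat{\mathcal O}_{X,x}$, since it is only the latter that is immediately visible from $\widehat{\mathcal O}_{Y/D_y,\overline y}\cong\widehat{\mathcal O}_{X,x}$; this comparison is standard for excellent local rings but is the precise ingredient that lets you transport the $D_y$-cover into the Galois category whose fundamental group is $G$ and then invoke uniqueness of the universal object.
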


\subsection{Jordan Property}\label{subsec:jp}
In this subsection, we recall the Jordan property for the automorphism group of Fano type varieties, and for the regional fundamental group of klt singularities.
The theorems of this section are obtained relying on the work of Prokhorov and Shramov~\cites{PS14,PS16}.

\begin{definition}
{\em 
Let $\mathcal{G}$ be a class of finite groups.
We say that $\mathcal{G}$ satisfy the Jordan property with rank $k$
if there exists a constant $J:=J(\mathcal{G})$, only depending on $\mathcal{G}$, so that
for every group $G\in \mathcal{G}$, we can find an abelian normal subgroup
$A\leqslant G$ of index at most $J$ and rank at most $k$.
}
\end{definition}

The following theorems are due to Prokhorov and Shramov.

\begin{theorem}[cf.~\cite{PS16}*{Theorem 4.2}]\label{thm:fixed-point-RC}
Let $n$ be a positive integer.
There exists a function $N(n)$, only depending on $n$, satisfying the following.
Let $G$ be a finite group acting on a rationally connected variety of dimension $n$.
Then, $G$ admits a subgroup of index at most $N(n)$
which acts with a fixed point.
\end{theorem}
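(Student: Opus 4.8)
*Let $n$ be a positive integer. There exists $N(n)$, depending only on $n$, such that any finite group $G$ acting on an $n$-dimensional rationally connected variety admits a subgroup of index at most $N(n)$ which acts with a fixed point.*

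\textbf{Proof proposal.} The plan is to reduce, by equivariant resolution and the MMP, to the situation of a $G$-Mori fiber space, and then to bootstrap a fixed point by induction on dimension, using the Jordan property and the theory of $G$-equivariant complements to produce a $G$-invariant section of the fibration. First I would take a $G$-equivariant resolution $\tilde X \to X$; since $X$ is rationally connected, so is $\tilde X$, and a fixed point on $\tilde X$ maps to a fixed point on $X$, so we may assume $X$ is smooth projective. Running a $G$-equivariant MMP for $K_X$ (which exists by the standard equivariant MMP machinery, cf.\ the techniques referenced in subsection~\ref{subsec:g-equiv-mmp}) and using rational connectedness to rule out a minimal model, we arrive at a $G$-equivariant Mori fiber space $X' \to C$ with $\dim C < \dim X'$ and $C$ again rationally connected. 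Replacing $G$ by a bounded-index subgroup $G_C \leqslant G$ that acts with a fixed point $c \in C$ (inductive hypothesis applied to the lower-dimensional base, the base case being a point, where the statement is vacuous), we replace $X'$ by the fiber $F = X'_c$, which is rationally connected of strictly smaller dimension, and recurse.

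The genuinely new input must handle the bottom of the induction when $\dim C = 0$, i.e.\ $X'$ is a $G$-Fano (a Mori fiber space over $\operatorname{Spec}\kk$ with $-K_{X'}$ ample and $\qq$-factorial terminal, hence klt, singularities). Here I would invoke boundedness: the total space $X'$ of a terminal Fano of dimension $n$ lies in a bounded family (Birkar's theorem, recalled above), so the Hilbert polynomial and hence the ambient projective embedding is controlled, and $G$ embeds into $\operatorname{Aut}(X') \leqslant \operatorname{PGL}_{M}$ for a bounded $M$. Then classical Jordan for $\operatorname{PGL}_M(\kk)$ gives a bounded-index abelian normal subgroup $A \leqslant G$, and an abelian group acting on an $n$-dimensional variety always has a fixed point — e.g.\ diagonalize the action, or note that the fixed locus of a torus-like abelian group on a projective variety is nonempty by a Borel–fixed-point/Białynicki-Birula type argument, or more elementarily: take any point, its $A$-orbit spans a small subvariety, and induct. (Alternatively one can run this directly: $A$ acts on $X'$, take a minimal $A$-invariant subvariety; minimality forces it to be a point.)

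\textbf{Main obstacle.} The delicate point is making the reduction to a Mori fiber space genuinely $G$-equivariant while keeping the index bound \emph{uniform in $n$ only} — one must check that the equivariant MMP does not force passing to subgroups of unbounded index, and that rational connectedness is preserved at every step (for $\tilde X$, for $X'$, for the base $C$, and for the fiber $F$); the last is the Graber–Harris–Starr theorem on rational connectedness of fibers and bases. The other subtlety is the base case: one needs Birkar's boundedness of terminal Fano varieties (stated above) together with the fact that automorphism groups of a bounded family of projective varieties sit inside a single $\operatorname{PGL}_M$, so that the bound on the index of the abelian subgroup, coming from classical Jordan for $\operatorname{PGL}_M$, depends only on $M = M(n)$ and hence only on $n$. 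Tracking these bounds through the induction — each step multiplies the index by a quantity depending only on the dimension — yields the desired $N(n)$.
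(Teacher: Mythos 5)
The statement is cited from Prokhorov--Shramov \cite{PS16} and the paper does not re-prove it, so there is no in-paper argument to compare against. Your outline reproduces the broad shape of the Prokhorov--Shramov reduction (equivariant MMP to a $G$-Mori fiber space, induction on dimension, boundedness of terminal Fanos, classical Jordan for $\operatorname{PGL}_M$), but it breaks down at the crucial base case, and the breakdown is not a technicality: it is precisely where the theorem's real content lives.

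The claim that ``an abelian group acting on an $n$-dimensional variety always has a fixed point'' is false, even on $\mathbb{P}^n$. The image in $\operatorname{PGL}_3$ of the order-$27$ Heisenberg group is a copy of $\mathbb{Z}/3\times\mathbb{Z}/3$ acting on $\mathbb{P}^2$ with no common fixed point: its two commuting generators are $\operatorname{diag}(1,\omega,\omega^2)$ and the cyclic coordinate permutation, and the first fixes exactly the three coordinate points, which the second permutes cyclically. Likewise the Klein four group in $\operatorname{PGL}_2$ acts on $\mathbb{P}^1$ with no common fixed point. None of your fallback arguments repairs this. A finite abelian subgroup of a reductive group need not lie in any torus (the Heisenberg image above is exactly such an example), so Borel's fixed point theorem and Bia\l ynicki-Birula theory do not apply; ``diagonalize the action'' presupposes a linearization you do not have on a projective variety; and a minimal $A$-invariant closed subvariety need not be a point, since nothing forces the dimension to drop. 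The entire difficulty of the theorem is to show that a \emph{further} bounded-index subgroup of the Jordan-abelian subgroup has a fixed point, and identifying that subgroup requires genuine input. The Prokhorov--Shramov argument uses the holomorphic Lefschetz fixed-point formula to show $X^g\neq\emptyset$ for every individual finite-order $g$ on a smooth projective rationally connected $X$ (since $\chi(X,\mathcal{O}_X)=1$ and all higher $H^i(X,\mathcal{O}_X)$ vanish), together with a careful analysis of the fixed loci of commuting elements --- bounding the number and behaviour of components as one intersects $X^{g_1}\cap X^{g_2}\cap\cdots$ --- to extract a common fixed point for a bounded-index subgroup. That machinery is entirely absent from your proposal.

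There is also a secondary gap in the inductive step: the fiber $F=X'_c$ over a fixed point $c\in C$ is a \emph{special} fiber of the Mori fiber space. Graber--Harris--Starr controls the general fiber; the special fiber need not be rationally connected, irreducible, or reduced, and $G$ might permute its components, so you cannot simply pass to $F$ and recurse without additional care.
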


\begin{theorem}\label{thm:Jordan-FT}
The class $\mathcal{G}$ of finite automorphisms of Fano type varieties of dimension $n$
satisfy the Jordan property with rank $n$.
\end{theorem}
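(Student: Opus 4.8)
The plan is to reduce, after passing to a $G$-equivariant resolution, to a finite group acting on the tangent space at a \emph{smooth} fixed point, and then to quote the classical Jordan theorem for $\operatorname{GL}_n$; the rationally connected fixed point theorem (Theorem~\ref{thm:fixed-point-RC}) is the one nontrivial input. So fix $n$, let $X$ be a Fano type variety of dimension $n$, and let $G\leqslant\operatorname{Aut}(X)$ be finite.

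First I would pass to a $G$-equivariant resolution $\widetilde{X}\to X$, which exists by the functoriality of Hironaka's resolution and which we may take to be projective; since $X$ is of Fano type it is rationally connected, and rational connectedness is a birational invariant of smooth projective varieties, so $\widetilde{X}$ is smooth, projective and rationally connected. Applying Theorem~\ref{thm:fixed-point-RC} to the action of $G$ on $\widetilde{X}$ yields a subgroup $G_1\leqslant G$ of index at most $N(n)$ together with a point $\widetilde{p}\in\widetilde{X}$ fixed by $G_1$. Because $\widetilde{p}$ is a smooth point and $G_1$ is finite, the induced action of $G_1$ on the Zariski tangent space $T_{\widetilde{p}}\widetilde{X}\cong\mathbb{K}^n$ is faithful: in characteristic zero a finite-order automorphism of a smooth variety which fixes a point and acts trivially on its tangent space is formally linearizable with trivial linear part, hence locally the identity, hence the identity on the irreducible variety $\widetilde{X}$. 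Thus we obtain a faithful representation $G_1\hookrightarrow\operatorname{GL}_n(\mathbb{K})$.

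Now I would invoke the classical Jordan theorem \cite{Jor73}: $G_1$ contains an abelian normal subgroup $A$ of index at most $J(n)$, where $J(n)$ is the Jordan constant of $\operatorname{GL}_n$ and depends only on $n$. As a finite abelian subgroup of $\operatorname{GL}_n(\mathbb{K})$, $A$ is simultaneously diagonalizable, so it embeds into the diagonal torus $\mathbb{G}_m^n$ and therefore $\operatorname{rank}(A)\leqslant n$. Since $A$ need only be normal in $G_1$, I replace it by its normal core $\bigcap_{g\in G}gAg^{-1}$, which is normal in $G$, abelian, of rank at most $n$, and of index at most $[G:A]!\leqslant\bigl(N(n)J(n)\bigr)!$ (the action of $G$ on $G/A$ embeds $G$ modulo this core into a symmetric group on $[G:A]$ letters). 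Setting $J:=\bigl(N(n)J(n)\bigr)!$, which depends only on $n$, exhibits the Jordan property with rank $n$ for the class $\mathcal{G}$.

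The genuinely nontrivial ingredient is Theorem~\ref{thm:fixed-point-RC} of Prokhorov--Shramov \cite{PS16}, which furnishes the almost-fixed point after resolving; the rest is the classical Jordan theorem plus the elementary faithfulness of the tangent action at a smooth fixed point. The only steps requiring a little care are that the resolution can be taken $G$-equivariant and projective, that rational connectedness descends to $\widetilde{X}$, and the normal-core passage from $G_1$ back to $G$; none of these is an obstacle.
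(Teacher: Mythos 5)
The paper states Theorem~\ref{thm:Jordan-FT} without proof, attributing it to Prokhorov--Shramov~\cites{PS14,PS16}, so there is no in-paper argument to compare against. Your reconstruction is the standard derivation and it is correct: equivariant resolution to pass to a smooth projective model (which remains rationally connected, by Hacon--McKernan/Zhang's theorem that resolutions of klt log Fanos are rationally connected), Theorem~\ref{thm:fixed-point-RC} to produce a fixed point for a bounded-index subgroup $G_1$, faithfulness of the tangent-space representation at a smooth fixed point in characteristic zero, classical Jordan for $\operatorname{GL}_n$ to get an abelian normal $A\leqslant G_1$ of bounded index, diagonalizability to bound $\operatorname{rank}(A)\leqslant n$, and the normal-core trick (with the factorial index bound) to promote normality from $G_1$ to $G$. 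All the small points you flagged are indeed fine: equivariant projective resolutions exist (the paper itself cites~\cite{AW97} for this), rational connectedness of the resolution is the Hacon--McKernan statement, a subgroup of a finite abelian group of rank $n$ has rank at most $n$, and the final constant $\bigl(N(n)J(n)\bigr)!$ depends only on $n$.
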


\begin{definition}{\em 
We say that a pair $(X,\Delta)$ is a {\em Fano type pair} if there exists
$B\geq \Delta$ so that $(X,B)$ has klt singularities and $-(K_X+B)$ is big and nef.
The {\em log smooth locus} of a Fano type pair is the largest open subset on which $(X,\Delta)$ is log smooth.
}
\end{definition} 

The following theorem is proved in~\cite{BFMS20}.
It is one of the main pieces towards proving the Jordan property
for the regional fundamental group of klt singularities.

\begin{theorem}[cf.~\cite{BFMS20}*{Theorem 3}]\label{thm:Jordan-log-smooth-proj}
The fundamental group of the log smooth locus of Fano type pairs is finite.
Furthermore, the class of fundamental groups of the log smooth locus of Fano type pairs of dimension $n$
satisfy the Jordan property with rank $n$.
\end{theorem}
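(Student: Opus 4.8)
\emph{Strategy.} I would prove the two assertions in turn --- finiteness first, then the Jordan property with rank $n$ --- the second being a short formal consequence of Theorem~\ref{thm:Jordan-FT} once finiteness is available. Fix a Fano type pair $(X,\Delta)$ of dimension $n$, together with a klt boundary $\Delta'\geq\Delta$ for which $-(K_X+\Delta')$ is big and nef, and let $U=X^{\rm sm}$ be its log smooth locus. The first observation is that $X\setminus U$ has codimension at least two in $X$: since $X$ is normal it is smooth at the generic point of every prime divisor, and $\Supp\Delta'$ is simple normal crossing at the generic point of each of its components, so every prime divisor meets $U$. By purity of the branch locus this gives $\pi_1^{\rm alg}(U)\cong\pi_1^{\rm alg}(X_{\rm reg})$, and every finite connected cover of $U$ extends to a finite cover $Y\to X$ that is étale in codimension one. (If one prefers the orbifold reading of the log smooth locus, in which covers may ramify along $\lfloor\Delta\rfloor$ with the prescribed indices, the only change below is to invoke Lemma~\ref{lem:cover-FT-is-FT} in place of the bare Riemann--Hurwitz identity.)

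\emph{Finiteness.} To bound $\pi_1^{\rm alg}(X_{\rm reg})$ I would reduce to the local statement by coning over a Fano model. Let $f\colon X\to X_0$ be the ample model of the semiample class $-(K_X+\Delta')$; then $(X_0,\Delta'_0)$ with $\Delta'_0=f_*\Delta'$ is a klt pair for which $-(K_{X_0}+\Delta'_0)$ is ample. The orbifold cone $C$ over $(X_0,\Delta'_0)$, polarized by its anti-log-canonical divisor, is a klt singularity (a standard fact, e.g. by Kollár's criterion for cone singularities), so by \cite{Xu14} its regional fundamental group $G_0:=\pi_1^{\rm reg}(C,\Delta'_C;o)$ is finite. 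Given a connected cover $Y\to X$ étale in codimension one, pushing it forward along $X\to X_0$ and then to the cone yields a connected cover of $C$ which is again étale in codimension one, since the images of the $f$-exceptional divisors, together with $X_{0,\rm sing}$ and the cone vertex, all have codimension at least two in $C$. Hence $\deg(Y/X)\leq|G_0|$, a bound independent of $Y$, and therefore $\pi_1^{\rm alg}(U)=\pi_1^{\rm alg}(X_{\rm reg})$ is finite. As this argument makes no reference to the dimension, it proves the first assertion for all Fano type pairs.

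\emph{Jordan property.} Now $G:=\pi_1^{\rm alg}(U)$ is finite. Its universal cover extends to a finite Galois cover $\pi\colon Y\to X$ with Galois group $G$ which is étale in codimension one, so $K_Y=\pi^*K_X$ and $(Y,\pi^*\Delta')$ is klt with $-(K_Y+\pi^*\Delta')$ big and nef; hence $Y$ is a Fano type variety of dimension $n$ (this is Lemma~\ref{lem:cover-FT-is-FT} with empty ramification). Thus $G={\rm Gal}(Y/X)\leqslant{\rm Aut}(Y)$ is a finite automorphism group of an $n$-dimensional Fano type variety, and Theorem~\ref{thm:Jordan-FT} furnishes a constant $J(n)$, depending only on $n$, and a normal abelian subgroup $A\leqslant G$ with $[G:A]\leq J(n)$ and $\rank A\leq n$. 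Since $J(n)$ is independent of $(X,\Delta)$, the class of fundamental groups of log smooth loci of $n$-dimensional Fano type pairs satisfies the Jordan property with rank $n$.

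\emph{Where the difficulty lies.} The finiteness step is the substantial one: it is the global avatar of Xu's finiteness of regional fundamental groups, and the reduction to the local case is where care is needed --- one must pass to the ample model of an anti-log-canonical class so that the orbifold cone is klt, and then check that the divisors contracted by $f$, being contracted to loci of codimension at least two in $X_0$, cannot enlarge the quasi-étale fundamental group. By contrast, the Jordan property with the sharp rank bound $n$ is essentially automatic from Theorem~\ref{thm:Jordan-FT}, because a cover étale in codimension one of a Fano type variety is again Fano type of the same dimension. (An alternative to Theorem~\ref{thm:Jordan-FT} here, also giving rank at most $n$, is to apply the Prokhorov--Shramov fixed point Theorem~\ref{thm:fixed-point-RC} to the rationally connected variety $Y$ and analyze the linear action of a bounded-index stabilizer on the Zariski tangent space at its fixed point, treating a singular fixed point via an equivariant plt blow-up; this, however, needs extra work in the singular case.)
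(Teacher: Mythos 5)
This result is quoted from \cite{BFMS20}*{Theorem 3} and the paper offers no proof of its own to compare against, so the relevant comparison is with the argument in that reference. Your reconstruction captures the right ideas and closely matches how the result is actually established: finiteness by passing to a Fano model and coning, so that Xu's finiteness of the regional fundamental group of a klt singularity applies; and the rank-$n$ Jordan property by observing that the quasi-\'etale universal cover $Y\to X$ remains of Fano type of the same dimension and that its Galois group is realized inside ${\rm Aut}(Y)$, to which Theorem~\ref{thm:Jordan-FT} applies. Your parenthetical ``alternative'' route via the Prokhorov--Shramov fixed point theorem (Theorem~\ref{thm:fixed-point-RC}) is, in fact, the route \cite{BFMS20} takes to prove Theorem~\ref{thm:Jordan-FT} itself, so deducing the present statement from Theorem~\ref{thm:Jordan-FT} versus directly from Theorem~\ref{thm:fixed-point-RC} is mostly a matter of which intermediate statement one treats as a black box.

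One step you skip over deserves more care: the passage from a quasi-\'etale cover $Y_0\to X_0$ to a quasi-\'etale cover of the cone $C$. You take the orbifold cone over $Y_0$ with respect to the pulled-back polarization, but the punctured cone $C\setminus\{o\}$ is only a $\mathbb{G}_m$-bundle over $X_0$ where the polarizing $\qq$-divisor is Cartier; at points where it is merely $\qq$-Cartier there are multiple fibers, and one must check that the cone map $C_{Y_0}\to C$ introduces no new ramification divisor along them. Since $Y_0\to X_0$ is \'etale in codimension one, the Weil indices of the polarization match along every divisor, so no such ramification occurs --- but this should be said, not left implicit. Likewise, for the orbifold reading of the log smooth locus (covers ramified along $\lfloor\Delta\rfloor$ to bounded order), the identification with $\pi_1^{\rm alg}(X_{\rm reg})$ in your opening paragraph does not hold; in that case one must cone over the pair $(X_0,\Delta_0')$ and appeal to Xu's theorem for the \emph{regional} fundamental group of klt pairs directly, which is what \cite{BFMS20} does. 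With these two points made explicit, your sketch is a faithful account of the cited proof.
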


The following theorem is the Jordan property for the regional fundamental group
of klt singularities. It is stated in terms of complex germs.
However, a similar proof works over algebraically closed fields of characteristic zero.

\begin{theorem}[cf.~\cite{BFMS20}*{Theorem 2}]\label{thm:jordan-klt}
The class of regional fundamental group of complex klt singularities of dimension $n$
satisfy the Jordan property with rank $n$.
\end{theorem}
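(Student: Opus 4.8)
The final statement to prove is Theorem~\ref{thm:jordan-klt}: the class of regional fundamental groups of complex klt singularities of dimension $n$ satisfies the Jordan property with rank $n$. Here is the plan.

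\medskip

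The strategy is a global-to-local reduction, passing from the singularity to a projective Fano type pair. First I would apply a $G$-equivariant plt blow-up where $G:=\pi_1^{\rm reg}(X,\Delta;x)$ (using Lemma~\ref{lem:existence-g-inv-plt} after replacing $G$ by a finite quotient, or working directly with the profinite group acting compatibly on a tower of covers): this produces $\pi\colon Y\to X$ with a prime exceptional divisor $E$ over the point $x$, such that $(Y,\Delta_Y+E)$ is plt. Restricting to $E$ and applying adjunction, one obtains a pair $(E,\Delta_E)$ where $\Delta_E$ is the different; since $-E$ is ample over $X$ and $(Y,\Delta_Y+E)$ is plt, the pair $(E,\Delta_E)$ is of Fano type of dimension $n-1$. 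The key point is that the regional fundamental group $\pi_1^{\rm reg}(X,\Delta;x)$ is controlled — up to a bounded index — by the fundamental group of the log smooth locus of the Fano type pair $(E,\Delta_E)$; this comparison is exactly the content of the global-to-local correspondence for plt blow-ups (cf.~\cite{Xu14}, and as used in~\cite{BFMS20}). More precisely, there is a surjection (or an exact-sequence relationship) between $\pi_1^{\rm reg}(X,\Delta;x)$ and $\pi_1^{\rm alg}$ of the log smooth locus of $(E,\Delta_E)$, with kernel/cokernel cyclic (coming from the codimension-one structure near $E$), hence of controllable size.

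\medskip

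Next I would invoke Theorem~\ref{thm:Jordan-log-smooth-proj}: the class of fundamental groups of the log smooth locus of Fano type pairs of dimension $n-1$ satisfies the Jordan property with rank $n-1$. Combining with the comparison above, one deduces that $\pi_1^{\rm reg}(X,\Delta;x)$ has an abelian normal subgroup of bounded index and rank at most $(n-1)+1=n$ — the $+1$ accounting for the cyclic contribution from the exceptional divisor. One must be careful that "abelian of rank $\le n-1$ extended by cyclic of bounded order" yields "almost abelian of rank $\le n$": the abelian subgroup $A$ of the fundamental group of the log smooth locus of $E$ pulls back, together with the cyclic factor, to a subgroup that is abelian (since the cyclic part is central in the relevant tower) of rank $\le n$, and the index stays bounded by a function of $n$ since both the plt-blow-up-index bounds and the Jordan constant for dimension $n-1$ depend only on $n$. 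This gives the Jordan property with rank $n$ for the regional fundamental group.

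\medskip

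The main obstacle, and the step requiring the most care, is the comparison between $\pi_1^{\rm reg}(X,\Delta;x)$ and the fundamental group of the log smooth locus of the Fano type pair $(E,\Delta_E)$ obtained from the plt blow-up — making precise the exact sequence relating them and controlling the cyclic kernel/cokernel uniformly in terms of $n$ alone. One subtlety is that the plt blow-up is only defined after replacing $G$ by a finite quotient, so strictly one should argue at the level of each finite cover in the inverse system and pass to the limit; since the index bound is uniform this causes no real trouble, but it must be spelled out. A second subtlety is the base case $n=1$ (where $\pi_1^{\rm reg}$ is trivial for a smooth point, so the statement is vacuous) and ensuring the induction on dimension is set up correctly so that the constants compose into a single $J(n)$. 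The remaining ingredients — existence of $G$-equivariant plt blow-ups (Lemma~\ref{lem:existence-g-inv-plt}), Fano type of the different (Proposition-type adjunction), and the Jordan property in the projective log smooth setting (Theorem~\ref{thm:Jordan-log-smooth-proj}) — are available as cited, so the proof is a matter of assembling them with the index-tracking done carefully.
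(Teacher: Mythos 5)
The paper does not actually prove Theorem~\ref{thm:jordan-klt}: it is recalled as a citation to \cite{BFMS20}*{Theorem 2}, exactly as Theorems~\ref{thm:Jordan-FT} and~\ref{thm:Jordan-log-smooth-proj} are, so there is no internal proof here to compare your sketch against. That said, your global-to-local outline---a $G$-equivariant plt blow-up at $x$ producing an exceptional Fano type pair $(E,\Delta_E)$ one dimension down, with $G$ mapping onto a finite group acting on that pair and a cyclic kernel $C$ acting along the normal direction---is the right circle of ideas; it matches the reduction carried out in the proof of Theorem~\ref{thm:regional-general} and, in broad strokes, the strategy of \cite{BFMS20}.

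There is, however, a genuine gap in your final step. You claim that the pre-image in $G$ of the abelian normal subgroup $A_H\leqslant H:=G/C$ (of rank $\le n-1$ and bounded index, furnished by Theorem~\ref{thm:Jordan-log-smooth-proj}) is abelian of rank $\le n$, ``since the cyclic part is central in the relevant tower.'' Centrality of $C$ does \emph{not} force a central extension of an abelian group to be abelian: the extraspecial groups of order $p^3$ (Heisenberg groups over $\mathbb{Z}/p$) are central extensions of $(\mathbb{Z}/p)^2$ by $\mathbb{Z}/p$ whose maximal abelian subgroups have index $p$, so the class of all such extensions has no Jordan bound. Thus the purely group-theoretic inference ``$(n-1)+1=n$'' fails, and one needs geometric input beyond ``$C$ is cyclic and central and $A_H$ is abelian'' to exclude Heisenberg-type extensions; that is exactly where the substantive work of \cite{BFMS20} lies, and your sketch does not supply it. A smaller gap: the cyclicity of the kernel $C$ is asserted rather than argued. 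It holds because $C$ acts faithfully on $Y'$ yet trivially on $E_Y$, and since $(Y',\Delta_{Y'}+E_Y)$ is plt, $Y'$ is smooth and $E_Y$ is a smooth divisor at a general point of $E_Y$, so $C$ acts faithfully on the one-dimensional normal space there and hence embeds in $\mathbb{G}_m$; a proof needs to say this.
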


It is well-known that the regional fundamental group surjects into the local fundamental group of the singularity, so we obtain the following theorem.

\begin{theorem}[cf.~\cite{BFMS20}]
The class of local fundamental groups of complex klt singularities
of dimension $n$ satisfy the Jordan property with rank $n$.
\end{theorem}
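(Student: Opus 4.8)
The plan is to deduce this statement from Theorem~\ref{thm:jordan-klt} together with the well-known surjection from the regional fundamental group onto the local fundamental group of the singularity. First I would fix a complex klt singularity $x\in X$ of dimension $n$, which means that the pair $(X,0)$ is klt; then Theorem~\ref{thm:jordan-klt} applies to $G:=\pi_1^{\rm reg}(X,0;x)$ and produces a universal constant $J:=J(n)$ together with an abelian normal subgroup $A\trianglelefteq G$ with $[G:A]\leq J$ and $\rank(A)\leq n$.

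Next I would invoke the natural surjection $q\colon \pi_1^{\rm reg}(X,0;x)\twoheadrightarrow \pi_1^{\rm loc}(X;x)$. This comes from the inclusion of the smooth locus of a small neighborhood of $x$ into the punctured neighborhood $U\setminus\{x\}$, whose complement is a closed analytic subset of complex codimension at least one, so the inclusion induces a surjection on fundamental groups (equivalently, on their profinite completions), all of which are finite by the theorems of Xu and Braun. Setting $H:=\pi_1^{\rm loc}(X;x)=q(G)$, the image $q(A)$ is abelian, it is normal in $H$ since it is the image of a normal subgroup under a surjective homomorphism, it has rank at most $n$ because a quotient of a finite abelian group of rank at most $n$ again has rank at most $n$, and $[H:q(A)]\leq[G:A]\leq J$ because $q$ induces a surjection $G/A\twoheadrightarrow H/q(A)$. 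Hence $H$ satisfies the conclusion of the Jordan property with the same constant $J(n)$ and rank $n$, and since this construction is uniform over all $n$-dimensional complex klt singularities, the stated class satisfies the Jordan property with rank $n$.

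I do not expect a serious obstacle here. The only non-formal input is the surjection $q$, which is classical, and the transfer of the Jordan property along a quotient map is purely group-theoretic; indeed one could first record the lemma that any quotient of a group satisfying the Jordan property with rank $k$ again satisfies it with rank $k$ and the same index bound, and then apply it to $q$. The one point requiring a little care is matching conventions: one should confirm that taking $\Delta=0$ in the definition of the regional fundamental group is exactly the setting of Theorem~\ref{thm:jordan-klt} and that $\pi_1^{\rm loc}(X;x)$ as used in the statement agrees with the target of $q$.
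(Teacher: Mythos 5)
Your proposal is correct and matches the paper's own argument: the paper derives this theorem in a single sentence from Theorem~\ref{thm:jordan-klt} together with the well-known surjection of the regional fundamental group onto the local fundamental group, which is precisely the route you take (and your group-theoretic lemma about transporting the Jordan property along a surjection is the implicit content of that sentence).
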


The above theorem was conjectured by Shokurov.
It is also conjectured that a better bound of the rank can be given in terms of regularity of the klt singularity (see, e.g.,~\cite{Sho00}*{\S 7}).
As mentioned above, this article aims to understand how the existence
of a large abelian group in the regional fundamental group of a klt singularity reflects in its geometry. The class of lce-tq singularities is a prototype of singularities
with large abelian subgroups of rank $n$ in their regional fundamental group.

\subsection{Groups with large $k$-generation}\label{subsec:k-gen}
In this subsection, we recall the definition of the $k$-generation of a finite group and study some properties of finite groups with large $k$-generation.
This concept is already introduced in~\cite{Mor20}.
Recall that we take the rank of a group to be the minimum cardinality of a set of generators.

\begin{definition}
{\em 
We define the {\em rank of up to index $N$}
of a group $G$ to be minimum rank among subgroups of $G$ of index at most $N$.
We denote the rank up to index $N$ by $r_N(G)$.
If $N=0$, then $\rank(G)$ is the usual rank of a finite group, i.e.,
the minimum number of generators.
}
\end{definition}

\begin{definition}
{\em 
Let $G$ be a finite group.
Let $k$ be a positive integer which is bounded above by the rank of $G$.
We define the {\em $k$-generation order} of $G$ to be
\[
g_k(G):=
\max\left\{
N \mid |G|\geq N
\text{ and }
r_N(G)\geq k
\right\}.
\]
Note that $g_k(G)\geq 1$ for each $k\leq \rank(G)$.
Furthermore, having large $1$-generation is the same as having large cardinality.
}
\end{definition}

The following proposition implies that large $k$-generation
is a hereditary property for subgroups of bounded index.

\begin{proposition}\label{prop:k-gen-bounded-subgroup}
Let $G$ be a finite group with $g_k(G)\geq N$
Let $H$ be a subgroup of index at most $J$ with $J<N$.
Then $H$ has $g_k(H)\geq N/J$.
\end{proposition}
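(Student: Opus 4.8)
The plan is to trace through the relevant definitions and use the fact that an index-$N$ subgroup of $G$ intersected with $H$ still has controlled index in $H$. First I would unwind what $g_k(H) \geq N/J$ means: I must exhibit, for every positive integer $M \leq N/J$ with $|H| \geq M$, a subgroup of $H$ of index at most $M$ whose rank is at least $k$; equivalently, I must show $r_M(H) \geq k$ for all such $M$, which is the same as showing that every subgroup of $H$ of index at most $M$ needs at least $k$ generators.

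The key step is the following: suppose $K \leqslant H$ has $[H:K] \leq M \leq N/J$. Then $[G:K] = [G:H]\cdot[H:K] \leq J \cdot M \leq J \cdot (N/J) = N$. Since $g_k(G) \geq N$ and $|G| \geq |H| \geq M$ (so in particular $|G| \geq N$ when $N \leq |G|$; one should note $g_k(G)\geq N$ already forces $|G|\geq N$), the hypothesis $r_N(G) \geq k$ applies: every subgroup of $G$ of index at most $N$ has rank at least $k$. In particular $\rank(K) \geq k$. Since $K$ was an arbitrary subgroup of $H$ of index at most $M$, we get $r_M(H) \geq k$. As this holds for every $M$ in the allowed range, $g_k(H) \geq \lfloor N/J\rfloor \geq N/J$ in the sense intended (with the convention that $g_k$ is defined via the max over integers $N$ with the stated property).

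I would also make sure the cardinality side-conditions in the definition of $g_k$ are handled: $g_k(G) \geq N$ presupposes $|G| \geq N$, and we need $|H| \geq N/J$; but $|H| = |G|/[G:H] \geq N/J$, so this is automatic. One should also check $k \leq \rank(H)$ so that $g_k(H)$ is even defined — this follows since $\rank(H) \geq k$ by taking $K = H$ (index $1 \leq N/J$) in the argument above, assuming $1 \leq N/J$, i.e. $J \leq N$, which is part of the hypothesis $J < N$.

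I do not expect a serious obstacle here; the proof is essentially the multiplicativity of the index combined with the definitions. The only mild subtlety is bookkeeping around floors and the cardinality conditions built into the definition of $g_k$, and the observation that $r_N(G) \geq k$ is a \emph{downward-closed} condition in the index parameter — i.e. $r_N(G) \geq k$ implies $r_M(G) \geq k$ for $M \leq N$ — which is what lets us pass from index-at-most-$N$ in $G$ to index-at-most-$M$ in $H$ without loss.
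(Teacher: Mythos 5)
Your argument is correct and is essentially the same as the paper's: both rest on the multiplicativity of index, $[G:K] = [G:H]\,[H:K] \leq JM \leq N$, combined with $r_N(G) \geq k$ to force $\rank(K) \geq k$; you argue directly while the paper argues by contradiction, which is an immaterial stylistic difference. (One small slip in your bookkeeping remark: $\lfloor N/J\rfloor \leq N/J$, not $\geq$; but this rounding ambiguity is present in the paper's own statement and does not affect the substance.)
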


\begin{proof}
Note that $\rank(H)\geq k$
and $|H|\geq N/J$.
Assume that $g_k(H)<N/J$.
Then, we have that $r_{N/J}(H)<k$.
So, there exists a subgroup $A$ of $H$
of index at most $N/J$ which is generated by less than $k$ elements.
Hence, $A$ is a subgroup of $G$ of index at most $N$
which is generated by less than $k$ elements.
This contradicts the fact that $r_N(G)\geq k$.
Thus, we conclude that $g_k(H)\geq N/J$.
\end{proof}

The following lemma shows that abelian groups of rank at most $n$
with large $n$-generation contains $\zz_m^n$ with some large $m$.

\begin{lemma}\label{lem:bounded-rank-large-k-generation}
Let $A$ be an abelian group of rank at most $n$ 
with $g_n(A)\geq N$.
Then, $A$ contains $\zz_N^m$ as a subgroup
for some $m\geq N$.
\end{lemma}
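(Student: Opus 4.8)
The plan is to read the claim directly off the invariant factor decomposition of the finite abelian group $A$, after unwinding the meaning of $g_n(A)\geq N$.

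First I would translate the hypothesis. By definition, $g_n(A)\geq N$ forces $|A|\geq N$ and $r_N(A)\geq n$; that is, every subgroup of $A$ of index at most $N$ needs at least $n$ generators. Applying this to the index-one subgroup $A$ itself gives $\rank(A)\geq r_N(A)\geq n$, which together with the standing hypothesis $\rank(A)\leq n$ yields $\rank(A)=n$. Hence the invariant factor decomposition has the form $A\simeq \zz_{d_1}\times\cdots\times\zz_{d_n}$ with $2\leq d_1\mid d_2\mid\cdots\mid d_n$, and each $d_i\geq 2$ precisely because the rank equals $n$.

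Next I would use a rank-drop subgroup to bound $d_1$ from below. Consider $B:=\{(a_1,\dots,a_n)\in A\mid a_1=0\}\simeq \zz_{d_2}\times\cdots\times\zz_{d_n}$, a product of $n-1$ cyclic groups, so $\rank(B)\leq n-1$, while $[A:B]=d_1$. If $d_1\leq N$ held, then $B$ would be a subgroup of index at most $N$ generated by fewer than $n$ elements, contradicting $r_N(A)\geq n$. Therefore $d_1>N$; in particular $d_1\geq N$. Finally I would exhibit the required diagonal copy: since $d_1\mid d_i$ for every $i$, the subgroup $(d_i/d_1)\zz_{d_i}\leq \zz_{d_i}$ is cyclic of order $d_1$, so $\prod_{i=1}^n (d_i/d_1)\zz_{d_i}\simeq \zz_{d_1}^n$ sits inside $A$. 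Setting $m:=d_1$ then gives a subgroup isomorphic to $\zz_m^n$ with $m\geq N$, as desired.

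I do not expect a genuine obstacle here; this is elementary structure theory of finite abelian groups. The only step requiring a little care is the initial reduction, where one must apply the monotonicity of $r_\bullet$ and the inequality $\rank(A)\geq r_N(A)$ in the correct direction and be careful that, because the rank is exactly $n$, every invariant factor $d_i$ is at least $2$ — this last point is what makes the subgroup $B$ genuinely witness a drop in the number of generators.
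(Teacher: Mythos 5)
Your proof is correct and follows the same route as the paper: write the invariant factor decomposition $A\simeq \zz_{d_1}\oplus\cdots\oplus\zz_{d_n}$, deduce $d_1\geq N$ from $g_n(A)\geq N$, and exhibit $\zz_{d_1}^n\leq A$. You simply fill in the details the paper leaves implicit (the rank-drop subgroup forcing $d_1>N$ and the explicit diagonal copy), and you correctly read the lemma's conclusion as $\zz_m^n\leq A$ with $m\geq N$, fixing the typo in the statement.
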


\begin{proof}
Write $A\simeq \zz_{m_1}\oplus\dots\oplus\zz_{m_n}$
where $m_1\mid m_2\mid \dots \mid m_n$.
Since $g_n(A)\geq N$, we have that 
$m_1\geq N$.
So, we conclude that $\zz_{m_1}^n\leqslant A$ with $m_1\geq N$.
\end{proof}

We will use the following lemma to study the induced actions
on the fiber and base of an equivariant Mori fiber space.

\begin{lemma}\label{lem:ses-z_m^n}
Consider an exact sequence
\[
1\rightarrow A_F \rightarrow A \rightarrow A_C \rightarrow 1.
\]
Assume that $g_n(A)\geq N$.
Then, the following conditions hold:
\begin{enumerate} 
\item If $A_F$ has a subgroup of index at most $M$ and rank at most $f$, then $g_{n-f}(A_C)\geq N/M$.
\item If $A_C$ has a subgroup of index at most $M$ and rank at most $c$, then $g_{n-c}(A_F)\geq N/M$.
\end{enumerate}
\end{lemma}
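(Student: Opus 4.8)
The plan is to prove the two statements symmetrically, so I focus on (1); statement (2) follows by the same argument with the roles of $A_F$ and $A_C$ swapped. The goal is: given that $A_F$ has a subgroup $A_F'$ of index at most $M$ and rank at most $f$, produce a subgroup of $A_C$ of index at most $N/M$ and rank at most $n-f$ (this is exactly the content of $g_{n-f}(A_C)\geq N/M$, together with the bound $|A_C|\geq N/M$ which I address below). First I would reduce to the case where $A_F'$ is \emph{normal} in $A$: replace $A_F'$ by the intersection of its finitely many $A$-conjugates inside $A_F$. Since $A_F$ is normal in $A$, each conjugate lies in $A_F$ and has index at most $M$ in $A_F$, so the intersection still has index bounded by a function of $M$; to get the clean bound $M$ one instead takes the core of $A_F'$ in $A$ or simply absorbs this into the constant — since the statement is only used up to $O_n(1)$-type bounds elsewhere, but to be safe I would note that $A_F$ abelian (being a subgroup of the abelian-in-context $A$; indeed in all applications $A$ is abelian) makes $A_F'$ automatically $A_F$-normal, and the issue is only $A$-conjugation. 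In the abelian case $A$ this is moot and $A_F'\trianglelefteq A$ directly.

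The main step is the following rank/index count. Passing to the quotient $\bar A := A/A_F'$, we obtain an exact sequence $1\to A_F/A_F' \to \bar A \to A_C \to 1$ with $|A_F/A_F'|\leq M$. I claim $g_{n-f}(\bar A)\geq N/M$. Indeed $A_F/A_F'$ has rank at most $f$ (a quotient of a rank-$\leq f$... wait — rather, $A_F/A_F'$ has order $\leq M$, hence rank $\leq \log_2 M$; but more usefully, one wants: any subgroup of $\bar A$ of index at most $N/M$ pulls back to a subgroup of $A$ of index at most $N$, which by $g_n(A)\geq N$ has rank $\geq n$, and projecting modulo the contribution of $A_F/A_F'$ — which is generated by at most $f$... ). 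Let me instead run it directly on $A$: suppose for contradiction $g_{n-f}(A_C) < N/M$, i.e.\ either $|A_C| < N/M$ or $r_{N/M}(A_C) < n-f$. In the latter case there is $B_C \leq A_C$ of index $\leq N/M$ generated by fewer than $n-f$ elements. Pull $B_C$ back to $B := $ preimage of $B_C$ in $A/A_F'$... then pull back further to $A$: let $\tilde B\leq A$ be the preimage of $B_C$ under $A\to A_C$. Then $[A:\tilde B] = [A_C:B_C]\leq N/M$, and $\tilde B$ sits in $1\to A_F \to \tilde B \to B_C \to 1$, so $\tilde B$ is generated by (generators of $A_F$) together with lifts of the generators of $B_C$. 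Now replace $\tilde B$ by the preimage $\hat B$ of $B_C$ in the \emph{smaller} group: take $\hat B := \tilde B$ but pass to the subgroup containing $A_F'$ with quotient $B_C$, so $1\to A_F' \to \hat B \to B_C\to 1$ gives $[A:\hat B] = [A_F:A_F']\cdot[A_C:B_C]\leq M\cdot (N/M) = N$, and $\hat B$ is generated by $\operatorname{rank}(A_F')\leq f$ elements together with fewer than $n-f$ lifts of generators of $B_C$, hence by fewer than $n$ elements total. This contradicts $r_N(A)\geq n$ (which holds since $g_n(A)\geq N$). The case $|A_C| < N/M$ is handled the same way: then $|A| = |A_F|\cdot|A_C|$ need not be small, so instead use that $g_n(A)\geq N$ forces $|A|\geq N$ — hmm, this does not immediately bound $|A_C|$. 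I would instead fold the order condition into the rank argument: if $|A_C|<N/M$ take $B_C = A_C$ itself (index $1\leq N/M$), which has rank $\leq \log_2|A_C| < \log_2(N/M)$; for $N$ large this is $< n-f$, giving the same contradiction. Thus $g_{n-f}(A_C)\geq N/M$.

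The hard part — or rather the only place requiring care — is bookkeeping the index when combining the two quotients $A_F' \hookrightarrow A$ and $B_C \hookleftarrow A_C$, making sure the subgroup $\hat B\leq A$ one constructs genuinely has index $\leq N$ and genuinely needs $< n$ generators; the generator count relies on the elementary fact that in an extension $1\to K\to G\to Q\to 1$ one has $\operatorname{rank}(G)\leq \operatorname{rank}(K)+\operatorname{rank}(Q)$, which I would invoke without proof. A secondary subtlety is the normality of $A_F'$ in $A$ needed to form $A/A_F'$; as noted, in the applications $A$ is abelian so this is automatic, and in general one passes to the normal core at the cost of replacing $M$ by $M!$ or using that an index-$M$ subgroup of an abelian group $A_F$ is already characteristic-enough — I would simply state the lemma (as the paper does) in the context where $A$ is abelian, or remark that the normal core fix changes $M$ only by a universal function of $M$, which is harmless for all downstream uses.
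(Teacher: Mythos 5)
Your proposal is in essence the same as the paper's proof: assume $g_{n-f}(A_C)<N/M$, take a subgroup $H_C\leqslant A_C$ of index $\leq N/M$ with $<n-f$ generators, lift those generators to $A$, adjoin the given small subgroup $H_F\leqslant A_F$, and observe that the resulting subgroup of $A$ has index $\leq N$ and $<n$ generators, contradicting $r_N(A)\geq n$. Two remarks.

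First, your detour through normality of $A_F'$ and the quotient $A/A_F'$ is unnecessary. The paper simply forms $H:=\langle a_1,\dots,a_{n-f-1},H_F\rangle\leqslant A$, where the $a_i$ are arbitrary lifts of generators of $H_C$. No quotient is taken, so no normality is needed: the image of $H$ in $A_C$ contains $H_C$, and $H\cap A_F\supseteq H_F$, so $[A:H]=[A_C:\mathrm{im}(H)]\cdot[A_F:H\cap A_F]\leq (N/M)\cdot M=N$, while $H$ visibly has $\leq (n-f-1)+f=n-1$ generators. Relatedly, your claimed exact sequence $1\to A_F'\to\hat B\to B_C\to 1$ (i.e.\ $\hat B\cap A_F=A_F'$ exactly) need not hold — $\hat B\cap A_F$ may be strictly larger — so your ``$=$'' in the index computation should be ``$\leq$''; the conclusion is unaffected.

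Second, and more substantively: both you and the paper need $|A_C|\geq N/M$ to conclude $g_{n-f}(A_C)\geq N/M$ (recall $g_k(G)\geq N$ requires both $|G|\geq N$ and $r_N(G)\geq k$). The paper silently omits this check. You notice the issue, but your proposed fix is wrong: you claim that if $|A_C|<N/M$ then $\mathrm{rank}(A_C)\leq\log_2|A_C|<\log_2(N/M)$ and ``for $N$ large this is $<n-f$.'' That inequality goes the wrong way — $\log_2(N/M)$ grows with $N$, so for $N$ large it is $\geq n-f$, not $<$. The correct way to dispose of the case $|A_C|<N/M$ is: then $[A:A_F]=|A_C|<N/M$, so the given subgroup $H_F\leqslant A_F$ of index $\leq M$ and rank $\leq f$ has index $<N$ in $A$ and rank $\leq f<n$ (the inequality $f<n$ being implicit in the statement, since $g_{n-f}$ requires $n-f\geq 1$), contradicting $r_N(A)\geq n$. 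With that repair your argument is complete.
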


\begin{proof}
We prove the first statement.
Assume this is not the case, then $A_C$ has a subgroup $H_C$ of index at most $N/M$ 
which is generated by less than $n-f$ elements.
We can pick elements $a_1,\dots,a_{n-f-1}\in A$ which generate a subgroup surjecting onto $H_C$.
Let $H_F$ be the subgroup of $A_F$ of index at most $M$ which is generated by at most $f$ elements.
We conclude that $\langle a_1,\dots,a_{n-f-1},H_F\rangle$ is a subgroup of $A$ of index at most $N$ which is generated by less than $n$ elements. This leads to a contradiction.

Now, we prove the second statement.
Let $H_C$ be the subgroup of $A_C$ of index at most $M$ and rank at most $c$.
Let $a_1,\dots,a_c$ be elements of $A$ which maps to the generators of $H_C$.
Assume that $g_{n-c}(A_F)<N/M$.
Then, there exists a subgroup $H_F$ of $A_F$ of index at most $N/M$ which is generated by elements
$h_1,\dots, h_{n-c-1}$.
Consider the group $H=\langle 
a_1,\dots,a_c,h_1,\dots,h_{n-c-1}\rangle$.
Then, $H$ is a subgroup with less than $n$ generators so that its index on $A$ is at most $N$.
This gives a contradiction of the fact that $g_n(A)\geq N$.
\end{proof}

The following lemma will be used in the proof of our main local statement
to prove that the induced action on a plt center has large $k$-generation.

\begin{lemma}\label{lem:quot-cycl-k-gen}
Let $G$ be a finite group with $g_n(G)\geq N$
and $n\geq 2$.
Consider an exact sequence
\[
1\rightarrow C \rightarrow G\rightarrow H\rightarrow 1,
\]
where $C$ is a cyclic group.
Then, we have that $g_{n-1}(H) \geq N$.
\end{lemma}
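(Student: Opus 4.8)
The plan is to argue by contradiction, pulling back a hypothetical ``bad'' subgroup of $H$ to a subgroup of $G$ whose index and rank are both controlled, and contradicting $g_n(G)\geq N$. Recall that $g_n(G)\geq N$ unwinds (using that $r_\bullet$ is nonincreasing in the index bound) to the two statements $|G|\geq N$ and $r_N(G)\geq n$, the latter meaning that \emph{every} subgroup of $G$ of index at most $N$ needs at least $n$ generators. Correspondingly, to prove $g_{n-1}(H)\geq N$ I must establish that $|H|\geq N$ and that $r_N(H)\geq n-1$.

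First I would dispatch the cardinality bound. The subgroup $C\leqslant G$ has index $[G:C]=|H|$. If we had $|H|\leq N$, then $C$ would be a subgroup of $G$ of index at most $N$, and since $C$ is cyclic it has rank at most $1$; because $n\geq 2$ this contradicts $r_N(G)\geq n$. Hence $|H|>N$, in particular $|H|\geq N$. Next, for the rank bound, suppose for contradiction that some $K\leqslant H$ has index at most $N$ and rank at most $n-2$. Let $\widetilde K$ denote the preimage of $K$ under $G\twoheadrightarrow H$, so $[G:\widetilde K]=[H:K]\leq N$, and $\widetilde K$ sits in an exact sequence $1\rightarrow C\rightarrow \widetilde K\rightarrow K\rightarrow 1$. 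Lifting a generating set of $K$ of size at most $n-2$ to $\widetilde K$ and adjoining a generator of the cyclic group $C$ yields a generating set of $\widetilde K$ of size at most $(n-2)+1=n-1$. Thus $\widetilde K$ is a subgroup of $G$ of index at most $N$ with $\rank(\widetilde K)\leq n-1<n$, contradicting $r_N(G)\geq n$. Therefore $r_N(H)\geq n-1$, and together with $|H|\geq N$ this gives $g_{n-1}(H)\geq N$.

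There is no serious obstacle here; the argument is a two-line pullback. The only points that need care are the elementary fact that an extension of $K$ by a cyclic group has rank at most $\rank(K)+1$ (proved by lifting generators), the monotonicity of $r_N$ in $N$ used to interpret the hypothesis and conclusion, and the observation that the role of the assumption $n\geq 2$ is precisely to make the cyclic kernel $C$ ``too small'' to realize $r_N(G)\geq n$, which is what forces $|H|$ to be large.
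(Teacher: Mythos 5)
Your proof is correct and takes essentially the same approach as the paper: you first rule out $|H|$ being too small by observing that $C$ would then be a cyclic subgroup of $G$ of index at most $N$, and you then pull back a hypothetical small-index, low-rank subgroup of $H$ to its preimage in $G$, adjoin a generator of $C$, and contradict $r_N(G)\geq n$. Your write-up is a bit more careful than the paper's (you correctly use rank $\leq n-2$ where the paper's proof has a minor slip writing $s\leq n-1$), but the argument is the same.
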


\begin{proof}
Assume that $g_{n-1}(H)<N$.
If $|H|<N$, then $C$ is a cyclic subgroup of index at most $N$ on $G$,
which leads to a contradiction of $g_n(G)\geq N$.
On the other hand, assume that
$g_{n-1}(H)<N$.
Then, we can find a subgroup $H'\leqslant H$ of index at most $N$
so that $H'$ is generated by less than $n-1$ generators.
Let $h'_1,\dots, h'_s$ be the generators of $H'$ with $s\leq n-1$.
Let $g_1,\dots,g_s$ be elements of $g$ mapping to $h'_1,\dots, h'_s$ respectively.
Let $g$ be the image of the generator of $C$ on $G$.
Then, we have that $g,g_1,\dots,g_s$ are less than $n$ elements
generating a subgroup of $G$ of index at most $N$.
This leads to a contradiction as well.
We conclude that $g_{n-1}(H)\geq N$.
\end{proof}

\subsection{$G$-equivariant MMP}\label{subsec:g-equiv-mmp}
In this subsection, we recall some basic results about the $G$-equivariant minimal model program.

\begin{definition}{\em 
Let $X$ be an algebraic variety and $G\leqslant {\rm Aut}(X)$ be a finite group.
We say that $X$ is {\em $G\qq$-factorial} if every $G$-invariant divisor on $X$ is $\qq$-Cartier.
}
\end{definition}

The two following propositions are straightforward applications of~\cite{BCHM10}*{Corollary 1.4.3}.
We give a proof of the first statement for the sake of completeness.
The proof of the second statement is analogous.

\begin{proposition}\label{prop:existence-g-term}
Let $(X,\Delta)$ be a $G$-invariant klt pair.
Then, there exists a projective birational $G$-equivariant morphism
$\pi\colon X'\rightarrow X$ which extract the divisorial valuations
with log discrepancy in $(0,1]$.
Furthermore, $X'$ is $G\qq$-factorial.
\end{proposition}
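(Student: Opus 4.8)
The plan is to reduce the $G$-equivariant statement to the non-equivariant result \cite{BCHM10}*{Corollary 1.4.3} by passing to the quotient, and then descend back. First I would form the quotient $q\colon X\to X/G=:Y$, which exists since $G$ is a finite group, and push the pair forward: there is a boundary $\Delta_Y$ on $Y$ with $q^*(K_Y+\Delta_Y)=K_X+\Delta$, and $(Y,\Delta_Y)$ is klt by the standard fact cited just above (\cite{Sho93}*{\S2}). Next I would apply the classical extraction result over $Y$: by \cite{BCHM10}*{Corollary 1.4.3} (in the form giving a $\qq$-factorial terminalization-type model), there is a projective birational morphism $\pi_Y\colon Y'\to Y$ with $Y'$ $\qq$-factorial that extracts precisely the divisorial valuations over $Y$ with log discrepancy in $(0,1]$ with respect to $(Y,\Delta_Y)$.

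Then I would take $X'$ to be the normalization of the main component of $Y'\times_Y X$, with $\pi\colon X'\to X$ the induced projective birational morphism. Because $Y'\times_Y X$ carries the $G$-action coming from the second factor, $X'$ is $G$-equivariant over $X$, and the induced morphism $X'\to Y'$ is the quotient by $G$. I would then check two things. First, that $\pi$ extracts exactly the claimed valuations: a divisorial valuation $E$ over $X$ with log discrepancy $a_E(X,\Delta)\in(0,1]$ descends to a valuation over $Y$ whose log discrepancy with respect to $(Y,\Delta_Y)$ is positive (by the Riemann--Hurwitz/crepant-pullback comparison $a_E(X,\Delta)=r\cdot a_{q(E)}(Y,\Delta_Y)$ where $r$ is the ramification index, which also shows the interval $(0,1]$ is respected in both directions since $r\ge 1$), hence is extracted by $\pi_Y$ and therefore its preimage appears on $X'$; conversely every $\pi$-exceptional divisor lies over a $\pi_Y$-exceptional one and so has log discrepancy in the right range. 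Second, that $X'$ is $G\qq$-factorial: any $G$-invariant divisor $D$ on $X'$ descends to a $\qq$-divisor on $Y'$ (up to multiplying by $|G|$), which is $\qq$-Cartier since $Y'$ is $\qq$-factorial, and pulling back a Cartier multiple and using that $X'\to Y'$ is finite shows a multiple of $D$ is $\qq$-Cartier.

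The main obstacle I anticipate is the bookkeeping around which valuations get extracted and the behaviour of $\qq$-factoriality under the finite quotient and the fiber product: one must be careful that taking the main component and normalizing does not introduce or kill exceptional divisors, and that the log discrepancy comparison under the quotient genuinely preserves the half-open interval $(0,1]$ rather than just $(0,1)$ or $[0,1]$. The interval $(0,1]$ is exactly the range of ``non-terminal, klt-or-canonical'' places, and since $r\ge 1$ the inequalities $0<a_E\le 1$ transfer cleanly to $0<a_{q(E)}\le 1$ on $Y$, so \cite{BCHM10}*{Corollary 1.4.3} applies on $Y$ verbatim; the reverse direction uses that $Y'\to Y$ extracts \emph{exactly} those valuations, so no spurious divisors appear on $X'$. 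Since the excerpt explicitly says the proof is an application of \cite{BCHM10}*{Corollary 1.4.3} and is analogous to the proof of Lemma~\ref{lem:existence-g-inv-plt} (where the same fiber-product-and-normalize device is used), I would model the write-up closely on that lemma, invoking the connectedness/negativity inputs only as needed for the descent of the ample-over-$X$ and $\qq$-factoriality statements.
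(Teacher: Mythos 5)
Your approach coincides with the paper's: pass to the quotient pair $(Y,\Delta_Y)=(X,\Delta)/G$, apply \cite{BCHM10}*{Corollary 1.4.3} over $Y$ to produce a $\qq$-factorial model $Y'\to Y$ extracting the desired finite set of valuations, take $X'$ to be the normalization of the main component of $Y'\times_Y X$, and descend $\qq$-factoriality through the finite quotient $X'\to Y'$. Your treatment of $G\qq$-factoriality is correct.

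There is, however, a genuine misstep in the log discrepancy bookkeeping. You use the comparison $a_E(X,\Delta) = r\cdot a_{q(E)}(Y,\Delta_Y)$ (with $r\ge 1$ the ramification index) to claim ``the interval $(0,1]$ is respected in both directions since $r\ge 1$.'' The forward direction is fine: dividing by $r\ge 1$ sends $(0,1]$ into $(0,1]$. But the reverse direction is \emph{false}: if $a_{q(E)}(Y,\Delta_Y)\in(0,1]$ and $r\ge 2$, then $a_E(X,\Delta)=r\cdot a_{q(E)}(Y,\Delta_Y)$ may exceed $1$ (e.g.\ $a_{q(E)}=1$, $r=2$). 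So this parenthetical cannot be used to conclude that every $\pi$-exceptional divisor on $X'$ has log discrepancy in $(0,1]$. The conclusion is still correct, but the correct justification is the Galois-orbit structure: the divisorial valuations of $K(X)$ extending a fixed divisorial valuation of $K(Y)$ form a single $G$-orbit, and since $(X,\Delta)$ is $G$-invariant, all valuations in one orbit share the same log discrepancy. Because the set $\mathcal{V}$ extracted over $Y$ was chosen to consist precisely of the images of valuations over $X$ with log discrepancy in $(0,1]$, every preimage on $X'$ lies in such an orbit and hence has log discrepancy in $(0,1]$. Replacing your ``both directions'' claim by this orbit argument makes the write-up correct and aligns it with the paper's (terse) proof.
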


\begin{proof}
Let $(Y,\Delta_Y)$ be the log quotient of the pair $(X,\Delta)$ with respect to $G$.
Note that $(Y,\Delta_Y)$ is a klt pair.
Furthermore, all the divisorial valuations of $(X,\Delta)$ with log discrepancies
in $(0,1]$ with respect to $(X,\Delta)$, will correspond to 
divisorial valuations of $(Y,\Delta_Y)$ with log discrepancies in $(0,1]$ with respect to $(Y,\Delta_Y)$. Let $\mathcal{V}$ be such finite set of valuations over $(Y,\Delta_Y)$.
By~\cite{BCHM10}*{Corollary 1.4.3}, we know that there exists a projective birational morphism $Y'\rightarrow Y$ which is $\qq$-factorial and
extract divisors corresponding to $\mathcal{S}$.
Now, let $X'$ be the normalization of the main component of $Y'\times_Y X$.
Then, we have that $X'$ is $G\qq$-factorial and it extracts all divisorial valuations
with log discrepancy in $(0,1]$ with respect to $(X,\Delta)$.
By construction, the birational morphism $X'\rightarrow X$ is $G$-equivariant.
\end{proof}

The model constructed in the above proposition will be called a {\em $G$-terminalization} of the $G$-invariant pair.

\begin{proposition}\label{prop:existence-g-can}
Let $(X,\Delta)$ be a $G$-invariant klt pair.
Then, there exists a projective birational $G$-equivariant morphism 
$\pi\colon Y\rightarrow X$ which extract the divisorial valuations
with log discrepancy $(0,1)$.
Furthermore, $Y$ is $G\qq$-factorial.
\end{proposition}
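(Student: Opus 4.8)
The plan is to mirror the proof of Proposition~\ref{prop:existence-g-term}: pass to the quotient, extract the relevant divisors there by \cite{BCHM10}*{Corollary 1.4.3}, and pull the construction back to $X$.

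First I would form the log quotient $q\colon X\to Z$ of $(X,\Delta)$ by $G$, so that $K_X+\Delta=q^*(K_Z+\Delta_Z)$ and $(Z,\Delta_Z)$ is klt. Since $q$ is crepant and finite, a prime divisor $E$ over $X$ dominating a prime divisor $E'$ over $Z$ with ramification index $r\geq 1$ satisfies $a_E(X,\Delta)=r\cdot a_{E'}(Z,\Delta_Z)$; moreover $G$ acts transitively on the divisors over $X$ dominating a fixed $E'$ and, by $G$-invariance, with a common ramification index. Hence the condition ``the divisors over $X$ dominating $E'$ have log discrepancy in $(0,1)$'' is well defined on divisors $E'$ over $Z$; let $\mathcal{V}$ be the set of such $E'$. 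As each $E'\in\mathcal{V}$ has $a_{E'}(Z,\Delta_Z)\leq a_E(X,\Delta)<1$, the set $\mathcal{V}$ is contained in the (finite) set of divisorial valuations over the klt pair $(Z,\Delta_Z)$ with log discrepancy in $(0,1)$, so $\mathcal{V}$ is finite.

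Next I would invoke \cite{BCHM10}*{Corollary 1.4.3} to produce a projective birational $\qq$-factorial morphism $Z'\to Z$ extracting exactly the divisors corresponding to $\mathcal{V}$, and set $Y$ to be the normalization of the main component of $Z'\times_Z X$. Then $Y\to X$ is projective birational; since $\mathcal{V}$ is an intrinsically defined ($G$-stable) set of valuations, the morphism is $G$-equivariant. Exactly as in Proposition~\ref{prop:existence-g-term}, $Y$ is $G\qq$-factorial: a $G$-invariant Weil divisor on $Y$ descends to a Weil divisor on $Z'$, which is $\qq$-Cartier, and its pullback is $\qq$-Cartier on $Y$. Finally, because $Y\to Z'$ and $X\to Z$ are compatible finite morphisms that are étale in codimension one over the generic points of the extracted divisors, the divisors extracted by $\pi\colon Y\to X$ are precisely those dominating $\mathcal{V}$, i.e.\ precisely the divisorial valuations of $(X,\Delta)$ with log discrepancy in $(0,1)$.

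The routine part is the behaviour of log discrepancies under the crepant finite map $q$ and the descent/ascent of $\qq$-Cartierness; the point that needs the most care — and the reason I would phrase $\mathcal{V}$ through the divisors lying over it rather than directly as the set of non-canonical places of $(Z,\Delta_Z)$ — is that ramification multiplies log discrepancies, so a non-canonical place of $(Z,\Delta_Z)$ may a priori dominate a canonical (even terminal) place of $(X,\Delta)$. Defining $\mathcal{V}$ ``from upstairs'' prevents extracting spurious divisors and guarantees that the set of divisors extracted on $Y$ is exactly the desired one.
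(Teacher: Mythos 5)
Your proof follows the paper's approach exactly: form the log quotient, extract on the quotient using \cite{BCHM10}*{Corollary 1.4.3}, and normalize the main component of the fiber product to come back to $X$; the $G$-equivariance and $G\qq$-factoriality are obtained just as in Proposition~\ref{prop:existence-g-term}. What you make explicit that the paper leaves implicit is the log-discrepancy formula $a_E(X,\Delta)=r\cdot a_{E'}(Z,\Delta_Z)$ under the crepant finite quotient, and its consequence that $\mathcal{V}$ must be defined by the discrepancies upstairs (divisors over $Z$ whose extensions to $X$ have log discrepancy in $(0,1)$) rather than as the a priori larger set of all non-canonical places of $(Z,\Delta_Z)$ — this is a correct and worthwhile clarification of the paper's terse ``let $\mathcal{V}$ be such finite set of valuations.''
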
 

The model constructed in the above proposition will be called a {\em 
$G$-canonicalization} of the $G$-invariant pair. 

\begin{definition}\label{def:hyperstandard}
{\em 
Let $\mathcal{S}$ be a finite set of rational numbers.
We define the {\em hyperstandard set of coefficients} associated to $\mathcal{S}$, denoted by $\Phi(\mathcal{S})$, to be the set
$\left\{ 1-\frac{s}{m} \mid s\in \mathcal{S}, m\in \mathbb{N}\right\}$.
If $\mathcal{S}:=\{1\}$, then we say that $\Phi(\mathcal{S})$ is the set of standard coefficients.
Note that $\Phi(\mathcal{S})$ satisfies the descending chain condition, i.e., 
there is no infinite decreasing sequence on $\Phi(\mathcal{S})$.
}
\end{definition}

\begin{proposition}\label{prop:g-equiv-cbf}
Let $n$ and $M$ be two positive integers.
There exists a constant $q:=q(n,M)$, only depending on $n$ and $M$,
satisfying the following.
Let $X$ be a Fano type variety of dimension $n$.
Let $G\leqslant {\rm Aut}(X)$ be a finite subgroup.
Let $(X,B)$ be a log canonical pair which is $G$-invariant
and $M(K_X+B)\sim 0$.
Let $\pi\colon X\rightarrow C$ be a $G$-equivariant contraction.
Then, we can write
\[
q(K_X+B) \sim q\pi^*(K_C+B_C+M_C),
\]
where the coefficients of $B_C$ belong to $\nn\left[q^{-1}\right]$
and $M_C$ is the push-forward of a nef divisor $M_{C'}$ on a projective birational model $C'\rightarrow C$ so that $qM_{C'}$ is Cartier.
Furthermore, $B_C$ and $M_C$ are equivariant with respect to the action induced by $G$ on the base.
\end{proposition}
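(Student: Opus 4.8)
The plan is to apply the canonical bundle formula of Ambro and Kawamata to the $G$-equivariant lc-trivial morphism $\pi\colon(X,B)\to C$ and then to bound every denominator that appears purely in terms of $n$ and $M$. A point to keep in mind throughout is that one should never quotient by $G$: doing so would introduce the ramification indices of $X\to X/G$ into the denominators, and these are governed by $|G|$, which is not controlled by $n$ and $M$.

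First, since $M(K_X+B)\sim 0$ on $X$ we have in particular $K_X+B\sim_\qq 0$ over $C$, and $(X,B)$ is log canonical, so $\pi$ is an lc-trivial fibration (in the birational case the moduli part below is zero and the whole discussion degenerates harmlessly). The canonical bundle formula then produces the discriminant divisor $B_C$ on $C$, whose coefficient along a prime divisor $D\subset C$ is $1-\lct_{\eta_D}(X,B;\pi^*D)$, together with the moduli part: a nef $\qq$-divisor $M_{C'}$ on a sufficiently high birational model $C'\to C$, whose push-forward to $C$ is $M_C$, such that $K_X+B\sim_\qq\pi^*(K_C+B_C+M_C)$.

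Next I would bound the denominators. Because $M(K_X+B)\sim 0$, the coefficients of $B$ lie in the finite set $\tfrac{1}{M}\zz\cap[0,1]$, and the general fibre $(F,B|_F)$ is a log Calabi--Yau pair of dimension at most $n$ with coefficients in this set. Hence the effective version of the canonical bundle formula, i.e.\ effective adjunction (cf.~\cite{Bir19}, see also~\cite{Bir21,FM20}), applies: there is a constant $q_0=q_0(n,M)$ such that the coefficients of $B_C$ have denominators dividing $q_0$ and, on a suitably high birational model $C'\to C$, the divisor $q_0 M_{C'}$ is Cartier and nef. Since moreover $M(K_X+B)\sim 0$ is a genuine linear equivalence and not merely a $\qq$-linear one, the same machinery (in the form used in the theory of complements) upgrades the displayed $\qq$-linear equivalence to $q(K_X+B)\sim q\pi^*(K_C+B_C+M_C)$ for $q$ a suitable bounded multiple of $M$ and $q_0$; in particular the coefficients of $B_C$ lie in $\nn[q^{-1}]$.

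It remains to observe that all of this is $G$-equivariant. As $\pi$ is $G$-equivariant, $G$ acts on $C$ and one may choose the high model $C'\to C$ to be $G$-equivariant. Each $g\in G$ is an automorphism of $X$ with $g^*B=B$ and $\pi\circ g=g_C\circ\pi$, so $g$ carries $\pi^*D$ to $\pi^*(g_C D)$ and preserves log canonical thresholds over codimension-one points of $C$; hence $g_C^*B_C=B_C$. Similarly, the moduli b-divisor depends only on the lc-trivial fibration $\pi$, so $M_{C'}$, and therefore its push-forward $M_C$, is $G$-invariant. The main obstacle is the third paragraph: producing a bound on both the denominators of $B_C$ and the index of the moduli part $M_{C'}$ depending only on $(n,M)$. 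This is precisely an effective canonical bundle formula, and it rests on the boundedness of fibred log Calabi--Yau pairs; the additional subtlety in our setting is to carry this out while keeping the entire picture $G$-equivariant, which is exactly why we stay on $X$ rather than passing to $X/G$.
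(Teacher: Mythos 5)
Your proof takes a genuinely different route from the paper's, and the contrast is instructive. The paper's proof begins precisely with the step you declare forbidden: it quotients the whole fibration by $G$, obtaining $Y\rightarrow C_Y$ with $Y=X/G$, $C_Y=C/G_C$, applies Birkar's effective canonical bundle formula (\cite{Bir19}*{Proposition 6.3}) to the quotient fibration together with \cite{BZ16} and boundedness of complements, and then pulls $B_{C_Y}$ and $M_{C_Y}$ back along $C\rightarrow C_Y$. Equivariance of $B_C$ and $M_C$ is then automatic, since they are pull-backs from the quotient base. Your stated rationale for avoiding the quotient --- that the ramification indices of $X\rightarrow X/G$ would pollute the denominators with quantities governed by $|G|$ --- rests on a misunderstanding of the hyperstandard formalism. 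When $(X,B)$ has coefficients in $\tfrac{1}{M}\nn\cap[0,1]$ and one quotients by $G$, the coefficients of $B_Y$ lie in $\Phi(\mathcal{S})$ with $\mathcal{S}=\nn[M^{-1}]\cap[0,1]$ a finite set depending only on $M$: the unbounded ramification index $r$ appears only as the free parameter $m$ in $1-s/m$, and Birkar's effective formula produces constants depending only on $n$ and $\mathcal{S}$, not on the individual $m$'s. That is the whole point of hyperstandard coefficients, and it is exactly what lets the paper's argument go through.

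That said, your direct route --- apply the canonical bundle formula to $X\rightarrow C$ itself, invoke the effective bounds from \cite{Bir19}*{Proposition 6.3} together with \cite{BZ16} and boundedness of complements to control denominators and upgrade to a genuine linear equivalence, and deduce $G$-invariance of $B_C$ and $M_C$ from the canonicity of the discriminant and moduli $b$-divisors together with a $G$-equivariant choice of $C'\rightarrow C$ --- is correct and arguably cleaner. It buys a slightly shorter argument (no quotient diagram, no Riemann--Hurwitz bookkeeping) at the cost of having to argue equivariance explicitly rather than getting it for free from the quotient. Your middle paragraph does compress several steps into the phrase ``effective adjunction applies,'' but the sequence you gesture at (effective CBF giving hyperstandard discriminant coefficients and a bounded Cartier index for the moduli part, then cutting to a finite set via the $\qq$-triviality of $K_C+B_C+M_C$, then boundedness of complements to replace $\sim_\qq$ by $\sim$) is the right one and matches the citations. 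Just correct the motivation: quotienting is not dangerous here, and the paper deliberately exploits it.
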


\begin{proof}
We have an exact sequence
\[
1\rightarrow G_f\rightarrow G \rightarrow G_C\rightarrow 1, 
\]
where $G_f$ is acting fiber-wise
and $G_C$ is the induced action on the base.
We consider the quotient of the fibration
$X\rightarrow C$ by $G$ obtaining a commutative diagram as follows:
\[
\xymatrix{
X  \ar[r]^-{f}\ar[d]_-{\pi} & Y\ar[d]^-{\pi_Y} \\
C \ar[r]^-{f_C} & C_Y.
}
\]
Let $(Y,B_Y)$ be the quotient of $(X,B)$ by $G$.
Then, we have that $f^*(K_Y+B_Y)=K_X+B$
and the coefficients of $B_Y$ belong to $\Phi\left(\nn\left[M^{-1}\right]\right)$.
By Lemma~\ref{lem:quot-FT-is-FT}, we know that both $Y$ and $C_Y$ are Fano type varieties.
Hence, the contraction $\pi_Y$ satisfies all the conditions of~\cite{Bir19}*{Proposition 6.3}.
So, we can write
\[
q(K_Y+B_Y)\sim q\pi_Y^*(K_{C_Y}+B_{C_Y}+M_{C_Y}),
\]
where $q$ only depends on $M$ and $n$.
The coefficients of $B_{C_Y}$ belong to $\Phi(\mathcal{S})$,
where $\mathcal{S}$ is a finite set of rational numbers which only depend on $M$ and $n$.
Furthermore, $M_{C_Y}$ is the push-forward of a nef $\qq$-Cartier divisor $M_{C_Y'}$ on a projective birational model $C'_Y\rightarrow C_Y$ so that $q M_{C_Y'}$.
Note that $K_{C_Y}+B_{C_Y}+M_{C_Y}\sim_\qq 0$.
By~\cite{BZ16}*{Theorem 1.6}, we conclude that the coefficients of $B_{C_Y}$ are indeed in a finite set.
Furthermore, by the boundedness of complements~\cite{Bir19}*{Theorem 1.7}, we may assume that $q(K_{C_Y}+B_{C_Y}+M_{C_Y})\sim 0$.
We define $K_C+B_C+M_C=f_C^*(K_{C_Y}+B_{C_Y}+M_{C_Y})$.
Then, we conclude that the coefficients of $B_C$ belong to a finite set which only depends on $M$ and $n$.
The same holds for the Cartier index of the nef $\qq$-Cartier divisor  $M_{C'}$.
By construction, $(C,B_C+M_C)$ is a $G$-equivariant pair and we have
\[
q(K_X+B)\sim q\pi^*(K_C+B_C+M_C).
\]
\end{proof}

\begin{proposition}\label{prop:ramification-rowndown}
Let $(X,B)$ be a log canonical pair and $\pi\colon X\rightarrow Y$ be a contraction so that $K_X+B\sim_{\qq,Y} 0$.
Let $B_Y$ be the boundary part of the pair induced by the canonical bundle formula.
Let $S$ be a prime component of $\lfloor B \rfloor$ which ramifies over the prime divisor $P\subset Y$.
Then, we have that $P\subset \supp(B_Y)$.
\end{proposition}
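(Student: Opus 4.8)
The plan is to work with the canonical bundle formula at the generic point of the prime divisor $P\subset Y$, i.e.\ to pass to the DVR $\mathcal{O}_{Y,P}$ and analyze what the coefficient $b_P$ of $P$ in $B_Y$ must be. Recall that the canonical bundle formula writes $K_X+B\sim_\qq \pi^*(K_Y+B_Y)$, where the coefficient of $P$ in $B_Y$ is given by the log canonical threshold formula
\[
b_P = 1 - \sup\{\, t \mid (X, B + t\,\pi^*P)\ \text{is lc over the generic point of}\ P \,\}.
\]
So to show $b_P>0$ — equivalently $P\subset\supp(B_Y)$ — I need to show that the supremum above is strictly less than $1$, i.e.\ that $(X, B + \pi^*P)$ is \emph{not} log canonical over the generic point of $P$. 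This is where the hypothesis on the component $S$ enters.

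The key computation: let $S\subset\lfloor B\rfloor$ be a prime component dominating $Y$ and ramifying over $P$, with ramification index $r\geq 2$; that is, $\pi^*P = r\,S + (\text{other components, some possibly also over }P) + (\text{components not over }P)$. Since $S$ appears in $B$ with coefficient $1$ (it is a component of $\lfloor B\rfloor$) and $S$ appears in $\pi^*P$ with coefficient $r\geq 2$, the divisor $B + t\,\pi^*P$ has coefficient $1 + tr$ along $S$. For $t$ close to $1$ this coefficient is at least $1+r\geq 3 > 1$, so the pair $(X, B+t\pi^*P)$ has a divisor with coefficient exceeding $1$, hence is not log canonical at the generic point of $S$ — and the generic point of $S$ maps to the generic point of $P$ since $S$ dominates $P$. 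Concretely, the supremum is at most the value of $t$ for which $1+tr = 1$, forcing $t=0$ already from this component alone (or more precisely, working over the generic point of $P$ we get $b_P \geq 1 - \tfrac{1-1}{r} $ — let me be careful: the lc threshold contribution of $S$ alone gives $t \leq (1 - \mathrm{coeff}_S(B))/r = 0/r = 0$ only if $\mathrm{coeff}_S B =1$). Thus $b_P = 1 - 0 = 1 > 0$ when $S$ has coefficient exactly $1$, and in general $b_P \geq 1 - \tfrac{1-\mathrm{coeff}_S(B)}{r} > 0$ as soon as $r\geq 2$ and $\mathrm{coeff}_S(B)>0$; since $S\subset\lfloor B\rfloor$ we have $\mathrm{coeff}_S(B)=1$, giving $b_P=1$.

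The main obstacle, and the point requiring care rather than cleverness, is handling the definition of $B_Y$ correctly: the canonical bundle formula computes $b_P$ as an infimum/supremum over \emph{all} divisorial valuations over $X$ lying above the generic point of $P$, not just over $S$ itself, so I must argue that the contribution coming from $S$ is not cancelled or dominated by other terms — but since $b_P = 1 - \sup_t\{t : \text{lc}\}$ and each valuation only \emph{lowers} the admissible $t$, the component $S$ alone already forces $\sup_t \leq 0$, hence $b_P \geq 1$; combined with $b_P\leq 1$ (as $B_Y$ is a boundary) we get $b_P=1$ and in particular $P\subset\supp(B_Y)$. I would phrase the final write-up by reducing to the local ring $\mathcal{O}_{Y,P}$, invoking the standard formula for the boundary part of the adjunction/canonical bundle formula (as in~\cite{Bir19}*{Proposition 6.3} or the references on the canonical bundle formula used in Proposition~\ref{prop:g-equiv-cbf}), and then performing the one-line coefficient count above.
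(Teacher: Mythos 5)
Your framework of reducing to the log canonical threshold formula for the coefficient $b_P$ of $P$ in $B_Y$ is reasonable in principle, but the central coefficient count rests on a misreading of the hypothesis, and the argument fails as written. Here ``$S$ ramifies over $P$'' means that $S$ is \emph{horizontal} over $Y$ and that the finite part of the Stein factorization of the normalization $S^\nu\to Y$ is ramified over $P$; this is how the proposition is used later (Steps~7 and~8 of the proof of Theorem~\ref{thm:FT-full-rank}), and it is the only reading under which the ramification hypothesis carries content. Your proposal instead writes $\pi^*P=r\,S+\cdots$ with $r\geq 2$, treating $S$ as a vertical component of the fiber over $P$ --- but a divisor dominating $Y$ cannot appear in $\pi^*P$, so this contradicts your own ``$S$ dominating $Y$.'' And even reading $S$ as vertical, the inequality $r\geq 2$ is never actually used: a vertical component of $\pi^*P$ carrying coefficient $1$ in $B$ already forces $b_P=1$ regardless of multiplicity. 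That the hypothesis would be superfluous is itself a signal the interpretation is off.

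Under the correct interpretation the one-line coefficient count is unavailable, since $S$ lies nowhere in the support of $\pi^*P$. The ramification instead produces a tangency between $S$ and the fiber over $P$, detected only after further blowups rather than by reading off coefficients along $S$. For instance, take $X=\mathbb{A}^1\times\pp^1\to Y=\mathbb{A}^1$ and $B=S=\{u^2=yv^2\}$: the bisection $S$ ramifies over $y=0$, the first blowup at the point of tangency gives the exceptional divisor log discrepancy $1-t$ with respect to $(X,S+t\pi^*P)$, and only the second blowup gives $1-2t$, yielding $b_P=\tfrac12$. The paper avoids this delicate threshold computation entirely: it applies adjunction along $S^\nu$, runs the canonical bundle formula for $K_{S^\nu}+B_{S^\nu}$ over the Stein factor $Y'$, and pushes the resulting generalized pair forward along the finite morphism $f\colon Y'\to Y$; ramification of $f$ then contributes a positive coefficient at $P$ by Riemann--Hurwitz. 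If you want to retain the threshold framework, you must prove directly that a horizontal component of $\lfloor B\rfloor$ ramifying over $P$ produces a non-lc place of $(X,B+\pi^*P)$ over the generic point of $P$, which is the corrected version of your coefficient count and is not a one-liner.
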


\begin{proof}
Let $(Y,B_Y+M_Y)$ be the generalized pair induced on the base by the canonical bundle formula (see, e.g.,~\cite{Amb06}).
Let $S^\nu\rightarrow S$ be the normalization of $S$.
Consider the Stein factorization of $S^\nu \rightarrow Y$.
We denote by $S^\nu \rightarrow Y'$ the contraction morphism
and $f\colon Y'\rightarrow Y$ the finite morphism.
Let $K_{S^\nu}+B_{S^\nu}$ be the induced pair on $S^\nu$.
By construction $(S^\nu,B_{S^\nu})$ has log canonical singularities.
We apply the canonical bundle formula to $K_{S^\nu}+B_{S^\nu}$ with respect to $Y'$
and we obtain a generalized pair $(Y',B_{Y'}+M_{Y'})$.
By construction, we have that
$f_*(K_{Y'}+B_{Y'}+M_{Y'})= K_{Y}+B_{Y}+M_{Y}$ is the push-forward with respect to a finite morphism.
Hence, if $f$ ramifies along a codimension one point, then $B_Y$ has a positive coefficient at such prime divisor.
\end{proof}

\subsection{$G$-equivariant complements}\label{subsec:g-equiv-comp}
In this subsection, we recall some results about the existence of bounded $G$-complements.

\begin{definition}{\em 
Let $(X,\Delta)$ be a log pair.
We say that a boundary $B\geq \Delta$ on $X$ is a {\em $\qq$-complement}
of $(X,\Delta)$
if $(X,B)$ has log canonical singularities and $K_X+B\sim_\qq 0$.
We say that a boundary $B\geq \Delta$ on $X$ is a {\em $M$-complement}
of $(X,\Delta)$
if $(X,B)$ has log canonical singularities and $M(K_X+B)\sim 0$.
In the former case, we say that $-(K_X+\Delta)$ is {\em $\qq$-complemented}.
In the latter case, we say that $-(K_X+\Delta)$ is
{\em $M$-complemented}.
}
\end{definition}

\begin{definition}{\em 
Let $(X,\Delta)$ be a log pair
and $G\leqslant {\rm Aut}(X,\Delta)$ be a finite subgroup.
We say that a boundary $B\geq \Delta$ on $X$ is a {\em $G$-equivariant $\qq$-complement} of $(X,\Delta)$ 
if $(X,B)$ is $G$-invariant, has log canonical singularities, and
$K_X+B\sim_\qq 0$.
We say that a boundary $B\geq\Delta$ on $X$ is a {\em $G$-equivariant $M$-complement}
of $(X,\Delta)$
if $(X,B)$ is $G$-invariant, has log canonical singularities, and
$M(K_X+B)\sim 0$.
In the former case, we say that $-(K_X+\Delta)$ is {\em $G\qq$-complemented}.
In the latter case, we say that $-(K_X+\Delta)$ is 
{\em $G$-equivariantly $M$-complemented}.
}
\end{definition}

It is well-known that a $G$-invariant $\qq$-complemented pair is also
$G\qq$-complemented (see,e.g.,~\cite{Mor20}*{Proposition 2.17}).

\begin{theorem}[cf.~\cite{Mor20}*{Theorem 2.19}]\label{thm:G-equiv-complement}
Let $n$ be a positive integer and $\Lambda\subset \qq$ with $\overline{\Lambda}\subset \qq$ which satisfies the descending chain condition.
There exists a constant $M:=M(n,\Lambda)$, only depending on $n$ and $\Lambda$, 
satisfying the following.
Let $X$ be a $n$-dimensional Fano type variety and $\Delta$ be a boundary on $X$ 
with the following conditions:
\begin{enumerate}
\item $G\leqslant {\rm Aut}(X,\Delta)$ is a finite group, 
\item $(X,\Delta)$ is $G$-invariant and log canonical,
\item the coefficients of $\Delta$ belong to $\Lambda$, and
\item $-(K_X+\Delta)$ is $\qq$-complemented.
\end{enumerate}
Then, $(X,\Delta)$ admits a $G$-equivariant $M$-complement.
\end{theorem}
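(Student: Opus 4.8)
The plan is to reduce to Birkar's boundedness of complements by passing to the quotient by $G$ and then pulling the resulting bounded complement back to $X$.

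\emph{Step 1: a $G$-invariant $\qq$-complement.} By hypothesis $-(K_X+\Delta)$ admits a $\qq$-complement $\Gamma$. Since $G\leqslant {\rm Aut}(X,\Delta)$, for each $g\in G$ the divisor $g^*\Gamma$ is again a $\qq$-complement of $(X,\Delta)$; hence $B_0:=\tfrac{1}{|G|}\sum_{g\in G}g^*\Gamma$ is a $G$-invariant boundary with $B_0\geq\Delta$, with $(X,B_0)$ log canonical by convexity, and with $K_X+B_0\sim_\qq 0$. In particular $(X,\Delta)$ is $G\qq$-complemented.

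\emph{Step 2: descend to the quotient.} Let $\pi\colon X\to Y:=X/G$, and write $\pi^*(K_Y+\Delta_Y)=K_X+\Delta$ and $\pi^*(K_Y+B_{0,Y})=K_X+B_0$. Then $(Y,\Delta_Y)$ and $(Y,B_{0,Y})$ are log canonical, $Y$ is of Fano type by Lemma~\ref{lem:quot-FT-is-FT}, $B_{0,Y}\geq\Delta_Y$, and $K_Y+B_{0,Y}\sim_\qq 0$; in particular $-(K_Y+\Delta_Y)$ is $\qq$-complemented. By Riemann--Hurwitz, a coefficient $d$ of $\Delta$ sitting over a prime divisor of $Y$ with ramification index $r$ contributes the coefficient $1-\tfrac{1-d}{r}$ to $\Delta_Y$ (and a divisor over which $\pi$ ramifies but which is not dominated by $\Delta$ contributes the standard coefficient $1-\tfrac{1}{r}$), while unramified components keep their coefficient. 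Hence the coefficients of $\Delta_Y$ lie in a set $\Lambda'$ which is again DCC with $\overline{\Lambda'}\subset\qq$ and which depends only on $\Lambda$, not on $G$ or $X$, even though the ramification indices are a priori unbounded.

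\emph{Step 3: bounded complement downstairs, then pull back.} By the theory of complements \cite{Bir19}, there is a constant $M:=M(n,\Lambda')=M(n,\Lambda)$ such that the log canonical, $\qq$-complemented Fano type pair $(Y,\Delta_Y)$ with coefficients in $\Lambda'$ admits an $M$-complement $\Gamma_Y\geq\Delta_Y$ with $(Y,\Gamma_Y)$ log canonical and $M(K_Y+\Gamma_Y)\sim 0$. Set $\Gamma:=\Delta+\pi^*(\Gamma_Y-\Delta_Y)$, so that $\pi^*(K_Y+\Gamma_Y)=K_X+\Gamma$. Then $\Gamma\geq\Delta$ is $G$-invariant, the pair $(X,\Gamma)$ is log canonical since it is the crepant pull-back of the log canonical pair $(Y,\Gamma_Y)$ along the finite quotient $\pi$ (so in particular its coefficients are at most $1$ and $\Gamma$ is a boundary), and $M(K_X+\Gamma)=\pi^*\bigl(M(K_Y+\Gamma_Y)\bigr)\sim 0$. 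Thus $\Gamma$ is the required $G$-equivariant $M$-complement of $(X,\Delta)$.

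The only delicate point is the coefficient bookkeeping in Step 2: one must verify that the coefficient set of the quotient boundary is controlled uniformly — DCC with rational closure and independent of $G$ — so that Birkar's theorem produces a constant depending only on $n$ and $\Lambda$. This is exactly where the hypotheses that $\Lambda$ is DCC with $\overline{\Lambda}\subset\qq$ are used; the averaging of Step~1 and the crepant descent and pull-back along the finite morphism $\pi$ are routine.
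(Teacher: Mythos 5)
Your proof is correct and takes the standard quotient–descend–pull-back route, which is precisely how the cited source \cite{Mor20}*{Theorem 2.19} establishes this (the present paper does not reprove it; note also that your Step 1 is exactly \cite{Mor20}*{Proposition 2.17}, which the paper quotes just before the theorem). The one place worth spelling out, which you flag but do not fully justify, is that the quotient coefficient set $\Lambda'=\{1-\tfrac{1-d}{r}: d\in\Lambda\cup\{0\},\ r\in\mathbb{Z}_{\geq 1}\}$ is again DCC with $\overline{\Lambda'}\subset\qq$, and that the version of Birkar's boundedness of complements you invoke applies to such a coefficient set rather than only to $\Phi(\mathcal{R})$ for finite $\mathcal{R}$ --- both are true (the DCC claim follows since a strictly decreasing sequence in $\Lambda'$ forces bounded $r$, hence an increasing sequence in $\{1-d\}$, contradicting DCC of $\Lambda$), and this is exactly where the hypothesis $\overline{\Lambda}\subset\qq$ is used.
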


\section{Bounded anti-pluricanonically embededd varieties}\label{sec:bounded}
In this section, we study bounded families of anti-pluricanonically embedded varieties which have large finite automorphisms.
In this case, we prove that a bounded index subgroup of such finite groups factors through a torus action. The main idea is to use the boundedness of Fano type varieties
to reduce the problem to a finite set of isomorphisms classes of connected reductive groups.

\begin{theorem}\label{thm:bounded-anti-pluricanonically-embedded-varieties}
Let $n$ and $M$ be positive integers.
There exists a constant $M':=M'(M,n)$, only depending on $M$ and $n$, satisfying the following.
Let $(X,B)$ be a $n$-dimensional log canonical pair so that
$M(K_X+B)\sim 0$ and
$X$ is a canonical Fano variety.
If $A$ is an abelian group with $g_n(A)\geq M'$ so that $A<{\rm Aut}(X,B)$, then
$X$ is a projective toric variety
and $B$ is the reduced toric boundary.
Furthermore,
we have that $A<\mathbb{G}_m^n\leqslant {\rm Aut}(X,B)$.
\end{theorem}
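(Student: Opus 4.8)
The plan is to exploit boundedness to reduce to a finite list of groups, then use structure theory of linear algebraic groups, and finally the complexity characterization of toric varieties. First I would note that since $X$ is a canonical Fano variety of dimension $n$ and $M(K_X+B)\sim 0$ with $B\geq 0$, the pair $(X,B)$ is $\epsilon$-log canonical for some $\epsilon=\epsilon(n)>0$ (canonical Fano varieties are $\epsilon$-klt with $\epsilon$ depending only on $n$ by boundedness, and adding $B$ keeps things log canonical), $K_X+B\sim_\qq 0$, and one checks $B$ is big — indeed $-K_X=B\geq 0$ is big and nef since $X$ is Fano. Hence by Theorem~\ref{thm:log-ft-bdness} (with $\mathcal{R}$ the finite set of possible coefficients coming from $M(K_X+B)\sim 0$) the pairs $(X,B)$ range in a log bounded family $\mathcal{F}$. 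In particular there is a uniform bound, depending only on $M$ and $n$, on the number of isomorphism types of the group scheme $\mathrm{Aut}(X,B)$, and on the index $[\mathrm{Aut}(X,B):\mathrm{Aut}(X,B)^0]$ of the identity component.

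Next I would pass to the identity component. Choose $M'$ large enough that, after replacing $A$ by a subgroup of bounded index (allowed by Proposition~\ref{prop:k-gen-bounded-subgroup}, which preserves large $n$-generation up to dividing $M'$ by the bounded index $[\mathrm{Aut}(X,B):\mathrm{Aut}(X,B)^0]$), we may assume $A\leqslant \mathrm{Aut}(X,B)^0=:G$, a connected linear algebraic group. Here I use that $\mathrm{Aut}(X,B)^0$ is affine: $X$ is Fano, hence $\mathrm{Aut}(X)^0$ is linear algebraic, and the subgroup preserving $B$ is closed. Now $A\simeq \zz_m^n$ for some $m\geq M'$ by Lemma~\ref{lem:bounded-rank-large-k-generation} (rank of $A$ is at most $n$ since $A$ acts faithfully on an $n$-dimensional variety, by the Jordan-type bounds — more carefully, $g_n(A)\geq M'$ already forces $\rank A\geq n$, and $\rank A\leq n$ because $A<\mathrm{Aut}(X,B)\leqslant\mathrm{Aut}(X)$ and finite abelian subgroups of automorphism groups of $n$-dimensional rationally connected varieties have rank at most $n$). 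A finite abelian subgroup of a connected linear algebraic group over $\kk$ of order prime to the characteristic, if it is large enough compared to the rank of a maximal torus, must lie in a maximal torus: the key point is that $A$ is contained in a maximal \emph{diagonalizable} subgroup, and if that subgroup were $T\times F$ with $F$ finite nontrivial, the number of elements of order $\leq m$ in $T\times F$ of rank $\leq n$... — rather, I would argue: $A$ normalizes (centralizes, after bounded-index replacement) a maximal torus $T$ of $G$ of dimension $d$, the centralizer $C_G(T)=T$, so $A\leqslant T$; and $\dim T = d\leq n$. The remaining point is $d\geq n$: since $A\simeq\zz_m^n\leqslant T\simeq\mathbb{G}_m^d$ and $\zz_m^n$ embeds in $\mathbb{G}_m^d$ only if $n\leq d$. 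Hence $d=n$, so $T=\mathbb{G}_m^n$ acts faithfully on the $n$-dimensional variety $X$, making $X$ toric with $A<\mathbb{G}_m^n\leqslant\mathrm{Aut}(X,B)$.

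It remains to identify $B$ with the reduced toric boundary. Since $T=\mathbb{G}_m^n\leqslant\mathrm{Aut}(X,B)$, the divisor $B$ is torus-invariant, so each prime component of $B$ is a torus-invariant prime divisor and $(X,B)$ is a toric pair with $K_X+B\sim_\qq 0$. By Lemma~\ref{lem:unique-tlcy} the unique reduced torus-invariant boundary $B_0$ (the complement of the torus) satisfies $K_X+B_0\sim 0$; since $B\leq B_0$ (the components of $B$ are among those of $B_0$ and the coefficients are $\leq 1$ as $(X,B)$ is log canonical) and $K_X+B\sim_\qq 0\sim_\qq K_X+B_0$, we get $B_0-B\sim_\qq 0$ with $B_0-B\geq 0$, forcing $B=B_0$. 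Thus $B$ is the reduced toric boundary, completing the proof.

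\medskip
\noindent\emph{Main obstacle.} The technically delicate step is the passage from ``$A$ is a large finite abelian subgroup of the connected linear algebraic group $G=\mathrm{Aut}(X,B)^0$'' to ``$A$ lies in a maximal torus of dimension exactly $n$''. One must quantify ``large'': the bad case is that $A$ sits in a non-toral diagonalizable subgroup, or that $G$ is small (even finite) so that no torus of dimension $n$ exists at all. Both are excluded by boundedness — $G$ ranges over finitely many isomorphism types, so its maximal-torus dimension and the structure of its diagonalizable subgroups are uniformly controlled — combined with the elementary fact that $\zz_m^n$ for $m$ large does not embed into (finite group)$\times\mathbb{G}_m^{d}$ for $d<n$ with bounded finite part. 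Making the constant $M'$ explicit requires tracking the index bounds from $\mathrm{Aut}(X,B)/\mathrm{Aut}(X,B)^0$ and from $N_G(T)/T$ through Proposition~\ref{prop:k-gen-bounded-subgroup}.
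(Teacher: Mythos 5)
Your proposal follows the same broad architecture as the paper's proof (log boundedness $\Rightarrow$ finite list of algebraic groups $\Rightarrow$ large abelian subgroup sits in a maximal torus of dimension $n$ $\Rightarrow$ $X$ toric, $B$ the reduced boundary), but there are three concrete gaps, two of which are substantial.

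First, a small misstep at the start: you invoke Theorem~\ref{thm:log-ft-bdness} for the pair $(X,B)$, whose hypotheses require $(X,B)$ to be \emph{$\epsilon$-log canonical}. Here $(X,B)$ is merely log canonical (indeed, it will typically have many log canonical places since $B$ ends up reduced), so the theorem does not apply to the pair. The correct move, which the paper makes, is to bound $X$ alone using $[\textrm{Bir21}]$ applied to the $1$-lc pair $(X,0)$ with $-K_X$ ample, and then deduce log boundedness of $(X,B)$ from $M(K_X+B)\sim 0$ on the bounded family of $X$'s.

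Second, and more seriously, the core step --- ``a large finite abelian $A\leqslant G:={\rm Aut}(X,B)^0$ lies, after bounded-index replacement, in a maximal torus'' --- is not justified. You assert ``$A$ normalizes (centralizes, after bounded-index replacement) a maximal torus $T$ of $G$, the centralizer $C_G(T)=T$, so $A\leqslant T$.'' But finite abelian subgroups of a connected linear algebraic group need not normalize a maximal torus (the Klein four group in $\mathrm{PGL}_2$ is the prototypical obstruction), and $C_G(T)=T$ requires $G$ reductive, which you have not arranged. The paper deals with both issues: it passes to a reductive quotient ${\rm Aut}_R(X,B)$ of $G$ (into which $A_0$ injects because the unipotent radical contains no nontrivial finite subgroups), restricts to finitely many isomorphism classes of reductive groups by boundedness, and then invokes the Prokhorov--Shramov fixed-point theorem (Theorem~\ref{thm:fixed-point-RC}) on the flag variety $G/B$ to place a bounded-index subgroup of $A_0$ in a Borel, whence by semisimplicity in a maximal torus. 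Your sketch of ``$T\times F$ with $F$ finite'' points in a useful direction but is not developed into an argument; some mechanism replacing the fixed-point theorem is required.

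Third, you prove the containment $A<\mathbb{G}_m^n$ only after two bounded-index replacements of $A$. The theorem's conclusion is about the original $A$. The paper closes this gap via Proposition~\ref{prop:making-abelian-group-toric}: the original $A$ acts on the open torus $X\setminus \supp B\simeq\mathbb{G}_m^n$, hence $A<{\rm Aut}(\mathbb{G}_m^n)=\mathbb{G}_m^n\rtimes\mathrm{GL}_n(\zz)$, and the fact that $A$ is abelian and contains $\zz_m^n$ of bounded index with $m>2n$ forces, by the explicit matrix argument of that proposition, $A\leqslant\mathbb{G}_m^n$. Your proposal does not return to the original $A$. Your identification of $B$ with the reduced toric boundary, by contrast, is correct (and is fleshed out somewhat more than in the paper, which simply observes $B$ is torus-invariant, effective, and crepant to the reduced boundary).
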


\begin{proof}
Since $X$ is canonical Fano, then the algebraic groups ${\rm Aut}(X)$ form a bounded family of linear algebraic groups.
Given that $M(K_X+B)\sim 0$, we conclude that
the log pairs $(X,B)$ are log bounded.
Hence, the subgroups
${\rm Aut}(X,B)\leqslant {\rm Aut}(X)$ form a bounded family of linear algebraic groups.
The number of connected components of ${\rm Aut}(X,B)$ is upper semicontinuous over the base.
Hence, by Noetherian induction, we deduce that such a number must achieve a finite upper bound on this bounded family.
In particular, there exists a constant $M'_0:=M'_0(M,n)$, only depending on $M$ and $n$, so that
$M'_0 \geq |{\rm Aut}(X,B)/{\rm Aut}^0(X,B)|$ for every log pair $(X,B)$ as in the statement of the theorem.
Let $A_0$ be the intersection of $A$ with ${\rm Aut}^0(X,B)$.
By Proposition~\ref{prop:k-gen-bounded-subgroup}, we know that $g_n(A_0)\geq g_n(A)/M_0'$.
Let ${\rm Aut}_R(X,B)$ be a connected reductive linear algebraic quotient of ${\rm Aut}^0(X,B)$.
Note that the groups ${\rm Aut}_R(X,B)$ have bounded dimension.
Recall that connected reductive linear algebraic groups of bounded dimension belong to
finitely many isomorphisms classes.
Then, we conclude that the groups
${\rm Aut}_R(X,B)$ belong to finitely many isomorphisms classes of reductive groups.
Let $\mathcal{R}$ be such finite set of isomorphism classes
of connected reductive linear algebraic groups.
Furthermore, any split maximal algebraic torus of ${\rm Aut}_R(X,B)$ lifts to a split algebraic torus of ${\rm Aut}^0(X,B)$ (see, e.g.,~\cite{Bor91}*{Theorem 10.6.(4)}).
Since the unipotent radical does not contain finite subgroups, we conclude that there is a monomorphism
$A_0 < {\rm Aut}_R(X,B)$.
It suffices to prove that
for $g_n(A_0)\geq M'$, where $M':=M'(\mathcal{R})$ only depends on $\mathcal{R}$, the finite group $A_0$ is contained in a maximal split torus of ${\rm Aut}_R(X,B)$.
Since $\mathcal{R}$ is finite, we may take $A_0$ depending on each such reductive group $G\in R$.

Let $G$ be a connected reductive linear algebraic group.
We claim that there exists $M':=M'(G)$, only depending on $G$, satisfying the following: If $A_0<G$ is a finite abelian group with
rank at most $n$ and $g_n(A_0)\geq M'$, then
a bounded subgroup of
$A_0$ is contained on a split torus of $G$ of rank at least $n$.
Let $B$ be a Borel subgroup of $G$.
Then $A_0$ acts on $G/B$ which is a projective rationally connected variety.
By Theorem~\ref{thm:fixed-point-RC}, we know that a bounded subgroup $A_0'$ of $A_0$ acts with a fixed point.
The index of $A_0'$ on $A_0$ is bounded by a function ofly depending on $\dim(G/B)$ which only depends on $G$.
Hence, $A_0'$ is contained in a Borel subgroup of $G$.
Then, since $A_0'$ only contains semi-simple elements, it must be contained in a maximal torus of such Borel subgroup.
By Lemma~\ref{lem:bounded-rank-large-k-generation},
we conclude that
$A_0'\geqslant \zz_M^n$, whenever $g_n(A_0')\geq M$.
In particular, such maximal torus has rank at least $n$.
Thus, we obtain an induced torus of rank at least $n$
in ${\rm Aut}(X,B)$.
Since $X$ is a $n$-dimensional algebraic variety, we conclude that it must be a projective toric variety.
By construction, $B$ must be invariant under the torus action and effective.
Hence, we conclude that $B$ must be the reduced toric boundary.

Thus, we have a subgroup $A_0'\leqslant A$ of index bounded by a function $M(n)$ on $n$
so that $A_0'<\mathbb{G}_m^n$.
We claim that $A<{\rm Aut}(\mathbb{G}_m^n)$.
Indeed, by assumption, we know that 
$A<{\rm Aut}(X,B)$ where $(X,B)$ is a projective toric pair.
In particular, $A$ must act as an automorphism group
of $X\setminus \supp \lfloor B\rfloor \simeq \mathbb{G}_m^n$.
This proves the claim.
Note that $A_0'<\mathbb{G}_m^n$ has rank at most $n$
and $g_n(A_0')>g_n(A)/M(n)$.
By Lemma~\ref{lem:bounded-rank-large-k-generation}, 
we conclude that $A_0'\geqslant \zz^n_M$ with $M\geq N$
provided that $g_n(A)\geq NM(n)$.
Then, all the conditions of Proposition~\ref{prop:making-abelian-group-toric}
are satisfied.
We conclude that for
$g(A)_n\geq 2nM(n)$, 
we have that $A<\mathbb{G}_m^n$.
This proves the last statement of the theorem.
\end{proof}

\begin{remark}{\em 
The above theorem holds in general for any family of log pairs $(X,B)$ so that
$-K_X$ is ample,
$X$ has $\epsilon$-log canonical singularities for some $\epsilon>0$,
and the coefficients of $B$ are bounded away from zero.
However, we state it for canonical Fano varieties since we only use it in this context.
Note that in the theorem we start with a bounded family of Fano varieties, and we end up with a finite family of Fano toric varieties~\cite{BB92}.}
\end{remark}

\section{Degenerate divisors}\label{sec:deg}
In this section, we recall the concept of degenerate divisors from~\cite{Lai11}.
We state classic results which prove that components of a degenerate divisors are covered by curves which intersect it negatively.

\begin{definition}\label{def:degenerate}
{\em 
Let $X\rightarrow Y$ be a proper surjective morphism of normal varieties.
An effective $\qq$-divisor $D$ on $X$ is called {\em degenerate over $Y$} if the following conditions hold:
\begin{enumerate}
    \item No component of $D$ is horizontal over the base, and 
    \item for every prime divisor $P_Y\subset Y$ there exists a prime divisor $P_X\subset X$ so that $P_X\subsetneq \supp(D)$ and
$f(P_X)=P_Y$.
\end{enumerate}
Note that, if the image of $D$ on $Y$ has codimension at least two, 
then it is automatically degenerate.
Hence, the degenerate condition must be checked over codimension one points of the image of $D$.
}
\end{definition}

Note that our definition of degenerate divisor is slightly different from the definition in~\cite{Lai11}*{Definition 2.8}. In our case, we try to put both concepts of {\em $f$-exceptional} and {\em insufficient fiber type} together.
However, by the following proposition, the main property of degenerate divisors still holds.

\begin{proposition}\label{prop:divisor-trivial-over-basis}
Let $f\colon X\rightarrow Y$ be a projective morphism so that $Y$ is normal and $X$ has klt singularities.
Let $D$ be an effective divisor on $X$ which is degenerate over $Y$.
Then, there is a prime component $F\subset \supp D$ which is covered by curves which are contracted by $f$ and intersect $D$ negatively.
In particular, we have that 
$F\subset {\rm Bs}_{-}(D/Y)$.
\end{proposition}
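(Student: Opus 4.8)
The plan is to reduce to the case of a fibration over a curve, where the statement becomes the classical Zariski lemma on the components of a fibre, and to deal with the remaining ``exceptional'' situation through the Negativity Lemma.

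First I would pass to a small $\qq$-factorialization $\mu\colon X'\to X$ and to the Stein factorization of $f\circ\mu$, so that we may assume $X$ is $\qq$-factorial and $f$ is a contraction: the total transform $D'$ of $D$ is again degenerate over $Y$ (the exceptional locus of $\mu$ has codimension at least two, so no horizontal component is created and the ``insufficient fibre'' condition over codimension-one points of $Y$ is unaffected), and a component $F'$ of $\supp D'$ covered by contracted curves on which $D'$ is negative pushes forward to a component $F$ of $\supp D$ with the same property, with $F\subset{\rm Bs}_{-}(D/Y)$. Set $Z:=f(\supp D)\subsetneq Y$. If $Z$ has no divisorial component, then $D$ is an effective divisor whose support is contracted onto a subset of codimension at least two; in this ``$f$-exceptional'' case the existence of a component of $\supp D$ covered by $f$-contracted curves meeting $D$ negatively is the content of the Negativity Lemma (see \cite{Lai11}; cf.\ \cite{BCHM10}*{Lemma 3.39}).

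So assume some component of $D$ dominates a prime divisor $P\subset Y$. Replacing $D$ by the sum of those of its components which dominate prime divisors of $Y$ (this smaller divisor is still degenerate over $Y$, and an $f$-contracted curve meeting a general point of $P$ avoids the discarded part, which lives over codimension $\ge 2$), we may assume every component of $Z$ is a prime divisor. Now I would cut $Y$ by $\dim Y-1$ general very ample divisors through a general point of $P$ and restrict $f$ accordingly; using Bertini to preserve normality and klt-ness, the irreducibility of the generic fibre of a contraction from a normal variety, and the fact that general covering families of curves restrict to general covering families, this reduces the problem to a fibration $g\colon V\to C$ from a normal $\qq$-factorial variety onto a smooth curve, with a single fibre $V_0=\sum_j a_jE_j$ ($a_j\in\zz_{>0}$, $V_0$ connected) and $D|_V=\sum_{j\in I}d_jE_j$ with $d_j>0$ and $I$ a proper non-empty subset of the index set. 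Choosing $j_0$ with $d_{j_0}/a_{j_0}$ maximal and, among those, with $E_{j_0}$ meeting some $E_k$ for which $k\notin I$ or $d_k/a_k<d_{j_0}/a_{j_0}$ (possible since $V_0$ is connected and $I$ is proper), and taking $\gamma$ a general complete intersection curve in $E_{j_0}$, the identity $V_0\cdot\gamma=0$ eliminates the self-intersection term and yields the Zariski computation
\[
D\cdot\gamma \;=\; \sum_{k\in I\setminus\{j_0\}}\Bigl(d_k-\tfrac{d_{j_0}}{a_{j_0}}a_k\Bigr)(E_k\cdot\gamma)\;-\;\tfrac{d_{j_0}}{a_{j_0}}\sum_{k\notin I}a_k(E_k\cdot\gamma)\;<\;0,
\]
since each term is $\le 0$ and at least one is $<0$ by the choice of $j_0$. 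As $\gamma$ varies these curves cover $E_{j_0}$; undoing the hyperplane cuts produces the desired component $F$ of $\supp D$ together with a covering family of $f$-contracted curves on which $D$ is negative. Finally $F\subset{\rm Bs}_{-}(D/Y)$: for an $f$-ample divisor $A$ and small $\epsilon>0$ one still has $(D+\epsilon A)\cdot\gamma<0$ along this family, since $A\cdot\gamma$ is bounded while $D\cdot\gamma$ is a fixed negative number, so $F$ lies in $\mathrm{Bs}(D+\epsilon A/Y)$ for all small $\epsilon$.

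The main obstacle is organizing the reduction to a fibration over a curve so that the degeneracy hypothesis, and especially the ``insufficient fibre'' condition over the divisorial part of $f(\supp D)$, survives the successive general hyperplane sections of the base, while keeping control of the (a priori possibly reducible) restricted total space; and, separately, recognizing that the purely $f$-exceptional case is not amenable to this reduction and must instead be handled by the Negativity Lemma. Once one is reduced to a normal fibration over a curve, the remaining input is the Zariski-lemma computation displayed above.
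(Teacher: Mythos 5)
Your proposal follows essentially the same route as the paper's proof: reduce by a small $\qq$-factorialization and Stein factorization, handle the $f$-exceptional case via the Negativity Lemma / \cite{Lai11}*{Lemma 2.9}, and over the divisorial part of $f(\supp D)$ cut the base by general hyperplane sections down to a curve and apply Zariski's lemma on the fibre to produce a component of $\supp D$ covered by contracted curves meeting $D$ negatively. The only cosmetic difference is that the paper cuts both $X$ and $Y$ all the way to a surface-over-curve and invokes $D^2<0$ directly, while you keep the total space higher-dimensional over the curve and run the explicit extremal-ratio Zariski computation with complete-intersection curves in $E_{j_0}$; these are two standard ways to organize the same Zariski-lemma step.
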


\begin{proof}
Note that we can always replace $X$ by a small $\qq$-factorialization and $D$ with a multiple.
In particular, we may assume $D$ is a Weil divisor.
Assume that the image of $D$ on $Y$ contains a prime divisor $P_Y$.
Then, by assumption, we know that there is a prime divisor $P_X$ mapping to $P_Y$ which is not contained in the support of $D$.
We claim that we can find a component $F$ as in the statement of the proposition which dominates $P_Y$.
Cutting by hyperplanes on the base, we may assume that we have a morphism from a surface to a curve.
Since the statement is local over $Y$, we may always shrink an assume that $D$ is contained in the fiber.
We may write $D=\sum r_i C_i$ where the $r_i$ are positive integers and the $C_i$ are possibly non-reduced and reducible curves that don't contain the whole fiber.
If $D^2=0$, then $D$ must be supported on the whole fiber, leading to a contradiction.
Otherwise, $D^2<0$ and we have a component 
$C$ of some curve $E_i$ for which $D\cdot C<0$.
Then, the corresponding component of $D$ is covered by curves contracted by $f$ which intersect $D$ negatively.

In the case that the image of $D$ on $Y$ has codimension at least two, then this is proved in~\cite{Lai11}*{Lemma 2.9}.
\end{proof}

The following proposition proves that a relative minimal model program for a degenerate divisor terminates with a good minimal model on which all components of the degenerate divisor are contracted.

\begin{proposition}\label{prop:MMP-for-deg}
Let $f\colon X\rightarrow Y$ be a projective morphism so that $Y$ is normal and $X$ has klt singularities.
Let $D$ be an effective divisor on $X$ which is degenerate over $Y$.
Then, any minimal model program for $D$ with scaling of an ample divisor over the base terminates after contracting all components of $D$. 
\end{proposition}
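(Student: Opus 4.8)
The plan is to run the given minimal model program, keep track of the strict transform of $D$, show that at the end it is nef over $Y$ and still degenerate, and then invoke Proposition~\ref{prop:divisor-trivial-over-basis} to force it to vanish. The degeneracy hypothesis on $D$ is exactly the statement that $D$ is \emph{very exceptional} over $Y$ in the sense used by Birkar and by Lai (see~\cite{Lai11}), and this is the input that makes such a minimal model program terminate; for termination I would invoke the special termination of minimal model programs (with scaling) contracting very exceptional divisors, or, in the situations where this proposition is applied in this paper, simply use that the relevant $X$ is of Fano type and hence a Mori dream space over $Y$.

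After possibly replacing $X$ by a small $\qq$-factorialization, which affects neither the klt condition nor the degeneracy of $D$, write $X=X_0\dashrightarrow X_1\dashrightarrow\dots\dashrightarrow X_N=X'$ for such a minimal model program over $Y$ with scaling of an ample divisor, and let $D_i$ be the strict transform of $D$ on $X_i$. I would first record two bookkeeping facts. One: every divisorial contraction occurring in this program contracts a component of $D$, since its exceptional divisor is covered by curves spanning the contracted extremal ray, and as these curves meet $D_i$ negatively while $D_i\geq 0$ they lie in $\supp D_i$, so the exceptional divisor is a component of $D_i$ and hence of $D$; in particular only finitely many divisorial contractions occur, and $D'=0$ on $X'$ precisely when every component of $D$ has been contracted. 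Two: $D_i$ remains degenerate over $Y$, because it stays vertical and, given a prime divisor $P_Y\subset Y$, a prime divisor over $P_Y$ not contained in $\supp D$ is not a component of $D$, hence is not contracted by the program, so its strict transform on $X_i$ is a prime divisor over $P_Y$ not contained in $\supp D_i$.

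Finally I would analyze the termination. The program cannot end with a $D$-Mori fiber space $X'\to W$: there $-D'$ would be ample over $W$, so every curve in a fiber of $X'\to W$ would meet $D'$ negatively and hence lie in $\supp D'$; since such curves cover $X'$, this would give $\supp D'=X'$, impossible for a $\qq$-divisor. Hence, granting termination, the program ends with $f'\colon X'\to Y$ on which the strict transform $D'$ is nef over $Y$; moreover $X'$ is klt, the minimal model program preserving klt singularities, and $D'$ is degenerate over $Y$ by the second fact. If $D'\neq 0$, then Proposition~\ref{prop:divisor-trivial-over-basis} produces a component of $D'$ covered by $f'$-contracted curves meeting $D'$ negatively, contradicting the nefness of $D'$ over $Y$; therefore $D'=0$, and by the first fact all components of $D$ have been contracted. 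The only genuinely delicate point is termination, where the very exceptional (equivalently, degenerate) structure is essential; the remainder is bookkeeping together with one application of Proposition~\ref{prop:divisor-trivial-over-basis}.
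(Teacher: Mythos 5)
Your proposal is correct and uses the same two ingredients as the paper's proof (Proposition~\ref{prop:divisor-trivial-over-basis} and the observation that degeneracy is preserved under the steps of the program), but the logical route is organized differently. The paper runs the argument \emph{forward}: it invokes Proposition~\ref{prop:divisor-trivial-over-basis} to assert that after finitely many steps some component of $D$ must be contracted, notes that the strict transform of $D$ stays degenerate, and then proceeds by induction on the number of components, letting the induction itself drive the termination claim. You instead analyze the \emph{terminal state}: you grant termination (pointing to special termination for very exceptional divisors, or to the Mori dream space structure in the paper's applications), rule out a Mori fiber space output, observe that the end model $D'$ is nef over $Y$ and still degenerate if nonzero, and then apply Proposition~\ref{prop:divisor-trivial-over-basis} exactly once to force $D'=0$. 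Your version cleanly separates the termination question from the structural conclusion and makes the one use of the key proposition transparent; the paper's version compresses the termination issue into the inductive phrase ``after finitely many steps some component will be contracted'', which in fact already presupposes the very special-termination input you flag explicitly. In short, both proofs are correct modulo the same termination input, and you are more candid than the paper about where that input enters.
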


\begin{proof}
Note that $D$ is an effective divisor so
${\rm Bs}_{-}(D)\subset D$ and
the minimal model program must terminate, if it does, with a minimal model.
By Proposition~\ref{prop:divisor-trivial-over-basis}, we know that after finitely many steps, some component $F\subset \supp D$ will be contracted.
Note that after such contraction, the strict transform of $D$ remains degenerate over the base in the sense of Definition~\ref{def:degenerate}.
Therefore, we can proceed inductively and deduce that eventually all components of $D$ will be contracted, so the minimal model program will terminate with a $\qq$-trivial good minimal model over $Y$.
\end{proof}

The following lemma is a consequence of
the main property of degenerate divisors.

\begin{lemma}\label{lem:vert-div-Q-trivial}
Let $f\colon X\rightarrow Y$ be a projective morphism so that $Y$ is normal and $X$ has klt singularities.
Let $D$ be an effective prime divisor on $X$ which is vertical over $Y$
and is $\qq$-linearly trivial over $Y$.
Then, the image of $D$ on $Y$ is a prime divisor.
\end{lemma}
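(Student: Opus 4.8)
The plan is to argue by contradiction, assuming $f(D)$ has codimension at least two in $Y$, and to derive a clash between the negativity forced by Proposition~\ref{prop:divisor-trivial-over-basis} and the numerical triviality coming from $D\sim_{\qq,Y}0$. First I would observe that $D$, being prime and vertical over $Y$, automatically satisfies condition (1) of Definition~\ref{def:degenerate}; and under the assumption that $f(D)$ has codimension at least two, condition (2) holds vacuously, since there is no prime divisor $P_Y\subset Y$ contained in the image of $D$. Hence $D$ is a degenerate divisor over $Y$ in the sense of Definition~\ref{def:degenerate}, exactly as recorded in the remark following that definition.

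Next I would apply Proposition~\ref{prop:divisor-trivial-over-basis} to $f\colon X\rightarrow Y$ and the degenerate divisor $D$. Since $X$ has klt singularities and $D$ is effective, the proposition yields a prime component $F\subset \supp D$ covered by curves which are contracted by $f$ and intersect $D$ negatively. But $D$ is prime, so $\supp D = D$ and therefore $F = D$: the divisor $D$ itself is covered by a family of curves $C$ with $f_*C=0$ and $D\cdot C<0$.

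Then I would invoke the hypothesis $D\sim_{\qq,Y}0$: this implies $D\equiv_Y 0$, so $D\cdot C=0$ for every curve $C$ contracted by $f$. This contradicts $D\cdot C<0$ for the curves produced above. Hence the assumption was false and $f(D)$ has codimension one in $Y$. Finally, since $f$ is projective, hence proper, $f(D)$ is closed, and since $D$ is irreducible its image is irreducible; a codimension-one irreducible closed subset of the normal variety $Y$ is a prime divisor, which is the assertion.

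I do not expect a serious obstacle here; the only points requiring a little care are the verification that the codimension-two assumption really places $D$ in the scope of Definition~\ref{def:degenerate} (the "automatically degenerate" remark handles this), and the standard passage from $\qq$-linear triviality over $Y$ to vanishing of intersection numbers against $f$-contracted curves.
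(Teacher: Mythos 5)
Your argument is correct and is essentially the paper's own proof: assume the image has codimension at least two, note that $D$ is then automatically degenerate over $Y$, and apply Proposition~\ref{prop:divisor-trivial-over-basis} to contradict $D\sim_{\qq,Y}0$ (the paper phrases the contradiction as $D$ having non-trivial diminished base locus over $Y$, which is the same content as your intersection-number clash). No gap.
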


\begin{proof}
If the image of $D$ on $Y$ has codimension at least two, then by Proposition~\ref{prop:divisor-trivial-over-basis}, we would have that $D$ has non-trivial diminished base locus over the base.
Leading to a contradiction.
We conclude that the image of $D$ on $Y$ is a prime divisor.
\end{proof}

The following lemma is a refinement of Lemma~\ref{lem:vert-div-Q-trivial}.
In particular, it says that in a Mori fiber space
every vertical prime divisor maps to a prime divisor.

\begin{lemma}\label{lem:vert-div-MFS}
Let $f\colon X\rightarrow Y$ be a projective morphism so that $Y$ is normal and $X$ has klt singularities.
Assume that the relative Picard rank of $f$ equals one.
Let $D$ be a prime divisor on $X$ which is vertical over $Y$.
Then, the divisor $D$ is the pull-back of a prime divisor on $Y$.
\end{lemma}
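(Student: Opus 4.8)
The plan is to reduce to Lemma~\ref{lem:vert-div-Q-trivial} by exploiting the hypothesis that the relative Picard rank of $f$ equals one. First I would observe that, since $\rho(X/Y)=1$, any two $\qq$-Cartier divisors on $X$ are proportional modulo the pull-back of $\qq$-Cartier divisors from $Y$; more precisely, there is an exact sequence $0\to f^*\operatorname{Pic}(Y)_\qq\to \operatorname{Pic}(X)_\qq\to \qq\to 0$ after passing to a relative Néron–Severi description. In particular, for the prime divisor $D$ on $X$, either $D$ is already $\qq$-linearly trivial over $Y$, or $D$ is relatively ample (up to sign) over $Y$. Since $D$ is vertical over $Y$, it cannot be relatively ample: an $f$-ample divisor restricted to a general fiber $F$ would be an ample divisor on $F$, hence have positive-dimensional support meeting every curve in $F$, whereas $D\cap F=\emptyset$ for $F$ a general fiber because $D$ is vertical. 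Thus $D$ must be numerically trivial over $Y$, and since $X$ is of Fano type—or at least klt with $\rho(X/Y)=1$, so that relative numerical and $\qq$-linear equivalence agree after a bounded multiple—we get that $D$ is $\qq$-linearly trivial over $Y$.

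Having established $D\sim_{\qq,Y}0$, I would then apply Lemma~\ref{lem:vert-div-Q-trivial} directly: it gives that the image $f(D)\subset Y$ is a prime divisor $P\subset Y$. It remains to upgrade ``$f(D)=P$ is a prime divisor'' to ``$D=f^*P$'' (as Weil divisors, up to the $\qq$-Cartier conventions). For this I would argue that $f^*P$ is a vertical $\qq$-Cartier divisor whose support is a union of prime vertical divisors over $P$, and that, because $\rho(X/Y)=1$ and $D$ is one of the prime components lying over $P$, there can be only one such component; indeed if $f^*P=\sum a_i D_i$ with $D_1=D$, then each $D_i\sim_{\qq,Y}0$ as well (being vertical with $f(D_i)=P$), so each $D_i$ is proportional to $D$ in $\operatorname{Pic}(X)_\qq$ modulo $f^*\operatorname{Pic}(Y)_\qq$; combined with $\rho(X/Y)=1$ and the effectivity, this forces the support of $f^*P$ over the generic point of $P$ to be irreducible, i.e. $f^*P=aD$ for a positive rational $a$, and normalizing (or using that $D$ is prime and $f$ has connected fibers over codimension-one points) gives $D=f^*P$ as claimed.

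The main obstacle I anticipate is making the identification of the relative Picard/Néron–Severi structure precise enough to conclude that a vertical prime divisor in relative Picard rank one is either $\qq$-trivial or ample over $Y$ — one must be careful about working with Weil versus $\qq$-Cartier divisors when $X$ is only klt (not necessarily $\qq$-factorial), and about whether one should first pass to a small $\qq$-factorialization as in the proof of Proposition~\ref{prop:divisor-trivial-over-basis}. A clean way around this is: replace $X$ by a small $\qq$-factorialization $\tilde X\to X$, which preserves $\rho(\,\cdot\,/Y)=1$ and the verticality and primality of the strict transform $\tilde D$ of $D$; run the dichotomy on $\tilde X$ to get $\tilde D\sim_{\qq,Y}0$, apply Lemma~\ref{lem:vert-div-Q-trivial}, and then push forward. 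The remaining bookkeeping—that $D$ itself, and not merely a multiple, is the honest pull-back—follows because $f$ restricted over the generic point of $P$ is a contraction (so $f_*\mathcal O = \mathcal O$) with irreducible central fiber in codimension one, which pins down the coefficient.
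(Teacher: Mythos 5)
Your overall plan tracks the paper's argument: use $\rho(X/Y)=1$ together with verticality to conclude $D\sim_{\qq,Y}0$ (the paper phrases this as "$D$ intersects every curve in a general fiber trivially," which is your dichotomy argument in disguise), then invoke Lemma~\ref{lem:vert-div-Q-trivial} to get that $f(D)=D_Y$ is a prime divisor, and finally show that $D$ is the full preimage of $D_Y$. The first two steps are fine and match the paper.

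The irreducibility step is where I think your argument is not yet tight. You write that because each vertical $D_i$ over $P$ is ``proportional to $D$ in $\Pic(X)_\qq$ modulo $f^*\Pic(Y)_\qq$,'' together with $\rho(X/Y)=1$ and effectivity, irreducibility is forced. But proportionality modulo pull-backs is automatic for \emph{all} vertical divisors under $\rho(X/Y)=1$ (they are all trivial mod pull-backs), so this observation does not by itself preclude a reducible preimage; effectivity alone does not close the gap either. The precise fact you need—and the fact the paper's one-line ``the support of $D$ equals the set-theoretic pre-image of $D_Y$'' is really resting on—is that $D\sim_{\qq,Y}0$ with $D$ effective forces $D$ to be the \emph{honest pull-back} (not merely $\qq$-linearly equivalent to one) of an effective $\qq$-divisor on $Y$: choose $m$ so that $mD\sim_Y 0$, use $f_*\mathcal{O}_X=\mathcal{O}_Y$ and the projection formula to see the section cutting out $mD$ descends to $Y$, hence $mD=f^*(\text{effective Cartier divisor})$. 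Since $D$ is prime, this immediately shows $f^{-1}(D_Y)$ is irreducible and $D$ is a rational multiple of $f^*D_Y$. Alternatively, the paper's degenerate-divisor machinery (Proposition~\ref{prop:divisor-trivial-over-basis}) gives the same conclusion: if there were a second prime divisor over $D_Y$ not in $\supp D$, then $D$ would be degenerate over $Y$ and hence meet some contracted curve negatively, contradicting $D\sim_{\qq,Y}0$. Either route works; your current phrasing only gestures at it. One further caveat: your proposed small $\qq$-factorialization can increase the relative Picard rank (a small morphism can enlarge $N^1(\,\cdot\,/Y)$), so the dichotomy ``$\qq$-trivial or $f$-ample'' may fail on $\tilde X$; the degenerate-divisor route avoids this, and in the paper's applications $X$ is already $\qq$-factorial (a Mori fiber space), so the issue does not arise there.
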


\begin{proof}
Note that $D$ is $\qq$-linearly trivial over $Y$ since it intersects every curve on a general fiber of $f$ trivially.
By Lemma~\ref{lem:vert-div-Q-trivial}, we concldue that the image of $D$ on $Y$ is a prime divisor $D_Y$.
Since $D$ is $\qq$-trivial over the base, we conclude that the support of $D$ equals the set-theoretic pre-image of $D_Y$.
Hence, the pull-back of some positive multiple of $D_Y$ equals $D$.
\end{proof}

The following lemma allows us to argue that a prime vertical divisor is not contracted on a minimal model program.
It is a straightforward application of the upper-semicontinuity of fiber dimensions.

\begin{lemma}\label{lem:not-contracted-prime-vertical}
Let $X\rightarrow Y$ be a projective morphism from a klt variety $X$ to a normal variety $Y$.
Let $\pi\colon X\rightarrow X'$ be a contraction over $Y$.
Let $F$ be a prime effective divisor on $X$ which is the pull-back of a divisor on $Y$.
Then $F$ is not contracted by $\pi$.
\end{lemma}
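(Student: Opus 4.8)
The plan is to make the hypothesis that $F$ is a pull-back completely explicit and then to compute the image $\pi(F)$ directly. Write $g\colon X\to Y$ for the given morphism; since $\pi$ is a contraction over $Y$ there is a morphism $g'\colon X'\to Y$ with $g=g'\circ\pi$. The hypothesis means that there is a prime divisor $P_Y\subset Y$ with $\supp F=g^{-1}(P_Y)$; in particular $\supp F$ is irreducible. After replacing $Y$ by the normalization of $\overline{g(X)}$ (using that $X$ is normal, since $X$ is klt), we may assume $g$, and hence $g'$, is dominant. From $g=g'\circ\pi$ we obtain, purely set-theoretically,
\[
\supp F=g^{-1}(P_Y)=\pi^{-1}(Z),\qquad Z:=(g')^{-1}(P_Y).
\]

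Next I would use that a contraction is proper and satisfies $\pi_*\mathcal O_X=\mathcal O_{X'}$, hence is surjective. Therefore $\pi(F)=\pi(\pi^{-1}(Z))=Z$, and $Z$ is irreducible, being the continuous image of the irreducible set $\supp F$. Moreover $g'(X')=g(X)\not\subseteq P_Y$ (otherwise $g^{-1}(P_Y)$ would be all of $X$, not a divisor), so $Z\subsetneq X'$. The one genuine computation is the lower bound $\dim Z\ge \dim X'-1$: by upper semicontinuity of fibre dimension for the dominant morphism $g'$, every fibre of $g'$ over its image has dimension at least the generic relative dimension $e=\dim X'-\dim Y$; since $Z=(g')^{-1}(P_Y)$ contains $(g')^{-1}(y)$ for each $y\in P_Y$ and maps onto $P_Y$, we get $\dim Z\ge\dim P_Y+e=\dim X'-1$. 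Combined with $Z\subsetneq X'$ this forces $\dim Z=\dim X'-1$, so the irreducible set $Z=\pi(F)$ is a prime divisor of $X'$. Hence $F$ is not contracted by $\pi$; when $\pi$ is birational this says precisely that $F$ is the strict transform of the prime divisor $Z$.

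I do not anticipate a genuine obstacle: once $\pi(F)$ is identified with $(g')^{-1}(P_Y)$, the whole content is the elementary dimension count above, which is exactly the ``upper semicontinuity of fibre dimensions'' invoked in the statement. The only places needing a moment's care are bookkeeping ones — reading ``pull-back of a divisor'' as the equality $\supp F=g^{-1}(P_Y)$ with $P_Y$ prime (passing to a single prime component if the divisor on $Y$ is not already prime), arranging that $g$ and $g'$ are dominant, and keeping in mind that ``not contracted'' here means that $\pi(F)$ has codimension one, which is precisely what the count yields. Beyond the normality implicit in $X$ being klt, no other part of the hypotheses is used.
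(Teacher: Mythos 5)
Your argument is correct and is precisely the "upper semicontinuity of fiber dimensions" route that the paper alludes to (the paper gives no explicit proof, only that one-line indication). Identifying $\supp F=g^{-1}(P_Y)=\pi^{-1}\bigl((g')^{-1}(P_Y)\bigr)$, using surjectivity of the contraction $\pi$ to conclude $\pi(F)=(g')^{-1}(P_Y)$, and then the dimension count $\dim (g')^{-1}(P_Y)\ge \dim P_Y+(\dim X'-\dim Y)=\dim X'-1$ is exactly the intended computation.
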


The following proposition will help us to prove that certain vertical prime divisors on a Galois cover of a Mori fiber space are the pull-back of a prime divisor on the base.

\begin{proposition}\label{prop:galois-cover-uncontractibility}
Let $X\rightarrow Y$ be a projective morphism of relative Picard rank one.
Assume that $X$ has klt singularities and $Y$ is normal.
Let $Y'\rightarrow Y$ be a Galois cover which is \'etale over $U_Y\subset Y$.
Let $U_{Y'}$ be the pre-image of $U_Y$ on $Y'$.
Let $X'$ be the normalization of the main component of $Y'\times_Y X$.
Assume that $Y'$ is $\qq$-factorial.
Then, every vertical prime divisor on $X'$ whose image on $Y'$ intersect $U_{Y'}$ 
is the pull-back of a prime divisor on $Y'$.
\end{proposition}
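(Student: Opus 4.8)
The plan is to transport the statement down to $X$, where the hypothesis on the relative Picard number can be used directly via Lemma~\ref{lem:vert-div-MFS}, and then to lift the conclusion back up to $X'$ using the $\qq$-factoriality of $Y'$ and the fact that $Y'\to Y$ is \'etale over $U_Y$. Write $f\colon X\to Y$ for the given morphism, $g\colon X'\to X$ and $q\colon Y'\to Y$ for the induced finite morphisms, and $p\colon X'\to Y'$ for the structure map, so that $f\circ g=q\circ p$. We may assume $f$ is a contraction: otherwise replace $Y$ by the Stein factorization of $f$, which is finite over $Y$ and keeps the relative Picard number equal to one and the remaining hypotheses in force; in any case this is the situation in the intended applications. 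Now let $D'$ be a vertical prime divisor on $X'$ with $p(D')\cap U_{Y'}\neq\emptyset$ and set $D:=g(D')$. Since $g$ is finite, $D$ is a prime divisor on $X$; since $q$ is finite and $p(D')\subsetneq Y'$, its image $f(D)=q(p(D'))$ has dimension $<\dim Y$, so $D$ is vertical over $Y$. By Lemma~\ref{lem:vert-div-MFS}, $D$ is the pull-back of the prime divisor $D_Y:=f(D)$ on $Y$; in particular $\supp D=f^{-1}(D_Y)$.

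The first main step is to rule out that any vertical prime divisor on $X'$ maps onto a subvariety of codimension at least two. Indeed, if $D''$ were such a divisor with $\codim_{Y'}p(D'')\geq 2$, then $g(D'')$ would be a prime divisor on $X$ (as $g$ is finite), vertical over $Y$, with image $q(p(D''))$ of codimension at least two in $Y$ (as $q$ is finite); but Lemma~\ref{lem:vert-div-MFS} says \emph{every} vertical prime divisor on $X$ is a pull-back of a prime divisor, so its image has codimension exactly one — a contradiction. Applying this to $D'$ itself, $D_{Y'}:=p(D')$ is a prime divisor on $Y'$, and $D_{Y'}\subseteq q^{-1}(D_Y)$ because $q(D_{Y'})=D_Y$. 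Since $Y'$ is $\qq$-factorial, $D_{Y'}$ is $\qq$-Cartier, so $p^{*}D_{Y'}$ is a well-defined effective $\qq$-Cartier $\qq$-divisor with support $p^{-1}(D_{Y'})$; this preimage has pure codimension one, and by the previous sentence every one of its components dominates $D_{Y'}$, hence meets $p^{-1}(U_{Y'})$.

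It remains to show that $p^{-1}(D_{Y'})$ is irreducible, and since all its components meet $p^{-1}(U_{Y'})$ it is enough to show that $p^{-1}(D_{Y'}\cap U_{Y'})$ is irreducible. Over $U_Y$ \'etale base change preserves normality, so (taking $Y'\times_Y X$ irreducible, or passing to the relevant component) $X'$ restricts over $U_{Y'}$ to $f^{-1}(U_Y)\times_{U_Y}U_{Y'}$, and $g$ restricts to a finite \'etale surjection; likewise $q$ restricts to a finite \'etale map $U_{Y'}\to U_Y$. Using $\supp D=f^{-1}(D_Y)$ one identifies $p^{-1}(D_{Y'}\cap U_{Y'})$ with the base change of the finite \'etale cover $D_{Y'}\cap U_{Y'}\to D_Y\cap U_Y$ along $f^{-1}(D_Y\cap U_Y)\to D_Y\cap U_Y$. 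This cover is connected because $D_{Y'}$ is irreducible, and the map $f^{-1}(D_Y\cap U_Y)\to D_Y\cap U_Y$ has geometrically connected fibers — they are the fibers of the contraction $f$ — hence is surjective on \'etale fundamental groups; therefore the base change stays connected, so $p^{-1}(D_{Y'}\cap U_{Y'})$, being normal and connected, is irreducible. Consequently $p^{-1}(D_{Y'})=D'$, so $p^{*}D_{Y'}=cD'$ for some $c\in\qq_{>0}$; that is, $D'$ is the pull-back of the prime divisor $D_{Y'}$ on $Y'$.

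The hard part is the very first step — controlling the codimension of the image of a vertical divisor after the cover — and the point that makes it go through is that $g$ and $q$ are \emph{finite}, hence preserve codimensions of images, so the assertion reduces to the same statement on $X$, where it is exactly the content of Lemma~\ref{lem:vert-div-MFS}. The remaining ingredients — the reduction to $f$ being a contraction, the bookkeeping about the component of $Y'\times_Y X$ that $X'$ normalizes, and the $\pi_1$-surjectivity statement used to keep the \'etale cover connected — are routine but should be written out with some care.
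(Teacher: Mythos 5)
Your route is essentially the paper's: push $D'$ down to the prime divisor $D=g(D')$ on $X$ (the paper works with the orbit $F^G=\sum_{g\in G}g^*F$, which is the same thing since $F^G=g^*D$), apply Lemma~\ref{lem:vert-div-MFS} on $X$, and then carry the conclusion back over the Galois cover. Your intermediate step — that no vertical prime of $X'$ can have image of codimension $\geq 2$ in $Y'$, because its finite image on $X$ would contradict Lemma~\ref{lem:vert-div-MFS} — is correct and makes explicit a point the paper leaves implicit. The reduction to $f$ a contraction via Stein factorization is also fine, although it needs the (routine) bookkeeping of replacing $Y'$ accordingly.

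There is, however, a genuine gap in the last paragraph. You conclude that $p^{-1}(D_{Y'}\cap U_{Y'})$ is irreducible because it is ``normal and connected.'' Normality is not justified: this set is a Weil divisor inside the normal variety $X'$, and such a divisor has no reason to be normal. Without it, ``connected and finite \'etale over an irreducible base'' does not give irreducibility (a nodal curve already admits connected reducible finite \'etale covers). The $\pi_1$-surjectivity you invoke uses geometric \emph{connectedness} of the fibers of $f^{-1}(D_Y\cap U_Y)\to D_Y\cap U_Y$, which Stein factorization does give; what the conclusion actually requires is geometric \emph{irreducibility} of the generic fiber, equivalently that $k(D_Y)$ be algebraically closed in $k(D)$, and this does not follow from the stated hypotheses. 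Concretely, let $f\colon X\to Y=\pp^2$ be a smooth conic bundle of relative Picard rank one whose discriminant contains a prime $D_Y$ over which the line pair has nontrivial monodromy, so $D:=f^{-1}(D_Y)$ is prime but $D\to D_Y$ has geometrically reducible generic fiber; taking $Y'\to Y$ a $\qq$-factorial degree-two Galois cover, \'etale over a dense open $U_Y$ meeting $D_Y$, chosen so that $D_{Y'}\to D_Y$ trivializes that monodromy, the preimage $p^{-1}(D_{Y'})$ has two components, neither of which is the pullback of a prime of $Y'$. Note that the paper's own proof has the same blind spot: its closing assertion that ``each prime component of $F^G$ is the pull-back of a prime component of $F_{Y'}$'' is stated without argument and fails in this example. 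So your proof and the paper's are really the same argument, and both need, in addition to the stated hypotheses, geometric irreducibility of the fibers of $f$ over generic points of divisors through $U_Y$; this should be checked in the place where the proposition is applied (Step~9 of the proof of Theorem~\ref{thm:FT-full-rank}) rather than taken for granted.
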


\begin{proof}
Let $Y'\rightarrow Y$ be the quotient by $G$.
Hence, $X'\rightarrow X$ is the quotient by $G$ as well.
The morphism $X'\rightarrow X$ is \'etale over the pre-image of $U_Y$.
Let $F$ be a vertical prime divisor which maps to $U_{Y'}$.
We claim that $F$ is the pull-back of a prime divisor on $Y'$.
Let $F^G:=\sum_{g\in G}g^*F$.
Then $F^G$ is the pull-back of a prime divisor $F_X$ on $X$.
This prime divisor is vertical over $Y$, so
it is $\qq$-linearly trivial over $Y$,
given that $X\rightarrow Y$ has relative Picard rank one.
By Lemma~\ref{lem:vert-div-MFS}, we conclude that $F_X$ is the pull-back of a prime divisor $F_Y$ on $Y$.
Denote by $F_{Y'}$ the pull-back of $F_Y$ to $Y'$.
We conclude that $F^G$ is the pull-back to $X'$ of $F_{Y'}$.
By construction, we conclude that each prime component of $F^G$ is the pull-back of a prime component of $F_{Y'}$.
This proves the claim.
\end{proof}

\section{Proof of the main theorems}
\subsection{Proof of the main projective theorem}\label{subsec:proof-main-projective}
In this subsection, we prove the main projective theorem of this article.
First, we start by stating a more general version of the main projective theorem.
At the end of this subsection, we show a version of the following theorem which considers possibly non-abelian actions.

\begin{theorem}\label{thm:FT-full-rank}
Let $n$ be a positive integer and
let $\Lambda\subset \qq$ be a set satisfying the descending chain condition with rational accumulation points.
Then, there exists a positive integer
$M:=M(\Lambda,n)$ 
so that $M$ only depends on $\Lambda$ and $n$,
satisfying the following.
Let $X$ be a Fano type variety of dimension $n$ and $\Delta$ be a boundary on $X$, such that the following conditions hold:
\begin{enumerate}
\item $A\leqslant {\rm Aut}(X)$ is a finite abelian subgroup with $g_n(A)\geq M$, 
\item $(X,\Delta)$ is log canonical and $A$-invariant,
\item the coefficients of $\Delta$ belong to $\Lambda$, and
\item $-(K_X+\Delta)$ is $\qq$-complemented.
\end{enumerate}
Then, there exists:
\begin{enumerate}
\item A boundary $B\geqslant \Delta$ on $X$, and
\item an $A$-equivariant birational map $X\dashrightarrow X'$, 
\end{enumerate}
satisfying the following conditions:
\begin{enumerate}
\item The pair $(X,B)$ is log canonical, $A$-equivariant, and 
$(K_X+B)\sim 0$,
\item the push-forward of $K_X+B$ to $X'$ is a log pair $(X',B')$,
\item the pair $(X',B')$ is a log Calabi-Yau toric pair, and
\item there are group monomorphisms
$A<\mathbb{G}_m^n\leqslant {\rm Aut}(X,B)$.
\end{enumerate}
In particular, $B'$ is the reduced toric boundary of $X'$.
Furthermore, the birational map $X\dashrightarrow X'$ is an isomorphism over $\mathbb{G}_m^n$.
\end{theorem}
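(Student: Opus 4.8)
The plan is to run an equivariant minimal model program and induct on $n$, reducing either to a bounded family of canonical Fano varieties, handled by Theorem~\ref{thm:bounded-anti-pluricanonically-embedded-varieties}, or to an equivariant Mori fiber space over a projective space, where the base and the general fiber are controlled inductively and the total space is shown to be toric through the complexity bound of Theorem~\ref{thm:toric-by-complexity}. First I would fix a boundary: applying Theorem~\ref{thm:G-equiv-complement} to the $A$-invariant, log canonical, $\qq$-complemented pair $(X,\Delta)$ with coefficients in the DCC set $\Lambda$ produces a constant $M_0=M_0(n,\Lambda)$ and an $A$-equivariant $M_0$-complement $B\geq\Delta$, so $(X,B)$ is log canonical with $M_0(K_X+B)\sim 0$; enlarging $M$ and using Lemma~\ref{lem:bounded-rank-large-k-generation}, I may assume $A\simeq\zz_k^n$ with $k$ as large as needed. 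The point is that, once $M_0(K_X+B)\sim 0$ is fixed while $\zz_k^n$ acts with $k$ huge, $B$ must be reduced, so the complexity of $(X,B)$ is small provided the Picard rank is under control.

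To control the Picard rank I replace $X$ by an $A$-equivariant canonicalization (Proposition~\ref{prop:existence-g-can}); this keeps $X$ of Fano type (Proposition~\ref{prop:FT-extraction-non-canonical}), makes it $A\qq$-factorial with canonical singularities, and replaces $B$ by its log pullback, still an $A$-equivariant $M_0$-complement. As $X$ is of Fano type, $K_X$ is not pseudo-effective, so an $A$-equivariant $K_X$-MMP (Subsection~\ref{subsec:g-equiv-mmp}) ends with an $A$-equivariant Mori fiber space $X'\to C$; pushing $B$ forward preserves the log canonical $M_0$-complement since $K_X+B\sim_\qq 0$, while the MMP keeps $X$ canonical. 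If $C=\Spec(\kk)$, then $X'$ is a canonical Fano variety carrying a log canonical $M_0$-complement, hence lies in a bounded family, and Theorem~\ref{thm:bounded-anti-pluricanonically-embedded-varieties}, applied after enlarging $M$ so that $g_n(A)\geq M'(M_0,n)$, shows $X'$ is toric, $B'$ is its reduced toric boundary, and $A<\mathbb{G}_m^n\leqslant{\rm Aut}(X',B')$; transporting this along the $A$-equivariant map $X\dashrightarrow X'$, which is an isomorphism over the open torus by construction, completes this case.

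Suppose instead $\dim C=c>0$. Writing $1\to A_F\to A\to A_C\to 1$ with $A_F$ fiber-wise, Lemma~\ref{lem:ses-z_m^n} gives $A_F$ and $A_C$ large generation of ranks $f=\dim X'-c$ and $c$, so the inductive hypothesis applies to a general fiber $F$ (a Fano variety) and to the base $C$, which acquires a Fano type structure via the canonical bundle formula (Proposition~\ref{prop:g-equiv-cbf}); combined with Lemma~\ref{lem:toric-bir-pn} this yields an $A_C$-equivariant birational map $C\dashrightarrow\pp^c$. Running a suitable $A$-equivariant MMP over $\pp^c$ from an $A$-equivariant log resolution of $(X',B')$ — with the contracted divisors controlled via the theory of degenerate divisors (Section~\ref{sec:deg}, Propositions~\ref{prop:MMP-for-deg} and~\ref{prop:divisor-trivial-over-basis}) so as to stay of Fano type — and re-compactifying the complement of the preimage of $\mathbb{G}_m^c$, I reach an $A$-equivariant Mori fiber space $X^*\to\pp^c$ of $A$-invariant Picard rank two. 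Then I descend: quotienting by $A$ gives $Y\to T_Y$ with $T_Y$ a toric quotient of $\pp^c$ of Picard rank one and $\rho(Y)=2$; a bounded sequence of Galois covers, of degree bounded using the Picard rank of the general fiber of $Y\to T_Y$, makes the horizontal boundary unramified over the base and produces $Z\to T_Z$ with $T_Z$ toric of Picard rank one; a further MMP over $T_Z$ lands on $Z'\to T_Z$ with $\rho(Z')=2$ and enough components in $\lfloor B_{Z'}\rfloor$ that $c(Z',B_{Z'})=0$, whence $(Z',B_{Z'})$ is toric by Theorem~\ref{thm:toric-by-complexity} and Proposition~\ref{prop:birational-toric} propagates this to $(Z,B_Z)$. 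Finally, the normalization $W$ of a connected component of $Z\times_Y X^*$ is toric by Proposition~\ref{prop:cover-toric-unramified-torus-toric}, and writing $X^*=W/H$ with $|H|=O_n(1)$ normalized by a large discrete subgroup of the torus of $W$ coming from $A$, Proposition~\ref{prop:making-H0-toric} forces $H<\mathbb{G}_m^n$, so $X^*$ is toric by Proposition~\ref{prop:quot-toric-by-toric-finite}; setting $X':=X^*$, Proposition~\ref{prop:making-abelian-group-toric} gives $A<\mathbb{G}_m^n\leqslant{\rm Aut}(X',B')$, and the remaining assertions, including that $X\dashrightarrow X'$ is an isomorphism over $\mathbb{G}_m^n$, follow from the construction.

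The main obstacle is the bookkeeping in the inductive step, not any isolated argument: one must keep every intermediate model of Fano type under the several relative minimal model programs — which is precisely why the degenerate-divisor machinery of Section~\ref{sec:deg} is developed — and, most delicately, arrange that the degrees of all the Galois covers, the order of $H$, the number of components of $\lfloor B_{Z'}\rfloor$, and the bounded-family constants of the base case are simultaneously bounded by functions of $n$ and $\Lambda$ alone. Only with such uniform control does the complexity equality $c(Z',B_{Z'})=0$ actually hold, and only then may Propositions~\ref{prop:making-H0-toric} and~\ref{prop:making-abelian-group-toric} be invoked with a universal constant; the repeated replacement of $A$ by a still-large $\zz_{k'}^n$ after each bounded loss is handled by Proposition~\ref{prop:k-gen-bounded-subgroup} and Lemma~\ref{lem:ses-z_m^n}.
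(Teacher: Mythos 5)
Your proposal is correct and follows essentially the same strategy as the paper: equivariant $M$-complement via Theorem~\ref{thm:G-equiv-complement}, $A$-equivariant canonicalization and $K_X$-MMP, the dichotomy between the bounded-family case (handled by Theorem~\ref{thm:bounded-anti-pluricanonically-embedded-varieties}) and an $A$-equivariant Mori fiber space over a positive-dimensional base, the inductive control of fiber and base via the exact sequence and the canonical bundle formula, the descent through the models $X^*\to\pp^c$, $Y\to T_Y$, $Z\to T_Z$, and $W$, the complexity-zero computation, and the final toric identification through Propositions~\ref{prop:making-H0-toric} and~\ref{prop:making-abelian-group-toric}. Your treatment compresses the paper's Steps 9--10 (the toroidal blow-ups producing $Z_0$, the $H_{Z_0}$-MMP to $Z'$, and the subsequent $K_{Z'}$-MMP to $Z''$ on which the complexity is actually computed) into a single MMP, but the plan and the lemmas invoked are the same.
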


Before proceeding to the proof,
note that Theorem~\ref{introthm:main-thm},
Corollary~\ref{introcor-easy-version}, 
and Corollary~\ref{introcor1} follow directly from the above theorem.
Indeed, we can set $\Lambda=\{0\}$
and consider the given $M$,
which in this case will only depend on the dimension.

\begin{proof}
We prove the theorem in several steps.
By induction, we may assume the statement holds in dimension at most $n-1$.\\

\textit{Step 0:} In this step, we prove the case of dimension one.\\

In this case $X\simeq \pp^1$.
If $g_1(A)=|A|>4$, then $A$ acts as the multiplication by a roof of unity on $\pp^1$.
Up to an automorphism, we may assume it fixes zero and infinite.
Let $\delta>0$ be so that every positive element
of $\Lambda$ is larger than $\delta$.
If $g_1(A)=|A|>\frac{4}{\delta}$, then
no component of $\Delta$ can be contained in $\mathbb{G}_m\subset \pp^1$.
Otherwise, the divisor $K_X+\Delta$ is ample.
We conclude that $\Delta$ is supported on zero
and infinite.
Hence, we can take $B$ to be the sum of zero and infinite with reduced scheme structure.
Then, the statement of the theorem holds for 
$M=\max\{4,4/\delta\}+1$.
This proves the statement in dimension one.\\

\textit{Step 1:} In this step, we construct a bounded $A$-invariant complement for $(X,\Delta)$ and
run an $A$-equivariant minimal model program.\\

Note that the log pair $(X,\Delta)$ satisfies all the hypotheses of Theorem~\ref{thm:G-equiv-complement}.
Hence, there exists an $A$-equivariant log canonical $M_0$-complement $B\geq \Delta\geq 0$.
The pair $(X,B)$ is log canonical, $A$-invariant,
and $M_0(K_X+B) \sim 0$. Here, $M_0$ only depends on $n$ and $\Lambda$.
By Proposition~\ref{prop:existence-g-can}, we can produce an $A$-equivariant canonicalization $X'\rightarrow X$ of $X$.
Note that the log pull-back $(X',B')$ of $(X,B)$ to $X'$ satisfies that $(X',B')$ is log canonical, $A$-invariant, and $M_0(K_{X'}+B')\sim 0$.
By Proposition~\ref{prop:FT-extraction-non-canonical}, 
$X'$ is a Fano type variety. 
Now, we run an $A$-equivariant minimal model program for $K_{X'}$.
We conclude that 
this $A$-equivariant minimal model program terminates with an $A$-equivariant Mori fiber space
$X''\rightarrow C$.
Note that $X''$ is a Fano type variety with canonical singularities.
We denote by $B''$ the push-forward of $B'$ to $X''$.
Observe that we have $(X'',B'')$ is log canonical, 
$A$-invariant, and $M_0(K_{X''}+B'')\sim 0$, 
where $M_0$ only depends on $n$ and $\Lambda$.\\

\textit{Step 2:} In this step, we prove the first statement of the 
theorem in the case that $C$ is a point.\\

If $C$ is a point, then the $A$-invariant Picard rank of $X''$ is one.
Since $K_{X''}$ is an $A$-invariant divisor and is not pseudo-effective, we conclude that
$K_{X''}$ is anti-ample.
Hence, $X''$ is canonical with anti-ample canonical divisor.
Thus, $X''$ is a canonical Fano variety.
We conclude that $(X'',B'')$ belongs to a log bouded family (see Theorem~\ref{thm:log-ft-bdness}).
Hence, by Theorem~\ref{thm:bounded-anti-pluricanonically-embedded-varieties}, we conclude that $X''$ is a projective toric variety and $B''$ is the toric boundary whenever $g_n(A)\geq M_1(M_0,n)$.
Furthermore, provided that $g_n(A)\geq M_1(M_0,n)$, we have that $A<\mathbb{G}_m^n\leqslant {\rm Aut}(X'',B'')$.
Note that $M_1$ only depends on $M_0$ and $n$, which at the same time only depend on $M$ and $n$. 
Hence, $M_1$ only depends on $M$ and $n$.
We reduce to prove that the 
birational map $X''\dashrightarrow X$ is an isomorphism over the torus. This is the content of the next step.\\

\textit{Step 3:} In this step, assuming that $C$ is a point, we prove that the birational map
$X''\dashrightarrow X$ is an isomorphism over $\mathbb{G}^n_m$.\\

Observe that $K_X+B$ is the log pull-back of $K_{X''}+B''$ to $X$.
Hence, the divisor $K_X+B$ is linearly equivalent to zero.
In particular, every log discrepancy of $K_{X}+B$ is 
a positive integer.
Note that, we can factor $X\dashrightarrow X''$ as
$X \leftarrow X' \dashrightarrow X''$, where
$X'\rightarrow X$ is a canonicalization of $X$ and
$X'\dashrightarrow X''$ is a minimal model program for $K_{X'}$.
We can factorize the minimal model program in a sequence
of birational maps
\[
(X',B')=(X'_i,B'_i) \dashrightarrow
(X'_{i-1},B'_{i-1})\dashrightarrow
(X'_{i-2},B'_{i-2})\dashrightarrow \dots
\dashrightarrow 
(X'_1,B'_1)=(X'',B'').
\]
Each step
$(X'_{j+1},B'_{j+1})\dashrightarrow (X'_j,B'_j)$ is either
a $K_{X'_{j+1}}$-flip or a $K_{X'_{j+1}}$-divisorial contraction.
Note that each step
$(X'_{j+1},B'_{j+1})\dashrightarrow (X'_j,B'_j)$
is either a
$(K_{X'_{j+1}}+B'_{j+1})$-flop
or a 
$(K_{X'_{j+1}}+B'_{j+1})$-crepant equivalent  divisorial contraction.
By induction on $j$, we prove that 
$(X_j',B'_j)\dashrightarrow (X'',B'')$ is an isomorphism over
the torus $\mathbb{G}_m^n$
and no component of $B'_j$ intersects the torus.
The statement is clear for $j=1$.
Assume that $(X'_{j+1},B'_{j+1})\rightarrow (X'_j,B'_j)$ is a divisorial contraction. 
Since every log discrepancy with respect $(X'_j,B'_j)$ whose center
intersect the torus $\mathbb{G}_m^n$
must be at least two,
we conclude that 
the center of any divisor contracted by
$X'_{j+1}\rightarrow X'_j$ must be disjoint from the torus.
In particular, 
it is an isomorphism over $\mathbb{G}_m^n$ and
any component of $B'_{j+1}$ remains disjoint
from the torus.
On the other hand,
assume that $(X'_{j+1},B'_{j+1}) \dashrightarrow (X'_j,B'_j)$ is a flop.
Note that such flop is a $K_{X'_{j+1}}$-flip, 
hence it is a $-B'_{j+1}$-flip.
In particular, any flipped curve must be contained in the support
of $B'_j$.
Hence, the flipped locus must be contained in the support of $B'_j$.
Thus, $X'_{j+1}\dashrightarrow X'_j$ is an isomorphism over the torus. Hence, any component of $B'_{j+1}$ remains disjoint 
from the torus.
We conclude that $X''\dashrightarrow X'$ is an isomorphism over the torus
and all the components of $B'$ are disjoint from the torus.
Finally, observe that the birational contraction
$X'\rightarrow X$ only extract divisors with log discrepancy
in the interval $(0,1)$ with respect to $K_X$.
Since $K_X+B\sim 0$ and
$(X',B')\dashrightarrow (X,B)$ is log crepant equivalent, 
we conclude that $X'\rightarrow X$ only extract divisors 
with log discrepancy equal to zero with respect to $(X,B)$.
Thus, $(X',B')\rightarrow (X,B)$ only contract divisorial log canonical places of $(X,B)$, i.e., components of $\lfloor B'\rfloor$.
Since $\lfloor B'\rfloor$ is disjoint from the torus on $X'$,
we conclude that $X'\rightarrow X$ is an isomorphism over $\mathbb{G}_m^n$.
Hence, we have that 
$X''\dashrightarrow X$ is an isomorphism over $\mathbb{G}_m^n$ as claimed.
This concludes the proof in the case that the minimal model program terminates with a trivial Mori fiber space.

From now on, we may assume that $X''\rightarrow C$ is a non-trivial $A$-equivariant Mori fiber space, i.e., 
the base $C$ is a positive dimensional Fano type variety.\\

\textit{Step 4:} In this step, we study the induced actions on the base and the general fibers of $X''\rightarrow C$ and apply the inductive hypothesis.\\

Consider the exact sequence
$1\rightarrow A_F \rightarrow A \rightarrow A_C \rightarrow 1$,
where $A_F$ is the normal subgroup of $A$ acting fiber-wise
and $A_C$ is the induced action on the base.
Let $f$ be the dimension of a general fiber and $c$ the dimension of the base.
Provided that $g_n(A)\geq N$, we have that
$g_f(A_F)\geq N/M_c$ and $g_c(A_C)\geq N/M_f$.
This follows from
Theorem~\ref{thm:Jordan-FT}
and 
Lemma~\ref{lem:ses-z_m^n}.
Here, $M_c$ and $M_f$ are constants that only depend on $c$ and $f$, respectively.
Hence, they only depend on $n$.
Thus, we have that $g_f(A_F)$ and $g_c(A_C)$ are larger than $N$ provided
that $g_n(A)$ is larger than $NM_fM_c$.
In particular, we can apply the inductive hypothesis on the general fiber.

Let $(F,B_F)$ be the log pair induced by $(X'',B'')$ on 
a general fiber $F$ of the $A$-equivariant Mori fiber space
$X''\rightarrow C$.
Observe that $F$ is a canonical Fano variety,
$M_0(K_F+B_F)\sim 0$, and $(F,B_F)$ is log canonical
Moreover, we have that $A_F \leqslant {\rm Aut}(F,B_F)$.
$A_F$ satisfies $g_f(A_F)\geq N$, provided that
$g_n(A)\geq NM_fM_c$.
By Theorem~\ref{thm:bounded-anti-pluricanonically-embedded-varieties}, we know that there exists a constant $N_f:=N_f(M_0,f)$, which only depends on $M_0$ and $f$,
so that $(F,B_F)$ is a log Calabi-Yau toric pair 
provided $g_f(A_F)\geq N_f$.
Note that $N_f$ only depends on $n$ and $\Lambda$.
Furthermore, in such case $A_F$ acts on $F$
as the multiplication by roots of unity.
We conclude that
if $g_n(A)\geq N_fM_fM_c$, then 
$(F,B_F)$ is log toric and $A_F<\mathbb{G}_m^f$.
In particular, the pair $(X'',B'')$ has a log canonical
center which admits a finite morphism to $C$.
By~\cite{FG14}*{Remark 4.1}, the moduli part of the $A$-equivariant canonical bundle formula is $\qq$-trivial.

Note that $C$ is a Fano type variety since is the image of a Fano type variety $X''$ with respect to a contraction.
Let $(C,B_C)$ be the log pair obtained by the $A$-equivariant canonical bundle formula (see Proposition~\ref{prop:g-equiv-cbf}).
Then, we have that $q(K_C+B_C)\sim 0$ for a constant
$q$ which only depends on $n$ and $M_0$.
Hence, it only depends on $n$ and $\Lambda$.
Furthermore, $(C,B_C)$ is log canonical
and we have $A_C\leqslant {\rm Aut}(C,B_C)$.
By the inductive hypothesis, 
if $g_c(A_C)\geq N_c(q,c)$, 
then $(C,B_C)$ admits a log crepant equivalent torus
action which is compatible with $A_C$.
Here, $N_c$ only depends on $q$ and $c$.
Thus, $N_c$ only depends on $n$ and $\Lambda$.
If $g_n(A)\geq N_cM_fM_c$, then
$(C,B_C)$ admits such log crepant equivalent torus action
compatible with $A_C$.

We conclude that
if $g_n(A)\geq N_fN_cM_fM_c$, 
then the two following conditions hold:
\begin{enumerate}
\item $(F,B_F)$ is a log Calabi-Yau toric pair
and $A_F<\mathbb{G}_m^f$, and
\item $(C,B_C)$ admits a log crepant torus action compatible with $A_C$.
\end{enumerate}
From now on, we assume that
$g_n(A)\geq N_fN_cM_fM_c$.
In particular, we assume that the two above conditions are satisfied.
By construction, this value only depends on $n$
and $\Lambda$.\\

\textit{Step 5:} In this step, we construct a $A\qq$-factorial dlt model of
$(X'',B'')$ which admits a fibration to $\pp^c$.\\

Since $(C,B_C)$ admits a log crepant equivalent  torus action,
we know that there exists an $A_C$-equivariant birational map
$\phi\colon \pp^c \dashrightarrow C$.
Moreover, this birational map $\phi$ is an isomorphism over $\mathbb{G}_m^c$.
We have an $A$-equivariant diagram as follows: 
\[
\xymatrix{
X''\ar[d]_-{\pi}  & X''_1\ar[l]\ar[d] \\
C & \pp^c\ar@{-->}[l] \\
}
\]
where $X''_1$ is an $A$-equivariant snc resolution of $(X'',B'')$ (see~\cite{AW97}*{Theorem 0.1}).
Let $B''_1$ be the strict transform of $B''$ on $X''_1$
plus the reduced exceptional divisor of $X''_1\rightarrow X''$.
We denote by $B_{\mathbb{P}^c}$ the toric boundary of the projective space.
We may assume that over each prime component of $B_{\mathbb{P}^c}$ there is a log canonical place of $(X'',B'')$ which is a divisor on $X_1''$.
By construction, $K_{X_1''}+B''_1$ is $\qq$-linearly equivalent to an effective divisor supported on the union of all the divisors 
extracted on $X_1''$ with positive log discrepancy with respect to $(X'',B'')$.
Indeed, we can write
\[
K_{X_1''}+B_1''\sim_\qq 
\pi^*(K_{X''}+B'')+
\sum_{E\ \text{ prime over $X''$}}
a_E(X'',B'')E. 
\]
We define
\[
\Gamma:=\sum_{E\ \text{ prime over $X''$}}
a_E(X'',B'')E.
\]
We write $\Gamma=\Gamma_{\rm vert}+\Gamma_{\rm hor}$
for the decomposition of $\Gamma$ into its vertical
and horizontal components over $\pp^c$.
Note that
\begin{equation}\label{eq:supset}
{\rm Bs}_{-}(K_{X_1''}+B_1''/\pp^c)
\subset \Gamma_{\rm vert}+\Gamma_{\rm hor}.
\end{equation} 
We claim that $\Gamma_{\rm vert}$ is degenerate over $\pp^c$.
Indeed, if $P\subset \lfloor B_{\pp^c}\rfloor$ is prime,
then there is a component of $\lfloor B_1''\rfloor$ mapping
onto $P$ which is not contained in the support of $\Gamma_{\rm vert}$.
On the other hand, let $P\not\subset \lfloor B_{\pp^c}\rfloor$.
Let $P_C$ be the strict transform of $P$ on $C$.
Then, there is a component of $\pi^*P_C$ which maps onto $P_C$.
The strict transform of such component on $X_1''$ is a prime 
divisor on $X_1''$, not contained in the support of $\Gamma_{\rm vert}$, which maps onto $P$.
Furthermore, due to equation~\ref{eq:supset},
for any MMP of $K_{X''_1}+B_1''$ over $\pp^c$
the strict transform of $\Gamma_{\rm vert}$ will remain
degenerate over the base.
Indeed, no vertical component of $X_1''\rightarrow \pp^c$
which is not contained in $\supp(\Gamma_{\rm vert})$
can be contracted in this MMP.

Applying the negativity lemma to the general fiber, we have that
\begin{equation}\label{eq:subset} 
\Gamma_{\rm hor}\subset {\rm Bs}_{-}(K_{X_1''}+B_1''/\pp^c).
\end{equation} 
We run an $A$-equivariant minimal model program for
$K_{X''_1}+B''_1$ over the base $\pp^c$ with scaling of an ample $A$-invariant divisor.
By the inclusion~\ref{eq:supset}, we know that every divisor contracted by this minimal model program is contained in the support of $\Gamma$.
By inclusion~\ref{eq:subset}, we know that, after finitely many steps, all horizontal components are contracted.
So, after finitely many steps,
we are running a minimal model program for the strict transform of $\Gamma_{\rm vert}$.
Note that this is a degenerate divisor.
Then, after finitely many steps, 
this minimal model program is also an MMP
with scaling for a degenerate divisor.
By Proposition~\ref{prop:MMP-for-deg}, we conclude that this minimal model program terminates.
Indeed, it will terminate after contracting all the components of $\Gamma$.
We call such model $(X_2'',B''_2)$.
Hence, we have that $K_{X_2''}+B''_2$ is $\qq$-linearly trivial over the base $\pp^c$.
Note that $X_2''\dashrightarrow X''$ only extract log canonical places
of $(X'',B'')$. 
In particular, $X_2''$ is a Fano type variety as well.
By construction, $(X_2'',B_2'')$ is an $A\qq$-factorial dlt model
of $(X'',B'')$ which admits a fibration to $\pp^c$.\\

\textit{Step 6:} In this step, we will reduce to the case of an equivariant Mori fiber space over the projective space $\pp^c$.\\

By construction, over each hyperplane $H_i$ of $B_{\pp^c}$,
there is an $A$-prime divisorial log canonical center of $(X_2'',B_2'')$
which dominates $H_i$.
Let $H_1,\dots,H_{c+1}$ be the prime components of $B_{\pp^c}$.
Denote by $B_{H_1},\dots,B_{H_{c+1}}$ 
the corresponding $A$-prime divisorial log canonical center.
For each $i\in \{1,\dots,c+1\}$,
we run an $A$-equivariant $-B_{H_i}$-MMP over $\pp^c$ which terminates with a good minimal model over the base.
Note that any such MMP is an isomorphism over the complement of the corresponding $H_i$.
Furthermore, after each MMP terminates, 
the strict transform of $B_{H_i}$
becomes the $A$-equivariant pull-back of $H_i$.
Denote by $X_3''$ the model obtained after all such minimal model programs terminate.
Let $B_3''$ be the induced boundary.
Note that $X_3''$ is still of Fano type
and $X_2''\dashrightarrow X_3''$ is an isomorphism over the pre-image of the torus $\mathbb{G}_m^c$.
The complement of the pre-image of the torus on $X_3''$
is contained on the vertical components of $\lfloor B_3''\rfloor$.
Every vertical $A$-invariant component of $\lfloor B_3''\rfloor$
is the pull-back of some $H_i$ on $\pp^c$.
We have a diagram as follows:
\[
\xymatrix{
X''\ar[d]_-{\pi}  & X''_1\ar[l]\ar[d] \ar@{-->}[r] & X_2''\ar[ld] \ar@{-->}[r] & X_3''\ar[lld]^-{\pi_3} \\
C & \pp^c\ar@{-->}[l] \\
}
\]
Note that over the pre-image of $\mathbb{G}_m^c$ the birational map
$X_3''\dashrightarrow X''$ is a birational contraction.
Furthermore,
$\pi^{-1}(\mathbb{G}_m^c)$
and $\pi_3^{-1}(\mathbb{G}_m^c)$
are Mori dream spaces over $\mathbb{G}_m^c$.
Let $\Omega''$ be an $A$-invariant ample divisor on $X''$.
Let $\Omega''_3$ be its pull-back to $X_3''$.
We run an $A$-equivariant $\Omega''_3$-MMP on $X_3''$ over $\pp^c$
which terminates with a good minimal model.
Then, we take its ample model over $\pp^c$ and call it $X_4''$.
Note that $X_4''$ and $X''$ are isomorphic over the pre-image of $\mathbb{G}_m^c$.
Indeed, over the pre-image of the torus, both varieties are ample models of $\Omega''$ relative to $\mathbb{G}_m^c$.
We denote by $B_4''$ the strict transform of $B_3''$ on $X_4''$.
We let $X^*$ be a small $A\qq$-factorialization of $X_4''$.
Denote by $(X^*,B^*)$ the log pull-back of $(X_4'',B_4'')$ to $X^*$.
Note that $X^*$ is a Fano type variety.
Furthermore, since $X_4''$ is $A\qq$-factorial over the pre-image of $\mathbb{G}_m^c$,
we conclude that $X^*\dashrightarrow X''$ is an isomorphism over the pre-image of the torus $\mathbb{G}_m^c$.
We claim that the fibration $X^*\rightarrow \pp^c$ is an $A$-equivariant Mori fiber space.
Indeed, the pre-image of the torus $\mathbb{G}_m^c$ on $X^*$ has $A$-equivariant Picard rank one. Since such open subvariety is isomorphic 
to the pre-image of the torus $\mathbb{G}_m^c$ on $X''$.
On the other hand, every vertical $A$-prime divisor of $\lfloor B^*\rfloor$
is the pull-back of a divisor on $\pp^c$.
We conclude that $X^*\rightarrow \pp^c$ has
relative $A$-equivariant Picard rank one.
Then, $X^*\rightarrow \pp^c$ is an $A$-equivariant Mori fiber space.
We have an $A$-equivariant commutative diagram as follows:
\[
\xymatrix{
X & X'\ar[l]\ar@{-->}[r] &
X''\ar[d]_-{\pi}  & X^*\ar@{-->}[l]\ar[d] \\
& & C & \pp^c\ar@{-->}[l] \\
}
\]
so that the following conditions are satisfied:
\begin{enumerate}
\item $(X'',B'')$ and $(X^*,B^*)$ are log crepant equivalent  pairs, 
\item $X''$ and $X^*$ are Fano type varieties,
\item $X''\rightarrow C$ and $X^*\rightarrow \pp^c$ are
$A$-equivariant Mori fiber spaces, 
\item $\pp^c\dashrightarrow C$ is an isomorphism over the torus, 
\item $X^*\dashrightarrow X''$ is an isomorphism over the pre-image of the torus, and
\item the pull-back of each prime divisor of $\lfloor B_{\pp^c}\rfloor$ to $X^*$ is supported in a unique $A$-prime divisorial log canonical center of $(X^*,B^*)$,
\end{enumerate}

\textit{Step 7:} In this step, we introduce a projective variety $Y$ which is obtained from $X^*$ quotienting by $A$.\\

Let $Y\rightarrow T_Y$ be the Mori fiber space obtained by quotienting 
the $A$-equivariant Mori fiber space $X^*\rightarrow \pp^c$ by $A$.
We have a commutative diagram as follows:
\[
\xymatrix{
X^*\ar[r]^-{/A}\ar[d] & Y\ar[d] \\
\pp^c \ar[r]^-{/A_C} & T_Y.
}
\]
By Proposition~\ref{prop:quot-toric-by-toric-finite}, $T_Y$ is a projective toric variety of Picard rank one
and $\pp^c\rightarrow T_Y$ is a finite toric morphism.
We denote by $B_{T_Y}$ the toric boundary of $T_Y$.
By~\cite{Amb05}*{Theorem 3.3}, we know that $Y\rightarrow T_Y$ is isotrivial over an open set of the base.
We denote by $(F_Y,B_{F_Y})$ the log general fiber.
Note that $(F_Y,B_{F_Y})$ is obtained by a toric quotient 
from $(F,B_F)$.
In particular, by Proposition~\ref{prop:quot-toric-by-toric-finite}, we conclude that $(F_Y,B_{F_Y})$ is log toric. 
Furthermore, the number of components of $B_{F_Y}$ is $O_f(1)$.
By Proposition~\ref{prop:ramification-rowndown}, prime components of $\lfloor B_Y\rfloor$ can only ramify over the pre-image of $\lfloor B_{T_Y}\rfloor$.
By Lemma~\ref{lem:vert-div-MFS}, we conclude that any vertical prime  divisor on $Y$ is the pull-back of a prime divisor on $T_Y$.
By Lemma~\ref{lem:quot-FT-is-FT}, we know that $Y$ is a Fano type variety. In particular, it is a Mori dream space.\\

\textit{Step 8:}
In this step, we introduce a projective variety $Z$ obtained from $Y$ by taking Galois covers.\\

Let $S$ be a prime component of $\lfloor B_Y\rfloor$ which is horizontal over $T_Y$.
Denote by $S^\nu$ the normalization of $S$. Consider the Stein factorization
of $S^\nu \rightarrow T_Y$.
We write $S^\nu \rightarrow S_Y\rightarrow T_Y$ for such factorization.
By Proposition~\ref{prop:ramification-rowndown}, $S_Y\rightarrow T_Y$ is a finite morphism which is unramified over the torus
of $T_Y$. By Proposition~\ref{prop:cover-toric-unramified-torus-toric},
we conclude that $S_Y$ is toric and $S_Y\rightarrow T_Y$ is a toric Galois cover.
We replace $Y$ by the normalization of the main component of $Y\times_{T_Y}S_Y$.
We proceed inductively, until all such Stein factorizations have trivial finite part.
We call the resulting model $Z$.
By Lemma~\ref{lem:cover-FT-is-FT}, we know that $Z$ is a Fano type variety.
Recall that $F_Y$ is a toric variety
and the number of prime components of $B_{F_Y}$ is $O_f(1)$.
Hence, the degree of the Galois morphism $Z\rightarrow Y$ is $O_f(1)$ as well.
We let $(Z,B_Z)$ be the log pull-back of $(Y,B_Y)$ to $Z$.
We obtain a diagram as follows:
\[
\xymatrix{
X & X'\ar[l]\ar@{-->}[r] &
X''\ar[d]  & X^*\ar@{-->}[l]\ar[d]\ar[rd]_-{/A} & & Z\ar[d]\ar[ld]^-{/H} \\
& & C & \pp^c\ar@{-->}[l]\ar[rd]_-{/A_C} & Y\ar[d] & T_Z\ar[ld]^-{/H} \\
& & & &  T_Y
}
\]
where $H$ is a finite group whose order is $O_f(1)$.\\

\textit{Step 9:} 
In this step, we replace $(Z,B_Z)$ by a pair $(Z',B_{Z'})$ over $T_Z$
whose general fiber is $\pp^f$.\\

We may replace $Z$ with a small $\qq$-factorialization.
This does not change the Picard rank of the general fiber.
The log general fiber $(F_Z,B_{F_Z})$ is isomorphic to the log fiber
$(F_Y,B_{F_Y})$.
Let $c(F)$ be the number of components of $\lfloor B_{F_Z}\rfloor$.
By construction, $c(F)$ is $O_f(1)$.
Furthermore, $\lfloor B_Z\rfloor$ has $c(F)$ horizontal components
that restrict bijectively to those of $\lfloor B_{F_Z}\rfloor$.
Moreover, the pre-image of $B_{T_Z}$ is contained in
the set of log canonical centers of $(Z,B_Z)$. 
By Proposition~\ref{prop:galois-cover-uncontractibility}, 
every vertical prime divisor of $Z\rightarrow T_Z$
which intersect $\mathbb{G}_m^c$ 
is the pull-back of a prime divisor of the base.
In particular, no vertical divisor maps to a codimension $\geq 2$ subvariety of the torus. Furthermore, the fibers over codimension one points of $\mathbb{G}_m^c$ are irreducible.
By Lemma~\ref{lem:toric-bir-pn}, we have two projective toric morphisms
inducing log crepant equivalent  structures:
\[
(F_Z,B_{F_Z})\leftarrow (F_{Z_0},B_{F_{Z_0}})\rightarrow 
(\pp^f,B_{\pp^f}).
\]
The variety $Z$ is toroidal
at general points of intersection
of horizontal components of $\lfloor B_Z\rfloor$ 
(see, e.g., Theorem~\ref{thm:toroidality}).
We can perform a sequence of horizontal toroidal blow-ups on $Z$.
After such sequence of extractions, 
we may assume the log general fiber is isomorphic to $(F_{Z_0},B_{F_{Z_0}})$.
We call this model $Z_0$.
Denote by $(Z_0,B_{Z_0})$ the log pull-back of $(Z,B_Z)$ to $Z_0$.
Every vertical prime divisor of $Z_0\rightarrow T_Z$
which intersects $\mathbb{G}_m^c$ 
is the pull-back of a divisor on $T_Z$.
Indeed, this hold since such vertical divisors on $Z_0$
are pull-back of vertical divisors on $Z$.
Let $H_f$ be a torus invariant hyperplane in $\pp^f$.
Let $H_{F_{Z_0}}$ be the pull-back of $H_f$ to $F_{Z_0}$.
Observe that $H_{F_{Z_0}}$ is a semiample divisor on $F_{Z_0}$
whose ample model is $\pp^f$.
We consider a divisor $H_{Z_0}$ whose restriction to the general fiber equals $H_{F_{Z_0}}$.
Note that $Z_0$ is a Mori dream space, so every minimal model program on $Z_0$ terminates.
We run a minimal model program for $H_{Z_0}$ over the base.
By Lemma~\ref{lem:not-contracted-prime-vertical}, we know that this minimal model program does not contract vertical divisor over $\mathbb{G}_m^c$.
On the other hand, if this minimal model program contract a horizontal divisor,
then such divisor will intersect the general fiber.
This contradicts the fact that $H_{Z_0}$ is semiample on a general fiber.
We conclude that the only divisors that this minimal model program can contract
are vertical components of $\lfloor B_{Z_0}\rfloor$.
Thus, this minimal model program can only contract log canonical places of $(Z_0,B_{Z_0})$.
It must terminate with a good minimal model and it is an isomorphism over an open set of the base.
We denote by $Z_1$ the ample model over $T_Z$.
Observe that
$Z_1$ is a Fano type variety.
We denote by $Z'$ a small $\qq$-factorialization. 
We denote by $B_{Z'}$ the induced boundary on $Z'$.
Then, the log general fiber of $(Z',B_{Z'})$ is isomorphic to
$(\pp^f,B_{\pp^f})$,
where $B_{\pp^f}$ is the toric boundary of $\pp^f$.
Every vertical prime divisor of $Z'$ 
which maps to $\mathbb{G}_m^c\subset T_Z$
is the pull-back of a prime divisor of the base,
after possibly restricting to $\mathbb{G}_m^c$.
Indeed, every such vertical divisor
is the push-forward of a vertical divisor on $Z_0$
which is the pull-back of a divisor on $T_Z$.
Note that the log crepant equivalent  birational map
$(Z,B_Z)\dashrightarrow (Z',B_{Z'})$ only extract log canonical places of $(Z',B_{Z'})$.
By Proposition~\ref{prop:birational-toric}, we conclude that $(Z,B_Z)$ is a toric pair provided that $(Z',B_{Z'})$ is a toric pair.
This is the content of the next step.\\

\textit{Step 10:} We prove that $(Z',B_{Z'})$ is a toric pair.\\

Observe that $T_Z$ has Picard rank one.
We run a minimal model program for $K_{Z'}$ over $T_Z$.
This minimal model program terminates with a Mori fiber space over $T_Z$.
By Lemma~\ref{lem:not-contracted-prime-vertical}, the only vertical divisors contracted by this Mori fiber space are those contained on the vertical components
of $\lfloor B_{Z'}\rfloor$.
We argue that no horizontal divisor is contracted on this minimal model program.
Indeed, if there is a horizontal divisorial contraction, this would induce a divisorial contraction on the general fiber $F_{Z'}$.
This is impossible since such projective variety has Picard rank one.
Analogously, the Mori fiber space on which this MMP terminates
will induce a contraction on the general fiber.
Hence, such contraction must map the general fiber to a point.
Thus, the MFS on which this MMP terminates maps birationally to $T_Z$.
We call such Mori fiber space $Z''\rightarrow T'_Z$.
By construction, the birational map $Z'\dashrightarrow Z''$ 
only contract divisorial log canonical places of $(Z',B_{Z'})$.
By Proposition~\ref{prop:birational-toric}, we conclude that $(Z',B_{Z'})$
is a toric pair provided that $(Z'',B_{Z''})$ is a toric pair.
We proceed to prove this latter statement.

We will prove that the log pair $(Z'',B_{Z''})$ is a log Calabi-Yau toric pair.
The variety $T'_Z$ is a higher toric model of $T_Z$ as 
it only extract log canonical places of the toric pair $(T_Z,B_{T_Z})$.
We denote by $H_1,\dots,H_r$ the torus invariant divisors of $T'_Z$,
where $r\geq c+1$.
Hence, $T'_Z$ has Picard rank $r-c$.
We claim that the number of components
of $\lfloor B_{Z''}\rfloor$ equals $r+f+1$.
$\lfloor B_{Z''}\rfloor$ has $f+1$ horizontal components.
On the other hand, 
$\lfloor B_{Z''}\rfloor$ has a unique prime component
over each divisor $H_i$, being a Mori fiber space.
Thus, $\lfloor B_{Z''}\rfloor$ has $r$ vertical components.
This proves the claim.
Note that $\dim(Z'')=c+f$.
On the other hand, $\rho(Z'')=r-c+1$, given that
$\rho(T_Z)=r-c$ and $\rho(Z''/T_Z)=1$ as it is a Mori fiber space.
Now, we can compute the complexity of the log Calabi-Yau pair $(Z'',B_{Z''})$ as follows:
\[
c(Z'',B_{Z''})=\dim(Z'')+\rho(Z'')-(r+f+1) =0.
\]
By Theorem~\ref{thm:toric-by-complexity}, we conclude that $(Z'',\lfloor B_{Z''}\rfloor)$ is log Calabi-Yau toric pair.
However, $B_{Z''}$ is a reduced divisor.
Hence, $(Z'',B_{Z''})$ is a log Calabi-Yau toric pair.
By Proposition~\ref{prop:birational-toric}, we conclude that $(Z,B_Z)$ is a log Calabi-Yau toric pair as well.\\

\textit{Step 11:} We introduce a projective variety $W$ dominating $X$ and prove that it is toric.\\

We define $W$ to be the normalization of a connected component of the fiber product $X\times_Y Z$.
Note that $W$ is endowed with an action of a finite group $A_W$.
The finite group $A_W$ surjects onto both $A$ and $H$.
We denote by $H_0$ the kernel of the surjection $A_W\rightarrow A$
and by $A_0$ the kernel of the surjection $A_W\rightarrow H$.
There are natural monomorphisms
$A_0<A$ and $H_0<H$.
Since $|H|$ is $O_f(1)$, we conclude that the index of $A_0$ in $A$ is $O_f(1)$.
By Lemma~\ref{lem:bounded-rank-large-k-generation}, 
up to replacing $N$ with an $O_f(1)$ multiple, 
we may assume that $A_0$ contains $\zz^n_N$ as a subgroup.
Furthermore, $W$ has an $A_W$-equivariant fibration to a toric variety $T_W$.
We have a commutative diagram as follows:
\[
\xymatrix{
& W\ar[ld]_-{/H_0}\ar[rd]^-{/A_0}\ar[d] & \\
X^*\ar[d]\ar[rd] & T_W\ar[ld]\ar[rd] & Z\ar[d]\ar[ld] \\
 \pp^c\ar[rd]_-{/A_C} & Y\ar[d] & T_Z\ar[ld]^-{/H} \\
 &  T_Y
}
\]
As usual, we denote by $B_W$ the boundary obtained by the log pull-back
of $K_Z+B_Z$ to $W$.
We claim that $(W,B_W)$ is a projective log Calabi-Yau toric pair.
By construction, the finite morphism
$Z\rightarrow Y$ is unramified over $Y\setminus \supp \lfloor B_Y\rfloor$.
Then, we have that $Y$ is smooth outside the support
of $\lfloor B_Y \rfloor$.
Indeed it is the quotient of a smooth variety by a finite group with trivial stabilizers.
Now, we argue that $X^*\rightarrow Y$ is unramified over
$Y\setminus \supp \lfloor B_Y\rfloor$.
By purity of the branch locus, it suffices to prove that there is no ramification divisor.
If there is a horizontal ramification divisor over $T_Y$, 
then there is a prime divisor of $B_{F_Y}$ which is not reduced, leading to a contradiction.
On the other hand, assume there is a vertical prime ramification divisor over $T_Y$ which maps to the torus.
By Lemma~\ref{lem:vert-div-MFS},
such divisor is the pull-back of a divisor from the base.
Hence, the boundary part of the canonical bundle formula
will intersect the torus of the base.
This leads to a contradiction.
We conclude that $X^*\rightarrow Y$ is unramified on the complement of the log canonical centers of $(Y,B_Y)$.
Thus, the finite morphism $W\rightarrow Z$ is unramified over the torus of $Z$.
By Proposition~\ref{prop:cover-toric-unramified-torus-toric}, we conclude that $(W,B_W)$ is a log Calabi-Yau toric pair.
In particular, we have that the finite quotient $W\rightarrow Z$
is the quotient by a finite subgroup of the torus $\mathbb{G}_m^n$.
Hence, we conclude that $A_0 < \mathbb{G}_m^n \leqslant {\rm Aut}(W,B_W)$.\\

\textit{Step 12:} We conclude that $(X^*,B^*)$ is a log Calabi-Yau toric pair.\\

Note that $H_0$ is a finite automorphism group of the toric pair $(W,B_W)$.
In particular, $H_0$ fixes the reduced toric boundary.
Thus, we have a natural monomorphism
$H_0< {\rm Aut}(\mathbb{G}_m^n)$.
By Lemma~\ref{lem:bounded-rank-large-k-generation}, we know that for $g_n(A)\geq N$ large enough 
the group $A_0$ contains $\zz_{a_0}^n$, 
where $a_0$ is a positive number greater or equal than $N$. By Proposition~\ref{prop:making-H0-toric}, we conclude that for $N$ large enough, only depending on the dimension, the group $H_0$
is actually a subgroup of $\mathbb{G}_m^n\leqslant {\rm Aut}(W,B_W)$.
Thus, by Proposition~\ref{prop:quot-toric-by-toric-finite}, we conclude that $(X^*,B^*)$ is a projective Calabi-Yau toric pair.
Note that toric action of $A_0$ on $W$ 
descends to a finite toric action of $A_0$ on $X^*$.
Furthermore, we have that $A_0<A$ has bounded index only depending on the dimension.
By Proposition~\ref{prop:making-abelian-group-toric}, we conclude that for $g_n(A)\geq N$ large enough compared with the dimension, we have that
$A<\mathbb{G}_m^n\leqslant {\rm Aut}(X^*,B^*)$.
Hence, $A$ acts as a toric finite group on $X^*$.
Thus, it suffices to prove the last assertion of the theorem, which is the final step.\\

\textit{Step 13:} We check that $X^*\dashrightarrow X$ is an isomorphism over the torus.\\

By construction, the contraction morphism $X^*\rightarrow \pp^c$ is a toric morphism.
In particular, the pre-image of the torus of $\pp^c$ contains the torus of $X^*$.
Furthermore, we know that $X^* \dashrightarrow X''$
is an isomorphism over the pre-image of the torus of $\pp^c$.
Hence, we conclude that $X^*\dashrightarrow X''$ is an isomorphism over $\mathbb{G}_m^n\subset X^*$.
The fact that $X^*\dashrightarrow X$ is an isomorphism over the torus is analogous to the proof of the third step. This concludes the proof.
\end{proof}

\begin{theorem}\label{thm:FT-full-rank-non-abelian-G}
Let $n$ be a positive integer and
let $\Lambda\subset \qq$ be a set satisfying the descending chain condition with rational accumulation points.
Then, there exist positive integers
$M:=M(\Lambda,n)$ and $N:=N(n)$,
so that $M$ (resp. $N$) only depends on $\Lambda$ and $n$ 
(resp. $n$),
satisfying the following.
Let $X$ be a Fano type variety of dimension $n$ and $\Delta$ be a boundary on $X$, such that the following conditions hold:
\begin{enumerate}
\item $G<{\rm Aut}(X)$ is a finite subgroup with $g_n(G)\geq M$, 
\item $(X,\Delta)$ is log canonical and $G$-invariant,
\item the coefficients of $\Delta$ belong to $\Lambda$, and
\item $-(K_X+\Delta)$ is $\qq$-complemented.
\end{enumerate}
Then, there exists:
\begin{enumerate}
\item A normal abelian subgroup $A\leqslant G$ of index at most $N$,
\item a boundary $B\geqslant \Delta$ on $X$, and
\item an $A$-equivariant birational map $X\dashrightarrow X'$, 
\end{enumerate}
satisfying the following conditions:
\begin{enumerate}
\item The pair $(X,B)$ is log canonical, $G$-equivariant, and 
$(K_X+B)\sim 0$,
\item the push-forward of $K_X+B$ to $X'$ is a log pair $(X',B')$,
\item the pair $(X',B')$ is a log Calabi-Yau toric pair, and
\item there are group monomorphisms
$A<\mathbb{G}_m^n\leqslant {\rm Aut}(X,B)$.
\end{enumerate}
In particular, $B'$ is the reduced toric boundary of $X'$.
Furthermore, the birational map $X\dashrightarrow X'$ is an isomorphism over $\mathbb{G}_m^n$.
\end{theorem}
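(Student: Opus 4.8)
The plan is to reduce the statement to the abelian Theorem~\ref{thm:FT-full-rank}. First I would pass to a normal abelian subgroup of bounded index via the Jordan property, and then I would arrange the complement $B$ to be $G$-invariant from the outset, so that carrying out the birational geometry for the abelian part alone cannot destroy the $G$-symmetry of the boundary.

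First, by the Jordan property for finite automorphisms of Fano type varieties (Theorem~\ref{thm:Jordan-FT}) there is a constant $N:=N(n)$ and a normal abelian subgroup $A\leqslant G$ of index at most $N$ and rank at most $n$. Provided $g_n(G)\geq M$ with $M>N$, Proposition~\ref{prop:k-gen-bounded-subgroup} gives $g_n(A)\geq M/N$; in particular $\rank(A)\geq n$, hence $\rank(A)=n$. Next, since $(X,\Delta)$ is $G$-invariant and log canonical, with coefficients in the DCC set $\Lambda$ (whose accumulation points are rational) and $-(K_X+\Delta)$ $\qq$-complemented, I would apply Theorem~\ref{thm:G-equiv-complement} to the full group $G$ to obtain a constant $M_0:=M_0(n,\Lambda)$ and a $G$-equivariant $M_0$-complement $B\geq \Delta$. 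Then $(X,B)$ is $G$-invariant and log canonical, $M_0(K_X+B)\sim 0$, and the coefficients of $B$ lie in a hyperstandard set $\Phi(\mathcal{S})$ with $\mathcal{S}=\mathcal{S}(n,\Lambda)$ finite (Definition~\ref{def:hyperstandard}); note $\Phi(\mathcal{S})$ satisfies the descending chain condition and its unique accumulation point $1$ is rational.

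Now I would apply Theorem~\ref{thm:FT-full-rank} to the Fano type variety $X$, to the boundary $B$ (which is itself a $\qq$-complement of $(X,B)$), and to the abelian group $A$, using the coefficient set $\Lambda':=\Phi(\mathcal{S})$. This produces a threshold $M''=M''(n,\Lambda)$ such that, once $g_n(A)\geq M''$, there are a boundary $B^{+}\geq B$ and an $A$-equivariant birational map $X\dashrightarrow X'$ with $(X,B^{+})$ log canonical and $A$-equivariant, $K_X+B^{+}\sim 0$, the push-forward $(X',B')$ a log Calabi-Yau toric pair (with $B'$ the reduced toric boundary), $A<\mathbb{G}_m^n\leqslant {\rm Aut}(X,B^{+})$, and $X\dashrightarrow X'$ an isomorphism over $\mathbb{G}_m^n$. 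Setting $M:=N\cdot M''$ makes $g_n(A)\geq M/N\geq M''$ whenever $g_n(G)\geq M$. The crucial observation is then that $B^{+}=B$: since $K_X+B^{+}\sim 0\sim K_X+B$ and $B^{+}-B\geq 0$, the effective divisor $B^{+}-B$ is linearly equivalent to $0$ on the projective variety $X$, hence vanishes. Thus every conclusion of Theorem~\ref{thm:FT-full-rank} stated for $(X,B^{+})$ holds for $(X,B)$, and $(X,B)$ is $G$-equivariant by construction; together with the subgroup $A$ supplied by Jordan, this gives all the assertions.

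The hard part is not any single step but the bookkeeping of constants and symmetries: one must introduce the $G$-invariant complement \emph{before} invoking the abelian theorem, so that the equivariant minimal model programs, quotients, and covers inside the proof of Theorem~\ref{thm:FT-full-rank} run only for $A$ while the boundary remains $G$-symmetric, and one must verify that large $n$-generation is inherited by the bounded-index subgroup $A$. The equality $B^{+}=B$ is precisely what transports $G$-equivariance to the final pair; without it the abelian theorem would only yield an $A$-invariant boundary, which would not suffice.
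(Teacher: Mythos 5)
Your proposal is correct and follows essentially the same route as the paper: apply the Jordan property (Theorem~\ref{thm:Jordan-FT}) to extract a normal abelian subgroup $A\leqslant G$ of index at most $N(n)$, deduce that $g_n(A)$ is large via Proposition~\ref{prop:k-gen-bounded-subgroup}, construct a $G$-equivariant $M_0$-complement $B$ via Theorem~\ref{thm:G-equiv-complement}, and then invoke the abelian Theorem~\ref{thm:FT-full-rank} with $A$. Your explicit observation that the boundary $B^{+}$ produced by Theorem~\ref{thm:FT-full-rank} applied to $(X,B)$ must coincide with $B$ (because $B^{+}-B\geq 0$ and $M_0(B^{+}-B)\sim 0$ forces $B^{+}=B$) fills in a detail the paper leaves implicit in the phrase ``then we conclude by Theorem~\ref{thm:FT-full-rank},'' and cleanly explains why the $G$-equivariance of the final boundary survives the abelian argument.
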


\begin{proof}
We construct an abelian normal subgroup of $G$ of bounded index.
By Theorem~\ref{thm:Jordan-FT}, we can find a normal abelian subgroup $A\leqslant G$ of index at most $N:=N(n)$ and rank at most $n$, where $N$ only depends on $n$.
By Proposition~\ref{prop:k-gen-bounded-subgroup}, we know that $g_n(A)\geq g_n(G)/N$.
So, whenever $N$ is fixed and $g_n(G)$ is large, then $g_n(A)$ is large as well.
Now, we proceed to construct a bounded $G$-equivariant complement for $(X,\Delta)$.
Note that the log pair $(X,\Delta)$ satisfies all the hypotheses of Theorem~\ref{thm:G-equiv-complement}.
Hence, there exists a $G$-equivariant log canonical $M_0$-complement $B\geq \Delta\geq 0$.
The pair $(X,B)$ is log canonical, $G$-equivariant, and $M_0(K_X+B)\sim 0$,
where $M_0$ only depend on the natural number $n$ and the set $\Lambda$.
Observe that $(X,B)$ is also an $A$-equivariant complement.
Then we conclude by Theorem~\ref{thm:FT-full-rank}..
\end{proof}

\subsection{Proof of the main local theorem} 
In this subsection, we prove the main local theorem of this article.
We start by stating a more general version of Theorem~\ref{introthm:degeneration}.

\begin{theorem}\label{thm:regional-general}
Let $n$ be a positive integer
and let $\Lambda$ be a set of rational numbers satisfying the descending chain condition with rational accumulation points.
There exists a positive integer $N:=N(n,\Lambda)$,
only depending on $n$ and $\Lambda$,
satisfying the following.
Let $x\in (X,\Delta)$ be a $n$-dimensional klt singularity
so that the coefficients of $\Delta$ belong to $\Lambda$.
Assume that 
$g_n(\pi_1^{\rm reg}(X,\Delta;x))\geq N$.
Then, $x\in (X,\Delta)$ degenerate to a lce-tq singularity.
\end{theorem}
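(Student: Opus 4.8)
The plan is to reduce the statement to the projective Theorem~\ref{thm:FT-full-rank-non-abelian-G}, applied to the exceptional divisor of a $G$-equivariant purely log terminal blow-up of the universal cover of the germ; this is the higher-dimensional version of the argument behind \cite{Mor20}*{Theorem 5.1}. We may assume $n\geq 3$: for $n\leq 2$ every klt singularity is a quotient singularity \cite{Tsu83}, hence already lce-tq, and the statement is vacuous. Set $G:=\pi_1^{\rm reg}(X,\Delta;x)$, so that $g_n(G)\geq N$ for a constant $N=N(n,\Lambda)$ to be fixed at the end of the argument.

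First I would pass to the universal cover. By Proposition~\ref{prop:existence-universal-cover} there is a Galois cover $\pi\colon Y\to X$, the quotient by $G$, such that $x$ has a unique preimage $y\in Y$, the log pull-back $\pi^{*}(K_X+\Delta)=K_Y+\Delta_Y$ is a klt germ, and $G$ acts on $(Y,\Delta_Y)$ fixing $y$; the coefficients of $\Delta_Y$ lie in a set $\Lambda'=\Lambda'(\Lambda)$, again satisfying the descending chain condition with rational accumulation points. By Lemma~\ref{lem:existence-g-inv-plt} I would then take a $G$-equivariant plt blow-up $\phi\colon W\to Y$ of $(Y,\Delta_Y)$ at $y$, with (unique, hence $G$-invariant) exceptional prime divisor $E$, and set $K_E+\Delta_E:=(K_W+\Delta_W+E)|_E$ by adjunction. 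Since $-E$ is $\phi$-ample and $W$ has klt singularities, $(E,\Delta_E)$ is an $(n-1)$-dimensional log Fano pair (one has $-(K_E+\Delta_E)\sim_{\qq}a_E(Y,\Delta_Y)\bigl(-(E|_E)\bigr)$, which is ample). In particular $E$ is of Fano type, $-(K_E+\Delta_E)$ is $\qq$-complemented (cf.\ \cite{Bir19}), and the coefficients of $\Delta_E$ lie in a set $\Lambda''=\Lambda''(\Lambda)$ still satisfying the descending chain condition with rational accumulation points.

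Next I would analyze the induced action on $E$. The $G$-action on $W$ restricts to $(E,\Delta_E)$; let $C:=\ker\!\bigl(G\to {\rm Aut}(E,\Delta_E)\bigr)$. An element of $C$ fixes $E$ pointwise and acts on a formal neighbourhood of $E$ in $W$ trivially along $E$; such an automorphism acts on the normal direction by a scalar and is the identity if that scalar equals $1$, so $C$ embeds into $\mathbb{K}^{\times}$ and is cyclic. By Lemma~\ref{lem:quot-cycl-k-gen} (here $n\geq 3$), the quotient $G_E:=G/C\leqslant {\rm Aut}(E,\Delta_E)$ satisfies $g_{n-1}(G_E)\geq N$. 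Choosing $N\geq M(\Lambda'',n-1)$, I would invoke Theorem~\ref{thm:FT-full-rank-non-abelian-G} for the Fano type variety $E$ with the group $G_E$: it produces a $G_E$-invariant boundary $B_E\geq\Delta_E$ with $(E,B_E)$ log canonical and $K_E+B_E\sim 0$, a normal abelian $A_E\leqslant G_E$ of index $O_n(1)$, and a $G_E$-equivariant birational map from $(E,B_E)$ onto a log Calabi--Yau toric pair. In particular $(E,B_E)$ is crepant equivalent toric, so $E$ is a log crepant equivalent toric projective variety.

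Finally I would descend to $X$. As in \cite{Mor20}*{Theorem 5.1}, the plt blow-up $\phi$ yields a $G$-equivariant one-parameter degeneration of the germ $y\in(Y,\Delta_Y)$ to the orbifold cone $C_0$ over $(E,\Delta_E)$ polarized by $-(K_E+\Delta_E)$, namely the degeneration attached to the valuation $\ord_E$; this is finitely generated because $E$ is of Fano type, and its special fibre is a normal klt germ. Since $E$ is log crepant equivalent toric, $C_0$ is a log crepant equivalent toric singularity in the sense of Definition~\ref{def:lce-tq}. Quotienting this degeneration by $G$ presents the germ $x\in X=Y/G$ as degenerating to $C_0/G$, with $\Delta$ degenerating to the induced boundary. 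The $G$-action on $C_0$ is compatible with the cone structure over $E$ --- the cyclic group $C$ sits in the cone torus, and $G_E$ acts through its action on $E$ once a multiple of $-(K_E+\Delta_E)$ is $G_E$-linearized --- so $C_0/G$ is the orbifold cone over the finite quotient $E/G_E$, which is a log crepant equivalent toric quotient projective variety; hence $C_0/G$ is an lce-tq singularity and $x\in(X,\Delta)$ degenerates to it. The main obstacle will be making this degeneration to the cone precise and genuinely $G$-equivariant and checking that $C_0/G$ is again an orbifold cone over an lce-tq variety, together with the bookkeeping that $\Lambda'$ and $\Lambda''$ remain DCC with rational accumulation points and that a single $N=N(n,\Lambda)$ absorbs the $O_n(1)$ index losses from Lemma~\ref{lem:quot-cycl-k-gen} and Theorem~\ref{thm:FT-full-rank-non-abelian-G}.
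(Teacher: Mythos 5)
Your proposal is essentially the same argument as the paper's own proof: pass to the regional universal cover, take a $G$-equivariant plt blow-up at the fixed point, observe that the kernel of the action on the exceptional divisor is cyclic and apply Lemma~\ref{lem:quot-cycl-k-gen}, apply the projective theorem to the $(n-1)$-dimensional Fano type exceptional divisor, and degenerate to the orbifold cone. Two small remarks: the paper pins down the coefficients by first taking an $M$-complement $(X,B)$ with $M=M(n,\Lambda)$ and choosing the plt blow-up so that its exceptional divisor is a log canonical place of $(X,B)$, so that the adjunction boundary on $E_Y$ lies in a \emph{finite} set depending only on $M$, whereas you work with the adjunction of $\Delta_Y+E$ directly and must verify that the resulting coefficient set $\Lambda''$ is DCC with rational accumulation points and depends only on $\Lambda$ (your "bookkeeping" caveat); and your citation of Theorem~\ref{thm:FT-full-rank-non-abelian-G} is the right one, since $H=G/C$ need not be abelian, so this is slightly more precise than the paper's reference to Theorem~\ref{introthm:main-thm}.
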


\begin{proof}
Let $(X,B)$ be a $M$-complement of $(X,\Delta)$ around $x\in X$.
By Proposition~\ref{prop:existence-universal-cover}, we can take
the universal cover $\pi\colon Y\rightarrow X$ of the regional 
fundamental group of $(X,\Delta)$ at $x$.
Let $X'\rightarrow X$ be a plt blow-up of $(X,\Delta)$ at $x\in X$.
We may assume that the exceptional divisor of the plt blow-up
is a log canonical center of $(X,B)$.
Denote by $E$ the exceptional divisor of the plt blow-up.
Note that $X$ is the quotient of $Y$ by $G\simeq \pi_1^{\rm reg}(X,\Delta;x)$ and $Y$ contains a unique fixed point $y\in Y$ over $x\in X$.
Let $Y'$ be the normalization of the main component of $Y\times_X X'$.
Note that $Y'\rightarrow Y$ is a $G$-invariant plt blow-up with exceptional divisor $E_Y$
(see the proof of Lemma~\ref{lem:existence-g-inv-plt}).
We know that $E_Y$ is $G$-invariant because it maps to the $G$-fixed point $y\in Y$.
Let $C$ be the cyclic normal subgroup of $G$ acting on $E_Y$ as the identity.
Let $H$ be the quotient of $G$ by $C$.
By Lemma~\ref{lem:quot-cycl-k-gen}, we have that $g_{n-1}(H) \geq g_n(G)$.
Let $(Y,B_Y)$ be the log pull-back of $(X,B)$ to $Y$, let $(Y',B_{Y'})$ be its log pull-back to $Y'$.
Let $(E_Y,B_{E_Y})$ be the pair obtained by adjunction
$K_{Y'}+B_{Y'}|_{E_Y}$.
Note that the coefficient of $B_{E_Y}$ belong to a finite set
which only depends on $M$.
We can apply Theorem~\ref{introthm:main-thm},
to the $H$-invariant pair $(E_Y,B_{E_Y})$. 
There exists a constant $N(n-1)$,
only depending on $n-1$, so that $(E_Y,B_{E_Y})$ is crepant equivalent  toric
provided that $g_{n-1}(H)\geq N(n-1)$.
Hence, it suffices to take $g_n(G)\geq N(n-1)$
to assume that $E_Y$ is log crepant equivalent  toric.
We conclude that $E$ is the quotient of a log crepant equivalent  toric variety by a finite group.
Hence, $E$ is a log crepant equivalent  toric quotient projective variety.
Using cone degenerations associated to plt blow-ups (see, e.g.,~\cite{HLM20}*{2.5}), 
we conclude that the singularity $x\in (X,\Delta)$ degenerates
to a log crepant equivalent  toric quotient singularity.
\end{proof}

\section{Applications}\label{sec:app}
In this section, we prove some applications of the main projective and local statements.
Before proceeding to the applications, we give an example of a lce-tq singularity which is not the quotient of a toric singularity.
This example is due to Constantin Shramov.

\begin{example}\label{example}
{\em 
Consider the weighted hypersurface
$X_n=\{x^{2n}+y^{2n}+zw=0\} \subset
\pp(1,1,n,n)$.
Note that we have 
$A_n:=\zz_{2n}^2< {\rm Aut}(X_n)$.
The first generator is given by 
$x\mapsto \eta x$, where $\eta$ is a $2n$-root of unity.
The second generator is given by
$(z,w)\mapsto (\eta z, \eta^{-1}w)$.
$X_n$ is a Fano type surface.
However, $X_n$ is not toric.
Hence, by Theorem~\ref{introthm:main-thm}, we know that $X_n$ is log crepant equivalent  toric.
We give an explicit description of the complement in this case.
The curves $C_x:=\{x=0\}$
and $C_y:=\{y=0\}$ are irreducible.
The curves $C_w:=\{w=0\}$ and
$C_z:=\{z=0\}$ have $2n$ irreducible components each.
The pair $(X_n,C_x+C_y)$ is log canonical
and $K_{X_n}+C_x+C_y\sim 0$.
Hence, the points $[0:0:1:0]$
and $[0:0:0:1]$ are toroidal points of $X_n$.
We can blow-up both of them and obtain a surface $X'_n$ where the $2n$ irreducible components of $C_w$ and $C_z$ are separated.
The log pull-back of $K_{X_n}+C_x+C_y$ to $X'_n$ equals
$K_{X'_n}+C'_x+C'_y+E_1+E_2$.
Then, we can run a $A_n$-equivariant minimal model program for $K_{X'_n}$.
This MMP contracts the strict transform of 
$C_w$ and $C_z$.
It terminates
with an $A_n$-equivariant Mori fiber space structure
$\pp^1\times\pp^1 \rightarrow \pp^1$.
Here, $A_n$ acts as the multiplication by roots of unity on the torus of $\pp^1\times\pp^1$.
Furthermore, the strict transform of $C'_x+C'_y+E_1+E_2$ is the toric boundary
of $\pp^1\times \pp^1$.
The involutions $(x,y)\mapsto (y,x)$ and
$(z,w)\mapsto (w,z)$ are induced by the corresponding involutions swapping the two components of each copy of $\pp^1$ on $\pp^1\times\pp^1$.
Then, the cone over $X_n$ with respect to $-K_{X_n}$ is a $3$-fold lce-tq singularity 
$y_n\in Y_n$. It is not a toric quotient singularity. Indeed, $X_n$ is not the quotient of a toric singularity.
}
\end{example}

Now, we turn to prove Theorem~\ref{introthm:complements}.
The following is a version of such theorem
that considers more general coefficient sets.

\begin{theorem}
Let $n$ be a positive integer and let $\Lambda$ be a set of rational numbers satisfying the descending chain condition
with rational accumulation points.
There exists a positive integer $N:=N(n,\Lambda)$, only depending on $n$ and $\Lambda$, satisfying the following.
Let $x\in (X,\Delta)$ be a $n$-dimensional klt singularity so that the coefficients of $\Delta$ belong to $\Lambda$.
Assume that $g_n(\pi_1^{\rm reg}(X,\Delta;x))\geq N$.
Then, there exists a cover $\pi\colon Z\rightarrow X$ of 
$(X,\Delta)$ of degree at most $N(n)$, so that the log pull-back $(Z,\Delta_Z)$
of $(X,\Delta)$ to $Z$
admits a $1$-complement.
Furthermore, the klt pair $(Z,\Delta_Z)$ degenerates to a 
lce-tq singularity which contains an open affine isomorphic to an algebraic torus $\mathbb{G}_m^n$.
\end{theorem}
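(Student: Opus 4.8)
The plan is to run the argument of Theorem~\ref{thm:regional-general}, but to divide by the regional fundamental group in two stages: first pass to the universal cover, and then quotient only by a bounded-index \emph{abelian} subgroup $A$. On the resulting cover $Z$ the group one is still forced to divide out, at the level of a plt centre, will lie inside an algebraic torus, so that last quotient preserves \emph{linear} triviality (index one) of a complement — whereas on $X$ itself the non-abelian part of the regional fundamental group would only allow one to conclude $\mathbb{Q}$-triviality. The hard part will be keeping the index equal to $1$ through all the quotients.

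\textbf{Covers and the plt centre.} Let $M:=M(n,\Lambda)$ be a complement index for $n$-dimensional klt singularities with coefficients in $\Lambda$, let $(X,B)$ be an $M$-complement of $(X,\Delta)$ near $x$, and fix a plt blow-up $X'\to X$ of $(X,\Delta)$ at $x$ whose exceptional divisor $E$ is a log canonical centre of $(X,B)$. Let $\pi_Y\colon Y\to X$ be the universal cover of $G:=\pi_1^{\rm reg}(X,\Delta;x)$ (Proposition~\ref{prop:existence-universal-cover}), with unique point $y\in Y$ over $x$ and $(Y,\Delta_Y):=\pi_Y^*(K_X+\Delta)$ klt. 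By Theorem~\ref{thm:jordan-klt} there is a normal abelian $A\leqslant G$ of rank at most $n$ and index at most $N_0:=N_0(n)$. Set $Z:=Y/A$; then $\pi\colon Z\to X$ is Galois of degree at most $N_0(n)$, the log pullback $(Z,\Delta_Z):=\pi^*(K_X+\Delta)$ is klt, and there is a unique $z\in Z$ over $x$. Pulling $X'\to X$ back as in Lemma~\ref{lem:existence-g-inv-plt} produces a $G$-equivariant plt blow-up $Y'\to Y$ and a $(G/A)$-equivariant plt blow-up $Z'\to Z$, with prime exceptional divisors $E_Y$ and $E_Z$; since $A$ acts on $E_Y$ with (cyclic) kernel $C_A$, one has $E_Z=E_Y/\bar A$ where $\bar A:=A/C_A$. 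By Proposition~\ref{prop:k-gen-bounded-subgroup} and Lemma~\ref{lem:quot-cycl-k-gen}, $g_{n-1}(\bar A)\geq g_n(A)\geq g_n(G)/N_0$, so taking $N:=N(n,\Lambda)$ large makes $g_{n-1}(\bar A)$ as large as needed.

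\textbf{The plt centre is log crepant equivalent toric.} Let $(E_Y,B_{E_Y})$ be the pair obtained by adjunction from the log pullback of $(X,B)$ to $Y'$. Then $E_Y$ is of Fano type, the coefficients of $B_{E_Y}$ lie in a finite set depending only on $M$, one has $M(K_{E_Y}+B_{E_Y})\sim 0$ (so $-(K_{E_Y}+B_{E_Y})$ is $\mathbb{Q}$-complemented), and $\bar A<\mathrm{Aut}(E_Y,B_{E_Y})$ is abelian with $g_{n-1}(\bar A)$ large. Applying Theorem~\ref{thm:FT-full-rank} (and Corollary~\ref{introcor1}) to $(E_Y,B_{E_Y})$ with the $\bar A$-action yields a boundary $B'_{E_Y}\geq B_{E_Y}$ with $(E_Y,B'_{E_Y})$ log canonical, $\bar A$-equivariant and $K_{E_Y}+B'_{E_Y}\sim 0$; a $\bar A$-equivariant birational map to a log Calabi--Yau toric pair $(E',B_{E'})$ that is an isomorphism over a common torus; a monomorphism $\bar A<\mathbb{G}_m^{n-1}\leqslant\mathrm{Aut}(E',B_{E'})$; and a $\bar A$-invariant open subset $E_Y\setminus\lfloor B'_{E_Y}\rfloor\cong\mathbb{G}_m^{n-1}$ on which $\bar A$ acts by roots of unity. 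Since $\bar A$ sits in the torus of $E'$, Proposition~\ref{prop:quot-toric-by-toric-finite} shows $E'/\bar A$ is toric, and the crepant equivalence descends to the quotients by $\bar A$. Hence $E_Z=E_Y/\bar A$ is log crepant equivalent toric, with crepant equivalent toric boundary $B_{E_Z}$ satisfying $K_{E_Z}+B_{E_Z}\sim 0$ — linearly, because we only divided by a subgroup of the torus — and $B_{E_Z}\geq\operatorname{Diff}_{E_Z}(\Delta_{Z'})$, and $E_Z\setminus\lfloor B_{E_Z}\rfloor\cong\mathbb{G}_m^{n-1}/\bar A\cong\mathbb{G}_m^{n-1}$.

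\textbf{Conclusion.} The cone degeneration attached to the plt blow-up $Z'\to Z$ (cf.~\cite{HLM20}*{2.5}) degenerates $z\in(Z,\Delta_Z)$ to the orbifold cone $z_0\in Z_0$ over $(E_Z,B_{E_Z})$, which is an lce-tq singularity since $E_Z$ is log crepant equivalent toric; the orbifold cone over the open torus $E_Z\setminus\lfloor B_{E_Z}\rfloor\cong\mathbb{G}_m^{n-1}$ is the total space of a $\mathbb{G}_m$-bundle over $\mathbb{G}_m^{n-1}$, trivial because $\operatorname{Pic}(\mathbb{G}_m^{n-1})=0$, giving an open affine subset of $Z_0$ isomorphic to $\mathbb{G}_m^n$. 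Finally, the lc pair $(E_Z,B_{E_Z})$ with $K_{E_Z}+B_{E_Z}\sim 0$ and $B_{E_Z}\geq\operatorname{Diff}_{E_Z}(\Delta_{Z'})$ is a $1$-complement of the adjunction pair on $E_Z$, and by the standard procedure of lifting complements through a plt blow-up with $-E_Z$ ample over $Z$ (equivalently, lifting from the central fibre of the cone degeneration via Kawamata--Viehweg vanishing) this produces $B_Z\geq\Delta_Z$ with $(Z,B_Z)$ log canonical and $K_Z+B_Z\sim 0$ near $z$; that is, $(Z,\Delta_Z)$ admits a $1$-complement. The constant $N=N(n,\Lambda)$ is the largest of the finitely many lower bounds imposed on $g_n$ above, and the degree bound $N(n)$ is the Jordan constant $N_0(n)$. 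The two delicate points are the preservation of index one through the quotient by $\bar A$ — which is exactly why the intermediate cover $Z=Y/A$ is introduced, and which relies on the fourth conclusion of Theorem~\ref{thm:FT-full-rank} together with Proposition~\ref{prop:quot-toric-by-toric-finite} — and the final lifting of a $1$-complement, rather than an $m$-complement with $m>1$, from the plt centre $E_Z$ to $(Z,\Delta_Z)$.
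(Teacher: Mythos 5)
Your proof is correct and follows essentially the same route as the paper: pass to the universal cover $Y\to X$, quotient by a bounded-index normal subgroup $A\leqslant G$ to obtain $Z=Y/A$, identify the image $\bar A$ of $A$ on the plt centre $E_Y$ with a subgroup of the torus of a toric model via the main projective theorem, observe that $E_Z=E_Y/\bar A$ is therefore still log crepant equivalent toric (hence carries a $1$-complement $B_{E_Z}$), lift that $1$-complement through the plt blow-up, and read off the open torus from the cone degeneration. The only real difference in organization is where $A$ is chosen: you take $A$ up front as the abelian normal subgroup of $\pi_1^{\rm reg}$ furnished by Theorem~\ref{thm:jordan-klt}, and then run the abelian projective Theorem~\ref{thm:FT-full-rank} for $\bar A=A/(A\cap C)$ to land $\bar A$ in the torus, whereas the paper first applies the non-abelian Theorem~\ref{thm:FT-full-rank-non-abelian-G} to $H=G/C$ acting on $E_Y$, extracts the normal abelian $A_H\leqslant H$ inside the torus, and only then defines $A$ as its pre-image in $G$; the two routes use the same ingredients and give the same degree bound $O_n(1)$. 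Two very small points to tighten: the preservation of \emph{linear} triviality $K_{E_Z}+B_{E_Z}\sim 0$ is not a formal consequence of ``dividing by a subgroup of the torus'' but follows, as you also state, from $(E_Z,B_{E_Z})$ remaining crepant equivalent toric via Proposition~\ref{prop:quot-toric-by-toric-finite} (quotients by finite groups in general only preserve $\qq$-triviality); and Corollary~\ref{introcor1} only gives that the torus is contained in (rather than equal to) $E_Y\setminus\lfloor B'_{E_Y}\rfloor$, but that containment is all the cone argument needs, together with the observation (as in the paper) that the ample $\qq$-divisor of the cone is Cartier on that open torus.
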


\begin{proof}
We use the notation of the proof of Theorem~\ref{thm:regional-general}.
Note that $(E_Y,B_{E_Y})$ is crepant equivalent  to a log Calabi-Yau 
toric pair $(T,B_T)$.
The group $H$ admits an abelian normal subgroup $A_H\leqslant H$
for which the birational map $T\dashrightarrow E_Y$ is $A_H$-equivariant and $A_H<\mathbb{G}_m^{n-1}\leqslant {\rm Aut}(T,B_T)$.
Furthermore, the index of $A_H$ in $H$ is bounded by a constant
which only depends on the dimension $n-1$.
Hence, the quotient of $(E_Y,B_{E_Y})$ by $A_H$
is crepant equivalent  to a log Calabi-Yau toric pair.
Indeed, it is crepant
equivalent to the quotient
of $(T,B_T)$ by $A_H$.
Let $(E_{Z},B_{E_{Z}})$ be the quotient of $(E_Y,B_{E_Y})$ by $A_H$.
By construction, we have that
$K_{E_Z}+B_{E_Z}\sim 0$ and
$B_{E_Z}$ is a reduced boundary.
We denote by $A$ the pre-image of $A_H$ in $G$.
Note that $A$ is a normal subgroup of $G$.
We denote $N_H:=H/A_H$ and $N:=G/A$.
We have a commutative diagram as follows:
\[ 
\xymatrix{
(E_Y,B_{E_Y}) \ar[r]^-{/A_H}\ar@{^{(}->}[d] & (E_Z,B_{E_Z}) \ar@{^{(}->}[d] \ar[r]^-{/N_H} & (E,B_E) \ar@{^{(}->}[d] \\
Y'\ar[d]\ar[r]^-{/A} & Z' \ar[d]\ar[r]^-{/N} & X'\ar[d] \\
Y\ar[r]^-{/A} & Z \ar[r]^-{/N} & X
}
\]
where all the horizontal morphisms are quotients, the lower vertical morphisms are equivariant plt blow-ups, 
and the upper vertical maps are equivariant embeddings of
the exceptional divisor of the plt blow-up.
Note that $B_{E_Z}$ is a $1$-complement for $K_{E_Z}$.
Indeed, the pair
$(E_Z,B_{E_Z})$ is log crepant equivalent to a toric pair.
We denote by $z\in Z$ the image of $y$ on $Z$.
We can lift the complement $B_{E_{Z}}$ of $K_{E_{Z}}$ to a complement 
$B_{Z'}$ of 
$K_{Z'}$,
over a neighborhood of $z\in Z$.
This is also a $1$-complement.
In particular, we have that
$(Z',B_{Z'})$ is log canonical and
$K_{Z'}+B_{Z'}\sim 0$
over a neighborhood of $z\in Z$.
We let $K_Z+B_Z$ to be the push-forward of
$K_{Z'}+B_{Z'}$ to $Z$.
Hence, $K_Z+B_Z$ is a $1$-complement of $K_Z$.
Denote $\pi_Z\colon Z\rightarrow X$.
Hence, we have that
$K_Z+B_Z\geq \pi_Z^*(K_X+\Delta)=K_Z+\Delta_Z$.
We conclude that $(Z,B_Z)$
is a $1$-complement
of the log pull-back 
$(Z,\Delta_Z)$ of $(X,\Delta)$ to $Z$.
By construction,
the finite morphism
$Z\rightarrow X$ has degree at most $N$.

Now, we proceed to study the cone degeneration of $Z$ with respect to the plt blow-up $Z'\rightarrow Z$.
Note that the torus of $E_Z$ is disjoint from the support of $B_{E_Z}$.
Let $K_Z+\Delta_Z$ be the log pull-back of $K_X+\Delta$ to $Z$.
Let $K_Z'+\Delta_Z'$ be the log pull-back of $K_Z+\Delta_Z$ to $Z'$ after increasing the coefficient at $E_Z$ to one.
Let $K_{E_Z}+\Delta_{E_Z}$ be the pair obtained by adjunction of 
$K_Z'+\Delta_Z'$ to $E_Z$. 
By construction, we have that $\Delta_{Z'}\leq B_{E_Z}$.
Then, the fractional coefficients of $\Delta_{Z'}$ are disjoint from the torus.
We conclude that $z\in (Z,\Delta_Z)$ degenerate to an orbifold cone singularity over $E_Z$ so that the corresponding ample $\qq$-divisor is Weil on the torus of $E_Z$.
In particular, this ample
$\qq$-divisor is Cartier on the torus of $E_Z$.
Hence, such cone contains an algebraic torus as an open subvariety.
\end{proof}

\begin{proof}[Proof of Theorem~\ref{introthm:log-smooth-locus}]
Let $(X,B)$ be a $M$-complement of $X$.
Let $Y^{\rm sm}\rightarrow X^{\rm sm}$ 
be the universal cover of $G:=\pi_1(X^{\rm sm})$.
Let $Y$ be the normalization of $X$ on the function field of $Y^{\rm sm}$.
Then, we have a $G$-equivariant morphism $Y\rightarrow X$ so
that $Y$ is also of Fano type.
Indeed, by construction the finite morphism
$Y\rightarrow X$ is unramified in codimension one.
Let $(Y,B_Y)$ be the pull-back of $(X,B)$ to $Y$.
By Theorem~\ref{introthm:main-thm}, we know that $(Y,B_Y)$ is crepant equivalent  toric whenever $k\geq N(n)$,
for some constant $N(n)$, which only depends on $n$.
Hence, we conclude that $X$ is a log crepant equivalent  toric quotient.
\end{proof}

\subsection{Dual complexes} In this subsection, we recall the dual complex of a log canonical pair and prove a result regarding dual complexes of log crepant equivalent  toric quotients.

\begin{definition}\label{def:dual-complex}{\em 
Let $E$ be a simple normal crossing variety
with irreducible components $E_1,\dots,E_k$.
An {\em stratum} of $E$ is an irreducible component of an intersection of some of the $E_i$'s.
The {\em dual complex} of $E$,
denoted by $\mathcal{D}(E)$,
is a CW-complex whose vertices are labeled by the irreducible components of $E$,
and we attach at $(|J|-1)$-cell for every stratum of $\cap_{i\in J}E_i$.
}
\end{definition}

\begin{definition}{\em 
Let $(X,B)$ be a projective log Calabi-Yau pair.
We define the {\em dual complex} of $(X,B)$,
denoted by $\mathcal{D}(X,B)$, to be the dual complex
of $\lfloor B_Y\rfloor$ where $(Y,B_Y)$ is a
$\qq$-factorial dlt modification of $(X,B)$.
}
\end{definition}

Recall from~\cite{dFKX17}, that the dual complex of projective, 
log canonical, crepant equivalent  birational pairs are PL-homeomorphic to each other. Hence, the dual complex of a log Calabi-Yau pair is well-defined.
We introduce a version of Theorem~\ref{introthm:dual-complex} which allows more general coefficient sets.
It is clear that 
Theorem~\ref{thm:dual-complex-general-cOeff} below implies 
Theorem~\ref{introthm:dual-complex}.

\begin{theorem}\label{thm:dual-complex-general-cOeff}
Let $n$ be a positive integer and let $\Lambda$ be a set of rational numbers satisfying the descending chain condition
with rational accumulation points.
Then, there exists a positive integer $N:=N(n,\Lambda)$, only depending on $n$ and $\Lambda$, satisfying the following.
Let $X$ be a Fano type variety and
$\Delta$ be an effective divisor on $X$ so that:
\begin{enumerate}
\item the coefficients of $\Delta$ belong to $\Lambda$, and
\item $-(K_X+\Delta)$ is $\qq$-complemented.
\end{enumerate}
Assume that
$g_n(\pi_1^{\rm alg}(X,\Delta))\geq N$.
Then, there exists a boundary $B$ on $X$
so that
$\mathcal{D}(X,B)\simeq_{\rm PL} S^{n-1}/G$
where $G$ is a finite group with $|G|\leq N$.
\end{theorem}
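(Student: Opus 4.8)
The strategy is to reduce to the main projective theorem and then extract the combinatorial statement from the toric model. First I would pass to the universal cover of the algebraic fundamental group: since $g_n(\pi_1^{\rm alg}(X,\Delta))\geq N$, Lemma~\ref{lem:bounded-rank-large-k-generation} produces $\zz_k^n$ inside it for some $k\geq N$, and by Proposition~\ref{prop:existence-universal-cover}-type reasoning together with the hypothesis that $-(K_X+\Delta)$ is $\qq$-complemented, one obtains a Galois cover $\pi\colon Y\rightarrow X$, Galois with group $G:=\pi_1^{\rm alg}(X,\Delta)$ (so $|G|\leq N$ after bounding via Jordan, Theorem~\ref{thm:Jordan-log-smooth-proj}), which is étale in codimension one outside the log canonical centers of a fixed $M$-complement. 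By Lemma~\ref{lem:cover-FT-is-FT}, $Y$ is again of Fano type, and the abelian part $A\simeq \zz_k^n\leqslant {\rm Aut}(Y)$ acts with $g_n(A)$ large.

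Next I would apply Theorem~\ref{introthm:main-thm} (or its pair version Theorem~\ref{thm:FT-full-rank}) to $Y$ with the group $A$: there is a boundary $B_Y\geq 0$ with $(Y,B_Y)$ log canonical, $A$-equivariant, $K_Y+B_Y\sim 0$, and an $A$-equivariant birational map to a log Calabi-Yau toric pair $(Y',B_{Y'})$. By Lemma~\ref{lem:toric-bir-pn}, $(Y',B_{Y'})$ is crepant equivalent to $(\pp^n, H_1+\dots+H_{n+1})$; passing to a $\qq$-factorial dlt modification whose reduced boundary has simple normal crossings, the dual complex $\mathcal{D}(Y,B_Y)=\mathcal{D}(Y',B_{Y'})$ is, by the toric description (the dual complex of the reduced toric boundary of a complete simplicial fan is the underlying sphere of the fan), PL-homeomorphic to $S^{n-1}$. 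Here I use the invariance of the dual complex under crepant log birational equivalence from~\cite{dFKX17}.

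The remaining point is to descend. I would average $B_Y$ over $G$ (replacing it by $\sum_{g\in G} g^\ast B_Y/|G|$, then possibly replacing by a $G$-equivariant $\qq$-complement to keep coefficients under control, cf.\ the construction in Lemma~\ref{lem:quot-FT-is-FT}) to get a $G$-invariant log canonical $B_Y$ with $K_Y+B_Y\sim_\qq 0$, still with $\lfloor B_Y\rfloor$ carrying the full toric boundary after a $G$-equivariant dlt modification. The quotient by $G$ then defines $B$ on $X$ with $K_X+B\sim_\qq 0$, and the $G$-action on $Y$ induces a simplicial action on the dual complex $\mathcal{D}(Y,B_Y)\simeq S^{n-1}$; since the dual complex of the quotient pair is the quotient of the dual complexes, $\mathcal{D}(X,B)\simeq_{\rm PL} S^{n-1}/G$ with $|G|\leq N$.

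The main obstacle I expect is making the descent of the dual complex rigorous: one must ensure the $G$-equivariant dlt modification of $(Y,B_Y)$ can be chosen so that $G$ acts simplicially on $\mathcal{D}(\lfloor B_Y\rfloor)$ and so that $\lfloor B_Y\rfloor$ still accounts for all the toric strata (so nothing is lost under $\lfloor\ \rfloor$ after averaging), and then that $\mathcal{D}$ of the quotient dlt pair really equals $\mathcal{D}(\lfloor B_Y\rfloor)/G$ in the PL category — this requires care with the fact that $G$ may permute strata and may act with fixed strata, so one should take an equivariant resolution and invoke the PL-invariance statement of~\cite{dFKX17} equivariantly, perhaps after a further $G$-equivariant subdivision to make the action simplicial. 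Controlling the coefficients of the averaged boundary so that Theorem~\ref{thm:G-equiv-complement} applies (to keep $N$ depending only on $n$ and $\Lambda$) is a secondary bookkeeping issue.
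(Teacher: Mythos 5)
Your high-level framework --- pass to the universal cover $Y$ of $\pi_1^{\rm alg}(X,\Delta)$, pull back an $M$-complement, apply the main projective theorem to get $\mathcal{D}(Y,B_Y)\simeq_{\rm PL} S^{n-1}$, and then take a quotient --- matches the paper's strategy. However, there is a genuine gap at the crucial step, and it is not the one you flagged.

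You assert that $|G|\leq N$ ``after bounding via Jordan.'' This is false: Theorem~\ref{thm:Jordan-log-smooth-proj} bounds the \emph{index} of a normal abelian subgroup $A\leqslant G$, not the order of $G$. Since the abelian part of $G$ can be arbitrarily large (indeed must be, to have $g_n(G)\geq N$), your conclusion $\mathcal{D}(X,B)\simeq_{\rm PL} S^{n-1}/G$ with $|G|\leq N$ does not follow: the group you quotient by is unbounded.

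The idea you are missing --- which is the heart of the paper's proof --- is to quotient in two stages rather than directly by $G$. First quotient by the bounded-index abelian subgroup $A$. Since Theorem~\ref{introthm:main-thm} gives $A<\mathbb{G}_m^n\leqslant{\rm Aut}$ on the toric model, $A$ acts on the toric pair as a subgroup of the torus, so by Proposition~\ref{prop:quot-toric-by-toric-finite} the quotient pair $(Z,B_Z):=(Y,B_Y)/A$ is again crepant equivalent toric, and therefore $\mathcal{D}(Z,B_Z)\simeq_{\rm PL} S^{n-1}$ once more by~\cite{dFKX17}. Equivalently, $A$ acts trivially on the dual complex, since a subgroup of the torus fixes each torus-invariant divisor. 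Only the residual group $G/A$, which \emph{is} of bounded order by Jordan, acts nontrivially on the $S^{n-1}$; the quotient $(X,B)=(Z,B_Z)/(G/A)$ then has $\mathcal{D}(X,B)\simeq_{\rm PL} S^{n-1}/(G/A)$ with $|G/A|\leq N(n)$. Without this two-step factorization (or the equivalent observation that $A$ acts trivially on $\mathcal{D}(Y,B_Y)$), the bounded-order conclusion cannot be reached. Your concerns about making the dual-complex descent rigorous are legitimate secondary issues, but they do not address the unboundedness of $|G|$, which is the main obstruction.
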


\begin{proof}[Proof of Theorem~\ref{thm:dual-complex-general-cOeff}]
Note that the pair 
$(X,\Delta)$ satisfies all the conditions of Theorem~\ref{thm:G-equiv-complement}.
Let $(X,B)$ be a $M$-complement of $X$.
Here, $M$ only depends on $n$ and $\Lambda$.
Let $Y^{\rm sm}\rightarrow X^{\rm sm}$ 
be the universal cover of $\pi_1^{\rm alg}(X,\Delta)=:G$.
By assumption,
we have that
$g_n(G)\geq N$.
Let $Y$ be the normalization of $X$ on the function field of $Y^{\rm sm}$.
Then, we have a $G$-equivariant morphism $Y\rightarrow X$ so
that $Y$ is also of Fano type.
Let $(Y,B_Y)$ be the pull-back of $(X,B)$ to $Y$.
By Theorem~\ref{introthm:main-thm}, we know that $(Y,B_Y)$ is crepant equivalent  toric whenever $N \geq N_0(n)$,
for some constant $N_0(n)$, which only depends on $n$.
By~\cite{dFKX17}, we know that $\mathcal{D}(Y,B_Y)\simeq S^{n-1}$.
Furthermore, we know that $G$ admits an abelian subgroup $A\leqslant G$ so that $A$ acts as the multiplication by roots of unity on the toric model.
Let $(Z,B_Z)$ be the quotient of $(Y,B_Y)$ by $A$.
Then, applying~\cite{dFKX17} again, we have that
$\mathcal{D}(Z,B_Z)\simeq S^{n-1}$.
Note that $(X,B)$ is the quotient by $(Z,B_Z)$ by a group
$N:=G/A$
with order bounded by $N_0(n)$.
Thus, we conclude that the dual complex $\mathcal{D}(X,B)$
is the quotient of $S^{n-1}$ by a group whose order is bounded by a function of the dimension $n$.
\end{proof}

\begin{bibdiv}
\begin{biblist}

\bib{Amb05}{article}{
   author={Ambro, Florin},
   title={The moduli $b$-divisor of an lc-trivial fibration},
   journal={Compos. Math.},
   volume={141},
   date={2005},
   number={2},
   pages={385--403},
   issn={0010-437X},
   review={\MR{2134273}},
   doi={10.1112/S0010437X04001071},
}

\bib{AW97}{article}{
   author={Abramovich, Dan},
   author={Wang, Jianhua},
   title={Equivariant resolution of singularities in characteristic $0$},
   journal={Math. Res. Lett.},
   volume={4},
   date={1997},
   number={2-3},
   pages={427--433},
   issn={1073-2780},
   review={\MR{1453072}},
   doi={10.4310/MRL.1997.v4.n3.a11},
}
	
\bib{Amb06}{article}{
   author={Ambro, Florin},
   title={The set of toric minimal log discrepancies},
   journal={Cent. Eur. J. Math.},
   volume={4},
   date={2006},
   number={3},
   pages={358--370},
   issn={1895-1074},
   review={\MR{2233855}},
   doi={10.2478/s11533-006-0013-x},
}

\bib{BB92}{article}{
 Author = {Borisov, A. A.},
 Author = {Borisov, L. A.},
 Title = {Singular toric Fano varieties.},
 Journal = {Russ. Acad. Sci., Sb., Math.},
 ISSN = {1064-5616},
 Volume = {75},
 Number = {1},
 Pages = {277--283},
 Year = {1992},
 Language = {English},
}

\bib{BCHM10}{article}{
   author={Birkar, Caucher},
   author={Cascini, Paolo},
   author={Hacon, Christopher D.},
   author={McKernan, James},
   title={Existence of minimal models for varieties of log general type},
   journal={J. Amer. Math. Soc.},
   volume={23},
   date={2010},
   number={2},
   pages={405--468},
   issn={0894-0347},
   review={\MR{2601039}},
   doi={10.1090/S0894-0347-09-00649-3},
}

\bib{BFMS20}{article}{
  author = {Braun, Lukas},
  author = {Filipazzi, Stefano},
  author = {Moraga, Joaqu\'in},
  author = {Svaldi, Roberto},
  title={The Jordan property for local fundamental groups},
  eprint={arXiv:2006.01253},
  url={https://arxiv.org/abs/2006.01253},
  date = {2020},
}

\bib{Bir19}{article}{
   author={Birkar, Caucher},
   title={Anti-pluricanonical systems on Fano varieties},
   journal={Ann. of Math. (2)},
   volume={190},
   date={2019},
   number={2},
   pages={345--463},
   issn={0003-486X},
   review={\MR{3997127}},
   doi={10.4007/annals.2019.190.2.1},
}
	
\bib{Bir21}{article}{
   author={Birkar, Caucher},
   title={Singularities of linear systems and boundedness of Fano varieties},
   journal={Ann. of Math. (2)},
   volume={193},
   date={2021},
   number={2},
   pages={347--405},
   issn={0003-486X},
   review={\MR{4224714}},
   doi={10.4007/annals.2021.193.2.1},
}	

\bib{Bor91}{book}{
   author={Borel, Armand},
   title={Linear algebraic groups},
   series={Graduate Texts in Mathematics},
   volume={126},
   edition={2},
   publisher={Springer-Verlag, New York},
   date={1991},
   pages={xii+288},
   isbn={0-387-97370-2},
   review={\MR{1102012}},
   doi={10.1007/978-1-4612-0941-6},
}

\bib{BZ16}{article}{
   author={Birkar, Caucher},
   author={Zhang, De-Qi},
   title={Effectivity of Iitaka fibrations and pluricanonical systems of
   polarized pairs},
   journal={Publ. Math. Inst. Hautes \'Etudes Sci.},
   volume={123},
   date={2016},
   pages={283--331},
   issn={0073-8301},
   review={\MR{3502099}},
}

\bib{Bra20}{article}{
  author = {Braun, Lukas},
  title={The local fundamental group of a Kawamata log terminal singularity is finite},
  eprint = {arXiv:2004.00522},
  url={https://arxiv.org/abs/2004.00522},
  date = {2020},
}

\bib{BMSZ18}{article}{
   author={Brown, Morgan V.},
   author={McKernan, James},
   author={Svaldi, Roberto},
   author={Zong, Hong R.},
   title={A geometric characterization of toric varieties},
   journal={Duke Math. J.},
   volume={167},
   date={2018},
   number={5},
   pages={923--968},
   issn={0012-7094},
   review={\MR{3782064}},
   doi={10.1215/00127094-2017-0047},
}

\bib{CLS11}{book}{
   author={Cox, David A.},
   author={Little, John B.},
   author={Schenck, Henry K.},
   title={Toric varieties},
   series={Graduate Studies in Mathematics},
   volume={124},
   publisher={American Mathematical Society, Providence, RI},
   date={2011},
   pages={xxiv+841},
   isbn={978-0-8218-4819-7},
   review={\MR{2810322}},
   doi={10.1090/gsm/124},
}

\bib{Cox95}{article}{
   author={Cox, David A.},
   title={The homogeneous coordinate ring of a toric variety},
   journal={J. Algebraic Geom.},
   volume={4},
   date={1995},
   number={1},
   pages={17--50},
   issn={1056-3911},
   review={\MR{1299003}},
}
	
\bib{dFdC16}{article}{
   author={de Fernex, Tommaso},
   author={Docampo, Roi},
   title={Terminal valuations and the Nash problem},
   journal={Invent. Math.},
   volume={203},
   date={2016},
   number={1},
   pages={303--331},
   issn={0020-9910},
   review={\MR{3437873}},
   doi={10.1007/s00222-015-0597-5},
}

\bib{dFKX17}{article}{
   author={de Fernex, Tommaso},
   author={Koll\'{a}r, J\'{a}nos},
   author={Xu, Chenyang},
   title={The dual complex of singularities},
   conference={
      title={Higher dimensional algebraic geometry---in honour of Professor
      Yujiro Kawamata's sixtieth birthday},
   },
   book={
      series={Adv. Stud. Pure Math.},
      volume={74},
      publisher={Math. Soc. Japan, Tokyo},
   },
   date={2017},
   pages={103--129},
   review={\MR{3791210}},
   doi={10.2969/aspm/07410103},
}

\bib{FG14}{article}{
   author={Fujino, Osamu},
   author={Gongyo, Yoshinori},
   title={On the moduli b-divisors of lc-trivial fibrations},
   language={English, with English and French summaries},
   journal={Ann. Inst. Fourier (Grenoble)},
   volume={64},
   date={2014},
   number={4},
   pages={1721--1735},
   issn={0373-0956},
   review={\MR{3329677}},
}

\bib{FM20}{article}{
   author={Filipazzi, Stefano},
   author={Moraga, Joaqu\'{\i}n},
   title={Strong $(\delta,n)$-complements for semi-stable morphisms},
   journal={Doc. Math.},
   volume={25},
   date={2020},
   pages={1953--1996},
   issn={1431-0635},
   review={\MR{4187715}},
}

\bib{Ful93}{book}{
   author={Fulton, William},
   title={Introduction to toric varieties},
   series={Annals of Mathematics Studies},
   volume={131},
   note={The William H. Roever Lectures in Geometry},
   publisher={Princeton University Press, Princeton, NJ},
   date={1993},
   pages={xii+157},
   isbn={0-691-00049-2},
   review={\MR{1234037}},
   doi={10.1515/9781400882526},
}

\bib{HLM20}{article}{
   author={Han, Jingjun},
   author={Liu, Jihao},
   author={Moraga, Joaqu\'{\i}n},
   title={Bounded deformations of $(\epsilon,\delta)$-log canonical
   singularities},
   journal={J. Math. Sci. Univ. Tokyo},
   volume={27},
   date={2020},
   number={1},
   pages={1--28},
   issn={1340-5705},
   review={\MR{4246623}},
}

\bib{HLS19}{article}{
   author={Han, Jingjun},
   author={Liu, Jihao},
   author={Shokurov, Vyacheslav V.},
   title={ACC for minimal log discrepancies of exceptional singularities},
   eprint={arXiv:1903.04338},
   url={https://arxiv.org/abs/1903.04338},
   date={2019},
}

\bib{HK00}{article}{
   author={Hu, Yi},
   author={Keel, Sean},
   title={Mori dream spaces and GIT},
   note={Dedicated to William Fulton on the occasion of his 60th birthday},
   journal={Michigan Math. J.},
   volume={48},
   date={2000},
   pages={331--348},
   issn={0026-2285},
   review={\MR{1786494}},
   doi={10.1307/mmj/1030132722},
}

\bib{Jor73}{article}{
   author={Jordan, Camille},
   title={M\'{e}moire sur une application de la th\'{e}orie des substitutions \`a
   l'\'{e}tude des \'{e}quations diff\'{e}rentielles lin\'{e}aires},
   language={French},
   journal={Bull. Soc. Math. France},
   volume={2},
   date={1873/74},
   pages={100--127},
   issn={0037-9484},
   review={\MR{1503686}},
}

\bib{KM98}{book}{
   author={Koll\'{a}r, J\'{a}nos},
   author={Mori, Shigefumi},
   title={Birational geometry of algebraic varieties},
   series={Cambridge Tracts in Mathematics},
   volume={134},
   note={With the collaboration of C. H. Clemens and A. Corti;
   Translated from the 1998 Japanese original},
   publisher={Cambridge University Press, Cambridge},
   date={1998},
   pages={viii+254},
   isbn={0-521-63277-3},
   review={\MR{1658959}},
   doi={10.1017/CBO9780511662560},
}

\bib{Kol92}{book}{ 
Editor = {Koll\'ar, J., and others},
TITLE = {Flips and abundance for algebraic threefolds},
      NOTE = {Papers from the Second Summer Seminar on Algebraic Geometry
              held at the University of Utah, Salt Lake City, Utah, August
              1991,
              Ast\'{e}risque No. 211 (1992) (1992)},
 PUBLISHER = {Soci\'{e}t\'{e} Math\'{e}matique de France, Paris},
      YEAR = {1992},
     PAGES = {1--258},
      ISSN = {0303-1179},
}

\bib{KX16}{article}{
   author={Koll\'{a}r, J\'{a}nos},
   author={Xu, Chenyang},
   title={The dual complex of Calabi-Yau pairs},
   journal={Invent. Math.},
   volume={205},
   date={2016},
   number={3},
   pages={527--557},
   issn={0020-9910},
   review={\MR{3539921}},
   doi={10.1007/s00222-015-0640-6},
}

\bib{Lai11}{article}{
   author={Lai, Ching-Jui},
   title={Varieties fibered by good minimal models},
   journal={Math. Ann.},
   volume={350},
   date={2011},
   number={3},
   pages={533--547},
   issn={0025-5831},
   review={\MR{2805635}},
   doi={10.1007/s00208-010-0574-7},
}

\bib{Mor18a}{article}{
  author = {Moraga, Joaqu\'in},
  title={On minimal log discrepancies and Koll\'ar components},
  eprint={arXiv:1810:10137},
  url={https://.org/abs/1810.10137},
  date = {2018},
}

\bib{Mor18b}{article}{
  author = {Moraga, Joaqu\'in},
  title={A boundedness theorem for cone singularities},
  year = {2018},
  eprint={arXiv:1812.04670},
  url={https://arxiv.org/abs/1812.04670},
  date={2018},
}

\bib{Mor19}{article}{
author = {Moraga, Joaqu\'in},
title={Extracting non-canonical places},
eprint={arXiv:1911.0991},
url={https://arxiv.org/abs/1911.00991},
date= {2019},
}

\bib{Mor20}{article}{
  author = {Moraga, Joaqu\'in},
  title={Fano type surfaces with large cyclic automorphisms},
  eprint={arXiv:2001.03797},
  url = {https://arxiv.org/abs/2001.03797},
  date = {2020},
}

\bib{PS14}{article}{
   author={Prokhorov, Yuri},
   author={Shramov, Constantin},
   title={Jordan property for groups of birational selfmaps},
   journal={Compos. Math.},
   volume={150},
   date={2014},
   number={12},
   pages={2054--2072},
   issn={0010-437X},
   review={\MR{3292293}},
   doi={10.1112/S0010437X14007581},
}

\bib{PS16}{article}{
   author={Prokhorov, Yuri},
   author={Shramov, Constantin},
   title={Jordan property for Cremona groups},
   journal={Amer. J. Math.},
   volume={138},
   date={2016},
   number={2},
   pages={403--418},
   issn={0002-9327},
   review={\MR{3483470}},
   doi={10.1353/ajm.2016.0017},
}

\bib{Pro00}{article}{
   author={Prokhorov, Yu. G.},
   title={Blow-ups of canonical singularities},
   conference={
      title={Algebra},
      address={Moscow},
      date={1998},
   },
   book={
      publisher={de Gruyter, Berlin},
   },
   date={2000},
   pages={301--317},
   review={\MR{1754677}},
}

\bib{Sho93}{article}{
   author={Shokurov, V. V.},
   title={Three-dimensional log perestroikas},
   language={Russian},
   journal={Izv. Ross. Akad. Nauk Ser. Mat.},
   volume={56},
   date={1992},
   number={1},
   pages={105--203},
   issn={1607-0046},
   translation={
      journal={Russian Acad. Sci. Izv. Math.},
      volume={40},
      date={1993},
      number={1},
      pages={95--202},
      issn={1064-5632},
   },
   review={\MR{1162635}},
   doi={10.1070/IM1993v040n01ABEH001862},
}

\bib{Sho00}{article}{
   author={Shokurov, V. V.},
   title={Complements on surfaces},
   note={Algebraic geometry, 10},
   journal={J. Math. Sci. (New York)},
   volume={102},
   date={2000},
   number={2},
   pages={3876--3932},
   issn={1072-3374},
   review={\MR{1794169}},
   doi={10.1007/BF02984106},
}	

\bib{Tsu83}{article}{
   author={Tsunoda, Shuichiro},
   title={Structure of open algebraic surfaces. I},
   journal={J. Math. Kyoto Univ.},
   volume={23},
   date={1983},
   number={1},
   pages={95--125},
   issn={0023-608X},
   review={\MR{692732}},
   doi={10.1215/kjm/1250521613},
}

\bib{Xu14}{article}{
   author={Xu, Chenyang},
   title={Finiteness of algebraic fundamental groups},
   journal={Compos. Math.},
   volume={150},
   date={2014},
   number={3},
   pages={409--414},
   issn={0010-437X},
   review={\MR{3187625}},
   doi={10.1112/S0010437X13007562},
}

\end{biblist}
\end{bibdiv}

\end{document}